\documentclass[reqno]{amsart}

\usepackage{amsfonts}
\usepackage{amssymb}
\usepackage{amsmath}
\usepackage{enumitem}
\usepackage{xcolor}
\usepackage{todonotes}

\setenumerate{label={\rm (\alph{*})}}
\usepackage{bbm,bm,euscript,mathrsfs}
\usepackage{paper_diening}
\usepackage{graphicx}
\usepackage[top=1in, bottom=1.25in, left=1.10in, right=1.10in]{geometry}
\usepackage{cases, color,amsthm}
\usepackage{hyperref}
\hypersetup{linkcolor=blue, colorlinks=true,citecolor = red}

\allowdisplaybreaks

\numberwithin{equation}{section}

\usepackage{tikz-cd}
\usetikzlibrary{lindenmayersystems}
\usetikzlibrary{decorations.pathreplacing}

\DeclareMathOperator{\Bog}{Bog}

\DeclareMathOperator{\Div}{div}

\DeclareMathOperator{\diver}{div}
\newcommand{\R}{\mathbb R}
\newcommand{\N}{\mathbb N}
\newcommand{\dd}{\mathrm d}
\newcommand{\dt}{\,\mathrm{d} t}

\renewcommand{\bfB}{\mathbf{B}}

\newcommand{\Testzeta}{{\mathscr{F}}^{\Div}_{\eta}}

\newcommand{\bx}{\mathbf{x}}
\newcommand{\by}{\mathbf{y}}
\newcommand{\bz}{\mathbf{z}}

\newcommand{\bu}{\mathbf{v}}

\newcommand{\bv}{\mathbf{v}}
\newcommand{\bn}{\mathbf{n}}
\newcommand{\be}{\mathbf{e}}

\newcommand{\dy}{\, \mathrm{d}\mathbf{y}}

\newcommand{\dx}{\, \mathrm{d} \mathbf{x}}

\newcommand{\divx}{\mathrm{div} }

\newcommand{\nabx}{\nabla }
\newcommand{\naby}{\nabla_{\mathbf{y}}}

\newcommand{\Delx}{\Delta }
\newcommand{\Dely}{\Delta_{\mathbf{y}}}

\newcommand{\dxt}{\,\mathrm{d}\mathbf{x}\,\mathrm{d}t}
\newcommand{\ds}{\,\mathrm{d}s}

\newcommand{\Oeta}{\Omega_\eta}

\newtheorem{theorem}{Theorem}[section]
\newtheorem{lemma}[theorem]{Lemma}
\newtheorem{proposition}[theorem]{Proposition}
\newtheorem{corollary}[theorem]{Corollary}
\newtheorem{remark}[theorem]{Remark}

\theoremstyle{definition}
\newtheorem{definition}[theorem]{Definition}

\begin{document}

\title[FSI with perfectly elastic shells]{Well-posedness theorems in fluid-structure interaction: perfectly elastic shells}

\author{Dominic Breit}
\address{Faculty of Mathematics, University of Duisburg-Essen, Thea-Leymann-Stra{\ss}e 9, 45127 Essen, Germany}
\email{dominic.breit@uni-due.de}

\author{Prince Romeo Mensah}
\address{Faculty of Mathematics, University of Duisburg-Essen, Thea-Leymann-Stra{\ss}e 9, 45127 Essen, Germany}
\email{prince.mensah@uni-due.de}

\author{Sebastian Schwarzacher}
\address{Department of Mathematics,  
	Uppsala University, 
	L\"agerhyddsv\"agen 1,
	752 37 Uppsala, Sweden and Department of Mathematical Analysis,
	Faculty of Mathematics and Physics,
	Charles University,
	Sokolovská 83,
	186 75 Praha 8, Czech Republic}
\email{schwarz@karlin.mff.cuni.cz}

\author{Pei Su}
\address{Department of Mathematical Analysis,
	Faculty of Mathematics and Physics,
	Charles University,
	Sokolovská 83,
	186 75 Praha 8, Czech Republic}
\email{peisu@karlin.mff.cuni.cz}



\subjclass[2020]{35B65, 35Q74, 35R37, 76D03, 74F10, 74K25}

\date{\today}


\keywords{Incompressible Navier--Stokes system, elastic shell equation, Fluid-Structure interaction, Strong solutions, Stability estimates.}

\begin{abstract}
%

In this work, we consider the interaction of a 3D incompressible fluid with a 2D flexible shell that occupies (a part of) the boundary of the fluid domain. We assume that the shell is perfectly elastic while the fluid is governed by the Navier--Stokes equations. Consequently, damping within the coupled system comes entirely from the parabolic fluid subsystem.
Our main result is the construction of a local-in-time unique strong solution to the system of PDEs.  Standard techniques from the literature do not apply here. They are restricted to visco-elastic structures, where the corresponding solid phase is parabolic. Our construction relies on a different method built upon a
 new estimate for the acceleration of the system.

In the case of a 2D viscous incompressible fluid interacting with a 1D perfectly elastic shell
 we can extend the local solution globally in time
 (until a possible self-intersection of the shell).

\end{abstract}

\maketitle

\section{Introduction}
The interaction of fluids with elastic structures has been the object of intensive research over the last decades. Significant achievements have been made by mathematicians, physicists and engineers working in the field and various research directions have evolved.
We are interested in the case where a flexible elastic structure occupies (a part of) the boundary of the fluid domain. This means that the fluid domain (or part of it) is
moving in time in accordance with the displacement of the structure. Due to its apparent relevance for applications (blood flow, oil flow, etc.), it is of particular interest in applied mathematics~\cite{sunny,kaltenbacher}.
We distinguish two cases visualised in Figure \ref{fig:2}:
\begin{itemize}
\item \textbf{Elastic plates.} The reference geometry for the fluid domain is flat and  the elastic plate is located on top of a rectangle. In this context, the transformation map between the reference geometry
and the moving domain is particularly simple.
\item \textbf{Elastic shells.} In the case of a general reference geometry, where the structure occupies
(a part of) the curved boundary, we speak about an 
elastic shell. Here, the transformation map is significantly more complicated than for plates.
\end{itemize}
Our main concern in this paper is the analysis of elastic shells but our main result for the three dimensional fluid model (where the structure is two-dimensional) is even new for plates. The reference model is the {\em linearised Koiter shell model}, 
where the elastic part of the equation for
 the solid becomes $\alpha\Dely^2\eta+B \eta $, where $B$ is a
 linear second-order differential operator.\footnote{The term $B$ will later be ignored since it has no impact on the analysis.}
Furthermore, we distinguish between dissipative and non-dissipative solid materials.
\begin{itemize}
\item \textbf{Visco-elastic solids.} The structure equation contains a dissipative term as
$-\gamma\partial_t\Dely\eta$ relating to the viscosity of the shell. 
The solid phase is thus parabolic.
\item \textbf{Perfectly elastic solids.} There is no dissipative term in the equation for the solid. The resulting solid phase is then hyperbolic and this results in regularity incompatibilities with the parabolic fluid phase.
\end{itemize}
All results we are aware of concerning the existence of strong solutions for the three-dimensional
fluid-structure interaction problems involving either  plates or shells consider visco-elastic solids. In this setting, the equation for just the solid is strongly dissipative, as it contains a second order damping term. The latter stabilises solid motions {\em independent of the weak dissipative effects coming from the fluid}. From a mathematical point of view, it leads to better a-priori bounds which are critical for many existence, regularity and uniqueness results.
In particular, short time existence for visco-elastic shells was shown recently by the authors~\cite{BMSS}.
However, it turns out that the developed theory on visco-elastic solids {\em methodologically excludes the treatment of perfectly elastic materials}, which are used for the majority of real-world applications of fluid-structure interactions. 


Our aim, therefore, is
to prove the local-in-time existence of strong solutions of the coupled systems of partial 
differential equations describing the interaction of a viscous incompressible fluid with a perfectly elastic linear shell in the physical case of three space dimensions. 
In the two-dimensional case, we can further prove that this solution exists globally in time 
(until a possible self-intersection of the shell).

\subsection{Bibliographical review}
In the following, we give an overview of recent works on
fluid-structure interactions involving (visco)-elastic structures interacting with an unsteady viscous incompressible fluid.
For a more exhaustive list, we refer to \cite{sunny} for an overview on the setting considered in this paper and to~\cite{kaltenbacher} for a more general overview on mathematical efforts in fluid-structure interactions. 

The theory on the existence of weak solutions is by now essentially equivalent when we speak of shells and plates or visco-elastic solids and perfectly elastic solids. The most relevant research to this current work is the result from \cite{LeRu}, which 
shows existence of a weak solution for a linearised elastic Koiter shell interacting with the incompressible Navier--Stokes equations; namely, for the coupled system we consider in this paper (see Section
\ref{subsec:PDE}  below). We also refer to the earlier work~\cite{CanMuh13} for the simplified model 
of a cylindrical linearised Koiter shell. Finally, in \cite{MuSc} the existence of weak solutions was shown
for the fully nonlinear model as suggested in the original literature by Koiter.

For analytical properties of the coupled system beyond the existence of weak solutions the difference between 3D models (with 2D structure) and 2D models
 (with 1D structure) becomes critical.

\textbf{3D models.} In the three dimensional case, one can 
expect the existence of more regular solutions and uniqueness
 in a short time horizon. An unrestricted global-in-time well-posedeness result is still out of reach -- 
even for 3D Navier--Stokes equation in fixed domains -- and
one may argue whether it can even be expected.
For short times on the other hand there exists various results that consider bulk elastic bodies immersed
in viscous incompressible fluids~\cite{coutand2005motion,coutand2006interaction,Kuk,
raymond2014fluid}. 
 Concerning thin  solids we refer to~\cite{cheng2007navier, CS}, where the interaction between a viscous incompressible fluid and a nonlinear elastic shell is studied and the existence of a local strong solution is shown.
However, inertia are ignored in both works in the 3D case leading to a static equation for the dynamics
of the structure. Thus the results differ substantially to ours.
More related to our work are the papers \cite{DRR} and \cite{BMSS}.
In \cite{DRR} the existence of a local strong solution to the interaction problem
with a linear visco-elastic plate is proved. This is extended to shells in \cite{BMSS}
(see also \cite{BM} and \cite{mensah2025strong} for higher topologies).
 Both \cite{DRR} and \cite{BMSS} rely on a similar strategy that builds on the methodology developed in~\cite{GraHil}, which heavily
benefits from the dissipation of the structure. The case of a perfectly elastic 
structure interacting with a three dimensional fluid is open for both plates and shells and needs a different approach that we introduce in this work.

\textbf{2D models.}
If the fluid motion is planar, one may expect a longer existence time for strong solutions,
as regularity and uniqueness was shown for the Navier--Stokes equations in fixed domains in the seminal work of Ladyshenskaya \cite{La}.
Indeed, a corresponding result for visco-elastic plates/beams was shown in the pioneering paper \cite{GraHil}. More precisely, the existence
of a global strong solution to the interaction problem with a linear 
visco-elastic plate is shown. This result has been extended by the first author
in \cite{Br} to the case of a visco-elastic shells (up to the point of self intersection).
Both papers rely on an acceleration estimate which uses heavily, the parabolic character
of the solid phase.
More recently, the third and fourth authors in \cite{ScSu} developed a different strategy to show strong solutions for perfectly elastic plates for arbitrary times, which was not accessible by previous methods.
The strategy developed in \cite{ScSu} builds the starting point of the investigations here, which include long time existence results for the previously untouched regime of perfectly elastic shells.

\subsection{The system of PDEs} \label{subsec:PDE}
We are interested in the interaction of an incompressible fluid with a flexible
 shell where the shell  reacts to the surface forces induced by the fluid and 
deforms the spatial reference domain $\Omega \subset \mathbb{R}^n$ to 
$\Omega_{\eta(t)}$ with respect to a coordinate transform
 $\bfvarphi_{\eta(t)}$ (see Figure \ref{fig:2} for the typical situation
 and  Section~\ref{ssec:geom} for the precise set-up). The displacement is proportional to
the normal $\bfn$ at the boundary of the reference geometry
with amplitude 
$\eta$.
The function $\eta:I \times \omega \mapsto   \mathbb{R}$
with $I=(0,T)$ for some $T>0$ solves
the equation\footnote{We identify the boundary $\partial\Omega$ with the flat torus $\omega$
here.}
 \begin{equation}\label{1}
\left\{\begin{aligned}
& \varrho_s\partial_t^2\eta + \alpha\Dely^2\eta=g-\bn^\intercal\bm{\tau}\circ\bm{\varphi}_\eta\bn_\eta
 \vert \mathrm{det}(\naby \bm{\varphi}_{\eta})\vert
&\text{ for all }  (t,\by)\in I\times\omega
 ,\\
&\eta(0,\by)=\eta_0(\by), \quad (\partial_t\eta)(0, \by)=\eta_*(\by)
&\text{ for all } \by\in\omega.
 	\end{aligned}\right.
 \end{equation}
 The vector $\bfn_\eta$ is the normal vector
of the deformed boundary and 
$\bfvarphi_{\eta(t)}:\omega\rightarrow\partial\Omega_{\eta(t)}$ is
a parametrisation of the moving boundary.
By $\bftau$ we denote the Cauchy stress of the fluid
 given by Newton's rheological law, {\em i.e.},
$$\bftau=\mu\big(\nabx\bu+(\nabx\bu)^\intercal\big)-\pi\mathbb I_{n\times n},$$
where $\mu>0$ is the viscocity of the fluid and $n=2,3$. Here
$\bu:I\times\Omega_\eta\rightarrow\R^n$ and 
$\pi:I\times\Omega_\eta\rightarrow\R$ are the velocity field and hydrodynamical pressure,
respectively.
The parameters $\varrho_s$ and $\alpha$ are positive constants and the function 
$g: I \times \omega \mapsto   \mathbb{R}$ is a given
 forcing term. The motion of the fluid is governed by the Navier--Stokes equations
 \begin{equation}\label{2}
\left\{\begin{aligned}
 &\varrho_f\big(\partial_t \bu  + (\mathbf{v}\cdot \nabx)\mathbf{v} \big)
 = 
 \mu\Delx \bu -\nabx\pi+ \bff &\text{ for all }(t,\bx)\in I\times\Omega_\eta,\\
 &\Div \bu=0&\text{ for all }(t,\bx)\in I \times\Omega_\eta,\\
 &\bu(0,\bx)=\bu_0(\bx) &\text{ for all } \bx\in \Omega_{\eta_0},
 \end{aligned}\right.
 \end{equation}
in the moving space-time cylinder $I\times\Omega_\eta$.
Here $\varrho_f$ is a positive constant representing the density of the fluid
 and the function $\bff:I \times \Oeta \mapsto\mathbb{R}^n$ is a given volume force. The equations \eqref{1} and \eqref{2} are coupled through the kinematic boundary condition
\begin{align}
\label{interfaceCond}
\bu\circ \bfvarphi_\eta=\partial_t\eta\bfn \quad\text{ for all } (t,\by)\in I\times \omega.
\end{align} 
%
\begin{figure}
\begin{center}
\begin{tikzpicture}[scale=2]
  \begin{scope}   
     \draw [thick, <->] (0.5,0.5) -- (0.5,-0.5) -- (2.8,-0.5);
        \node at (0.5,0.6) {$\R$};
       \node at (3.1,-0.5) {$\R^{n-1}$};
       \draw [blue,dashed] (0.5,0.2) -- (2.5,0.2);
                \draw (2.5,0.2) -- (2.5,-0.5);
                \draw [red] (0.5,0.2) .. controls (0.7,-0.1) and (1,0.8) .. (1.3,0.1);
                \draw [red] (1.3,0.1) .. controls (1.5,-0.1) and  (1.7,0.4) .. (2.5,0.2);
                        \node at (1.4,-0.25) {\textcolor{blue}{$\Omega$} $\rightsquigarrow$ \textcolor{red}{$\Omega_\eta$}};
                         \node [red] at (1.5,0.4) {$\eta$};
        \draw [thick, <->] (4.5,0.5) -- (4.5,-0.5) -- (6.8,-0.5);
        \node at (4.5,0.6) {$\R$};
       \node at (7.1,-0.5) {$\R^{n-1}$};
                     \draw (4.5,-0.5) .. controls (4,0) and (4.5,0.2) .. (4.5,0.2);
       \draw (6.5,0) .. controls (6.8,-0.2) and (6.5,-0.9) .. (4.5,-0.5);
        \draw [blue,dashed] (4.5,0.2) .. controls (4.7,0.3) and (4.7,0.3) .. (4.7,0.3);
        \draw [blue,dashed] (4.7,0.3) .. controls (4.8,0.4) and (4.9,0.3) .. (5.1,0.2);
       \draw [blue,dashed] (5.1,0.2) .. controls (5.3,-0.3) and (5.5,0.3) .. (6.5,0);
                \draw [red] (4.5,0.2) .. controls (4.7,-0.1) and (5,0.8) .. (5.3,0.1);
                \draw [red] (5.3,0.1) .. controls (5.5,-0.1) and  (5.7,0.4) .. (6.5,0);
                        \node at (5.4,-0.25) {\textcolor{blue}{$\Omega$} $\rightsquigarrow$ \textcolor{red}{$\Omega_\eta$}};
                         \node [red] at (5.5,0.4) {$\eta$};
  \end{scope}
\end{tikzpicture}
\caption{Domain transformation for elastic plates (left) and shells (right).}\label{fig:2}
\end{center}
\end{figure}
A weak solutions $(\eta,\bu)$ to \eqref{1}--\eqref{interfaceCond} 
can be constructed to satisfy the energy inequality and thus
\begin{equation*}
\sup_{I}\|\partial_t\eta\|_{L^2_\by}^2+\sup_{I}\|\Dely\eta\|_{L^2_\by}^2+\sup_{I}\|\bu\|_{L^2_\bx}^2+\int_{I}\|\nabx\bu\|_{L^2_\bx}^2\dt<\infty.
\end{equation*}
We speak about a strong solution if all quantities in 
\eqref{1} and \eqref{2} belong to the class $L^\infty_t(L^2)$.
The precise definitions can be found in Section \ref{sec:main}. In a simplified version,
 our main result reads as follows (see Theorems \ref{thm:NS3} and \ref{thm:NS2} for the precise statements).
\begin{theorem}\label{thm:mainsimple}
\begin{enumerate}
\item
{\em (Local strong solution 3D)} Under natural assumptions on the data  (see Theorem \ref{thm:NS3}), there exists $T_{\tt max}>0$ and a unique strong 
solution to \eqref{1}--\eqref{interfaceCond} which exists in $(0,T_{\tt max})$.
\item {\em(Global strong solution 2D)} If $n=2$ then this solution exists
until the self-intersection of the shell (see Theorem \ref{thm:NS2}). If there is no self-intersection 
it exists in $(0,\infty)$.
\end{enumerate}
\end{theorem}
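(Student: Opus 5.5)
The plan is to build the local solution by a fixed-point or Galerkin-type scheme around a linearised problem. The usual maximal-regularity scheme of \cite{GraHil,DRR,BMSS} is not available here, because without the damping term $-\gamma\partial_t\Dely\eta$ the structure equation gives no parabolic gain. Instead we aim for an \emph{acceleration estimate}, in the spirit of \cite{ScSu}: we differentiate the coupled system in time and test it with $(\partial_t\bu,\partial_t^2\eta)$, which the kinematic condition \eqref{interfaceCond} makes an admissible pair.

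\textbf{Step 1: transformed problem.} We pull the fluid equations back to the fixed reference domain $\Omega$ via $\bfvarphi_\eta$, or rather via a solenoidal-preserving (Piola-type) extension of it. This gives a fixed-domain system whose coefficients depend on $\eta$.

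\textbf{Step 2: time-differentiated estimates.} On a regularised or Galerkin level, we differentiate the momentum and shell equations in time and use the energy structure. This controls
\[
\sup_t\big(\|\partial_t\bu\|_{L^2}^2+\|\partial_t^2\eta\|_{L^2}^2+\|\Dely\partial_t\eta\|_{L^2}^2\big)+\int_0^t\|\nabla\partial_t\bu\|_{L^2}^2\,\mathrm ds .
\]
The right-hand side consists of commutator terms coming from $\partial_t$ falling on the geometry ($\partial_t\bfvarphi_\eta$, $\partial_t\bn_\eta$, the Jacobian) and of the convective term. The energy identity is used with the pressure eliminated by the divergence-free test functions.

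\textbf{Step 3: recover spatial regularity.} With $\partial_t\bu$ and $\partial_t^2\eta$ in $L^\infty_tL^2$, we read the shell equation as a biharmonic problem $\alpha\Dely^2\eta=\text{RHS}$. Coupled with a Stokes problem on $\Omega_\eta$ with prescribed boundary velocity $\partial_t\eta\bn$, this yields $\bu\in L^\infty_tW^{2,2}$, $\pi\in L^\infty_tW^{1,2}$ and the trace of $\bftau$ in a suitable space, so that $\Dely^2\eta\in L^\infty_tL^2$. For this we use Stokes regularity on domains of limited (Sobolev) boundary regularity, which is assumed available from earlier sections.

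\textbf{Main obstacle.} The hardest step is closing Step 2 in 3D. The time-differentiated pressure $\partial_t\pi$ appears on the boundary, paired with $\partial_t^2\eta$, through the geometric commutators, and $\partial_t\pi$ is not controlled by the dissipation. We would first try to rewrite these terms using a time-differentiated Bogovskii/extension operator, so that the pressure only meets divergence-free, boundary-compatible test functions. The remaining geometric terms would then be estimated through $\|\eta\|_{W^{3,2}}$-type quantities, which are available from the energy and Step 3 bounds. Smallness of $T$ then absorbs the superlinear terms via a Gronwall-type inequality. Uniqueness follows by estimating the difference of two solutions in the energy norm after transforming both to the same geometry.

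\textbf{2D global extension.} For $n=2$, a Ladyzhenskaya-type interpolation makes the convective and geometric terms in Step 2 at most linear in the high norm, times integrable-in-time factors controlled by the energy. Gronwall then gives bounds on any interval on which $\Omega_\eta$ stays non-degenerate, and we continue the local solution until self-intersection.
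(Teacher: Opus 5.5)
Your Step~2 relies on a claim that fails for shells: $(\partial_t\bu,\partial_t^2\eta)$ is not an admissible test pair for the time-differentiated system, and \eqref{interfaceCond} does not make it one.

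In the moving domain $\partial_t\bu$ is solenoidal, but differentiating \eqref{interfaceCond} in time gives $(\partial_t\bu)\circ\bfvarphi_\eta=\partial_t^2\eta\,\bn-\partial_t\eta\,(\nabla\bu)\circ\bfvarphi_\eta\,\bn$. In the reference frame $\overline\bu=\bu\circ\bfPsi_\eta$ the trace is correct, but $\bfB_\eta:\nabla\partial_t\overline\bu=-\partial_t\bfB_\eta:\nabla\overline\bu\neq0$.

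You propose to fix this with a zero-trace Bogovskii correction, and that cannot work. By the Piola identity and $\partial_\bn\bfPsi_\eta=\bn$ (so $\bfB_\eta^\intercal\bn=J_\eta\bn$ on $\partial\Omega$), the integral of this divergence defect over $\Omega$ is $-\int_{\partial\Omega}\partial_tJ_\eta\,\partial_t\eta\,\dd\mathcal H^2$. This vanishes for plates but not for a curved reference surface. The correction must therefore change the boundary values, and with them the shell test function.

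The paper tests with $(\partial_t^2\eta+\bn^\intercal\bfG\circ\bfvarphi,\ \partial_t\overline\bu+\bfG)$, where $\bfG=\bfB_\eta^{-\intercal}\partial_t\bfB_\eta^\intercal\overline\bu$. This is the time derivative of the Piola transform $\bfB_\eta^\intercal\overline\bu$, pulled back. Even a Piola formulation in your Step~1 gives the trace $\partial_t(J_\eta\partial_t\eta)\bn$, not $\partial_t^2\eta\,\bn$. Since $\bn^\intercal\bfG\circ\bfvarphi=(J_\eta^{-1}\partial_tJ_\eta)\circ\bfvarphi\,\partial_t\eta\neq0$, the differentiated shell equation produces
$\int_I\int_\omega\partial_t^3\eta\,\bn^\intercal\bfG\circ\bfvarphi\dy\dt$ and $\int_I\int_\omega\partial_t\Dely^2\eta\,\bn^\intercal\bfG\circ\bfvarphi\dy\dt$.
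Both lie outside the acceleration energy. They must be integrated by parts in time and in space. In 2D this needs the extra bounds $\eta\in L^2_tW^{3,2}_\by$ and $\partial_t\Dely\eta\in L^2_tW^{1/4,2}_\by$, which come from a separate difference-quotient test in space built with the solenoidal extension $\Testzeta$ (Lemma~\ref{lem:4.2}). This is exactly where shells differ from the plate case of \cite{ScSu}, and your plan does not account for it.

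You also place the main difficulty in the wrong spot. With this corrector, $\partial_t\overline\pi$ never appears. What remains is $\int_I\int_\Omega\overline\pi\,\Div\big(\partial_t\bfB_\eta^\intercal(\partial_t\overline\bu+\bfG)\big)\dx\dt$, with $\overline\pi$ itself.
\begin{itemize}
\item In 3D the paper bounds it by the Stokes-based estimate of Lemma~\ref{lem:pres}, $\int_I\|\overline\pi\|_{W^{1,2}_\bx}^2\dt\lesssim\int_I(1+\mathcal E_\eta^5)\dt$ plus forcing. This is why the argument closes only through the Gr\"onwall lemma with powers (Lemma~\ref{lem:gron}) on a short interval.
\item In 2D the paper does not estimate the pressure at all. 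It rewrites this term through the weak formulation tested with $\boldsymbol\Theta^\eta$. Your 2D paragraph gives no mechanism that keeps this term linear.
\end{itemize}

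Several commutators (e.g.\ $\partial_t\bfA\nabla\overline\bu:\nabla\partial_t\overline\bu$ and the $\nabla\bfG$ terms) are controlled in the paper by $\|\partial_t\naby\zeta\|_{L^\infty_\by}$ for the prescribed geometry ($\zeta=\eta$ at the fixed point). The bound $\partial_t\eta\in L^\infty_tW^{2,2}_\by$ does not give this on a two-dimensional $\omega$. The paper instead proves $\partial_t\eta\in L^2_tW^{2+s,2}_\by$, $s<\tfrac14$, by a fractional difference-quotient test. It builds this into $X_{\tt shell}$, the space in which the decoupled map $\zeta\mapsto\eta$ (mollified geometry, artificial damping $-\varepsilon\partial_t\Dely\eta$) is a self-map. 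Your ``$\|\eta\|_{W^{3,2}}$-type quantities'' concern $\eta$, not $\partial_t\eta$, so they do not supply this.

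Finally, for uniqueness: the second solution, transported to the first geometry, is no longer solenoidal. That is where the paper actually uses the universal Bogovskij operator (Proposition~\ref{prop:contraction}).
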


\subsection{Strategy of the proof}
At the heart of our analysis is an $L^\infty_t$-estimate for the acceleration
$\frac{1}{2}\|\partial_t\bu\|_{L^2_\bx}^2+\frac{1}{2}\|\partial_t^2\eta\|_{L^2_\by}^2$.
It can be formally achieved by differentiating the system (transformed to the reference
geometry) and testing it by $(\partial_t^2\eta,\partial_t\bu)$. This strategy was successfully used in \cite{ScSu} for perfectly elastic plates in 2D. It yields
substantially more regularity than the $L^2_t$-acceleration estimate previously used in \cite{GraHil} and
\cite{Br} for the visco-elastic case, which was not even sufficient in the case of elastic plates in 2D.  It stands to wonder if related arguments could be used to estimate the nonlinear terms appearing in 3D.
At first glance, the method from \cite{ScSu} has a severe disadvantage: It can only be performed
after transforming the fluid equation to the fixed-in-time reference geometry. This transformation
leads to a perturbed Navier--Stokes type system with coefficients depending on
the structure displacement. 
In the two-dimensional case the embeddings at hand
are sufficiently strong to close
the estimate. Note, in particular, that one-dimensional embeddings show that the structure function
is a priori Lipschitz continuous in space uniformly in time. This is not the case for the 3D problem. Here higher order a-priori estimates are only possible locally in time. For that we follow the standard strategy to construct strong short time solutions via contraction principles. This means that the analysis is performed on the {\em decoupled problem}, where the geometry of the fluid domain is decoupled from the interaction problem. We then proceed as follows.
\begin{enumerate}
\item[1)] We start with a geometric prescription given by the mollified coordinate map $\zeta_\varepsilon$ describing the geometry. An a priori $L^\infty_t$-acceleration estimate is obtained
independently of $\varepsilon$, with constants depending only on
certain well-chosen quantities associated with $\zeta$. In order to do this rigorously we additionally include artificial dissipation in the structure equation.

Unfortunately, the resulting system (which is a variant of that from \cite{BMSS}) still does not possess enough regularity to justify in-time differentiation, which is essential for the estimate. To overcome this, we approximate the time derivative by temporal difference quotients. The resulting estimate is by far the biggest effort of the paper and can be found in Proposition~\ref{thm:transformedSystemLinear}. It is uniform with respect to the regularisation and thus allow to construct the solution map $F:\zeta\to F(\zeta)=\eta$, where $\eta$ is the structure displacement of the decoupled interaction problem.
\item[2)] We show that there exists a normed space and a large enough ball $B_R$ in this space, such that one can choose a small enough time $T_{\tt max}$ for which $F(\zeta)\in B_R$. This bound involves a non-standard Gr\"{o}nwall lemma including integrals of arbitrary high powers of the acceleration on the right-hand side and the use of elliptic regularity theory for the Stokes system with coefficients c.f.~\cite{Br}. 
\item[3)] We then show that this mapping is a contraction, such that a unique fixed point (a unique solution) can be found. Here it turns out
to be beneficial to work with the fluid equation in the moving domain
in order to avoid the appearance of the pressure function.
The idea is then to prove a stability estimate for two solutions arising from two 
different geometries. The main difficulty is to compare the two velocity
fields which are defined on different (moving) domains. It can be overcome
by transforming one of the solutions to the geometry of the second one. 
This destroys its solenoidal character but can be corrected by means
of a version of the Bogovskij operator in moving domains, c.f.~\cite{SaaSch21}. This part builds partially on \cite{BMSS}, where a weak-strong uniqueness result for
the visco-elastic problem was shown.
\end{enumerate}
In the 2D case we close the $L^\infty_t$-acceleration estimate directly for a strong solution, {\em i.e.}, no additional approximation layer is needed. However, in contrast to the plate case~\cite{ScSu} we first have to prove (fractional) spatial differentiability of $\eta$, see Lemma \ref{lem:4.2}. Combined with the $L^\infty_t$-acceleration estimate, the argument can then be closed by a standard Gr\"{o}nwall inequality. 
After proving  the $L^\infty_t$-acceleration estimate we can then bound $\nabla\pi$ and $\nabla^2\bv$ by elliptic regularity theory, 
c.f.~\cite{Br}. Eventually, this yields additional spatial regularity for the structure.

\section{Preliminaries \& main results}
\label{sec:prelim}
\subsection{Conventions}
For simplicity, we set all physical constants in \eqref{1}--\eqref{interfaceCond} to 1. The analysis is not affected as long as they are strictly positive.
For two non-negative quantities $f$ and $g$, we write $f\lesssim g$  if there is a $c>0$ such that $f\leq\,c g$. Here $c$ is a generic constant which does not depend on the crucial quantities. If necessary, we specify particular dependencies. We write $f\approx g$ if both $f\lesssim g$ and $g\lesssim f$ hold. In the notation for function spaces (see next subsection), we do not distinguish between scalar- and vector-valued functions. However, vector-valued functions will usually be denoted in bold case.
For simplicity, we supplement \eqref{1} with periodic boundary conditions and identify $\omega$ (which represents the complete boundary of $\Omega$) with  $(0,1)^2$ (or $(0,1)$ if $n=2$). We consider periodic function spaces for zero-average functions.
It is only a technical matter to consider \eqref{1} on a nontrivial subset of $\partial\Omega$ together with zero boundary conditions for $\eta$ and $\naby\eta$ instead of considering, see e.g. \cite{LeRu} or \cite{BrSc} for the corresponding geometrical set-up.
We shorten the time interval $(0,T)$ by $I$.

\subsection{Classical function spaces}
Let $\mathcal O\subset\R^n$, $n=2,3$, be open.
The function spaces of continuous or $\alpha$-H\"older-continuous functions, $\alpha\in(0,1)$,
 are denoted by $C(\overline{\mathcal O})$ or $C^{0,\alpha}(\overline{\mathcal O})$ respectively, where $\overline{\mathcal O}$ is the closure of $\mathcal O$. Similarly, we write $C^1(\overline{\mathcal O})$ and $C^{1,\alpha}(\overline{\mathcal O})$.
We denote  by $L^p(\mathcal O)$ and $W^{k,p}(\mathcal O)$ for $p\in[1,\infty]$ and $k\in\mathbb N$, the usual Lebesgue and Sobolev spaces over $\mathcal O$. 
For a bounded domain $\mathcal O\subset\R^n$,  the notation $(f)_{\mathcal O}:=\dashint_{\mathcal O}f\dx:=\mathcal L^n(\mathcal O)^{-1}\int_{\mathcal O}f\dx$ represents the mean or average value of $f\in L^p(\mathcal O)$, where $\mathcal L^n$ denotes the $n$-dimensional Lebesgue measure.
 We denote by $W^{k,p}_0(\mathcal O)$, the closure of the smooth and compactly supported functions in $W^{k,p}(\mathcal O)$. If $\partial\mathcal O$ is regular enough, this coincides with the functions vanishing $\mathcal H^{n-1}$ -a.e. on $\partial\mathcal O$ with the $(n-1)$-dimensional Hausdorff measure $\mathcal H^{n-1}$. 
 We also denote by $W^{-k,p'}(\mathcal O)$ the dual of $W^{k,p}_0(\mathcal O)$ for $p,p'\in[1,\infty)$, with $\frac{1}{p}+\frac{1}{p'}=1$.
  Finally, we consider subspaces
$W^{1,p}_{\Div}(\mathcal O)$ and $W^{1,p}_{0,\Div}(\mathcal O)$ of divergence-free vector fields which are defined accordingly. The space $L^p_{\Div}(\mathcal O)$ is defined as the closure of the set of smooth and compactly supported solenoidal functions in $L^p(\mathcal O).$ We will use the shorthand notations $L^p_\bx$ and $W^{k,p}_\bx$ in the case of $n$-dimensional domains (typically spaces defined over $\Omega\subset\R^n$ or $\Omega_\eta\subset\R^n$) and   
$L^p_\by$ and $W^{k,p}_\by$ for $(n-1)$- dimensional sets (typcially spaces of periodic functions defined over $\omega\subset\R^{n-1}$). 
For any pair of separable Banach spaces $(X,\|\cdot\|_X)$ and $(Y,\|\cdot\|_Y)$ with $X\subset Y$, we write $X\hookrightarrow Y$ if $X$ is continuously embedded in $Y$, that is $\Vert\cdot\Vert_Y\lesssim \Vert \cdot\Vert_X$.
Since we only consider functions on $\omega$ with periodic boundary conditions and zero mean values, we have the following equivalences
\begin{align*}
\|\cdot\|_{W^{1,2}_\by}\approx \|\nabla_\by\cdot\|_{L^2_\by},\quad \|\cdot\|_{W^{2,2}_\by}\approx \|\Dely\cdot\|_{L^2_\by},\quad \|\cdot\|_{W^{4,2}_\by}\approx \|\Dely^2\cdot\|_{L^2_\by}.
\end{align*}
Similarly, it holds for $s>0$
\begin{align*}
\|\cdot\|_{W^{2s,2}_\by}\approx \|\Dely^{s}\cdot\|_{L^2_\by},
\end{align*}
where the operator $\Dely^{s}$ is defined as follows. 
For a separable Banach space $(X,\|\cdot\|_X)$, we denote by $L^p(I;X)$, the set of (Bochner-) measurable functions $u:I\rightarrow X$ such that the mapping $t\mapsto \|u(t)\|_{X}$ belongs to $L^p(I)$. 
The set $C(\overline{I};X)$ denotes the space of functions $u:\overline{I}\rightarrow X$ which are continuous with respect to the norm topology on $(X,\|\cdot\|_X)$. For $\alpha\in(0,1]$ we write
$C^{0,\alpha}(\overline{I};X)$ for the space of H\"older-continuous functions with values in $X$. The space $W^{1,p}(I;X)$ consists of those functions from $L^p(I;X)$ for which the distributional time derivative belongs to $L^p(I;X)$ as well. The space $W^{k,p}(I;X)$ is defined accordingly.
We use the shorthand $L^p_tX$ for $L^p(I;X)$. For instance, we write $L^p_tW^{1,p}_\bx$ for $L^p(I;W^{1,p}(\mathcal O))$. Similarly, $W^{k,p}_tX$ stands for $W^{k,p}(I;X)$.
For $p\in[1,\infty)$, the fractional Sobolev space (Sobolev-Slobodeckij space) with differentiability $s>0$ with $s\notin\mathbb N$ will be denoted by $W^{s,p}(\mathcal O)$. For $s>0$, we write $s=\lfloor s\rfloor+\lbrace s\rbrace$ with $\lfloor s\rfloor\in\N_0$ and $\lbrace s\rbrace\in(0,1)$.
 We denote by $W^{s,p}_0(\mathcal O)$, the closure of the smooth and compactly supported functions in $W^{s,p}(\mathcal O)$. For $s>\frac{1}{p}$ this coincides with the functions vanishing $\mathcal H^{n-1}$ -a.e. on $\partial\mathcal O$ provided that $\partial\mathcal O$ is regular enough. We also denote by $W^{-s,p'}(\mathcal O)$, for $s>0$ and $p,p'\in[1,\infty)$, with $\frac{1}{p}+\frac{1}{p'}=1$, the dual of $W^{s,p}_0(\mathcal O)$. Similar to the case of unbroken differentiabilities above, we use the shorthand notations $W^{s,p}_\bx$  and $W^{s,p}_\by$. 
We will denote by $B^s_{p,q}(\R^n)$, the standard Besov spaces on $\R^n$ with differentiability $s>0$, integrability $p\in[1,\infty]$ and fine index $q\in[1,\infty]$. They can be defined (for instance) via Littlewood-Paley decomposition leading to the norm $\|\cdot\|_{B^s_{p,q}(\R^n)}$. 
 We refer to \cite{RuSi} and \cite{Tr,Tr2} for an extensive description. 
For a bounded domain $\mathcal O\subset\R^n$, the Besov spaces $B^s_{p,q}(\mathcal O)$ are defined as the restriction of functions from $B^s_{p,q}(\R^n)$, that is
 \begin{align*}
 B^s_{p,q}(\mathcal O)&:=\{f|_{\mathcal O}:\,f\in B^s_{p,q}(\R^n)\},\\
 \|g\|_{B^s_{p,q}(\mathcal O)}&:=\inf\{ \|f\|_{B^s_{p,q}(\R^n)}:\,f|_{\mathcal O}=g\}.
 \end{align*}
 If $s\notin\mathbb N$ and $p\in(1,\infty)$ we have $B^s_{p,p}(\mathcal O)=W^{s,p}(\mathcal O)$.

\subsection{Function spaces on variable domains}
\label{ssec:geom}
 The spatial domain $\Omega$ is assumed to be an open bounded subset of $\mathbb{R}^n$, $n=2,3$ with smooth boundary $\partial\Omega$ and an outer unit normal ${\bfn}$. We assume that
 $\partial\Omega$ can be parametrised by an injective mapping ${\bfvarphi}\in C^k(\omega;\R^n)$ for some sufficiently large $k\in\N$. If $n=3$ we suppose for all points $\by=(y_1,y_2)\in \omega$ that the pair of vectors  
$\partial_i {\bfvarphi}(\by)$, $i=1,2,$ are linearly independent, if $n=2$ we must have $\partial_y\bfvarphi(\by)\neq0$.
 For a point $\bx$ in the neighbourhood
of $\partial\Omega$, we define the functions $\by$ and $s$ by  
\begin{align*}
 \by(\bx)=\arg\min_{\by\in\omega}|\bx-\bfvarphi(\by)|,\quad s(\bx)=(\bx-\by(\bx))\cdot\bfn(\by(\bx)).
 \end{align*}
Moreover, we define the projection $\bfp(\bx)=\bfvarphi(\by(\bx))$. We define $L>0$ to be the largest number such that $s,\by$ and $\bfp$ are well-defined on $S_L$, where
\begin{align}\label{eq:boundary1}
S_L=\{\bx\in\R^n:\,\mathrm{dist}(\bx,\partial\Omega)<L\}.
\end{align}
	Accordingly, the mapping
\begin{align}\label{eq:Lambda}
\Lambda:\partial\Omega\times(-L,L)\times S_L,\quad (\bfp,s)\mapsto \bfp+s\bfn(\bfvarphi^{-1}(\bfp)),
\end{align}
is a $C^k$-diffeomorphism.
Due to the smoothness of $\partial\Omega$ for $L$ small enough we have $\abs{s(\bx)}=\min_{\by\in\omega}|\bx-\bfvarphi(\by)|$ for all $\bx\in S_L$. This implies that $S_L=\{s\bfn(\by)+\by:(s,\by)\in (-L,L)\times \omega\}$.
For a given function $\eta : I \times \omega \rightarrow\R$ we parametrise the deformed boundary by
\begin{align*}
{\bfvarphi}_\eta(t,\by)={\bfvarphi}(\by) + \eta(t,\by){\bfn}(\by), \quad \,\by \in \omega,\,t\in I.
\end{align*}
The domain $\Omega_{\eta(t)}$ is defined through
\begin{equation}\label{eq:2612}
\partial\Omega_{\eta(t)}=\set{{\bfvarphi}(\by) + \eta(t,\by){\bfn}(\by):\by\in \omega}.
\end{equation}
With the abuse of notation we define deformed space-time cylinder $I\times\Omega_\eta=\bigcup_{t\in I}\set{t}\times\Omega_{\eta(t)}\subset\R^{n+1}$.
The corresponding function spaces for variable domains are defined as follows.
\begin{definition}{(Function spaces)}
For $I=(0,T)$, $T>0$, and $\eta\in C(\overline{I}\times\omega)$ with $\|\eta\|_{L^\infty(I\times\omega)}< L$ we define for $1\leq p,r\leq\infty$
\begin{align*}
L^p(I;L^r(\Omega_\eta))&:=\big\{v\in L^1(I\times\Omega_\eta):\,\,v(t,\cdot)\in L^r(\Omega_{\eta(t)})\,\,\text{for a.e. }t,\,\,\|v(t,\cdot)\|_{L^r(\Omega_{\eta(t)})}\in L^p(I)\big\},\\
L^p(I;W^{1,r}(\Omega_\eta))&:=\big\{v\in L^p(I;L^r(\Omega_\eta)):\,\,\nabla v\in L^p(I;L^r(\Omega_\eta))\big\}.
\end{align*}
\end{definition}
\noindent 
In order to establish a relationship between the 
fixed domain and the time-dependent domain, we introduce the Hanzawa transform $\bm{\Psi}_\eta : \Omega \rightarrow\Omega_\eta$ defined by
\begin{equation}
\label{map}
\bfPsi_\eta(\bx)
=
 \left\{
  \begin{array}{lr}
    \mathbf{p}(\bx)+\big(s(\bx)+\eta(\by(\bx))\phi(s(\bx))\big)\bn(\by(\bx)) &\text{if dist}(\bx,\partial\Omega)<L,\\
    \bx &\text{elsewhere},
  \end{array}
\right.
\end{equation}
for any $\eta:\omega\rightarrow (-L,L)$. Here $\phi\in C^\infty(\mathbb R)$ is such that 
$\phi\equiv 0$ in a neighbourhood of $-L$ and $\phi\equiv 1$ in a neighbourhood of $0$. The other variables  $\mathbf{p}$, $s$ and $\bn$ are as defined above.
Due to the size of $L$, we find that $\bfPsi_\eta$ is a homomorphism such that $\bfPsi_\eta|_{\Omega\setminus S_L}$ is the identity. Furthermore, $\bm{\Psi}_\eta$ together with its inverse\footnote{It exists provided that we choose $\phi$ such that $|\phi'|<L/\alpha$ and $\|\eta\|_{L^\infty_\by}\leq\alpha<L$.} 
 $\bm{\Psi}_\eta^{-1} : \Omega_\eta \rightarrow\Omega$  possesses the following properties, see \cite{Br} for details. If for some $\alpha>0$, we assume that
\begin{align*}
\Vert\eta\Vert_{L^\infty_\by}
+
\Vert\zeta\Vert_{L^\infty_\by}
< \alpha <L 
\end{align*}
holds, then for any  $s>0$, $\varrho,p\in[1,\infty]$ and for any $\eta,\zeta \in B^{s}_{\varrho,p}(\omega)\cap W^{1,\infty}(\omega)$, we have that
\begin{align}
\label{210and212}
&\Vert \bm{\Psi}_\eta \Vert_{B^s_{\varrho,p}(\Omega\cup S_\alpha)}
+
\Vert \bm{\Psi}_\eta^{-1} \Vert_{B^s_{\varrho,p}(\Omega\cup S_\alpha)}
 \lesssim
1+ \Vert \eta \Vert_{B^s_{\varrho,p}(\omega)},
\\
\label{211and213}
&\Vert \bm{\Psi}_\eta - \bm{\Psi}_\zeta  \Vert_{B^s_{\varrho,p}(\Omega\cup S_\alpha)} 
+
\Vert \bm{\Psi}_\eta^{-1} - \bm{\Psi}_\zeta^{-1}  \Vert_{B^s_{\varrho,p}(\Omega\cup S_\alpha)} 
\lesssim
 \Vert \eta - \zeta \Vert_{B^s_{\varrho,p}(\omega)},
\end{align}
and
\begin{align}
\label{218}
&\Vert \partial_t\bm{\Psi}_\eta \Vert_{B^s_{\varrho,p}(\Omega\cup S_\alpha)}
\lesssim
 \Vert \partial_t\eta \Vert_{B^{s}_{ \varrho,p}(\omega)},
\qquad
\eta
\in W^{1,1}(I;B^{s}_{\varrho,p}(\omega)),
\end{align}
holds uniformly in time with the hidden constants depending only on the reference geometry, on $L-\alpha$ and $R$. One easily computes the determinant of $\nabla\bfPsi_\eta$ seeing that
\begin{align}\label{eq:detPsi}
\mathrm{det}(\nabla\bfPsi_\eta)\approx 1+\eta.
\end{align}
Indeed, employing the mapping $\Lambda$ from \eqref{eq:Lambda} we have
\begin{align*}
\mathrm{det}(\nabla\bfPsi_\eta)&=\mathrm{det}(\nabla(\bfPsi_\eta\circ\Lambda^{-1})\circ\Lambda)\mathrm{det}(\nabla\Lambda),
\end{align*}
where $\nabla(\bfPsi_\eta\circ\Lambda^{-1})$ has column vectors (in the coordinates $(y,s)$ with $y=\bfvarphi^ {-1}(\bfp)$)
\begin{align*}
\partial_1\bfvarphi+(s+\eta)\partial_1\bfn+\partial_1\eta\bfn,\,\,\,\partial_2\bfvarphi+(s+\eta)\partial_2\bfn+\partial_2\eta\bfn,\,\,\,\bfn,
\end{align*}
and thus the determinant only depends on
\begin{align*}
\partial_1\bfvarphi+(s+\eta)\partial_1\bfn,\,\,\,\partial_2\bfvarphi+(s+\eta)\partial_2\bfn,\,\,\,\bfn.
\end{align*}
Since $\mathrm{det}(\nabla\Lambda^{-1})$ is a smooth and strictly positive function we arrive at \eqref{eq:detPsi}. Similarly to the above we obtain
\begin{align*}
\partial_{\bfn}(\bfPsi_\eta\circ\Lambda)=\bfn,
\end{align*} 
which implies
\begin{align}\label{eq:dnPsi}
\partial_{\bfn}\bfPsi_\eta=\nabla(\bfPsi_\eta\circ\Lambda)\circ\Lambda^{-1}\partial_{\bfn}\Lambda^{-1}=\nabla(\bfPsi_\eta\circ\Lambda)\circ\Lambda^{-1}\bfn=\bfn.
\end{align}

Finally, we recall a result from \cite[Prop. 3.3]{MuSc}
concerning solenoidal extension.
\begin{lemma}
\label{prop:musc}
For a given $\eta\in L^\infty(I;W^{1,2}( \omega ))$ with $\|\eta\|_{L^\infty_{t,\by}}<\alpha<L$, there are linear operators
\begin{align*}
\mathscr K_\eta:L^1( \omega )\rightarrow\mathbb R,\quad
\Testzeta:\{\xi\in L^1(I;W^{1,1}( \omega )):\,\mathscr K_\eta(\xi)=0\}\rightarrow L^1(I;W^{1,1}_{\Div}(\Omega\cup S_{\alpha} )),
\end{align*}
such that the tuple $(\Testzeta(\xi-\mathscr K_\eta(\xi)),\xi-\mathscr K_\eta(\xi))$ satisfies
\begin{align*}
\Testzeta(\xi-\mathscr K_\eta(\xi))&\in L^\infty(I;L^2(\Omega_\eta))\cap L^2(I;W^{1,2}_{\Div}(\Omega_\eta)),\\
\xi-\mathscr K_\eta(\xi)&\in L^\infty(I;W^{2,2}( \omega ))\cap  W^{1,\infty}(I;L^{2}( \omega )),\\
\mathrm{tr}_\eta (\Testzeta&(\xi-\mathscr K_\eta(\xi))=\xi-\mathscr K_\eta(\xi),\\
\Testzeta(\xi-\mathscr K_\eta&(\xi))(t,\bx)=0 \text{ for } (t,\bx)\in I \times (\Omega \setminus S_{\alpha})
\end{align*}
provided that we have $\xi\in L^\infty(I;W^{2,2}( \omega ))\cap  W^{1,\infty}(I;L^{2}(\omega))$.
In particular, we have the estimates
\begin{align*}
\|\Testzeta(\xi-\mathscr K_\eta(\xi))\|_{L^q(I;W^{1,p}(\Omega \cup S_{\alpha}  ))}
&\lesssim \|\xi\|_{L^q(I;W^{1,p}( \omega ))}+\|\xi\naby \eta\|_{L^q(I;L^{p}( \omega ))},\\
\|\partial_t\Testzeta(\xi-\mathscr K_\eta(\xi))\|_{L^q(I;L^{p}( \Omega\cup S_{\alpha}))}
&\lesssim \|\partial_t\xi\|_{L^q(I;L^{p}( \omega ))}+\|\xi\partial_t \eta\|_{L^q(I;L^{p}( \omega ))},
\\
\|\Testzeta(\xi-\mathscr K_\eta(\xi))\|_{L^q(I;W^{2,p}(\Omega \cup S_{\alpha}  ))}
&\lesssim \|\xi\|_{L^q(I;W^{2,p}( \omega ))}+\|\xi\naby^2 \eta\|_{L^q(I;L^{p}( \omega ))}
\\
&\quad+\|\abs{\naby \xi}\abs{\naby \eta}\|_{L^q(I;L^{p}( \omega ))}+\|\abs{\xi}\abs{\naby \eta}^2\|_{L^q(I;L^{p}( \omega ))}\\
&\quad+\|\xi\naby\eta\|_{L^q(I;L^{p}( \omega ))},
\\
\|\partial_t\Testzeta(\xi-\mathscr K_\eta(\xi))\|_{L^q(I;W^{1,p}( \Omega\cup S_{\alpha}))}
&\lesssim \|\partial_t\xi\|_{L^q(I;W^{1,p}( \omega ))}+\|\xi\partial_t \naby \eta\|_{L^q(I;L^{p}( \omega ))}
\\
&\quad+\|\abs{\partial_t \xi}\abs{\naby \eta}\|_{L^q(I;L^{p}( \omega ))}+\|\abs{\naby \xi}\abs{\partial_t \eta}\|_{L^q(I;L^{p}( \omega ))}
\\
&\quad+\|\xi\abs{\partial_t\eta}\abs{\naby \eta}\|_{L^q(I;L^{p}( \omega ))},
\end{align*}
for any $p\in (1,\infty),q\in(1,\infty]$.
\end{lemma}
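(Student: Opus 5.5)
The plan is to build $\Testzeta$ in two stages: first a correction on $\omega$ that makes the flux vanish, then an explicit solenoidal extension written in the tubular coordinates $(\by,s)$ that come from the map $\Lambda$ in \eqref{eq:Lambda}.

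\textbf{The functional $\mathscr K_\eta$.} A solenoidal field on $\Omega_\eta$ whose trace is $\xi\bfn$ in the sense $\bu\circ\bfvarphi_\eta=\xi\bfn$ must have zero total flux through the boundary. Pulled back to $\omega$, this reads $\int_\omega \xi\,\bfn\cdot\bfn_\eta|\det\naby\bfvarphi_\eta|\dy=0$. The weight here is a smooth function of $(\by,\eta,\naby\eta)$; its normal part involves only $\eta$, and the $\naby\eta$ contributions are tangential. So I set $\mathscr K_\eta(\xi):=\big(\int_\omega \xi\,\gamma(\eta)\dy\big)\big/\big(\int_\omega\gamma(\eta)\dy\big)$, where $\gamma(\eta)\approx 1+\eta$ is the area factor, positive because $\|\eta\|_\infty<\alpha<L$. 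This is a bounded linear functional, and after subtracting it the flux vanishes. Time regularity of $\mathscr K_\eta$ follows from $\partial_t\eta$.

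\textbf{The extension.} Working in the coordinates $(\by,s)$, I first define $\mathbf w(\by,s):=\phi(s)\,\xi(\by)\,\bfn(\by)$ near the boundary, with a cutoff that vanishes outside $S_\alpha$. This field is not divergence free. I correct it in one of two ways. The first option follows \cite{MuSc}: use a Bogovskij-type operator on a fixed annular region $\{-\alpha<s<-\alpha/2\}$ inside $\Omega\setminus\Omega_\eta$-independent geometry to remove the divergence, which has zero mean there because of the flux condition. The second option is an explicit stream-function / Piola transform: take $\xi\bfn$ extended constantly in $s$, push it forward through $\bfPsi_\eta$ via the Piola transform $\bfPsi_\eta^\ast\mathbf w=(\nabla\bfPsi_\eta\,\mathbf w)/\det\nabla\bfPsi_\eta$, which preserves divergence-freeness, and use $\partial_\bfn\bfPsi_\eta=\bfn$ from \eqref{eq:dnPsi} to check the trace. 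Since $\xi$ has mean zero with respect to the weight, the only flux-carrying part is the constant mode, and that part is absorbed by a fixed smooth solenoidal field supported in $S_\alpha$. The support in $\Omega\cup S_\alpha$ away from $\Omega\setminus S_\alpha$ is then automatic from the cutoffs.

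\textbf{Estimates.} Each listed bound comes from the chain rule applied to $\xi(\by(\bx))\,F(\eta(\by(\bx)),\naby\eta,s)$. One spatial derivative gives $\naby\xi$ together with $\xi\naby\eta$. Two derivatives give $\naby^2\xi$, $\xi\naby^2\eta$, $|\naby\xi||\naby\eta|$ and $|\xi||\naby\eta|^2$. The time derivative gives $\partial_t\xi$ and $\xi\partial_t\eta$, and the mixed terms arise in the same way. The Bogovskij part is bounded in $W^{k,p}$ by the divergence in $W^{k-1,p}$ on a fixed Lipschitz annulus, so the same structure carries through. Finally, the regularity classes for $\xi\in L^\infty W^{2,2}\cap W^{1,\infty}L^2$ follow from these estimates with $p=2$ and $q=\infty$ together with Sobolev embedding.

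\textbf{Main obstacle.} The hardest step is making sure that only first derivatives of $\eta$ appear in the $W^{1,p}$ bound, so that no $\naby^2\eta$ enters, and that the exact trace identity holds on the moving boundary. I would handle this by using the Piola transform through $\bfPsi_\eta$, whose gradient contains only $\naby\eta$, instead of composing with the normal field $\bfn_\eta$.
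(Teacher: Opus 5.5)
The paper does not prove this lemma; it quotes it from \cite{MuSc}. Your $\mathscr K_\eta$ is fine. Write $\lambda(\by,s)$ for the Jacobian of $(\by,s)\mapsto\bfvarphi(\by)+s\bfn(\by)$. Then the normal component of $\partial_1\bfvarphi_\eta\times\partial_2\bfvarphi_\eta$ is $\lambda(\by,\eta(\by))$, which is your area factor and is consistent with the weighted mean of Corollary~\ref{cor:cor}.

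\textbf{The gap is in the extension.} In these coordinates $\Div(f\bfn)=\lambda^{-1}\partial_s(\lambda f)$, so
\[
\Div\big(\phi(s)\xi(\by)\bfn(\by)\big)=\phi'(s)\,\xi(\by)+\phi(s)\,\xi(\by)\,\partial_s\log\lambda(\by,s).
\]
Here $\partial_s\log\lambda=\Div\bfn$ is the sum of the principal curvatures of the parallel surfaces, which is not identically zero for the closed surface $\partial\Omega$. The consequences are as follows.
\begin{itemize}
\item The divergence is not confined to the annulus $\{-\alpha<s<-\alpha/2\}$. It lives on all of $S_\alpha$, including around and beyond the moving interface $s=\eta(\by)$. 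A Bogovskij operator on the fixed annulus cannot remove it. One on $\Omega_\eta$ would need $\eta$ Lipschitz (you only have $W^{1,2}$), and it would still not make the field solenoidal on $\Omega\cup S_\alpha$.
\item Your zero-mean claim is false. The divergence in the annulus integrates to the flux through $\{s=-\alpha/2\}$, namely $\int_\omega\xi\,\lambda(\cdot,-\alpha/2)\dy$, not to $\int_\omega\xi\,\lambda(\cdot,\eta)\dy$. What you wrote is the plate construction, which is valid only when $\lambda$ is independent of $s$.
\item The Piola variant has the same defect. The field $\xi\bfn$ extended constantly in $s$ is neither solenoidal nor compactly supported, so there is nothing for the transform to preserve. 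Moreover, by \eqref{eq:dnPsi} its trace would be $\xi\bfn/J_\eta$, not $\xi\bfn$.
\item No solenoidal field on $\Omega\cup S_\alpha$ that vanishes on $\Omega\setminus S_\alpha$ carries flux through $\partial\Omega_\eta$. So a ``constant mode'' cannot be absorbed that way; this is exactly why $\mathscr K_\eta(\xi)$ is subtracted.
\end{itemize}

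\textbf{The fix is to build the Jacobian weight into the normal field.} Choose $\|\eta\|_{L^\infty_{t,\by}}<\alpha_0<\alpha$ and set
\[
\mathbf w:=\beta(s)\,\frac{\lambda(\by,\eta(\by))}{\lambda(\by,s)}\,\xi(\by)\,\bfn(\by),
\]
with $\beta\equiv0$ near $-\alpha$ and $\beta\equiv1$ on a neighbourhood of $[-\alpha_0,\alpha)$. Then $\lambda(\by,s)\,\mathbf w\cdot\bfn=\beta(s)\lambda(\by,\eta(\by))\xi(\by)$, so
\[
\Div\mathbf w=\beta'(s)\,\lambda(\by,\eta(\by))\,\xi(\by)/\lambda(\by,s).
\]
This divergence is supported in the fixed, $\eta$-independent annulus $\{-\alpha<s<-\alpha_0\}$, which lies inside $\Omega_\eta$. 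Its integral is exactly $\int_\omega\xi\,\lambda(\cdot,\eta)\dy$, which vanishes precisely when $\mathscr K_\eta(\xi)=0$. The trace at $s=\eta(\by)$ is $\xi\bfn$. The desired extension is $\mathbf w-\Bog(\Div\mathbf w)$, with a Bogovskij operator on that fixed annulus.

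The coefficient depends on $(\by,\eta,s)$ only, and not on $\naby\eta$ as your $F(\eta,\naby\eta,s)$ would allow. Hence the chain rule produces exactly the terms in the statement:
\begin{itemize}
\item $\xi\naby\eta$ at first order;
\item $\xi\naby^2\eta$, $|\naby\xi||\naby\eta|$ and $|\xi||\naby\eta|^2$ at second order;
\item $\xi\partial_t\eta$ and the listed mixed terms for time derivatives.
\end{itemize}
The Bogovskij part is time-independent and bounded $W^{k,p}_0\to W^{k+1,p}_0$ on mean-zero data, so it inherits the same bounds. With this weight no Piola transform is needed, and the obstacle you identify does not arise.
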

For the corrector $\mathscr K_\eta(\xi)$ from above the following result is proved in \cite[Corollary 3.2]{MuSc}.
\begin{corollary}[Corrector]
\label{cor:cor}
For a given $\eta\in L^\infty(I;W^{1,2}( \omega ))$ with $\|\eta\|_{L^\infty_{t,\by}}<\alpha<L$, the corrector map  
\[
\mathscr K_\eta:L^1(\omega)\to \R,\, \quad\xi\mapsto \mathscr K_\eta(\xi)=(\tilde{\xi})_{{\lambda_{\eta}}}=\frac{\int_{ {S_\alpha}} \tilde{\xi}(\bfp(\bx)) {{\lambda_{\eta}}}({t},\bx)\dx}{\int_{ {S_\alpha}} {{\lambda_{\eta}}}({t},\bx)\dx},
\]
satisfies the following estimates for $q\in [1,\infty]$:
\begin{align*}
\|\mathscr K_\eta(\xi)\|_{L^{q}(0,T)}&\leq C \|\xi\|_{L^{q}(0,T;L^1(\omega))},
\\
\|\partial_t \mathscr K_\eta(\xi)\|_{L^{q}(0,T)}
&\leq C
\Big(\|\partial_t \xi\|_{L^{q}(0,T;L^1(\omega))}
+\| \xi \partial_t\eta\|_{L^{q}(0,T;L^1(\omega))}\Big),
\end{align*}
whenever the right hand side is finite. Here $C$ depends only on $L-\alpha$.

\end{corollary}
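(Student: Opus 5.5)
The definition of $\mathscr K_\eta(\xi)$ as a weighted mean, $\mathscr K_\eta(\xi)=N(t)/D(t)$, suggests handling numerator and denominator separately and then combining them by the quotient rule. Write
\[
N(t)=\int_{S_\alpha}\tilde\xi(t,\bfp(\bx))\,\lambda_\eta(t,\bx)\dx,\qquad D(t)=\int_{S_\alpha}\lambda_\eta(t,\bx)\dx .
\]
Here $\tilde\xi$ is the extension of $\xi$ constant along normals and $\lambda_\eta$ is the weight from \cite{MuSc}. The weight should be chosen as a smooth function of $\eta(t,\by(\bx))$ and $s(\bx)$. A concrete choice is $\lambda_\eta(t,\bx)=\chi\big(s(\bx)-\eta(t,\by(\bx))\big)$ times the Jacobian of the normal coordinates. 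Here $\chi\ge0$ is a fixed smooth bump supported in $(-(L-\alpha),0)$ with $\int\chi>0$. I assume this is the construction, since it localises the weight inside $\Omega_\eta\cap S_\alpha$.

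First I would record two pointwise facts, using $\|\eta\|_{L^\infty}<\alpha<L$ and the diffeomorphism $\Lambda$ from \eqref{eq:Lambda}:
\[
0\le \lambda_\eta\le C,\qquad |\partial_t\lambda_\eta|\le C|\partial_t\eta(t,\by(\bx))|,
\]
together with the lower bound $D(t)\ge c>0$. All constants depend only on $L-\alpha$ and the reference geometry. The lower bound follows from a change of variables $\bx=\Lambda(\by,s)$: for each $\by$ the integral of $\chi(s-\eta(t,\by))$ over $s\in(-L,L)$ equals $\int\chi$, because the support stays inside $(-L,L)$ when $|\eta|<\alpha$. The Jacobian of $\Lambda$ is bounded above and below. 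The derivative bound is the chain rule, $\partial_t\lambda_\eta=-\chi'(\cdot)\,\partial_t\eta(t,\by(\bx))\times\text{Jacobian}$.

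Next, the same change of variables gives
\[
|N(t)|\lesssim\int_\omega|\xi(t,\by)|\,\dy ,
\]
because $\tilde\xi(\bfp(\bx))=\xi(\by)$ and the $s$-integral of the weight is bounded. Dividing by $D\ge c$ yields $|\mathscr K_\eta(\xi)(t)|\le C\|\xi(t)\|_{L^1_\by}$ pointwise in time. Taking the $L^q(0,T)$ norm gives the first estimate for every $q\in[1,\infty]$.

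For the time derivative I would differentiate under the integral; this step is justified first for smooth $\xi,\eta$ and then extended by density and lower semicontinuity. This gives
\[
N'=\int\partial_t\tilde\xi\,\lambda_\eta+\int\tilde\xi\,\partial_t\lambda_\eta ,\qquad D'=\int\partial_t\lambda_\eta .
\]
Hence
\[
\partial_t\mathscr K_\eta=\frac{N'}{D}-\frac{N D'}{D^2}
=\frac{1}{D}\int_{S_\alpha}\partial_t\tilde\xi\,\lambda_\eta\dx+\frac{1}{D}\int_{S_\alpha}\big(\tilde\xi-\mathscr K_\eta(\xi)\big)\partial_t\lambda_\eta\dx .
\]
The first term is bounded by $C\|\partial_t\xi\|_{L^1_\by}$, exactly as in the previous step. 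In the second term, the piece $\int\tilde\xi\,\partial_t\lambda_\eta$ is bounded by $C\|\xi\,\partial_t\eta\|_{L^1_\by}$ via the derivative bound and the change of variables.

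The remaining piece, $\mathscr K_\eta(\xi)\int\partial_t\lambda_\eta$, is the main obstacle. A naive estimate gives $\|\xi\|_{L^1}\|\partial_t\eta\|_{L^1}$, not the product norm $\|\xi\,\partial_t\eta\|_{L^1}$ that the statement requires. I would first try to show that this piece vanishes, i.e.\ $\int_{S_\alpha}\partial_t\lambda_\eta\dx=0$, because $D(t)$ is constant in time. This holds for the translation-type weight above if the Jacobian factor is absorbed so that $\lambda_\eta\,\dx$ becomes $\chi(s-\eta)\,\mathrm{d}s\,\mathrm{d}\by$ in $\Lambda$-coordinates: then $D=\int\chi\cdot|\omega|$ identically.

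If the actual weight of \cite{MuSc} does not have constant total mass, I would fall back on the weaker bound with $\|\xi\|_{L^q L^1}\|\partial_t\eta\|_{L^\infty}$. Alternatively, I would redefine $\lambda_\eta$ as above, since only its listed properties are used in Lemma~\ref{prop:musc}. With the vanishing in hand, taking $L^q(0,T)$ norms gives the second estimate.
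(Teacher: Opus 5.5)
The paper gives no argument for this statement; it quotes \cite[Corollary 3.2]{MuSc}, where $\lambda_\eta$ is the particular weight produced by the construction of $\Testzeta$. Your quotient-rule computation and the first estimate are fine for any bounded weight with $\int_{S_\alpha}\lambda_\eta\dx\ge c>0$. You also located the obstacle correctly, but the way you remove it does not work.

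The weight you actually need for $\int_{S_\alpha}\partial_t\lambda_\eta\dx=0$ is $\lambda_\eta\dx=\chi(s-\eta)\,\mathrm{d}s\,\dy$ in normal coordinates. Your first choice, ``$\chi(s-\eta)$ times the Jacobian'', has fibre mass $\int\chi(s-\eta)J(\by,s)^2\,\mathrm{d}s$, which moves with $\eta$. The repaired weight has fibre mass $\int\chi$ for every $\by$ and $t$, so it turns $\mathscr K_\eta$ into the plain mean $|\omega|^{-1}\int_\omega\xi\dy$, independent of $\eta$. Both estimates are then trivial, but this is not the corrector of Lemma~\ref{prop:musc}, and $\mathscr K_\eta$ cannot be chosen freely.

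That lemma requires $\xi-\mathscr K_\eta(\xi)$ to be the boundary value, in the sense $\mathbf w\circ\bfvarphi_\eta=(\xi-\mathscr K_\eta(\xi))\bfn$, of a field $\mathbf w$ that is solenoidal on $\Omega\cup S_\alpha\supset\Omega_\eta$. Hence the flux of $\mathbf w$ through $\partial\Omega_\eta$ must vanish. Let $J(\by,s)$ be the Jacobian of $(\by,s)\mapsto\bfvarphi(\by)+s\bfn(\by)$. The determinant computation behind \eqref{eq:detPsi} gives $\bfn\cdot(\partial_1\bfvarphi_\eta\times\partial_2\bfvarphi_\eta)=J(\by,\eta)$. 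So the flux is $\int_\omega(\xi-\mathscr K_\eta(\xi))J(\by,\eta)\dy$, and its vanishing for every $\xi$ forces
\[
\mathscr K_\eta(\xi)=\frac{\int_\omega\xi\,J(\by,\eta)\dy}{\int_\omega J(\by,\eta)\dy},
\]
whatever representation through $\lambda_\eta$ is used. The plain mean has the right kernel only when $J$ does not depend on $s$, i.e.\ for plates.

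For this forced functional, $D(t)=\int_\omega J(\by,\eta(t,\by))\dy$ and $D'(t)=\int_\omega\partial_sJ(\by,\eta)\,\partial_t\eta\dy$. This is nonzero in general, since $|\partial_sJ(\by,0)|=2|H(\by)|J(\by,0)$ with $H$ the mean curvature of $\partial\Omega$. So the term $\mathscr K_\eta(\xi)D'/D$ is genuinely present, and it is controlled only by $C\|\xi\|_{L^1_\by}\|\partial_t\eta\|_{L^1_\by}$, not by $\|\xi\,\partial_t\eta\|_{L^1_\by}$.

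In fact the bound as literally stated cannot hold for a curved $\partial\Omega$. Take $\xi\ge0$, $\xi\not\equiv0$, independent of $t$ and supported in a set $A$. Take $\eta=t\psi$ with $\psi\ge0$, $\psi\not\equiv0$, supported in a set $B$ disjoint from $A$ on which $H$ has a strict sign. Then $\partial_t\xi=0$ and $\xi\,\partial_t\eta=0$, so the right-hand side of the second estimate vanishes. But for small $t>0$ we get $\partial_t\mathscr K_\eta(\xi)=-\mathscr K_\eta(\xi)D'/D\neq0$.

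So the missing step cannot be supplied by a better choice of weight. What your computation proves, and what appears to be the correct form for curved reference geometries, is the second estimate with the extra term $C\|\xi\|_{L^\infty(0,T;L^1(\omega))}\|\partial_t\eta\|_{L^q(0,T;L^1(\omega))}$. This is essentially your fallback.
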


\subsection{The main results}\label{sec:main}
In this subsection, we introduce the notions of a strong solution to \eqref{1}--\eqref{interfaceCond} that are under consideration. 
The existence of a weak solution can be shown as in \cite{LeRu}.
Note that the weak formulation uses a pressure-free formulation (that is, with test-function satisfying additionally divergence-free constraint). If the solution possesses more regularity,
the pressure can be recovered by solving
\begin{align}
\label{eq:press}
\begin{aligned}
-\Delta \pi&= \diver \big(\partial_t \bu +(\bu\cdot\nabla)\bu-\Delta\bu -\bff\big)\qquad&\text{ in }I\times \Omega_\eta,
\\
\pi  &= \big(\vert \mathrm{det}(\naby \bm{\varphi}_{\eta})\vert^{-1}(\partial_t^2\eta +\Dely^2\eta-g)\bn^\intercal\bn_\eta\big)\circ\bm{\varphi}_{\eta}^{-1}
&
\\&\quad+\bn\circ\bm{\varphi}_{\eta}^{-1}\big(\nabx\bu+(\nabx\bu)^\intercal \big)\big)\bn_\eta\circ\bm{\varphi}_{\eta}^{-1} &\text{ on }I\times \partial\Omega_\eta.
\end{aligned}
\end{align}
 Setting
$\pi(t)=\pi_0(t)+c_\pi(t)$, where $\int_{\Omega_{\eta(t)}}\pi_0(t)\dx=0$ and $c_\pi=\pi-\pi_0$ is constant in space and 
testing the structure equation with 1 we obtain
\begin{equation}
\begin{aligned}\label{eq:pressure}
c_\pi(t)\int_{\omega}\bfn\cdot\bfn_\eta|\det(\naby\bfvarphi_\eta)|\dy
&=
\int_{\omega}\bfn\big(\nabla\bu+(\nabla\bu)^\intercal-\pi_0\mathbb I_{n\times n}\big)\circ\bfvarphi_\eta\bfn_\eta|\det(\naby\bfvarphi_\eta)|\dy
\\&\quad+\int_\omega\partial_t^2\eta\dy-\int_\omega g\dy.
\end{aligned}
\end{equation}
Since $\Omega_\eta$ is $C^1$ uniformly in time, the operator $\Delta$ has the usual regularity and uniqueness properties for $C^1$ domains. In particular, it allows for a unique solution in $L^2$, if the right hand side is in $W^{-2,2}$ and the boundary value in $W^{-\frac{1}{2},2}$ or for a unique solution in $W^{1,2}$, provided that its boundary value is in $W^{\frac{1}{2},2}$ and the right hand side is in $W^{-1,2}$. Moreover, in this particular case, the solution of \eqref{eq:press} satisfies 
\begin{align*}
-\nabla \pi=\partial_t \bu +(\bu\cdot\nabla)\bu-\Delta\bu-\bff,
\end{align*}
which implies that
\begin{align*}
\int_I\int_{\Omega_\eta}|\nabla\pi|^2\dx\dt&\lesssim \int_I\int_{\Omega_\eta}\big(|\partial_t\bv\vert^2+|(\bu\cdot\nabla)\bu)|^2+|\Delta\bu|^2+\vert\bff\vert^2\big)\dx\dt
\\&\lesssim
\int_I\|\partial_t\bu\|^2_{L^{2}(\Omega_\eta)}\dt
+
\bigg( \int_I\|\bu\|^6_{L^6(\Omega_\eta)}\dt\bigg)^{\frac{1}{3}}\bigg(\int_I\|\nabla\bu\|^3_{L^{3}(\Omega_\eta)}\dt\bigg)^{\frac{2}{3}}
\\&\quad
+\int_I\|\nabla^2\bu\|^2_{L^{2}(\Omega_\eta)}\dt
+\int_I\|\bff\|^2_{L^{2}(\Omega_\eta)}\dt,
\end{align*}
whenever the right hand side is finite, independent of the boundary value of $\pi$ in \eqref{eq:press}.
This is the case for a strong solution defined as follows.

\begin{definition}[Strong solution] \label{def:strongSolution}
We call the triple $(\eta,\bu,\pi)$ a strong solution to \eqref{1}--\eqref{interfaceCond} provided that $(\eta,\bu,\pi)$ verifies the equations \eqref{1}--\eqref{interfaceCond} almost everywhere and satisfies
$$
\eta \in W^{2,\infty}(0,T;L^2(\omega))\cap W^{1,\infty}(0, T;W^{2,2}(\omega))\cap L^\infty(0, T; W^{4,2}(\omega)),$$
$$ \bu \in W^{1,\infty} \big(I; L^{2}(\Omega_{\eta}) \big)\cap  L^\infty \big(I; W^{2,2}(\Omega_{\eta}) \big),\quad\pi\in  L^\infty \big(I; W^{1,2}(\Omega_{\eta}) \big).
$$
\end{definition}

For a strong solution $(\eta,\bu,\pi)$ the momentum equation holds in the strong sense, that is we have
 \begin{align*} 
 \partial_t \bu+(\bu\cdot\nabla)\bu&=\Delta\bu-\nabla \pi+\bff&
 \end{align*}
a.e. in $I\times\Omega_\eta$. The shell equation together with the regularity properties above yield $\eta\in L^2(I;W^{4,2}(\omega))$.  Hence the shell equation holds in the strong sense as well, that is, we have
 \begin{align*}
\ \partial_t^2\eta+\Dely^2\eta=g-\bn^\intercal\bm{\tau}\circ\bm{\varphi}_\eta\bn_\eta
\vert \mathrm{det}(\naby \bm{\varphi}_\eta)\vert
 \end{align*}
a.e. in $I\times\omega$. 
Note that for a strong solution, the Cauchy stress $\bftau=\nabla\bu+(\nabla\bu)^\intercal-\pi\mathbb I_{n\times n}$ possesses enough regularity to be evaluated at the moving boundary (this is due to the trace theorem and the uniform Lipschitz continuity of $\Omega_\eta$).
Our main result is as follows.
\begin{theorem}\label{thm:NS3}
Let $n=3$.
Suppose that the dataset $(\bff, g, \eta_*, \eta_0,  \bv_0)$
satisfies 
\begin{equation}
\begin{aligned}
\label{dataset'}
\bff\in L^2&\big(I; W^{1,2}(\R^3)\big)\cap W^{1,2}\big(I; L^2(\R^3)\big),\quad 
g \in L^2\big(I; W^{1,2}(\omega)\big)\cap W^{1,2}\big(I; L^2(\omega)\big),  
\\
&\eta_0 \in W^{4,2}(\omega) , \quad \eta_* \in W^{2,2}(\omega),\quad 
\bv_0\in W^{2,2}_{\mathrm{\Div}}(\Omega_{\eta_0}),
\end{aligned}
\end{equation}
which verifies $\bu_0\circ\bfvarphi_{\eta_0} =\eta_* \bfn$ on $\omega$.
Then there exists $T_{\tt max}>0$ such that there exists is a unique strong solution to \eqref{1}--\eqref{2} in the sense of Definition \ref{def:strongSolution} in $(0,T_{\tt max})$. 
\end{theorem}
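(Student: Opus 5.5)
The proof follows the fixed-point strategy outlined in the introduction. We decouple the geometry from the interaction problem. For a given displacement $\zeta$ in a ball $B_R$ of a space
\[
X_T:=W^{2,\infty}(I;L^2(\omega))\cap W^{1,\infty}(I;W^{2,2}(\omega))\cap L^\infty(I;W^{4,2}(\omega)),
\]
with $\zeta(0)=\eta_0$, $\partial_t\zeta(0)=\eta_*$ and $\|\zeta\|_{L^\infty_{t,\by}}<\alpha<L$, we transform the fluid equations to the reference domain $\Omega$ via the Hanzawa map $\bfPsi_\zeta$. This gives a linear(ised) Stokes-type system with coefficients $\mathbf A_\zeta=J_\zeta\nabla\bfPsi_\zeta^{-1}(\nabla\bfPsi_\zeta^{-1})^\intercal$ and $\mathbf B_\zeta=J_\zeta\nabla\bfPsi_\zeta^{-1}$. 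The convective term and the mesh velocity $\partial_t\bfPsi_\zeta$ are treated as in \cite{BMSS}. This system is coupled to the plate equation for $\eta$ through the transformed stress and the boundary condition $\bv=\partial_t\eta\bfn$ on $\partial\Omega$, using \eqref{eq:dnPsi}. We solve this decoupled problem and set $F(\zeta)=\eta$.

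\textbf{Step 1 (acceleration estimate for the decoupled problem).} To make the computations rigorous, we mollify $\zeta$ to $\zeta_\varepsilon$ and add artificial dissipation $-\kappa\partial_t\Dely\eta$ to the structure equation. This yields a visco-elastic problem solvable by \cite{BMSS}. We then derive an $\varepsilon,\kappa$-uniform estimate for
\[
\sup_I\big(\|\partial_t\bv\|_{L^2_\bx}^2+\|\partial_t^2\eta\|_{L^2_\by}^2\big)+\int_I\|\nabla\partial_t\bv\|^2_{L^2_\bx}\dt .
\]
To do so, we apply a temporal difference quotient $\partial_t^h$ to the system and test with $(\partial^h_t\bv,\partial^h_t\partial_t\eta)$; the coupling terms cancel because the boundary condition commutes with $\partial_t^h$. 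The commutator terms involve $\partial_t^h\mathbf A_{\zeta}$, $\partial_t^h\mathbf B_\zeta$ and the pressure. They are bounded using \eqref{210and212} and \eqref{218} with $\partial_t\zeta\in L^\infty_tW^{2,2}_\by\hookrightarrow L^\infty_tW^{1,p}_\by$ for all $p<\infty$ (here $n=3$, so $\omega$ is 2D) and $\partial_t^2\zeta\in L^\infty_tL^2_\by$. The pressure terms are handled by estimating $\pi$ through the Stokes problem with coefficients. The initial acceleration is controlled by the data in \eqref{dataset'}, evaluating the equations at $t=0$ with the compatibility $\bv_0\circ\bfvarphi_{\eta_0}=\eta_*\bfn$.

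\textbf{Step 2 (self-map).} With the acceleration bound in hand, we recover $\bv\in L^\infty_tW^{2,2}_\bx$ and $\pi\in L^\infty_tW^{1,2}_\bx$ from the steady Stokes system with coefficients, which has the regularity of \cite{Br}, with $\partial_t\bv$ as a right-hand side. The boundary forcing then gives $\Dely^2\eta\in L^\infty_tL^2_\by$ from the structure equation, and $\partial_t\eta\in L^\infty_tW^{2,2}_\by$ follows by interpolation from the energy estimate and the equation. All constants depend on $R$, but the right-hand side carries either the data or factors $T^\theta$ multiplying powers of the acceleration, so a nonlinear Gr\"onwall argument applies. Choosing $R$ large in terms of the data and then $T_{\tt max}$ small gives $F(B_R)\subset B_R$. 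We then pass $\varepsilon,\kappa\to0$ by weak-$*$ compactness.

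\textbf{Step 3 (contraction and uniqueness).} For $\zeta_1,\zeta_2\in B_R$ with solutions $(\eta_i,\bv_i)$, we compare them in a weaker norm, for instance
\[
\sup_I\|\partial_t(\eta_1-\eta_2)\|_{L^2_\by}^2+\sup_I\|\Dely(\eta_1-\eta_2)\|^2_{L^2_\by}
\]
plus the $L^2$ energy of the velocity difference. We work in the moving domain, which avoids the pressure. We transport $\bv_2$ to $\Omega_{\zeta_1}$ via $\bfPsi_{\zeta_1}\circ\bfPsi_{\zeta_2}^{-1}$ and correct its divergence with the Bogovskij-type operator in moving domains \cite{SaaSch21}. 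The boundary data are adjusted using $\Testzeta$ and $\mathscr K_\eta$ from Lemma \ref{prop:musc} and Corollary \ref{cor:cor}. Using \eqref{211and213}, the error terms are bounded by $T^\theta\|\zeta_1-\zeta_2\|$, which gives a contraction for small $T$. Since $B_R$ is closed in the weak norm by lower semicontinuity, Banach's theorem yields a fixed point, and the same stability estimate gives uniqueness.

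The main obstacle is Step 1. The difference-quotient estimate must be closed uniformly in the regularisation, even though the solid phase provides no damping. The hardest terms are the commutators of $\partial_t^h$ with the geometry-dependent coefficients, and the pressure contributions, which need $\partial_t\nabla\bfPsi_\zeta\in L^\infty_tL^p$. For these we would first try to use the pressure estimate via \eqref{eq:pressure} and absorb them into $\int\|\nabla\partial_t\bv\|^2$, at the cost of arbitrarily high powers of the acceleration on the right-hand side.
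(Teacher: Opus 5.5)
\textbf{There is a genuine gap in Step 1: the test function $\partial_t^h\overline{\mathbf v}$ is not admissible.}

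After you transform to $\Omega$, the incompressibility constraint reads $\mathbf B_\zeta:\nabla\overline{\mathbf v}=0$, equivalently $\Div(\mathbf B_\zeta^\intercal\overline{\mathbf v})=0$. Here $\mathbf B_\zeta$ depends on time. Write $f^h=f(\cdot+h)$. The discrete product rule then gives $\mathbf B_\zeta^h:\nabla\partial_t^h\overline{\mathbf v}=-\partial_t^h\mathbf B_\zeta:\nabla\overline{\mathbf v}$, which is not zero.

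So when you test the differenced momentum equation with $\partial_t^h\overline{\mathbf v}$, the pressure does not enter only through the commutator $\int\overline\pi^h\,\partial_t^h\mathbf B_\zeta:\nabla\partial_t^h\overline{\mathbf v}$ that you have in mind. The following term also survives:
\[
\int_\Omega \partial_t^h\overline\pi\;\mathbf B_\zeta^h:\nabla\partial_t^h\overline{\mathbf v}\dx=-\int_\Omega\partial_t^h\overline\pi\;\partial_t^h\mathbf B_\zeta:\nabla\overline{\mathbf v}\dx .
\]
This term contains the time derivative of the pressure, and nothing controls it. The Stokes estimate only gives $\overline\pi\in L^\infty_tW^{1,2}_\bx$. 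Through the structure equation, the trace of $\partial_t\overline\pi$ is tied to $\partial_t^3\eta$ and $\partial_t\Dely^2\eta$, and neither is bounded. Your sentence that the pressure is ``handled through the Stokes problem'' therefore covers only part of what actually appears, and the estimate does not close as described.

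\textbf{What the paper does instead.} The missing idea is a divergence corrector. The paper tests with $\partial_t^h\overline{\mathbf v}+\mathbf G$, where $\mathbf G=\mathbf B_\zeta^{-\intercal}\partial_t^h\mathbf B_\zeta^\intercal\,\overline{\mathbf v}^h$. Then $\mathbf B_\zeta^\intercal(\partial_t^h\overline{\mathbf v}+\mathbf G)=\partial_t^h(\mathbf B_\zeta^\intercal\overline{\mathbf v})$ is divergence free. The $\partial_t^h\overline\pi$-term therefore drops out, and only $\int\overline\pi^h\,\Div\big(\partial_t^h\mathbf B_\zeta^\intercal(\partial_t^h\overline{\mathbf v}+\mathbf G)\big)$ remains. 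Lemma \ref{lem:pres} controls exactly this term, at the price of $\mathcal E^5$.

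For a shell, unlike a plate, $\mathbf G$ does not vanish on $\partial\Omega$: one has $\mathbf G\circ\boldsymbol\varphi\approx\partial_t^hJ_\zeta(1+\zeta)\,\partial_t\eta^h\,\bn$. The structure test function must therefore become $\partial_t^h\partial_t\eta+\bn^\intercal\mathbf G\circ\boldsymbol\varphi$, so your claim that the coupling terms simply cancel no longer holds. You then have to estimate:
\begin{itemize}
\item $\int\partial_t^2\partial_t^h\eta\,\bn^\intercal\mathbf G\circ\boldsymbol\varphi$, by integrating by parts in time;
\item $\int\partial_t^h\Dely^2\eta\,\bn^\intercal\mathbf G\circ\boldsymbol\varphi$, by integrating by parts twice in space;
\item $\int J_\zeta\,\partial_t^h\overline{\mathbf v}\cdot\partial_t\mathbf G$, which contains $\partial_t^2\nabla\boldsymbol\Psi_\zeta$ and needs a spatial integration by parts that produces a boundary integral.
\end{itemize}
These terms make up most of the work in Proposition \ref{thm:transformedSystemLinear}.

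\textbf{Two further points you pass over.}
\begin{itemize}
\item Bounding the initial difference quotients by the data requires that the acceleration of the regularised solution be essentially continuous at $t=0$. A strong solution as in \cite{BMSS} does not give this. The paper obtains it from an $\varepsilon$-dependent Galerkin estimate (Proposition \ref{thm:accestn} and \eqref{eq:cont}).
\item The paper also derives and propagates $\partial_t\eta\in L^2_tW^{2+s,2}_\by$. Several commutators use $\partial_t\naby\zeta\in L^4_tL^\infty_\by$, and $\partial_t\zeta\in L^\infty_tW^{2,2}_\by$ does not give this on a two-dimensional $\omega$. If your ball $X_T$ omits it, you must compensate in each such term with $\overline{\mathbf v}\in L^2_tW^{2,2}_\bx$ from the Stokes estimate.
\end{itemize}
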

\begin{theorem}\label{thm:NS2}
Let $n=2$.
Suppose that the dataset $(\bff, g, \eta_*, \eta_0,  \bv_0)$
satisfies \eqref{dataset'}.
Then there exists a unique strong solution to \eqref{1}--\eqref{2} in the sense of Definition \ref{def:strongSolution}. The interval of existence is of the form $I = (0, t)$, where $t < T$ only in case $\Omega_{\eta(s)}$ approaches a self-intersection when $s\rightarrow t$ 
\footnote{Self-intersection and degeneracy are excluded if $\|\eta\|_{L^\infty_{t,\by}}<L$, cf. \eqref{eq:boundary1}.}.
\end{theorem}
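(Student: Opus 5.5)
The plan is to continue the local strong solution for as long as the geometry stays non-degenerate, using a priori bounds that do not blow up in finite time. The local solution itself comes from Theorem \ref{thm:NS3}; its construction (decoupling, a fixed point for $\zeta\mapsto F(\zeta)$, contraction via stability) works verbatim for $n=2$. So it suffices to show the following: on any interval $(0,t)$ with $\|\eta\|_{L^\infty_{t,\by}}\leq\alpha<L$, the norms in Definition \ref{def:strongSolution} are bounded by a constant depending only on the data, $T$ and $L-\alpha$. Given that, we restart the local theorem at times close to $t$ with a uniform time step, which reaches either $T$ or a time where $\|\eta\|_{L^\infty}\to L$, i.e.\ a self-intersection or degeneracy. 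Uniqueness follows from the stability estimate used in the 3D contraction step.

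The a priori bounds will be built in a hierarchy.

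\emph{Energy.} The energy inequality gives $\eta\in L^\infty_tW^{2,2}_\by$. In 1D this embeds into $C^{1,1/2}$, so $\Omega_\eta$ is uniformly Lipschitz, and the bounds \eqref{210and212}--\eqref{218} for $\bfPsi_\eta$ hold with $\|\naby\eta\|_{L^\infty}$ controlled.

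\emph{Extra spatial regularity.} Next I would prove fractional regularity of the shell, e.g.\ $\eta\in L^2_tW^{2+s,2}_\by$ and $\partial_t\eta\in L^2_tW^{s,2}_\by$ for some $s>0$. The idea is to test the structure equation with $\Dely^{s}\eta$ (or with a difference quotient), extended to the fluid via $\Testzeta$ from Lemma \prettyref{prop:musc}. The fluid terms are absorbed using the energy bound and the $L^2_tW^{1,2}_\bx$ dissipation.

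\emph{Acceleration estimate.} Transform the fluid to $\Omega$ with $\bfPsi_\eta$. Differentiate the coupled system in time and test with $(\partial_t\bv\circ\bfPsi_\eta$, corrected to be compatible$,\partial_t^2\eta)$, following \cite{ScSu}. This gives
\begin{align*}
\frac{\mathrm d}{\mathrm dt}\Big(\|\partial_t\bv\|_{L^2_\bx}^2+\|\partial_t^2\eta\|_{L^2_\by}^2+\|\Dely\partial_t\eta\|_{L^2_\by}^2\Big)+\|\nabla\partial_t\bv\|_{L^2_\bx}^2\lesssim \Phi(t)\Big(1+\|\partial_t\bv\|_{L^2_\bx}^2+\|\partial_t^2\eta\|_{L^2_\by}^2+\|\Dely\partial_t\eta\|_{L^2_\by}^2\Big)
\end{align*}
with $\Phi\in L^1(I)$. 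The commutator terms come from $\partial_t$ hitting the coefficients $\nabla\bfPsi_\eta$; they involve $\partial_t\eta$, $\partial_t\naby\eta$ and $\nabla\bv$. In 2D they are controlled by Ladyzhenskaya/Gagliardo--Nirenberg interpolation together with the 1D embeddings, e.g.\ $\|\partial_t\naby\eta\|_{L^\infty}\lesssim\|\partial_t\eta\|_{W^{2,2}}$. The pressure terms disappear because $\partial_t\bv$ is divergence free (in transformed form) and its trace matches $\partial_t^2\eta\bfn$ up to lower-order corrections. A standard Gr\"onwall argument then closes the estimate.

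\emph{Main obstacle.} I expect the delicate point to be the critical terms here. One is the time derivative of the transformed stress on the boundary. Another is the convective term with $\partial_t\bfPsi_\eta$. For these the fractional gain in $\eta$ from the previous step is needed, so that $\Phi$ is integrable rather than only borderline.

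\emph{Elliptic step.} With $\partial_t\bv\in L^\infty_tL^2_\bx$ and $\partial_t^2\eta\in L^\infty_tL^2_\by$ in hand, apply Stokes regularity with coefficients in $\Omega_\eta$, as in \cite{Br}, to get $\bv\in L^\infty_tW^{2,2}$ and $\pi\in L^\infty_tW^{1,2}$. Then use the shell equation, where $\Dely^2\eta$ equals $L^2$ data plus the normal stress trace in $W^{1/2,2}$, to conclude $\eta\in L^\infty_tW^{4,2}_\by$. This yields the uniform bounds needed for the continuation argument.
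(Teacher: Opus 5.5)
Your overall architecture matches the paper: a local solution from Theorem~\ref{thm:NS3}, an acceleration estimate linear in $\mathcal E$ (in the paper $\Phi=1+\|\overline{\mathbf v}\|_{W^{1,2}_\bx}^2$ plus data), then Stokes regularity and the shell equation. However, the acceleration step does not close as you describe it, and the first problem is that the pressure does not disappear. The compatible fluid test function is $\boldsymbol\phi=\partial_t\overline{\mathbf v}+\mathbf G$ with $\mathbf G=\mathbf B_\eta^{-\intercal}\partial_t\mathbf B_\eta^\intercal\overline{\mathbf v}$, so that $\mathrm{div}(\mathbf B_\eta^\intercal\boldsymbol\phi)=\partial_t\,\mathrm{div}(\mathbf B_\eta^\intercal\overline{\mathbf v})=0$. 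This removes $\partial_t\overline\pi$. But differentiating $\mathrm{div}(\mathbf B_\eta\overline\pi)$ in time also produces the commutator $\int_I\int_\Omega\overline\pi\,\mathrm{div}(\partial_t\mathbf B_\eta^\intercal\boldsymbol\phi)\,\mathrm d\mathbf x\dt$, which does not vanish in general. You could estimate it through elliptic bounds for $\overline\pi$, as the paper does in 3D via Lemma~\ref{lem:pres}. That bound involves $\|\partial_t\overline{\mathbf v}\|_{L^2_\bx}+\|\partial_t^2\eta\|_{L^2_\by}$ (there is no a priori $L^2_t$-control of the pressure), multiplied by $\|\partial_t\naby\eta\|_{L^\infty_\by}\lesssim\|\partial_t\eta\|_{L^2_\by}^{1/4}\|\partial_t\Dely\eta\|_{L^2_\by}^{3/4}$ and by $\|\nabla\boldsymbol\phi\|_{L^2_\bx}$. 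After absorbing the last factor you are left with a superlinear power of $\mathcal E$. That is fine for short times but kills the global Gr\"onwall argument. The missing idea is to write $\mathrm{div}(\partial_t\mathbf B_\eta^\intercal\boldsymbol\phi)=\mathrm{div}(\mathbf B_\eta^\intercal\boldsymbol\Theta^\eta)$ with $\boldsymbol\Theta^\eta=\mathbf B_\eta^{-\intercal}\partial_t\mathbf B_\eta^\intercal\boldsymbol\phi$, and then to test the \emph{undifferentiated} system with the non-solenoidal field $\boldsymbol\Theta^\eta$, see \eqref{later}. This trades the pressure for $\int J_\eta\partial_t\overline{\mathbf v}\cdot\boldsymbol\Theta^\eta$, $\int\mathbf A_\eta\nabla\overline{\mathbf v}:\nabla\boldsymbol\Theta^\eta$, $\int\mathbf h_\eta\cdot\boldsymbol\Theta^\eta$ and shell terms on $\omega$. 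These are made linear in $\mathcal E$ by integrating by parts the factor $\partial_t\nabla\boldsymbol\Psi_\eta$ and by slicing with $\Lambda^{\boldsymbol\varphi}$, i.e.\ using 1D embeddings in $\by$ on each layer.

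Second, for shells the boundary corrections are not lower order, and the spatial gain you propose is at the wrong level. Since $\mathbf G\circ\boldsymbol\varphi\approx(1+\eta)|\partial_t\eta|^2\mathbf n$ and $\boldsymbol\Theta^\eta\circ\boldsymbol\varphi\approx(1+\eta)\partial_t\eta\,\partial_t^2\eta\,\mathbf n+(1+\eta)^2|\partial_t\eta|^3\mathbf n$ do not vanish, the structure test function must be $\partial_t^2\eta+\mathbf n^\intercal\mathbf G\circ\boldsymbol\varphi$. This creates the terms $\int\partial_t\Dely^2\eta\,(1+\eta)|\partial_t\eta|^2$, $\int\partial_t^3\eta\,(1+\eta)|\partial_t\eta|^2$, $\int\Dely^2\eta\,(1+\eta)\partial_t\eta\,\partial_t^2\eta$ and $\int\Dely^2\eta\,(1+\eta)^2|\partial_t\eta|^3$ (cf.\ \eqref{dt3.8a2d} and \eqref{later}). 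After integration by parts these contain, e.g., $\int|\partial_t\Dely\eta|^2\partial_t\eta$ and $\int\naby\Dely\eta\cdot\naby\big((1+\eta)^2|\partial_t\eta|^3\big)$. Those need $\partial_t\Dely\eta\in L^2_tW^{1/4,2}_\by\hookrightarrow L^2_tL^4_\by$ and $\eta\in L^2_tW^{3,2}_\by$, i.e.\ gains at the acceleration level. Your energy-level regularity $\eta\in L^2_tW^{2+s,2}_\by$, $\partial_t\eta\in L^2_tW^{s,2}_\by$ does not suffice; for small $s$ the latter is even weaker than the trace bound $\partial_t\eta\in L^2_tW^{1/2,2}_\by$. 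Bounding $\int_I\|\partial_t\Dely\eta\|_{L^2_\by}^2\|\partial_t\eta\|_{L^\infty_\by}\dt$ would need $\|\partial_t\eta\|_{L^\infty_\by}\in L^1_t$ a priori, which is borderline for $W^{1/2,2}$ in 1D. Interpolating against $\|\partial_t\Dely\eta\|_{L^2_\by}$ again gives a superlinear power of $\mathcal E$. The paper obtains these gains by testing the system with spatial (fractional) difference quotients of $\partial_t\eta$ extended via $\Testzeta$ (Lemma~\ref{lem:4.2} and the $W^{1/4,2}$ term in \eqref{energyEstLinear}). Those bounds themselves involve $\mathcal E$ and the dissipation $\int\|\partial_t\nabla\overline{\mathbf v}\|_{L^2_\bx}^2$, so they are absorbed inside the same Gr\"onwall argument rather than proved as a preliminary step.
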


\subsection{Stokes equations in rough domains}
In this section, we present the necessary framework to parametrise
the boundary of a domain $\mathcal O\subset\R^n$ by local maps of a certain regularity. This yields, in particular, a rigorous definition of a  $B^{s}_{\varrho,p}$-boundary. We follow the presentation from \cite{Br} (see also \cite{Br2}).

We assume that $\partial\mathcal O$ can be covered by a finite
number of open sets $\mathcal U^1,\dots,\mathcal U^\ell$ for some $\ell\in\mathbb N$, such that
the following holds. For each $j\in\{1,\dots,\ell\}$ there is a reference point
$\by^j\in\R^n$ and a local coordinate system $\{\be^j_1,\dots,\be_n^j\}$ (which we assume
to be orthonormal and set $\mathcal Q_j=(\be_1^j|\dots |\be_n^j)\in\mathbb R^{n\times n}$), a function
$\varphi_j:\mathbb R^{n-1}\rightarrow\mathbb R$
and $r_j>0$
with the following properties:
\begin{enumerate}[label={\bf (A\arabic{*})}]
\item\label{A1} There is $h_j>0$ such that
$$\mathcal U^j=\{\bx=\mathcal Q_j\bz+\by^j\in\mathbb R^n:\,\bz=(\bz',z_n)\in\R^n,\,|\bz'|<r_j,\,
|z_n-\varphi_j(\bz')|<h_j\}.$$
\item\label{A2} For $\bx\in\mathcal U^j$ we have with $\bz=\mathcal Q_j^\intercal(\bx-\by^j)$
\begin{itemize}
\item $\bx\in\partial\mathcal O$ if and only if $z_n=\varphi_j(\bz')$;
\item $\bx\in\mathcal O$ if and only if $0<z_n-\varphi_j(\bz')<h_j$;
\item $\bx\notin\mathcal O$ if and only if $0>z_n-\varphi_j(\bz')>-h_j$.
\end{itemize}
\item\label{A3} We have that
$$\partial\mathcal O\subset \bigcup_{j=1}^\ell\mathcal U^j.$$
\end{enumerate}
In other words, for any $\bx_0\in\partial\mathcal O$ there is a neighbourhood $U$ of $\bx_0$ and a function $\phi:\mathbb R^{n-1}\rightarrow\mathbb R$ such that after translation and rotation\footnote{By translation via $\by^j$ and rotation via $\mathcal Q_j$ we can assume that $\bx_0=\bm{0}$ and that the outer normal at~$\bx_0$ is pointing in the negative $x_n$-direction.}
 \begin{equation*}\label{eq:3009}
 U \cap \mathcal O = U \cap G,\quad G = \set{(\bx',x_n)\in \R^n \,:\, \bx' \in \R^{n-1}, x_n > \phi(\bx')}.
 \end{equation*}
 The regularity of $\partial\Omega$ will be described by means of local coordinates as just described.
 \begin{definition}\label{def:besovboundary}
 Let ${\mathcal{O}}\subset\R^n$ be a bounded domain, $s>0$ and $1\leq \rho,q\leq\infty$. We say that $\partial{\mathcal{O}}$ belongs to the class $B^s_{\rho,q}$ if there is $\ell\in\mathbb N$ and functions $\varphi_1,\dots,\varphi_\ell\in B^s_{\rho,q}(\mathbb R^{n-1})$ satisfying \ref{A1}--\ref{A3}.
 \end{definition}
Clearly, a similar definition applies for a Lipschitz boundary (or a $C^{1,\alpha}$-boundary with $\alpha\in(0,1)$) by requiring that $\varphi_1,\dots,\varphi_\ell\in W^{1,\infty}(\mathbb R^{n-1})$ (or $\varphi_1,\dots,\varphi_\ell\in C^{1,\alpha}(\mathbb R^{n-1})$). We say that the local Lipschitz constant of $\partial{\mathcal{O}}$, denoted by $\mathrm{Lip}(\partial{\mathcal{O}})$, is (smaller or) equal to some number $L>0$ provided that the Lipschitz constants of $\varphi_1,\dots,\varphi_\ell$ are not exceeding $L$.

After these preparations let us consider the steady Stokes system
\begin{equation}\label{eq:Stokes}
\left\{\begin{aligned}
&\Delta \bu-\nabla\pi=-\bff,	\\
&\Div\bu=0,\\
&\bu|_{\partial{\mathcal{O}}}=\bu_{\partial},
\end{aligned}\right.
\end{equation}
in a domain ${\mathcal{O}}\subset\R^n$ with unit normal $\bfn$. The result given in the following theorem is a maximal regularity estimate for the solution of \eqref{eq:Stokes} in terms of the right-hand side. The boundary data under minimal assumption on the regularity of $\partial\mathcal O$ is obtained in \cite[Theorem 3.1]{Br}. 
\begin{theorem}\label{thm:stokessteady}
Let $p\in(1,\infty)$, $s\geq 1+\frac{1}{p}$ and 
\begin{align*}
\varrho\geq p\quad\text{if}\quad p(s-1)\geq n,\quad \varrho\geq \tfrac{n}{s-1}\quad\text{if}\quad p(s-1)< n,
\end{align*}
 such that 
$n\big(\frac{1}{p}-\frac{1}{2}\big)+1\leq  s$.
 Suppose that ${\mathcal{O}}$ is a $B^{\theta}_{\varrho,p}$-domain for some $\theta>s-1/p$ with locally small Lipschitz constant, $\bff\in W^{s-2,p}({\mathcal{O}})$ and $\bu_{\partial}\in W^{s-1/p,p}(\partial{\mathcal{O}})$ with $\int_{\partial{\mathcal{O}}}\bu_\partial\cdot\bfn\,\dd\mathcal H^{n-1}=0$. Then there is a unique solution to \eqref{eq:Stokes} and we have
\begin{align*}
\|\bu\|_{W^{s,p}({\mathcal{O}})}+\|\pi\|_{W^{s-1,p}({\mathcal{O}})}\lesssim\|\bff\|_{W^{s-2,p}({\mathcal{O}})}+\|\bu_{\partial}\|_{W^{s-1/p,p}(\partial{\mathcal{O}})}.
\end{align*}
\end{theorem}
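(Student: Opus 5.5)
The plan is to localise and flatten the boundary, obtain the estimate in the half-space setting, and then patch the local pieces together. This is the scheme of \cite{Br}; the point of the stated assumptions is that it works when $\partial\mathcal O$ has only Besov regularity $B^\theta_{\varrho,p}$ and a locally small Lipschitz constant.

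\textbf{Reduction to homogeneous boundary data.} First remove the boundary data. Extend $\bu_\partial$ to a field $\bfw\in W^{s,p}(\mathcal O)$ with $\|\bfw\|_{W^{s,p}}\lesssim\|\bu_\partial\|_{W^{s-1/p,p}(\partial\mathcal O)}$. For rough boundaries this uses an extension operator built in the local charts $\varphi_j$; the condition $\theta>s-1/p$ together with the lower bound on $\varrho$ is what makes the composition with $\varphi_j$ and multiplication by $\nabla\varphi_j$ bounded on the relevant Besov/Sobolev spaces. Using $\int_{\partial\mathcal O}\bu_\partial\cdot\bfn\,\dd\mathcal H^{n-1}=0$, correct the divergence of $\bfw$ with a Bogovskij-type operator, which is bounded $W^{s-1,p}_0\to W^{s,p}_0$ on Lipschitz domains. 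This reduces the problem to $\bu_\partial=0$, with a modified right-hand side in $W^{s-2,p}$.

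\textbf{Existence and local estimates.} Existence and uniqueness of a weak solution of the homogeneous problem follow from Lax--Milgram, or from the Lipschitz-domain $W^{1,p}$ theory when $p\neq2$; the hypothesis $n(\frac1p-\frac12)+1\le s$ ensures that $W^{s-2,p}\hookrightarrow W^{-1,2}$, so the weak solution exists. Next localise with a partition of unity subordinate to the cover $\mathcal U^1,\dots,\mathcal U^\ell$. In each chart, flatten via $\bz\mapsto(\bz',z_n-\varphi_j(\bz'))$. The pushed-forward system is a Stokes system on the half-space with variable coefficients that depend on $\nabla\varphi_j$, together with a perturbed divergence constraint. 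Treat this system as the constant-coefficient half-space Stokes problem, for which the $W^{s,p}$ estimate is classical, plus perturbation terms. There are two kinds of perturbation terms:
\begin{enumerate}
\item[(i)] terms of the form $\nabla\varphi_j\,\nabla^2\bu$, whose size is controlled by the Lipschitz constant; they are absorbed because that constant is locally small;
\item[(ii)] terms involving $\nabla^2\varphi_j$ (more precisely, derivatives of $\varphi_j$ of fractional order up to $s$) multiplied by lower derivatives of $\bu$.
\end{enumerate}

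\textbf{Main obstacle.} The main difficulty is the terms in (ii). They must be estimated through multiplier estimates in Besov spaces of the form
\[
\|\nabla\varphi\, \nabla\bu\|_{W^{s-1,p}}\lesssim \|\varphi\|_{B^{\theta}_{\varrho,p}}\|\bu\|_{W^{1+\epsilon,q}}+\|\nabla\varphi\|_{L^\infty}\|\bu\|_{W^{s,p}}.
\]
Here Sobolev embedding is applied with the exponent $\varrho\ge n/(s-1)$, or $\varrho\ge p$ when $p(s-1)\ge n$. This choice of $\varrho$ is exactly what makes the product estimate close. The lower-order norm of $\bu$ is then removed by interpolation and Young's inequality against the weak-solution bound.

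\textbf{Conclusion.} Summing over the charts, together with the interior estimate, gives the bound for $\|\bu\|_{W^{s,p}}+\|\pi\|_{W^{s-1,p}}$, where the pressure is normalised to have zero mean. For non-integer $s$ one can argue by interpolation between integer-order estimates, or work directly with Besov difference quotients. In this paper we simply take the statement from \cite[Theorem 3.1]{Br}, where this program is carried out.
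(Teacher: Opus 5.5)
The paper gives no argument of its own here. The theorem is simply quoted from \cite[Theorem 3.1]{Br}, so your closing citation matches it. However, the program you attribute to that reference is not the one carried out there, and as written it fails in exactly the regime the theorem is designed for.

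You flatten with the graph map $\bz\mapsto(\bz',z_n-\varphi_j(\bz'))$. Its Jacobian depends on $\bz'$ only and has just the regularity of $\nabla\varphi_j$, i.e.\ $\theta-1$ derivatives. Composition with such a map preserves $W^{s,p}$ only if $\nabla\varphi_j$ is a multiplier on $W^{s-1,p}$. That forces $\nabla\varphi_j\in W^{s-1,p}_{\rm loc}(\R^{n-1})$, i.e.\ $s$ derivatives of $\varphi_j$, whereas the hypothesis only gives $\theta>s-1/p$.

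Concretely, your displayed product estimate is false whenever $\theta<s$. Take $\bu$ smooth with $\nabla\bu$ a nonzero constant matrix on a ball $B$, so that on $B$ the product $\nabla\varphi\,\nabla\bu$ is a fixed nonzero combination of the $\partial_i\varphi(\bz')$. Take $\varphi$ a small multiple of a cut-off lacunary series $\sum_k2^{-k\sigma}\cos(2^kz_1)$ with $\theta<\sigma<s$. This $\varphi$ lies in $C^1\cap B^\theta_{\varrho,p}$ but not in $W^{s,p}_{\rm loc}$, so your right-hand side is finite while the left-hand side is infinite.

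Hence your scheme gives at best the version with $\theta\ge s$. That misses the $1/p$ gap, which is the whole content of the statement. It is also exactly the gap used in Remark~\ref{rem:stokes}: for $n=3$, $s=p=2$ one needs $\varrho\ge 3$, and $\eta\in W^{2,2}(\omega)$ only yields $B^{5/3}_{3,2}$, so $\theta=5/3<2=s$.

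The missing idea is the regularised flattening from Maz'ya--Shaposhnikova's theory of Sobolev multipliers, on which \cite{Br} relies. You replace the graph map by one built from an extension of $\varphi_j$ into the half-space, e.g.\ $\boldsymbol\Phi(\bz',z_n)=(\bz',z_n+\mathcal T\varphi_j(\bz',z_n))$ with $\mathcal T\varphi(\bz',t)=\int_{\R^{n-1}}\zeta(\xi)\,\varphi(\bz'+t\xi)\,\dd\xi$ and $\int\zeta=1$. The small local Lipschitz constant makes $\boldsymbol\Phi$ a bi-Lipschitz perturbation of the identity mapping $\R^n_+$ onto $\{z_n>\varphi_j\}$.

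Being an inverse-trace operator, $\mathcal T$ gains $1/p$ derivatives. Thus $\nabla(\mathcal T\varphi_j)$ is a small multiplier on $W^{s-1,p}(\R^n_+)$ as soon as $\nabla\varphi_j$ is a small multiplier on $W^{s-1-1/p,p}(\R^{n-1})$. The condition $\theta>s-1/p$, the lower bounds on $\varrho$ and the small Lipschitz constant are what place $\nabla\varphi_j$ in that class locally. The perturbation terms of the transformed Stokes system are then small in multiplier norm and are absorbed into the half-space estimate. The same map is also what makes your extension of $\bu_\partial$ work.

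Your remaining ingredients are fine: the weak solution via $W^{s-2,p}\hookrightarrow W^{-1,2}$, the localisation, and the absorption. One minor caveat: for $s-1>1/p$ the divergence of the extension $\mathbf w$ need not lie in $W^{s-1,p}_0$. So the Bogovskij bound you quote does not apply directly, and it is simpler to keep a nonzero divergence in the perturbed problem.
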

\begin{remark}\label{rem:stokes}
As observed in \cite[Remark 2.10]{BMSS} Theorem \ref{thm:stokessteady} can be applied in $\Omega_{\eta(t)}$ (for fixed $t$ and uniformly in time) with $s=p=2$ under the assumption
$\eta\in L^\infty(I;W^{2,2}\cap C^{1}(\omega))$.
\end{remark}

\subsection{Universal Bogovskij}

Bogovskij operators are natural to be considered in star-shaped domains. As Lipschitz domains are unions of star-shaped domains, for some time Bogovskij operators are available on Lipschitz domains. Recently the concept of universal Bogovskij operators was introduced in \cite{KamSchSpe20}. Observe that the same Bogovskij operator actually can be used for a family of domains, as long as the Lipschitz constant is controlled. This allows to use a (locally) steady operator to correct the divergence in the time-changing domains.

More precisely in \cite[Corollary 3.4]{KamSchSpe20} the following statement was shown:
\begin{theorem} \label{thm:ndBog1}
	Let $\Sigma \subset \R^{3}$ be a three-dimensional Lipschitz-manifold, $M > \gamma > 0$, $C_L>0$, $b\in C_c^\infty(\Sigma \times [0,\gamma])$ with unit integral. Then there exists a linear, universal Bogovskij operator $\Bog: C_c^\infty(\Sigma \times [0,M]) \to C_c^\infty(\Sigma \times [0,M])$ such that for any $C_L$-Lipschitz function (i.e., with Lipschitz constant $C_L$) $\eta:\Sigma \to [\gamma,M]$ and $\Sigma_\eta := \{(\bx',x_n) \in \Sigma \times [0,M]: 0< x_n < \eta(\bx')\}$ the operator $\Bog$ maps $C_c^\infty(\Omega_\eta)$ to $C_c^\infty(\Omega_\eta)$ with $\Div\Bog f = f - b \int f \dx$. 
	In addition,
	\[\norm{\Bog(f)}_{W^{s+1,p}( \Sigma_\eta)} \leq C_B^{s,p}\norm{f}_{W^{s,p}( \Sigma_\eta)},\]
	for all $ 1<p<\infty $ and $ s \ge 0$
	with $C_B^{s,p}$ only depending on $s$, $p$, $\operatorname{diam}(\Sigma),C_L,\gamma $ and the Lipschitz properties of $\Sigma$. %
\end{theorem}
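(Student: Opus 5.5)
The plan is to build $\Bog$ by gluing classical Bogovskij operators on finitely many pieces that are star-shaped with respect to balls. The point is that the pieces, the balls and the cut-offs are all chosen depending only on $\Sigma$, $C_L$, $\gamma$, $M$, and never on $\eta$. The resulting operator is then automatically universal.

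\emph{Step 1: covering.} Since $\eta\geq\gamma$, every $\Sigma_\eta$ contains the fixed layer $\Sigma\times(0,\gamma)$. Using the Lipschitz charts of $\Sigma$, cover $\Sigma$ by finitely many open sets $D_1,\dots,D_m$, each bi-Lipschitz to a disc of radius $r$. Here $r$ is chosen so small that $C_L r\le\gamma/4$ (up to the chart distortion). In each column $D_j\times[0,M]$ place a ball $B_j$ of radius $\sim r$ centred at height $\gamma/2$.

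\emph{Step 2: star-shapedness.} For every $C_L$-Lipschitz $\eta$ with values in $[\gamma,M]$, the set $U_j^\eta:=\Sigma_\eta\cap(D_j\times(0,M))$ is star-shaped with respect to $B_j$. The flat bottom and the vertical sides are harmless. For the top graph we need $\eta(\bx')-p_n\ge C_L|\bx'-\bp'|$ for all $\bp\in B_j$, and this holds by the choice $C_L r\le\gamma/4$. Moreover, the ratio $\mathrm{diam}(U_j^\eta)/\mathrm{rad}(B_j)$ is bounded independently of $\eta$.

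\emph{Step 3: local operators.} On $U_j^\eta$ use the explicit Bogovskij formula
\[
\Bog_j f(\bx)=\int f(\by)\,K_j(\bx,\by)\dy ,
\]
whose kernel $K_j$ depends only on a weight $\omega_j\in C_c^\infty(B_j)$ and not on the domain. It maps $C_c^\infty(U)$ into $C_c^\infty(U)$ for every $U$ star-shaped with respect to $B_j$. It satisfies $\Div\Bog_j f=f-\omega_j\int f$ and the bounds $W^{s,p}\to W^{s+1,p}$ for all $s\ge0$, $1<p<\infty$. Via Calder\'on--Zygmund theory, the constants depend only on $s$, $p$ and the diameter-to-ball ratio (Galdi; Geissert--Heck--Hieber).

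\emph{Step 4: gluing.} Take a fixed partition of unity $\chi_j$ on $\Sigma$ subordinate to $\{D_j\}$, extended constantly in $x_n$, and set $f_j=\chi_jf$. The local masses $\int f_j$ do not vanish individually, so we transfer them. Using a fixed chain of overlapping columns, write
\[
\sum_j\Big(\omega_j\int f_j\Big)-b\int f
\]
as a linear combination of differences $\omega_j-\omega_k$ of bumps in overlapping columns. Each difference can be written as a divergence of a fixed smooth field supported in the bottom layer $\Sigma\times(0,\gamma)$, hence inside every $\Sigma_\eta$. Then define
\[
\Bog f=\sum_j\Bog_j f_j+\sum_{j,k}c_{jk}(f)\,\mathbf w_{jk},
\]
where the $c_{jk}$ are linear combinations of the $\int f_j$. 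This gives $\Div\Bog f=f-b\int f$, linearity, preservation of $C_c^\infty(\Sigma_\eta)$, and the $W^{s,p}$ bound with constants controlled by $\|\chi_j\|_{C^k}$, the chain and the local constants, all independent of $\eta$.

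The main obstacle I expect is Step 3 combined with the Lipschitz manifold structure. First, one must check that the chart distortion does not destroy star-shapedness; this is handled by shrinking $r$ further in terms of the Lipschitz character of $\Sigma$. Second, the higher-order estimates for $s>0$ must hold uniformly over the whole family of domains. This requires that the kernel estimates of the Bogovskij formula depend only on the ball and the diameter, and, for fractional $s$, on interpolation between integer $s$.
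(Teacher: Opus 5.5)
The paper gives no proof of this theorem. It is quoted from \cite[Corollary 3.4]{KamSchSpe20}, and the only argument of this kind in the text is the sketch for Corollary \ref{thm:ndBog}, which applies the theorem on small pieces and glues them with the partition of unity of \cite[Section 3.1]{SaaSch21}. Your proposal supplies a self-contained proof one level down, by the same mechanism, and it is sound.

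The decisive point is your Step 3. The kernel of $\Bog_j$ depends only on the weight $\omega_j$, and $\mathrm{supp}\,\Bog_j f$ lies in the hull $\{\lambda y+(1-\lambda)z:\ y\in\mathrm{supp}\,f,\ z\in\mathrm{supp}\,\omega_j,\ \lambda\in[0,1]\}$. This hull is contained in every open set $U\supset\mathrm{supp}\,f$ that is star-shaped with respect to $B_j$. The Calder\'on--Zygmund bounds only see $\mathrm{diam}(U_j^\eta)$ and the radius $\rho$ of $B_j$. Together with $\Sigma\times(0,\gamma)\subset\Sigma_\eta$ for every admissible $\eta$, this lets one operator serve all $\eta$. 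Citing buys brevity; your route makes the universality mechanism and the constant dependence explicit.

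Several details need adjusting:
\begin{itemize}
\item Star-shapedness constants. With $B_j$ centred at height $\gamma/2$, the top condition $z_n+C_L|x'-z'|\le\eta(x')$ for $z=(z',z_n)\in B_j$ and $x'\in D_j$ requires $\rho(1+C_L)+C_Lr\le\gamma/2$. So take $\rho\lesssim\min\{r,\gamma/(1+C_L)\}$, not merely $\rho\sim r$.
\item Mass transfer. Differences $\omega_j-\omega_k$ alone cannot produce $b$. The simplest correction is $\sum_j(\int f_j)\,\omega_j-b\int f=\sum_j(\int f_j)(\omega_j-b)=\Div\sum_j(\int f_j)\,\mathbf w_j$ with fixed $\mathbf w_j\in C_c^\infty(\Sigma\times(0,\gamma))$. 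This needs $\Sigma$ connected and $\mathrm{supp}\,b$ compact in the open layer.
\item Choice of pieces. ``Bi-Lipschitz to a disc'' does not give star-shapedness. It must hold in the coordinates in which $\Div$ is Euclidean: discs for flat $\Sigma$ such as the torus $\omega$, and Lipschitz-graph pieces star-shaped with respect to balls near a lateral boundary.
\item Fractional $s$. Interpolation bounds $\Bog f$ by the zero-extension norm of $f$. You then also need this norm to be equivalent to $\|f\|_{W^{s,p}(\Sigma_\eta)}$ on $C_c^\infty(\Sigma_\eta)$, uniformly in $\eta$, and that equivalence fails when $s-1/p\in\mathbb Z$.
\end{itemize}

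Your constants depend on $M$, through $\mathrm{diam}(U_j^\eta)/\rho\lesssim(r+M)/\rho$ and $|\int f_j|\le(|\Sigma|M)^{1-1/p}\|f\|_{L^p}$. The statement lists only $s$, $p$, $\operatorname{diam}(\Sigma)$, $C_L$, $\gamma$ and the Lipschitz character of $\Sigma$. This dependence is forced, at least for the full $W^{s+1,p}$ norm, and is not a defect of your construction. Take $M$ large, $\eta\equiv M$, and a fixed unit-mass bump $f$ translated into $\Sigma\times(M-1,M)$. Integrating $\Div\Bog f=f-b$ over $\Sigma\times(h,M)$ gives $\int_\Sigma(\Bog f)_n(\cdot,h)\,\mathrm{d}x'=-1$ for every $h\in(\gamma,M-1)$. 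Hence $\|\Bog f\|_{L^p}\ge|\Sigma|^{-(1-1/p)}(M-1-\gamma)^{1/p}$, while $\|f\|_{W^{s,p}}$ does not change. So $C_B^{s,p}$ must grow with $M$, and your dependence is the correct one.
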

In order to make this operator admissible for our needs we introduce the following version. 
\begin{corollary} \label{thm:ndBog}
	There is a universal Bogovskij operator, such that for all $\eta:\omega\rightarrow (-L,L)$ with $\norm{\naby \eta}_{L^\infty(\omega)}\leq C_L$ and $b\in C_c^\infty(\Omega\setminus S_L)$ (where $\Omega_\eta$ is defined by \eqref{eq:2612}) with unit integral we have
	\[
	\Bog: C_c^\infty(\Omega_\eta) \to C_c^\infty(\Omega_\eta)\text{ with }\Div\Bog f = f - b \int f \dx.
	\]
	In addition, it holds
	\begin{align*}
\norm{\Bog(f)}_{W^{s+1,p}( \Omega_\eta)} &\leq C_B^{s,p}\norm{f}_{W^{s,p}( \Omega_\eta)},\\
\norm{\Bog(\Div \bff)}_{W^{s,p}( \Omega_\eta)} &\leq C_B^{s,p}\norm{\bff}_{W^{s,p}( \Omega_\eta)},
\end{align*}
	for all $ 1<p<\infty $ and $ s \ge 0$
	with $C_B^{s,p}$ only depending on $L,\varphi,C_L$.
\end{corollary}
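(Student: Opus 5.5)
The plan is to reduce Corollary \ref{thm:ndBog} to Theorem \ref{thm:ndBog1} by localisation and a partition of unity, gluing with a fixed interior Bogovskij operator. The domain $\Omega_\eta$ splits into the fixed interior part $\Omega\setminus S_{L'}$ for some $L'<L$ chosen so that $\|\eta\|_{L^\infty}<L'$ (shrinking if needed), and a boundary layer $\{\bfp+s\bfn:\ -L<s<\eta(\by)\}$. Via the diffeomorphism $\Lambda$ from \eqref{eq:Lambda}, the boundary layer is the image of $\omega\times\{(s):-L<s<\eta(\by)\}$. After the shift $x_n=s+L$ this is exactly of the form $\Sigma_{\tilde\eta}$ with $\Sigma=\omega$ (a compact Lipschitz manifold, the torus, or finitely many charts of it) and $\tilde\eta=\eta+L\in[\gamma,M]$ with $\gamma=L-\|\eta\|_\infty>0$ and $M=2L$. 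The Lipschitz constant of $\tilde\eta$ is $C_L$. Since $\Lambda$ is a smooth diffeomorphism independent of $\eta$, pulling back preserves $W^{s,p}$ norms up to constants depending only on $\varphi$ and $L$.

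The divergence does not transform exactly under $\Lambda$. To handle this, I would work with the Piola transform: for $\mathbf w$ on the flattened domain set $\mathbf u=(\det\nabla\Lambda)^{-1}\nabla\Lambda\,\mathbf w\circ\Lambda^{-1}$, so that $\Div\mathbf u=(\det\nabla\Lambda)^{-1}(\Div\mathbf w)\circ\Lambda^{-1}$. This gives a Bogovskij operator on the layer, $\Bog_{\rm layer}f:=\mathcal P\big(\Bog_{\Sigma}((\det\nabla\Lambda) f\circ\Lambda)\big)$. It has the mapping property $C^\infty_c\to C^\infty_c$, and its divergence equals $f-\tilde b\int f$, where $\tilde b$ is the pushed-forward bump, supported near $s=-L$, i.e.\ in the fixed region. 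The integral is preserved because of the Jacobian factor.

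To glue the pieces, I take a smooth cutoff $\chi$ equal to $1$ near $\Omega\setminus S_{L'}$ and supported in $\Omega\setminus S_{L''}$, $L''<L'$, independent of $\eta$. I split $f=\chi f+(1-\chi)f$ and apply the layer operator to $(1-\chi)f$, producing the error bump $\tilde b\int(1-\chi)f$ supported in the fixed interior. The remaining function $\chi f+\tilde b\int(1-\chi)f$ lives in a fixed smooth subdomain $G$ of $\Omega\setminus S_{L''}$, which can be taken star-shaped with respect to a ball or a finite union of such domains, and contains $\operatorname{supp} b$. On $G$ the classical Bogovskij operator (Galdi), corrected by $b$, yields the final term with divergence $\chi f+\tilde b\int(1-\chi)f-b\int f$, since the total integral is $\int f$. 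Summing the two contributions gives $\Div\Bog f=f-b\int f$. The estimate $W^{s,p}\to W^{s+1,p}$ follows from Theorem \ref{thm:ndBog1}, the smoothness of $\chi,\Lambda$, and classical bounds on $G$.

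For the second estimate, $\|\Bog(\Div\bff)\|_{W^{s,p}}\lesssim\|\bff\|_{W^{s,p}}$, the issue is the negative-order bound. I would rely on the analogous property of the universal operator in \cite{KamSchSpe20}, where the operator is built from kernel averages and satisfies $W^{s-1,p}\to W^{s,p}$ bounds on divergences. Passing through the Piola transform commutes divergence with the pullback up to the smooth Jacobian. Multiplying by the cutoff produces $\Div((1-\chi)\bff)+\nabla\chi\cdot\bff$, where the second term is of order zero and is handled by the first estimate.

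The main obstacle I expect is this negative-norm estimate: the cutoff and Jacobian commutators, and the fact that $\int\Div\bff$ need not vanish unless $\bff$ has zero normal trace. I would restrict to $\bff\in C^\infty_c$, where boundary terms vanish, and verify that the constants depend only on $L$, $\varphi$ and $C_L$.
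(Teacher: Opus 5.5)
Your argument is correct, but it follows a different route from the paper. The paper works locally. It covers $S_L$ by finitely many small balls in which every admissible $\partial\Omega_\eta$ is a Lipschitz graph over a flat base, applies Theorem~\ref{thm:ndBog1} in each ball, and glues the local operators by the partition-of-unity argument of \cite{SaaSch21}.

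You instead use the global structure of this setting: $\partial\Omega$ is parametrised by the whole torus $\omega$, and $\partial\Omega_\eta$ is a normal graph over it. This lets you flatten the entire boundary layer with one fixed diffeomorphism $(\by,s)\mapsto\bfvarphi(\by)+s\bfn(\by)$. The Piola transform then makes the divergence transfer exact, $(\det\nabla\Lambda)(\Div\mathbf u)\circ\Lambda=\Div\mathbf w$.

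This buys you two things:
\begin{itemize}
\item a much simpler gluing: one cutoff $\chi$, one transfer of mass through $\tilde b$ into a fixed interior domain, and one classical Bogovskij operator there;
\item a clean transfer of the $\Bog(\Div\mathbf f)$ estimate, because the Piola pull-back of $\Div((1-\chi)\mathbf f)$ is again an exact divergence and $\nabla\chi\cdot\mathbf f$ is of order zero.
\end{itemize}
The price is that you need Theorem~\ref{thm:ndBog1} with the torus itself as base, i.e.\ a periodic version of the construction in \cite{KamSchSpe20}. If only Euclidean bases are available, you must localise in $\by$ and are back to a multi-patch gluing as in the paper. The paper's route is in turn more robust, since it only needs $\partial\Omega_\eta$ to be locally a Lipschitz graph.

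Two small corrections:
\begin{itemize}
\item Order the radii as $\sup|\eta|<L''<L'<L$, so that $\operatorname{supp}\chi\subset\Omega\setminus S_{L''}\subset\Omega_\eta$.
\item Fix $\gamma\le L-L''$ and $b_\Sigma$ in advance, rather than taking $\gamma=L-\|\eta\|_\infty$; otherwise the operator is not universal. The constants then depend on this margin, as is implicit in the paper's statement.
\end{itemize}

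Finally, Theorem~\ref{thm:ndBog1} as quoted only covers $s\ge 0$. So in your argument and in the paper's alike, the $\Bog(\Div\mathbf f)$ bound rests on a negative-order property of the universal operator from \cite{KamSchSpe20}, which you rightly flag.
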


\begin{proof}
	The proof is by now standard. One covers the domain $S_L$ with balls of finite overlap, such that on each ball all possible functions $\eta$ can be written as a graph. On these sets one may apply Theorem~\ref{thm:ndBog1}. Hence the partition of unity argument introduced in~\cite[Section 3.1]{SaaSch21} allows to construct the desired operator.
\end{proof} 

\begin{remark}[Time-derivative of Bogovskij operator]\label{time-bog}
	{\rm Carefully note that the same operator can be applied to a time-changing function with time-changing support. The operator then automatically has zero trace on the variable support of the function. In particular for $\sup_t\norm{\naby \eta(t)}_{L^\infty_\by}\leq C_L$, $\sup_t\norm{\eta(t)}_{L^\infty_\by}\leq L$, we find that
		$\partial_t \mathrm{Bog}(f\chi_{\Omega_\eta})=\mathrm{Bog} (\partial_tf\chi_{\Omega_\eta})$ and $\mathrm{Bog}(\partial_t f\chi_{\Omega_\eta})=0$ on $\partial\Omega_\eta$.}
\end{remark}

\subsection{Gr\"onwall lemma}
We need the following version of a local Gr\"{o}nwall's lemma based on \cite[Lemma 24]{DiRu2005}.
\begin{lemma}
\label{lem:gron}
Let $T>0$ and $p\geq1$. Assume that for $f\in L^p((0,T))$ and $h \in L^1((0, T))$ with $f,h\geq0$ a.a. we have
\[
f(t)\leq c_0+\int_0^th(s)\ds+c_1\int_0^tf^p(s)\ds\quad\text{for a.a. }t\in[0,T],
\]
for some non-negative constants $c_0,c_1$.
Then there exist some $\tilde{T}\in(0, T]$ that depends on $h, c_0, c_1$ and $p$, such that for a.a. $t\in [0,\tilde{T}]$ we have
\[
f(t)\leq 2f(0)+2\int_0^th(s)\ds.\]
Moreover, it holds
\[\limsup_{t\to 0}f(t)\leq c_0.
\]
\end{lemma}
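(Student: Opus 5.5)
We prove Lemma~\ref{lem:gron} by a continuity (bootstrap) argument, treating the term $c_1\int_0^t f^p\ds$ as a perturbation that stays small on a short time interval. Set $H(t):=\int_0^t h(s)\ds$, which is continuous, nondecreasing and satisfies $H(0)=0$. Also set $K:=2c_0+2H(T)$, a fixed bound we aim to propagate. If $p=1$ the classical Gr\"onwall inequality already gives $f(t)\le (c_0+H(t))e^{c_1 t}$, and the argument below covers this case as well. So the interesting case is $p>1$, where the nonlinearity prevents a global bound and forces a local time $\tilde T$.

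\textbf{Step 1: barrier and comparison time.} Define $F(t):=c_0+H(t)+c_1\int_0^t f^p(s)\ds$. This function is absolutely continuous and nondecreasing, and $f\le F$ almost everywhere. Since $f^p\le F^p$ a.e., we have $F'\le h+c_1F^p$ a.e. We want to show that $F(t)\le 2(c_0+H(t))$ on a short interval. Let
\[
t^*:=\sup\{t\le T:\ F(s)\le 2(c_0+H(s))\ \text{for all }s\le t\}.
\]
Because $F(0)=c_0$ and both sides are continuous, $t^*>0$ whenever $c_0>0$; the degenerate case $c_0=0$ is handled by replacing $c_0$ with $\varepsilon>0$ and letting $\varepsilon\to0$. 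On $[0,t^*]$ we then have
\[
F(t)\le c_0+H(t)+c_1 t\,\bigl(2c_0+2H(T)\bigr)^p .
\]
Choose $\tilde T$ so small that $c_1\tilde T K^p<c_0$. Then for $t\le\min(t^*,\tilde T)$ we get $F(t)<2c_0+H(t)\le 2(c_0+H(t))$, with strict inequality. By continuity the defining inequality therefore persists slightly beyond $t^*$, so $t^*\ge\tilde T$. The time $\tilde T$ depends only on $c_0$, $c_1$, $p$ and $H(T)=\|h\|_{L^1}$, as required.

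\textbf{Step 2: the stated bounds.} For a.a.\ $t\le\tilde T$ this gives $f(t)\le 2c_0+2\int_0^th\ds$. The statement writes $2f(0)$ in place of $2c_0$; I interpret $f(0)$ as $c_0$, which is the natural initial value in the applications, where $c_0$ is the initial energy. For the limsup claim, the same estimate yields
\[
f(t)\le c_0+H(t)+c_1 tK^p\quad\text{for a.a. } t\le\tilde T .
\]
The right-hand side tends to $c_0$ as $t\to0$, which gives $\limsup_{t\to0}f(t)\le c_0$.

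The main obstacle is rigour at a few points: the inequality holds only a.e., $f$ is merely in $L^p$, and we need continuity for the bootstrap. This is why the comparison is run on the absolutely continuous majorant $F$ rather than on $f$ itself. A second delicate point is the case $c_0=0$, which is handled by perturbing $c_0$ to $\varepsilon$ as in Step 1.
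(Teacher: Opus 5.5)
For $c_0>0$ your bootstrap on the absolutely continuous majorant $F$ is correct, and it takes a different route from the paper. The paper solves the integral equation $g(t)=c_0+\int_0^t h\,\mathrm{d}s+c_1\int_0^t g^p\,\mathrm{d}s$ and imports the bound $g\le 2g(0)+2\int_0^t h\,\mathrm{d}s$ from \cite[Lemma 24]{DiRu2005}. It then proves $f\le g$ by applying the linear Gr\"onwall lemma to $(f-g)_+$, with the integrable kernel coming from the integral mean value form of $f^p-g^p$. Your argument is self-contained and gives an explicit $\tilde T$. The paper's argument gives the sharper comparison $f\le g$ with the ODE solution, but outsources the key bound. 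Reading $2f(0)$ as $2c_0$ is also what the paper's proof actually delivers, since $g(0)=c_0$ and $f(0)$ has no meaning for $f\in L^p$.

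There is, however, a genuine gap in the case $c_0=0$, $p>1$, $c_1>0$, $H(T)>0$, which the statement allows; your classical Gr\"onwall remark covers $p=1$. Your choice $c_1\tilde T K^p<c_0$ uses only the slack $c_0$, and the proposed perturbation does not repair this. With $c_0$ replaced by $\varepsilon$ you need $c_1\tilde T_\varepsilon(2\varepsilon+2H(T))^p<\varepsilon$, so $\tilde T_\varepsilon\to0$ as $\varepsilon\to0$, and at any fixed $t>0$ the argument eventually says nothing. The perturbation does give $\limsup_{t\to0}f(t)\le c_0$, because that claim is local at $t=0$, but it does not give the bound on a fixed interval $[0,\tilde T]$.

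The missing step is to exploit the slack $c_0+H(t)$ by linearising:
\begin{enumerate}
\item Run your continuity argument with the non-degenerate barrier $K:=2(c_0+H(T))+1>F(0)$. This gives $F\le K$ on $[0,T_1]$ for any $T_1\in(0,T]$ with $c_1T_1K^p<c_0+H(T)+1$.
\item On $[0,T_1]$ you have $f^p\le K^{p-1}f\le K^{p-1}F$, hence $F(t)\le c_0+H(t)+\lambda\int_0^t F\,\mathrm{d}s$ with $\lambda:=c_1K^{p-1}$.
\item The linear Gr\"onwall lemma with nondecreasing forcing $c_0+H$ gives $F(t)\le (c_0+H(t))e^{\lambda t}\le 2(c_0+H(t))$ for $t\le\tilde T:=\min\{T_1,\ln 2/\lambda\}$, with $\ln 2/\lambda:=\infty$ if $c_1=0$.
\end{enumerate}
This $\tilde T$ depends only on $c_1$, $p$ and $c_0+\|h\|_{L^1}$, works uniformly down to $c_0=0$, and also gives the $\limsup$ claim directly.
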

\begin{proof}

At first we consider the solution to the integral equation (which is nothing but a simple ordinary differential equation with unique local solution),
\[
g(t)=g(0)+\int_0^t h(s)\ds+c_1\int_0^tg^p(s)\ds\text{ with }g(0)=c_0.
\]
We find that $g$ is a continuous solution to an ODE for short times. Moreover by \cite[Lemma 24]{DiRu2005} that there exists a $\tilde{T}>0$ depending on $h, c_0, c_1$ and $p$, such that
\[
g(t)\leq 2g(0)+2\int_0^th(s)\ds\quad\text{for a.a. }t\in[0,\tilde T].
\]
Now we find that by our assumption on $f$ that
\[
f(t)-g(t)\leq c_1\int_0^t(f^p-g^p)\ds\quad\text{for a.a. }t\in[0,\tilde T].
\] 
It follows that
\[
f(t)-g(t)\leq c_1\int_0^t\int_0^1(\theta \abs{f(s)}+(1-\theta)\abs{g(s)})^{p-1}\, \dd\theta(f(s)-g(s))\ds\quad\text{for a.a. }t\in[0,\tilde T].
\] 
This implies for
$
a(t):=(f(t)-g(t))_{+},
$
\[
a(t)\leq c_1\int_0^t\bigg(\int_0^1(\theta \abs{f(s)}+(1-\theta)\abs{g(s)})^{p-1}\, \dd\theta\bigg) a(s)\ds\quad\text{and}\quad a(0)=0.
\]
We obtain $a(t)\equiv0$ by the standard Gronwall lemma as $\int_0^1(\theta \abs{f}+(1-\theta)\abs{g})^{p-1}\, d\theta \in L^1(0,\tilde T)$.
Hence both claims follows as $f(t)\leq g(t)$ by the above and the continuity of $g$ at $0$.
\end{proof}

\section{3D Navier--Stokes equations}
\label{sec:loc}
Our goal in this section is to construct a local-in-time strong solution of \eqref{1}--\eqref{interfaceCond}, i.e. proving Theorem \ref{thm:NS3}. To do this, it is useful to transform this system onto the fixed reference domain.
Thus, for a solution $( \eta, \bu,  \pi )$  of \eqref{1}--\eqref{interfaceCond}, we set $\overline{\pi}=\pi\circ \bfPsi_\eta$ and 
$\overline{\bu}=\bu\circ \bfPsi_\eta$
and define
\begin{equation}\label{matrices}
\begin{aligned}
\mathbf{A}_\eta=J_\eta\big( \nabx \bfPsi_\eta^{-1}\circ \bfPsi_\eta \big)^\intercal\nabx \bfPsi_\eta^{-1}\circ \bfPsi_\eta,&\\
\mathbf{B}_\eta=J_\eta \left(\nabx \bfPsi_\eta^{-1}\circ \bfPsi_\eta\right)^\intercal,&\\
\mathbf{H}_\eta(\overline{\bu}, \overline{\pi})
=-\mathbf{A}_\eta \nabx \overline{\bu}
+
\mathbf{B}_\eta \overline{\pi},&
\\
\mathbf{h}_\eta(\overline{\bu})
=
-
J_\eta \nabx\overline{\bu}\cdot \partial_t \bfPsi_\eta^{-1}\circ \bfPsi_\eta -\mathbf{B}_\eta\nabx\overline{\bfv}~\overline{\bu}
+
J_\eta  \overline{\bff},
\end{aligned}
\end{equation}
where $J_\eta=\mathrm{det}(\nabla\bfPsi_\eta)$ and $\overline{\bff}=\bff \circ \bfPsi_\eta$. Exactly as in the two-dimensional visco-elastic case considered in  \cite[Lemma 4.2]{Br} we obtain the following result.
\begin{theorem}
\label{thm:transformedSystem}
Suppose that the dataset
$(\bff, g, \eta_0, \eta_*, \bu_0)$
satisfies \eqref{dataset'}.
Then $( \eta, \bu,  \pi )$ is a strong solution to \eqref{1}--\eqref{interfaceCond} in the sense of Definition \ref{def:strongSolution}, if and only if $( \eta, \overline{\bu},  \overline{\pi} )$ is a strong solution of
\begin{align}
\label{contEqAloneBar}
\mathbf{B}_{\eta}:\nabx \overline{\bu}= 0,
\\
\partial_t^2\eta  + \Dely^2\eta
=
g+\bn^\intercal \big[\mathbf{H}_\eta(\overline{\bu}, \overline{\pi})\big]\circ\bm{\varphi} \bn ,
\label{shellEqAloneBar}
\\
J_{\eta}\partial_t \overline{\bu}  -\divx(\mathbf{A}_{\eta}  \nabx\overline{\bu}) 
 +\divx(\mathbf{B}_{\eta}\overline{\pi}) 
 = 
\mathbf{h}_\eta(\overline{\bu}),
\label{momEqAloneBar}
\end{align}
with  $\overline{\bu}  \circ \bm{\varphi}  =(\partial_t\eta)\bn$ on $I\times \omega$.
\end{theorem}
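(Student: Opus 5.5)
The argument is a change of variables through the Hanzawa transform $\bfPsi_\eta$, using the regularity of $\bfPsi_\eta$ from \eqref{210and212}--\eqref{218}. I treat the two directions simultaneously, since every step is an identity between equivalent formulations.

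\emph{Regularity of the transform.} For a strong solution, $\eta\in L^\infty_tW^{4,2}_\by\cap W^{1,\infty}_tW^{2,2}_\by$. In 3D (2D structure) this gives $\eta\in L^\infty_t C^{2,\beta}$, and $\partial_t\eta\in L^\infty_t C^{0,\beta}\cap L^\infty_tW^{2,2}_\by$. By \eqref{210and212} and \eqref{218}:
\begin{itemize}
\item $\bfPsi_\eta$ and $\bfPsi_\eta^{-1}$ are uniformly bi-Lipschitz $W^{4,2}$-diffeomorphisms;
\item $J_\eta\approx 1+\eta$ is bounded from above and below by \eqref{eq:detPsi};
\item $\mathbf A_\eta,\mathbf B_\eta\in L^\infty_tW^{3,2}_\bx\hookrightarrow L^\infty_tW^{1,\infty}_\bx$;
\item $\partial_t\bfPsi_\eta\in L^\infty_tW^{2,2}_\bx$.
\end{itemize}
Composition with such a map therefore preserves $L^2$, $W^{1,2}$ and $W^{2,2}$ uniformly in time. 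Moreover $\partial_t\overline{\bu}=\partial_t\bu\circ\bfPsi_\eta+(\nabx\bu\circ\bfPsi_\eta)\,\partial_t\bfPsi_\eta$, and $\nabx\bu\,\partial_t\bfPsi_\eta\in L^\infty_tL^2$ because $\nabx\bu\in L^\infty_tL^6$ and $\partial_t\bfPsi_\eta\in L^\infty_tL^3$. This shows that $(\bu,\pi)$ lies in the class of Definition \ref{def:strongSolution} if and only if $(\overline{\bu},\overline{\pi})$ lies in the corresponding class on $\Omega$.

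\emph{Transforming the equations.} The Piola identity $\divx\big(J_\eta(\nabx\bfPsi_\eta^{-1}\circ\bfPsi_\eta)^\intercal\big)=0$ holds pointwise a.e. here, since $\bfPsi_\eta\in W^{2,\infty}$ suffices. With it:
\begin{itemize}
\item $J_\eta(\Div\bu)\circ\bfPsi_\eta=\mathbf B_\eta:\nabx\overline{\bu}$, which gives \eqref{contEqAloneBar};
\item $J_\eta(\nabx\pi)\circ\bfPsi_\eta=\divx(\mathbf B_\eta\overline{\pi})$;
\item $J_\eta(\Delta\bu)\circ\bfPsi_\eta=\divx(\mathbf A_\eta\nabx\overline{\bu})$, using the symmetric-gradient simplification $\Div(\nabla\bu)^\intercal=\nabla\Div\bu=0$.
\end{itemize}
For the time derivative and convection term I use the chain rule. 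Writing $\partial_t\bu\circ\bfPsi_\eta=\partial_t\overline{\bu}+\nabx\overline{\bu}\,(\partial_t\bfPsi_\eta^{-1})\circ\bfPsi_\eta$ and $((\bu\cdot\nabx)\bu)\circ\bfPsi_\eta=\nabx\overline{\bu}\,(\nabx\bfPsi_\eta^{-1}\circ\bfPsi_\eta)\,\overline{\bu}$, and multiplying by $J_\eta$, yields \eqref{momEqAloneBar} with $\mathbf h_\eta$ exactly as in \eqref{matrices}. All these identities hold a.e. and are reversible because $J_\eta>0$ and $\bfPsi_\eta$ is invertible.

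\emph{Boundary terms.} By construction $\bfPsi_\eta\circ\bfvarphi=\bfvarphi_\eta$, so the kinematic condition \eqref{interfaceCond} is exactly $\overline{\bu}\circ\bfvarphi=\partial_t\eta\bn$. For the shell equation I use Nanson's formula $\bn_\eta|\det\naby\bfvarphi_\eta|=J_\eta(\nabx\bfPsi_\eta^{-1}\circ\bfPsi_\eta)^\intercal\bn\,|\det\naby\bfvarphi|$ on $\partial\Omega$ (with the normalisation $|\det\naby\bfvarphi|=1$ absorbed as in \cite{Br}). This turns $\bn^\intercal\bftau\circ\bfvarphi_\eta\bn_\eta|\det\naby\bfvarphi_\eta|$ into $-\bn^\intercal[\mathbf H_\eta(\overline{\bu},\overline{\pi})]\circ\bfvarphi\,\bn$. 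In doing so, the transpose part $(\nabx\bu)^\intercal$ of the stress must be handled. My first choice is to argue, as in \cite[Lemma 4.2]{Br}, that its normal–normal contribution agrees with that of $\nabx\bu$, so that $\bn^\intercal(\nabx\bu+(\nabx\bu)^\intercal)\bn_\eta$ reduces to the form encoded by $\mathbf A_\eta$. The basis for this is that $\bu\circ\bfvarphi_\eta$ is parallel to $\bn$, together with $\Div\bu=0$.

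\emph{Main obstacle.} I expect this last step to be the delicate one: matching the transformed boundary stress precisely with $\mathbf H_\eta$, including the $(\nabx\bu)^\intercal$ term and the traces. The trace regularity itself is fine, since $\nabx\overline{\bu},\overline{\pi}\in L^\infty_tW^{1,2}$ have $W^{1/2,2}$ traces and the coefficients are Lipschitz. The rest is routine bookkeeping following \cite[Lemma 4.2]{Br}.
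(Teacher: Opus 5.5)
Your route is the same as the paper's, which simply invokes \cite[Lemma 4.2]{Br}: transport through $\bfPsi_\eta$, the Piola identity for $\Div\bu$, $\nabla\pi$ and $\Delta\bu$, the chain rule for $\partial_t\bu$ and the convective term, and $\bfPsi_\eta\circ\bfvarphi=\bfvarphi_\eta$ for the kinematic condition. Those parts are fine.

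The gap is in the step you flag yourself, the boundary stress, and your proposed argument for it does not work. Read it either way: the normal--normal contribution of $(\nabla\bu)^\intercal$ neither vanishes nor coincides with that of $\nabla\bu$ once the reference surface is curved. In the second reading you would also get $2\,\bn^\intercal\nabla\bu\,\bn_\eta$, not the single copy that $\mathbf A_\eta$ puts into $\mathbf H_\eta$.

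Here is what your two ingredients actually give. Set $\tilde\bn=\bn\circ\by$ and $w=\partial_t\eta\circ\by$; both are constant along the normal lines of $\partial\Omega$. Then $\bu-w\tilde\bn$ vanishes on $\partial\Omega_\eta$, so on $\partial\Omega_\eta$ we have $\nabla\bu=\nabla(w\tilde\bn)+\mathbf a\otimes\bn_\eta$ for some vector field $\mathbf a$. Incompressibility gives $\mathbf a\cdot\bn_\eta=-\Div(w\tilde\bn)$. Since $\partial_{\tilde\bn}(w\tilde\bn)=0$,
\[
\bn^\intercal(\nabla\bu)^\intercal\bn_\eta=(\partial_{\tilde\bn}\bu)\cdot\bn_\eta=-(\bn\cdot\bn_\eta)\,\partial_t\eta\,(\Div\tilde\bn)\circ\bfvarphi_\eta\quad\text{on }\omega .
\]
This term is fixed by the boundary data. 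By contrast, $\bn^\intercal\nabla\bu\,\bn_\eta$ contains the genuine normal derivative $\tilde\bn\cdot\mathbf a$. For plates $\Div\tilde\bn=0$, so the transpose drops out; that is the identity used in the plate literature. For shells it does not drop out. A concrete case: the unit disc, $\eta=0$ at some instant, $\bu=w\,\mathbf e_r$ on the circle. Then $\partial_r u_r=-w$ there, so the symmetric stress gives $-2w$ while the gradient form gives $-w$.

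So the equivalence with $\mathbf H_\eta=-\mathbf A_\eta\nabla\overline\bu+\mathbf B_\eta\overline\pi$ cannot be closed as you plan. You need one of two repairs.

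\begin{itemize}
\item Keep the extra term. Nanson's formula and $\nabla\bfPsi_\eta\bn=\bn$ give $(\bn\cdot\bn_\eta)|\det\nabla_\by\bfvarphi_\eta|=J_\eta\circ\bfvarphi\,|\det\nabla_\by\bfvarphi|$. The transformed shell equation then acquires $+J_\eta\circ\bfvarphi\,|\det\nabla_\by\bfvarphi|\,(\Div\tilde\bn)\circ\bfvarphi_\eta\,\partial_t\eta$ on the right-hand side. Equivalently, transform the full symmetric gradient inside $\mathbf H_\eta$.
\item Read the traction in \eqref{1} in gradient form, $(\nabla\bu-\pi\mathbb I)\bn_\eta$. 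This is exactly what the weak formulation with $\mathbf A_\eta\nabla\overline\bu:\nabla\bfphi$, used throughout the paper, encodes.
\end{itemize}

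The extra term has the form $c(\by,\eta)\,\partial_t\eta$ with $c$ smooth, so it is harmless for all later estimates. Without one of these adjustments, though, the boundary identity you want is false for shells. The paper does not spell this point out; it only cites \cite{Br}. Separately, in 3D the area factor $|\det\nabla_\by\bfvarphi|$ cannot in general be normalised to $1$, so carry it along as a smooth positive weight rather than absorbing it.
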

The idea now is to replace the unknown geometry, i.e. $\eta$ in $J_\eta, \mathbf A_\eta,\mathbf B_\eta$, $\mathbf h_\eta$ and $\mathbf H_\eta$, by a given function $\zeta$ and consider the problem of finding
$( \eta, \overline{\bu},  \overline{\pi} )$ as a strong solution of
\begin{align}
\label{contEqAloneBar'}
\mathbf{B}_{\zeta}:\nabx \overline{\bu}= 0,
\\
\partial_t^2\eta  + \Dely^2\eta
=
g+\bn^\intercal \big[\mathbf{H}_\zeta(\overline{\bu}, \overline{\pi})\big]\circ\bm{\varphi} \bn ,
\label{shellEqAloneBar'}
\\
J_{\zeta}\partial_t \overline{\bu}  -\divx(\mathbf{A}_{\zeta}  \nabx\overline{\bu}) 
 +\divx(\mathbf{B}_{\zeta}\overline{\pi}) 
 = 
\mathbf{h}_\zeta(\overline{\bu}),
\label{momEqAloneBar'}
\end{align}
with  $\overline{\bu}  \circ \bm{\varphi}  =(\partial_t\eta)\bn$ on $I\times \omega$.
We introduce the following spaces where we eventually will seek a solution within. For  $0<s\leq 1/4$, we define
\begin{align}\label{reguzeta}
\begin{aligned}
X_{\tt shell}(0,T)&:=W^{2,\infty}(0,T;L^2(\omega))\cap W^{1,\infty}(0, T;W^{2,2}(\omega))\cap L^\infty(0, T; W^{4,2}(\omega))\\
&\quad\cap W^{2,2}(0,T;W^{1/2,2}(\omega))\cap W^{1,2}(0,T;W^{s+2,2}(\omega))\cap L^{2}(0,T;W^{9/2,2}(\omega)),
\end{aligned}
\end{align}
and
\begin{align*}
 X_{\tt fluid}(0,T):= W^{1,\infty}(0,T;L^2(\Omega))\cap L^\infty(0,T;W^{2,2}(\Omega))\times L^\infty(0,T;W^{1,2}(\Omega)).
\end{align*}
We equip the spaces $X_{\tt shell}(0,T)$ and $X_{\tt fluid}(0,T)$,  with their natural norms
$\|\cdot\|_{X_{\tt shell}(0,T)}$ and $\|\cdot\|_{X_{\tt fluid}(0,T)}$, respectively. 
In Section \ref{sec:3.2} we show a-priori estimates, which imply that
the mapping $\zeta\mapsto\eta $ maps a ball $B_R$ in $X_{\tt shell}$ to itself provided that we choose $R$ large enough and $T:=T_{\tt max}$ small enough and that  $(\overline{\bu},\overline{\pi})$ belongs to a ball $B_R$ in $X_{\tt fluid}$.
In Section~4 below, we then prove that the mapping $\zeta\mapsto\eta $ is a contraction in the topology of the energy space
\begin{align*}
W^{1,\infty}(0,T;L^2(\omega))\cap L^{\infty}(I;W^{2,2}(\omega)),
\end{align*}
see Section \ref{sec:3.3}.

In the following assume that $\zeta\in  X_{\tt shell}(0,T)$, with 
\[
\|\zeta\|_{X_{\tt shell}(0,T)}\leq R.
\]
We do not solve
 \eqref{contEqAloneBar'}--\eqref{momEqAloneBar'}  directly but add an additional regularisation terms, such that all calculations in the following are admissible. 
Thus we consider for $0<\varepsilon\ll1$, the function ${\zeta_\varepsilon} $
as the mollification of $\zeta$ in space and time with a radius $\varepsilon$
and solve
\begin{equation}\label{eq:dissstructureTrans}
	\left\{\begin{aligned}
		& \partial_t^2\eta-\varepsilon\partial_t\Dely\eta +\Dely^2\eta=g
		+
		\bn^\intercal \big[\mathbf{H}_{\zeta_\varepsilon}(\overline{\bu}, \overline{\pi})\big]\circ\bm{\varphi} \bn
		&\text{ for all }  (t,\by)\in I\times\omega
		,\\
		&J_{\zeta_\varepsilon}\partial_t \overline{\bu}  -\divx(\mathbf{A}_{\zeta_\varepsilon}  \nabx\overline{\bu}) 
		+\divx(\mathbf{B}_{\zeta_\varepsilon}\overline{\pi}) 
		= 
		\mathbf{h}_{\zeta_\varepsilon}(\overline{\bu}) &\text{ for all }(t,\bx)\in I\times\Omega,\\
		&\mathbf{B}_{\zeta_\varepsilon}:\nabx \overline{\bu}=0&\text{ for all }(t,\bx)\in I\times\Omega.
	\end{aligned}\right.
\end{equation}
with  $ \overline \bu   \circ \bm{\varphi} =(\partial_t\eta)\bn$ on $I\times \omega$.
The latter reads in the weak formulation
\begin{equation*}
\begin{aligned}
	&\int_I  \frac{\mathrm{d}}{\dt}\bigg(\int_\omega \partial_t \eta \, \phi  J_{\zeta_\varepsilon}\dy
	+
	\int_{\Omega}\overline\bu  \cdot {\bfphi}\dx
	\bigg)\dt 
	\\
	&=\int_I  \int_{\Omega}\big(  \partial_t J_{\zeta_\varepsilon} \,\overline\bu \cdot{\bfphi}+J_{\zeta_\varepsilon}\,\overline\bu\cdot \partial_t  {\bfphi} 
	-  
	\bfA_{\zeta_\varepsilon}\nabla \overline\bu:\nabla {\bfphi} +\bfh_{\zeta_\varepsilon}(\overline \bfv)\cdot{\bfphi} \big) \dx\dt
	\\
	&\quad+
	\int_I \int_\omega \big(\partial_t \eta\, \partial_t\phi-\epsilon\partial_t\nabla_\by\eta\cdot \nabla_\by\phi+
	g\, \phi-\Dely\eta\,\Dely \phi \big)\dy\dt,
\end{aligned}
\end{equation*}
for all  $(\phi, {\bfphi}) \in C^\infty(\overline{I}\times\omega) \times C^\infty(\overline{I} ;C^\infty_{\Div}(\R^3))$ with $\phi(T,\cdot)=0$, ${\bfphi}(T,\cdot)=0$ and $\bfphi\circ\bfvarphi =\phi {\bfn}$ on $I\times\omega$.
%
Moreover we work with a regularised set of initial conditions $(\eta_{\varepsilon,0},\eta_{\varepsilon,*},\bv_{\varepsilon,0})$ satisfying
\begin{equation}\label{initialconv}
\begin{aligned}
&\eta_{\varepsilon,0} \rightarrow \eta_0 \qquad\text{in }\quad {{W^{4,2}(\omega)}},\\
&\eta_{\varepsilon,*} \rightarrow \eta_* \qquad\text{in }\quad {{W^{2,2}(\omega),}}\\
&\overline \bv_{\varepsilon,0} \rightarrow \overline \bv_0 \qquad\text{in } \quad {{W^{2,2}(\Omega).}}
\end{aligned} 
\end{equation}
We construct a solution of \eqref{eq:dissstructureTrans} with smooth initial data via a refined Galerkin approximation that allows us to prove higher temporal regularity (depending on $\varepsilon$). This is the content of the next subsection. 

\subsection{Galerkin approximation}
Firstly by solving the eigenvalue problems of the
Laplace operator we construct a smooth orthogonal basis $(\hat \bfX_k)_{k\in\N}$
of $W^{1,2}_{0}(\Omega)$. More precisely, we collect all the eigenfunctions of the following Stokes problem:
\begin{equation*}
	\left\{\begin{aligned}
		&-\Delta\hat \bfX_k+\nabla\hat{\mathfrak q}_k=\lambda_k\hat \bfX_k \quad 
		&\text{in}\quad \Omega,\\
		&\Div \hat \bfX_k=0 \quad &\text{in}\quad \Omega,\\
		&\hat \bfX_k=0 \quad &\text{on}\quad \partial\Omega,
	\end{aligned}\right.
\end{equation*}
where $\lambda_k$ is the corresponding eigenvalue of the eigenfunction 
$\hat \bfX_k$ for every $k\in\mathbb{N}$. This is orthonormal in $W^{1,2}_{0}(\Omega)$ and orthogonal in $L^2(\Omega)$, respectively.

Next we construct $(\check \bfX_k)_{k\in\mathbb{N}}$ by extending $(\check X_k)_{k\in 
	\mathbb{N}}$, the basis of ${W}^{2,2}(\omega):=\{\,f\in W^{2,2}(\omega): \int_\omega f\, \dd\mathcal{H}^2=0\}$ 
by solving the Stokes problem
\begin{equation*}
	\left\{\begin{aligned}
		&-\Delta\check  \bfX_k+\nabla \check  {\mathfrak q}_k=0&\text{in}\quad \Omega,\\
		&\Div \check  \bfX_k=0 &\text{in}\quad \Omega,\\
		& \check   \bfX_k= \check X_k\bfn\quad &\text{on}\quad\partial\Omega,
	\end{aligned}
	\right.
\end{equation*}
These functions are smooth in space and linearly independent. 

Now we define $(X_k, \bfX_k)_{k\in\mathbb{N}}$ in an enumeration way as follows 
\begin{equation*}
	\left(X_k, \bfX_k\right):=
	\left\{\begin{aligned}
		&\left(\check X_{\frac{k+1}{2}}, \check \bfX_{\frac{k+1}{2}}\right)
		\quad &k\,\,\, \text{odd}\\
		&\left(0, \hat \bfX_{\frac{k}{2}}\right)\quad &k\,\,\,\text{even}
	\end{aligned}
	\right.\qquad\text{ for all }k\in\mathbb{N}.
\end{equation*}
The $\text{span} \left\{(X_k, \, \bfX_k)\,\left|\,k\in \mathbb{N}\right.
\right\}$
consists of all couples
$(X, \bfX)\in {W}^{2,2}(\omega) \times W^{1,2}(\Omega) $ with $\bfX\circ\bfvarphi=X\,\bfn$ for every $\by\in \omega$. 

We observe further that the space $\text{span} \{( J_{\zeta_\varepsilon}^{-1}\circ\bfvarphi X_k, \bfB_{\zeta_\varepsilon}
^{-\intercal}\bfX_k)| k\in\mathbb{N}\}$ consists of the test 
function space:
$$(\psi, \Psi)\in\left\{(\psi, \Psi)\in 
{W}^{2,2}(\omega)\times W^{1,2}(\Omega)|\,\, \Psi\circ\bfvarphi=\psi \,\bfn\right\}, $$ 
which has the same regularity as ${\zeta_\varepsilon}$, being smooth in time. Note that $\Div (\bfB_{\zeta_\varepsilon}^{\intercal}\Psi)=0$ in $\Omega$ provided that
$\Div(\Psi\circ\bfPsi_{\zeta_\varepsilon}^{-1})=0$ in $\Omega_{\zeta_\varepsilon}$.
On that space we introduce the corresponding projection
operators by
\begin{equation*}
	\begin{aligned}
		&\mathscr{P}_n^{\tt shell}\phi:=J_{\zeta_\varepsilon}^{-1}\circ\bfvarphi\sum_{k=1}^n P^k(\phi) X_k:=J_{\zeta_\varepsilon}^{-1}\circ\bfvarphi\sum_{k=1}^n\langle J_{\zeta_\varepsilon}\circ\bfvarphi\,\phi, X_k\rangle_{{W}^{2,2}(\omega)}X_k,
		\\ 
		&\mathscr{P}_n^{\tt fluid}\Phi:=\bfB_{\zeta_\varepsilon}^{-\intercal}\sum_{k=1}^nP^k(\Phi)\bfX_k
		=\bfB_{\zeta_\varepsilon}^{-\intercal}\Big(\sum_{k\leq n,k\text{ odd}}P^k(\phi)\bfX_k+\sum_{k\leq n,k\text{ even}}\langle\bfX_k,\bfB_{\zeta_\varepsilon}^{\intercal}\Phi\rangle_{W^{1,2}_{0}(\Omega)}\bfX_k\Big).
	\end{aligned}
\end{equation*}

We make the following ansatz, for every fixed $n\in\mathbb{N}$:
\begin{equation}\label{ansatz}
	\begin{aligned}
		\overline \bu_n(t,\bx):=\bfB_{\zeta_\varepsilon}^{-\intercal}\sum_{k=1}^n
	\alpha^k_n(t)\bfX_k(\bx) \quad\text{ for all }t\in(0, T),\\
		\eta_n(t,\by):=\int_0^t(J_{\zeta_\varepsilon}^{-1}\circ\bfvarphi)(\tau)\sum_{k=1}^n\alpha_n^k(\tau) X_k(\by)\dd\tau+\mathscr{P}_n^{\tt shell}\eta_0\quad\text{ for all }t\in(0, T).
	\end{aligned}
\end{equation}

From the construction \eqref{ansatz}, we see that $\overline \bu_n\circ\bfvarphi=\partial_t\eta_n\,\bn$ for $(t, \by)\in (0, T)\times \omega$.

Now we go back to the weak formulation and take the test function ${\bfphi}:=\bfB_{\zeta_\varepsilon}^{-\intercal}\bfX_k$ for $k=1,\dots,n$. We seek for $(\eta_n, \overline{\bv}_n)$ (or rather the $\alpha_ n^k$'s from \eqref{ansatz}) by solving:
\begin{align}
	&\int_\Omega\partial_t\overline\bu_n\cdot \mathbf{B}_{\zeta_\varepsilon}^{-\intercal}\bfX_k J_{\zeta_\varepsilon}\dd\bx+\int_\Omega\nabla\overline\bu_n\,\partial_t\bfPsi_{\zeta_\varepsilon}^{-1}\circ\bfPsi_{\zeta_\varepsilon} \, \cdot \bfB_{\zeta_\varepsilon}^{-\intercal}\bfX_k J_{\zeta_\varepsilon}\dd\bx\nonumber
	\\
	&+\frac{1}{2}\int_\omega\partial_t{\zeta_\varepsilon} \partial_t\eta_n\,X_k \dy+\frac{1}{2}\int_\Omega(\overline\bu_n\,\cdot\mathbf{B}_{\zeta_\varepsilon}\nabla)\overline\bu_n\cdot\mathbf{B}_{\zeta_\varepsilon}^{-\intercal}\bfX_k\dd\bx-\frac{1}{2}\int_\Omega(\overline\bu_n\cdot\bfB_{\zeta_\varepsilon}\nabla)(\bfB_{\zeta_\varepsilon}^{-\intercal}\bfX_k)\cdot\overline\bu_n\,\dd\bx\nonumber
	\\ &+\int_\Omega(\mathbf{A}_{\zeta_\varepsilon}\nabla)\overline\bu_n:\nabla(\mathbf{B}_{\zeta_\varepsilon}^{-\intercal}\bfX_k)\dd\bx
	-\int_\Omega \bff\circ\bfPsi_{\zeta_\varepsilon}\cdot \,\bfB_{\zeta_\varepsilon}^{-\intercal}\bfX_k J_{\zeta_\varepsilon}\dd\bx-\int_\omega g\,J_{\zeta_\varepsilon}^{-1}\circ\bfvarphi X_k\dy\nonumber
	\\
	&
	+\int_\omega J_{\zeta_\varepsilon}^{-1}\circ\bfvarphi X_k\,\partial_t^2\eta_n \dy+\int_\omega\Dely\eta_n\Dely(J_{\zeta_\varepsilon}^{-1}\circ\bfvarphi  X_k) \dy+\varepsilon\int_\omega\partial_t\nabla \eta_n\cdot \nabla(J_{\zeta_\varepsilon}^{-1}\circ\bfvarphi X_k)\dy=0,\label{weakreference}
\end{align}
where  $(X_k, \bfX_k)\in {W}^{2,2}(\omega )\times W^{1,2}_{\operatorname{div}}(\Omega)$ with $\bfX_k \circ\bm{\varphi}= X_k\,\bn$ on $\omega$.  Note that in the above formulation we decouple the geometry and the solution pair $(\eta_n, \overline\bu_n)$.

Using the ansatz \eqref{ansatz}, we substitute the formulas of $\partial_t\overline\bu_n$ and $\partial_t^2\eta_n$ in \eqref{weakreference} and obtain the first order nonlinear ODE for $\boldsymbol{\alpha}_n=(\alpha_n^k)_{k=1}^n$:
\begin{equation}\label{ODE}
\mathcal{A}(t)\boldsymbol{\alpha}'_n(t)=\mathcal{F}(\partial_t\nabla\zeta_\varepsilon, \partial_t\zeta_\varepsilon, \nabla^2\zeta_{\varepsilon}, \nabla\zeta_{\varepsilon}, \boldsymbol{\alpha}_n),
\end{equation}
where the coefficient matrix $$\mathcal{A}(t)=\int_\Omega\bfB_{\zeta_\varepsilon}^{-\intercal}\bfX_k\cdot \bfB_{\zeta_\varepsilon}^{-\intercal}\bfX_l\, J_{\zeta_{\varepsilon}}\dx +\int_\omega J_{\zeta_\varepsilon}\circ\bfvarphi\, X_k\cdot J_{\zeta_{\varepsilon}}\circ\bfvarphi\, X_l \dy $$
is symmetric and coercive. The smoothness of $\zeta_\varepsilon$, together with the ODE theory, ensures the existence and uniqueness of $\boldsymbol{\alpha}_n$, which is in fact smooth.

We can now multiply  \eqref{weakreference} by $\alpha_n^k$ and sum over $k$. This corresponds to the test by $(\partial_t\eta_n,\overline{\bu}_n)$ and we obtain the basic energy estimate:
\begin{align*}
&\sup_I
\int_\omega\big(\vert \partial_t\eta_n\vert^2
+
\vert  \Dely \eta_n\vert^2 \big)\dy
 +
 \sup_I
 \int_\Omega\vert  \overline{\bu}_n\vert^2\dx
+\varepsilon\int_I\int_\omega|\partial_t\naby\eta_n|^2\dy\dt+
 \int_I\int_\Omega\vert  \nabx\overline{\bu}_n\vert^2\dx\dt
\\
&\lesssim
\int_\omega\big(\vert \eta_*\vert^2
+
\vert \Dely \eta_0\vert^2 \big)\dy
 +
 \int_I\int_\omega|g|^2\dy\dt
  +
   \int_{\Omega_{\zeta_\varepsilon(0)}}\vert {\bu}_0\vert^2\dx
+
 \int_I\int_{\Omega_{\zeta_\varepsilon}}\vert {\bff}\vert^2 
 \dx\dt,
\end{align*}
where the hidden constant is independent of $n,T$ and $\varepsilon$. This guarantees already the existence of a limit object $(\eta_\varepsilon,\overline\bu_\varepsilon)$, $n\to \infty$,  which is a weak solution to \eqref{eq:dissstructureTrans}.
Now we derive a higher order estimate on the Galerkin level, where we allow $\varepsilon$-dependence thus benefiting from the regularity of the regularised geometry. In comparison, the $\varepsilon$-independent estimate will be derived in the next subsection \ref{sec:3.2}. This $\epsilon$-dependent a-priori estimate is unavoidable even though we use finite differences later. The reason is that without only with an epsilon dependent bound the difference quotients are bounded uniformly leading to an $\epsilon$ independent bound. 
We introduce the following higher order in time energy:
\begin{align*}
\mathcal E_n(t)&=\|\partial_t^2\eta_n(t)\|_{L^2_\by}^2+\|\partial_t\Dely\eta_n(t)\|_{L^2_\by}^2+\|\partial_t\overline\bu_n(t)\|_{L^2_\bx}^2+\|\nabla\overline\bu_n(t)\|_{L^2_\bx}^2,\\
H(t)&=1+\|\overline{\bff}(t)\|^2_{W^{1,2}_\bx}+\|\partial_t\overline{\bff}(t)\|^2_{L^2_\bx}+\| g(t)\|^2_{W^{1,2}_\by}+\|\partial_tg(t)\|^2_{L^2_\by}.
\end{align*}
and obtain the following result.

\begin{proposition}
\label{thm:accestn}
Let $\varepsilon>0$ be given.
There is $T_{\tt max}^\varepsilon>0$ depending on $\varepsilon$ such that the following holds.
Suppose that the dataset
$( \bff,g, \eta_0, \eta_*, \overline{\bu}_0)$
satisfies \eqref{dataset'}.
Further assume that $\zeta\in X_{\tt shell}(0,T_{\tt max}^\varepsilon)$ in \eqref{reguzeta}.
Then the solution $( \eta_n, \overline{\bu}_n)$ of \eqref{weakreference} satisfies 
\begin{align}
\mathcal E_n(t)+\int_0^t\|\partial_t\nabla\overline\bu_n\|^2_{L^2_\bx}\ds+\varepsilon\int_0^t\|\partial_t^2\eta_n\|_{W^{1,2}_\by}^2\ds\lesssim \mathcal E_n(0)+1+\int_0^t H(s)\ds+\int_0^t \mathcal E_n^{4}(s)\ds,\label{gronwallpreparen}
\end{align}
for all $t\in (0,T_{\tt max}^\varepsilon)$ and $c$ independent of $n$.
\end{proposition}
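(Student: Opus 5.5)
The plan is to differentiate the Galerkin system \eqref{weakreference} in time and test it with the time derivative of the solution. Since $\zeta_\varepsilon$ is smooth in space and time and the coefficients $\boldsymbol{\alpha}_n$ solving \eqref{ODE} are smooth, this is legitimate at the Galerkin level. Concretely, I differentiate \eqref{weakreference} with respect to $t$, multiply by $\partial_t\alpha_n^k$ and sum over $k\le n$. By the ansatz \eqref{ansatz}, this corresponds to testing the time-differentiated system with the pair $(\partial_t^2\eta_n,\partial_t\overline\bu_n)$, up to correction terms that arise because the test functions $J_{\zeta_\varepsilon}^{-1}\circ\bfvarphi X_k$ and $\bfB_{\zeta_\varepsilon}^{-\intercal}\bfX_k$ themselves depend on time through $\zeta_\varepsilon$. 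The leading terms then produce
\begin{align*}
\tfrac12\tfrac{\dd}{\dt}\Big(\|\partial_t^2\eta_n\|_{L^2_\by}^2+\|\partial_t\Dely\eta_n\|_{L^2_\by}^2+\|\sqrt{J_{\zeta_\varepsilon}}\,\partial_t\overline\bu_n\|_{L^2_\bx}^2\Big)+c\|\partial_t\nabla\overline\bu_n\|_{L^2_\bx}^2+\varepsilon\|\partial_t^2\naby\eta_n\|_{L^2_\by}^2 .
\end{align*}
For the coercive part I use that $\bfA_{\zeta_\varepsilon}$ is uniformly elliptic, since $\zeta_\varepsilon$ stays in $B_R$ and has small $L^\infty$ norm for short times, and that $J_{\zeta_\varepsilon}\approx1$ by \eqref{eq:detPsi}. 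The term $\|\nabla\overline\bu_n\|_{L^2_\bx}^2$ in $\mathcal E_n$ is handled separately by testing the undifferentiated equation with $\partial_t\overline\bu_n$, $\partial_t^2\eta_n$; this is what the extra dissipation $\int_0^t\|\partial_t\nabla\overline\bu_n\|^2$ also controls via $\|\nabla\overline\bu_n(t)\|^2\le\|\nabla\overline\bu_n(0)\|^2+2\int_0^t\|\nabla\overline\bu_n\|\|\partial_t\nabla\overline\bu_n\|$.

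The remaining terms are commutator terms, in which time derivatives fall on $\zeta_\varepsilon$-dependent coefficients, and nonlinear terms. For the commutator terms I bound $\partial_t\bfA_{\zeta_\varepsilon},\partial_t\bfB_{\zeta_\varepsilon},\partial_tJ_{\zeta_\varepsilon},\partial_t^2\bfPsi_{\zeta_\varepsilon}$ in $L^\infty_\bx$ or $L^3_\bx$ using \eqref{210and212}--\eqref{218}. Because $\zeta_\varepsilon$ is a mollification, each such quantity is bounded by $c(\varepsilon,R)$; this is where the $\varepsilon$-dependence of $T^\varepsilon_{\tt max}$ and of the constant enters. A typical term is $\int\partial_t\bfA_{\zeta_\varepsilon}\nabla\overline\bu_n:\nabla\partial_t\overline\bu_n$, which I bound by $\delta\|\partial_t\nabla\overline\bu_n\|^2+c_\delta\|\nabla\overline\bu_n\|^2$ and absorb. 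On the shell side, the extra terms come from $\partial_t(J_{\zeta_\varepsilon}^{-1}\circ\bfvarphi)$ multiplying $X_k$, for instance $\int\Dely\eta_n\,\Dely(\partial_tJ^{-1}\,\partial_t\eta_n)$. These require $\|\partial_t\eta_n\|_{W^{2,2}_\by}$, which is controlled by $\|\partial_t\Dely\eta_n\|_{L^2}$ together with the energy. Where one derivative too many appears, I integrate by parts or use the $\varepsilon$-damping $\varepsilon\|\partial_t^2\eta_n\|_{W^{1,2}}^2$ in interpolation form. The forcing terms give $H(t)$ after integrating by parts in time where $\partial_t g$, $\partial_t\overline\bff$ appear.

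The main obstacle is the convective term $\partial_t\big[(\overline\bu_n\cdot\bfB_{\zeta_\varepsilon}\nabla)\overline\bu_n\big]$ tested with $\partial_t\overline\bu_n$, together with its antisymmetrised companion terms in \eqref{weakreference}. The worst piece is $\int(\partial_t\overline\bu_n\cdot\bfB\nabla)\overline\bu_n\cdot\partial_t\overline\bu_n$. In 3D I would estimate it by $\|\partial_t\overline\bu_n\|_{L^3}\|\nabla\overline\bu_n\|_{L^2}\|\partial_t\overline\bu_n\|_{L^6}$, interpolate $L^3$ between $L^2$ and $W^{1,2}$, absorb the factor $\|\partial_t\nabla\overline\bu_n\|$ via Young, and obtain $\|\nabla\overline\bu_n\|^4\|\partial_t\overline\bu_n\|^2\lesssim\mathcal E_n^3$. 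The term with the derivative on $\overline\bu_n$ is treated by the antisymmetric form, which avoids $\nabla^2\overline\bu_n$. Boundary traces of $\partial_t\overline\bu_n$ equal $\partial_t^2\eta_n\bn$, and there I use the $\varepsilon$-term and the trace theorem. Collecting everything gives the right-hand side $\mathcal E_n(0)+1+\int_0^tH+\int_0^t\mathcal E_n^4$ (powers below $4$ are bounded by $1+\mathcal E_n^4$). The constant depends on $\varepsilon,R$ but not on $n$, since every estimate uses only the projection-independent structure of the test.
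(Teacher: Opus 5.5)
Your proposal is correct and follows the paper's proof. Both arguments differentiate \eqref{weakreference} in time and multiply by ${\alpha_n^k}'$, bound the coefficient commutators through the $\varepsilon$-dependent smoothness of $\zeta_\varepsilon$, integrate by parts in time for the correction terms carrying $\partial_t^2\overline\bu_n$, $\partial_t^3\eta_n$ or $\Dely\partial_t^2\eta_n$, and interpolate the convective terms into a power of $\mathcal E_n$. The differences are cosmetic: your $L^3$--$L^6$ splitting gives $\mathcal E_n^3$ instead of $\mathcal E_n^4$, and your differential bound for $\|\nabla\overline\bu_n(t)\|_{L^2_\bx}^2$ is slightly cleaner. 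Of the two options you offer for the $\Dely\partial_t^2\eta_n$ terms, only time integration by parts works, because the $\varepsilon$-damping controls just $\naby\partial_t^2\eta_n$.
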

%
\begin{proof}
Based on the analysis around \eqref{ODE}, the system \eqref{weakreference} is smooth with respect to the time variable. Thereby we differentiate \eqref{weakreference} in time, multiply the resulting expression by ${\alpha_n^k}'(t)$ and sum with respect to $k$ (this corresponds to a test by $(\partial_t^2\eta_n,\partial_t\overline{\bu}_n)$ in the time-differentiated system). 
Note that we have the following identities
\begin{align*}
\sum_{k=1}^n{\alpha_n^k}'(t)\bfX_k&=\partial_t(\bfB_{\zeta_\varepsilon}^\intercal\overline\bu_n)=\bfB_{\zeta_\varepsilon}^\intercal\partial_t\overline\bu_n+\partial_t\bfB_{\zeta_{\varepsilon}}^\intercal\overline\bu_n,
\\
\sum_{k=1}^n{\alpha_n^k}'(t)X_k&=\partial_t(J_{\zeta_{\varepsilon}}\circ\bfvarphi\,\partial_t\eta_n)=\partial_t J_{\varepsilon}\circ\bfvarphi\,\partial_t\eta_n+J_{\varepsilon}\circ\bfvarphi\,\partial_t^2\eta_n,
\\
\sum_{k=1}^n{\alpha_n^k}'(t)\nabla(\bfB_{\zeta_{\varepsilon}}^{-\intercal}\bfX_k)&=\nabla(\bfB_{\zeta_\varepsilon}^{-\intercal}\partial_t(\bfB_{\zeta_\varepsilon}^\intercal\overline\bu_n)),
\\
\sum_{k=1}^n{\alpha_n^k}'(t)\nabla(\partial_t\bfB_{\zeta_\varepsilon}^{-\intercal}\bfX_k)&=\nabla(\partial_t\bfB_{\zeta_\varepsilon}^{-\intercal}\partial_t(\bfB_{\zeta_\varepsilon}^\intercal\overline\bu_n)),
\\
\sum_{k=1}^n{\alpha_n^k}'(t)\Delta(J_{\zeta_{\varepsilon}}^{-1}\circ\bfvarphi\, X_k)&=\Delta(J_{\zeta_\varepsilon}^{-1}\circ\bfvarphi\partial_t(J_{\varepsilon}\circ\bfvarphi\,\partial_t\eta_n)),
\\
\sum_{k=1}^n{\alpha_n^k}'(t)\Delta(\partial_t J_{\varepsilon}^{-1}\circ\bfvarphi\, X_k)&=\Delta(\partial_t J_{\varepsilon}^{-1}\circ\bfvarphi\, \partial_t(J_{\varepsilon}\circ\bfvarphi\,\partial_t\eta_n)),
\\
\sum_{k=1}^n{\alpha_n^k}'(t)\nabla(J_{\varepsilon}^{-1}\circ\bfvarphi\, X_k)&=\nabla(J_{\varepsilon}^{-1}\circ\bfvarphi\,\partial_t(J_{\varepsilon}\circ\bfvarphi\partial_t\eta_n)),
\\
\sum_{k=1}^n{\alpha_n^k}'(t)\nabla(\partial_tJ_{\varepsilon}^{-1}\circ\bfvarphi\, X_k)&=\nabla(\partial_t J_{\varepsilon}^{-1}\circ\bfvarphi\,\partial_t(J_{\varepsilon}\circ\bfvarphi\,\partial_t\eta_n)).
\end{align*}
Further note that 
\[
\|\nabla\overline\bu_n(t)\|_{L^2_\bx}^2\lesssim  \int_0^t\|\partial_t\nabla\overline\bu_n\|^2_{L^2_\bx}\ds + \|\nabla\overline\bu_n(0)\|_{L^2_\bx}^2.
\]
Hence we obtain after integrating over time that
\begin{align*}
	\mathcal E_n(t)+\int_0^t\|\partial_t\nabla\overline\bu_n\|^2_{L^2_\bx}\ds+\varepsilon\int_0^t\|\partial_t^2\eta_n\|_{W^{1,2}_\by}^2\ds=\mathcal E_n(0)+({\tt I})_n+\cdots+ ({\tt XIII})_n,
\end{align*}
where the terms $({\tt I})_n,\dots, ({\tt XIII})_n$ are defined and estimated in the following. 
Since $\bfB_{\zeta_\varepsilon}$, $J_{\zeta_\varepsilon}$ and $\bf\Psi_{\zeta_\varepsilon}$ are all smooth in time and space, it suffices to estimate the critical terms in the remaining part. 

We first have by integration by parts that
\begin{align*}
({\tt I})_n:&=\int_0^t\int_\Omega\partial_t^2\overline\bu_n\partial_t\bfB_{\zeta_\varepsilon}^{-\intercal}
\bfB_{\zeta_\varepsilon}^\intercal\overline\bu_n J_{\zeta_\varepsilon}\dx\dd s
\\
&=\int_\Omega\partial_t\overline\bu_n\partial_t\bfB_{\zeta_\varepsilon}^{-\intercal}\bfB_{\zeta_\varepsilon}^\intercal\overline\bu_n J_{\zeta_\varepsilon}\dx\big|_0^t-\int_0^t\int_\Omega\partial_t\overline\bu_n\partial_t(\partial_t\bfB_{\zeta_\varepsilon}^{-\intercal}\bfB_{\zeta_\varepsilon}^\intercal\overline\bu_n J_{\zeta_\varepsilon})\dx\dd s\\
&\leq \kappa\|\partial_t\overline\bu_n\|_{L^2_\bx}^2+c(\kappa) C_0(1+t)+\int_0^t\|\partial_t\overline\bu_n\|_{L^2_\bx}^2\dd s.
\end{align*}
Then we also see that
\begin{align*}
({\tt II})_n:&=\int_0^t\int_\Omega\partial_t\overline\bu_n\partial_t\bfB_{\zeta_\varepsilon}^{-\intercal}\partial_t(\bfB_{\zeta_\varepsilon}^\intercal\overline\bu_n) J_{\zeta_\varepsilon}\dx\dd s+\int_0^t\int_\Omega\partial_t\overline\bu_n\bfB_{\varepsilon}^{-\intercal}\partial_t(\bfB_{\zeta_\varepsilon}^\intercal\overline\bu_n)\partial_t J_{\zeta_\varepsilon}\dx\dd s\\
&\lesssim \int_0^t\|\partial_t\overline\bu_n\|_{L^2_\bx}(\|\partial_t\overline\bu_n\|_{L^2_\bx}+\|\overline\bu_n\|_{L^2_\bx})\dd s\\
&\leq \int_0^t\|\partial_t\overline \bu_n\|_{L^2_\bx}^2\dd s+C_0 t.
\end{align*}
\begin{align*}
({\tt III})_n:&=\int_0^t\int_\Omega\partial_t\nabla\overline\bu_n\partial_t{\bf\Psi}_{\zeta_\varepsilon}^{-1}\circ{\bf\Psi}_{\zeta_\varepsilon} \bfB_{\zeta_\varepsilon}^{-\intercal}\partial_t(\bfB_{\zeta_\varepsilon}^\intercal \overline\bu_n) J_{\zeta_\varepsilon}\dx\dd s\\
&\leq \kappa\int_0^t\|\partial_t\nabla\overline\bu_n\|_{L^2_\bx}^2\dd s+c(\kappa)\int_0^t\|\partial_t\overline\bu_n\|_{L^2_\bx}^2\dd s+C_0 t.
\end{align*}
\begin{align*}
({\tt IV})_n:&=\int_0^t\int_\Omega\nabla\overline\bu_n\partial_t\left(\partial_t{\bf\Psi}_{\zeta_\varepsilon}^{-1}\circ{\bf\Psi}_{\zeta_\varepsilon}\,\bfB_{\zeta_\varepsilon}^{-\intercal} J_{\zeta_\varepsilon}\right)\partial_t(\bfB_{\zeta_\varepsilon}^\intercal\overline\bu_n)\dx\dd s\\
&\lesssim \int_0^t\|\nabla\overline\bu_n\|_{L^2_\bx}^2\dd s+\int_0^t(\|\partial_t\overline\bu_n\|_{L^2_\bx}^2+\|\overline\bu_n\|_{L^2_\bx}^2)\dd s.
\end{align*}
\begin{align*}
({\tt V})_n:&=\frac{1}{2}\int_0^t\int_\omega\partial_t^2\zeta_{\varepsilon}\partial_t\eta_n\,\partial_t(J_{\zeta_{\varepsilon}}\circ\bfvarphi\partial_t\eta_n)\dy\dd s+\frac{1}{2}\int_0^t\int_\omega\partial_t\zeta_{\varepsilon}\partial_t^2\eta_n\,\partial_t(J_{\zeta_{\varepsilon}}\circ\bfvarphi\partial_t\eta_n)\dy\dd s\\
&\lesssim \int_0^t\|\partial_t^2\eta_n\|_{L^2_\by}^2\dd s+\int_0^t\|\partial_t\eta_n\|_{L^2_\by}^2\dd s.
\end{align*}
\begin{align*}
({\tt VI})_n:&=\frac{1}{2}\int_0^t\int_\Omega\partial_t\overline\bu_n\bfB_{\zeta_\varepsilon}\nabla\overline\bu_n\bfB_{\zeta_\varepsilon}^{-\intercal}\partial_t(\bfB_{\zeta_\varepsilon}^\intercal\overline\bu_n)\dx\dd s+\frac{1}{2}\int_0^t\int_\Omega\overline\bu_n\partial_t\bfB_{\zeta_\varepsilon}\nabla\overline\bu_n\bfB_{\zeta_\varepsilon}^{-\intercal}\partial_t(\bfB_{\zeta_\varepsilon}^\intercal\overline\bu_n)\dx\dd s\\
&\quad +\frac{1}{2}\int_0^t\int_\Omega\overline\bu_n\bfB_{\zeta_\varepsilon}\nabla\overline\bu_n\partial_t\bfB_{\zeta_\varepsilon}^{-\intercal}\partial_t(\bfB_{\zeta_\varepsilon}\overline\bu_n)\dx\dd s+\frac{1}{2}\int_0^t\int_\Omega\overline\bu_n\bfB_{\zeta_\varepsilon}\partial_t\nabla\overline\bu_n\bfB_{\zeta_\varepsilon}^{-\intercal}\partial_t(\bfB_{\zeta_\varepsilon}^\intercal\overline\bu_n)\dx\dd s\\
&\lesssim \int_0^t(\|\nabla\overline\bu_n\|_{L^2_\bx}+\|\partial_t\nabla\overline\bu_n\|_{L^2_\bx})(\|\overline\bu_n\|_{L^4_\bx}\|\partial_t\overline\bu_n\|_{L^4_\bx}+\|\overline\bu_n\|_{L^4_\bx}^2)\dd s+\int_0^t\|\nabla\overline\bu_n\|_{L^2_\bx}\|\partial_t\overline\bu_n\|_{L^4_\bx}^2\dd s\\
&\lesssim \kappa\int_0^t\|\partial_t\overline{\bu}_n\|_{W^{1,2}_\bx}^2\ds+c(\kappa) \int_0^t\Big(\|\overline{\bu}_n\|^{8}_{W^{1,2}_\bx}+\|\partial_t\overline{\bu}_n\|_{L^2_\bx}^{8}+1\Big)\ds.
\end{align*}
The integral from the contribution of
$$\frac{1}{2}\int_0^t\int_\Omega(\overline\bu_n\cdot\bfB_{\zeta_\varepsilon}\nabla)(\bfB_{\zeta_\varepsilon}^{-\intercal}\bfX_k)\cdot\overline\bu_n\,\dd\bx\ds,$$
denoted by $({\tt VII})_n$, can be treated similarly as $({\tt VI})_n$. 

We continue the estimate:
\begin{align*}
({\tt VIII})_n:&=\int_0^t\int_\Omega\partial_t\bfA_{\zeta_\varepsilon}\nabla\overline\bu_n\,\nabla(\bfB_{\zeta_\varepsilon}^{-\intercal}\partial_t(\bfB_{\zeta_\varepsilon}^\intercal\overline\bu_n))\dx\dd s+\int_0^t\int_\Omega\bfA_{\zeta_\varepsilon}\nabla\overline\bu_n\,\nabla(\partial_t\bfB_{\zeta_\varepsilon}^{-\intercal}\partial_t(\bfB_{\zeta_\varepsilon}^\intercal\overline\bu_n))\dx\dd s\\
&\quad +\int_0^t\int_\Omega\bfA_{\zeta_\varepsilon}\partial_t\nabla\overline\bu_n\left(\nabla(\bfB_{\zeta_\varepsilon}^{-\intercal}\partial_t(\bfB_{\zeta_\varepsilon}\overline\bu_n))-\partial_t\nabla\overline\bu_n\right)\dx\dd s\\
&\lesssim \int_0^t\int_\Omega (\partial_t\nabla\overline\bu_n+\nabla\overline\bu_n)(\overline\bu_n+\partial_t\overline\bu_n+\nabla\overline\bu_n)\dx\dd s\\
&\leq \kappa\int_0^t\|\partial_t\nabla\overline\bu_n\|_{L^2_\bx}^2\dd s+c(\kappa)\int_0^t(\|\nabla\overline\bu_n\|_{L^2_\bx}^2+\|\partial_t\overline\bu_n\|_{L^2_\bx}^2+\|\overline\bu_n\|_{L^2_\bx}^2)\dd s.
\end{align*}
\begin{align*}
({\tt IX})_n:&=\int_0^t\int_\Omega\partial_t(\bff\circ{\bf\Psi}_{\zeta_\varepsilon}\,\bfB_{\zeta_\varepsilon}^{-\intercal} J_{\zeta_\varepsilon})\partial_t(\bfB_{\zeta_\varepsilon}^\intercal\overline\bu_n)\dx\dd s\\
&\lesssim \int_0^t\int_\Omega(\partial_t\overline\bff+\partial_t\nabla\overline\bff)(\partial_t\overline\bu_n+\overline\bu_n)\dx\dd s\\
&\leq \int_0^tH(s)\dd s+\int_0^t(\|\partial_t\overline\bu_n\|_{L^2_\bx}^2+\|\overline\bu_n\|_{L^2_\bx}^2)\dd s.
\end{align*}
\begin{align*}
({\tt X})_n:&=\int_0^t\int_\omega\partial_t(g\, J_{\zeta_\varepsilon}^{-1}\circ\bfvarphi)\partial_t( J_{\zeta_\varepsilon}\circ\bfvarphi \partial_t\eta_n)\dt\dd s\\
&\lesssim \int_0^t\int_\omega(g+\partial_tg)(\partial_t\eta_n+\partial_t^2\eta_n)\dy\dd s\\
&\leq \int_0^t H(s)\dd s+\int_0^t(\|\partial_t\eta_n\|_{L^2_\by}^2+\|\partial_t^2\eta_n\|_{L^2_\by}^2)\dd s.
\end{align*}
Now we take again the integration by parts with respect to time and obtain that
\begin{align*}
({\tt XI})_n:&=\int_0^t\int_\omega\partial_t^3\eta_n J_{\zeta_\varepsilon}^{-1}\circ\bfvarphi\partial_t J_{\zeta_\varepsilon}\circ\bfvarphi\partial_t\eta_n\dy\dd s+\int_0^t\int_\omega\partial_t^2\eta_n\partial_t J_{\zeta_\varepsilon}^{-1}\circ\bfvarphi\partial_t(J_{\zeta_\varepsilon}\circ\bfvarphi\partial_t\eta_n)\dy\dd s\\
&=\int_\omega\partial_t^2\eta_n  J_{\zeta_\varepsilon}^{-1}\circ\bfvarphi\partial_t J_{\zeta_\varepsilon}\circ\bfvarphi\partial_t\eta_n\dy\big|_0^t
-\int_0^t\partial_t^2\eta_n\partial_t(J_{\zeta_\varepsilon}^{-1}\circ\bfvarphi\partial_t J_{\zeta_\varepsilon}\circ\bfvarphi\partial_t\eta_n)\dy\dd s\\
&\quad +\int_0^t\int_\omega\partial_t^2\eta_n\partial_t J_{\zeta_\varepsilon}^{-1}\circ\bfvarphi\partial_t(J_{\zeta_\varepsilon}\circ\bfvarphi\partial_t\eta_n)\dy\dd s\\
&\leq\kappa\|\partial_t^2\eta_n\|_{L^2_\by}^2+c(\kappa)\|\partial_t\eta_n\|_{L^2_\by}^2+\int_0^t(\|\partial_t^2\eta_n\|_{L^2_\by}^2+\|\partial_t\eta_n\|_{L^2_\by}^2)\dd s.
\end{align*}
We also see that
\begin{align*}
({\tt XII})_n:&=\int_0^t\int_\omega\partial_t\Delta\eta_n\Delta(J_{\zeta_\varepsilon}^{-1}\circ\bfvarphi\partial_t J_{\zeta_\varepsilon}\circ\bfvarphi\partial_t\eta_n)\dy\dd s\\
&\quad +\int_0^t\int_\omega\Delta\eta_n\Delta(\partial_t J_{\zeta_\varepsilon}^{-1}\circ\bfvarphi\partial_t(J_{\zeta_\varepsilon}\circ\bfvarphi\partial_t\eta_n))\dy\dd s\\
&=\int_0^t\int_\omega\partial_t\Delta\eta_n\Delta(J_{\zeta_\varepsilon}^{-1}\circ\bfvarphi\partial_t J_{\zeta_\varepsilon}\circ\bfvarphi\partial_t\eta_n)\dy\dd s+\int_0^t\Delta\eta_n\partial_t\Delta\eta_n\dy\dd s\\
&\quad +\int_\omega\Delta\eta_n\partial_t J_{\zeta_\varepsilon}^{-1}\circ\bfvarphi J_{\zeta_\varepsilon}\circ\bfvarphi\partial_t\eta_n\dy\big|_0^t
-\int_0^t\int_\omega\partial_t\Delta\eta_n\partial_t J_{\zeta_\varepsilon}^{-1}\circ\bfvarphi J_{\zeta_\varepsilon}\circ\bfvarphi\partial_t\eta_n\dy\dd s\\
&\quad -\int_0^t\int_\omega\Delta\eta_n\partial_t(\partial_t J_{\zeta_\varepsilon}^{-1}\circ\bfvarphi J_{\zeta_\varepsilon}\circ\bfvarphi)\partial_t\eta_n\dy\dd s\\
&\lesssim \|\Delta\eta_n\|_{L^2_\by}^2+\|\partial_t\eta_n\|_{L^2_\by}^2+\int_0^t(\|\partial_t\Delta\eta_n\|_{L^2_\by}^2+\|\Delta\eta_n\|_{L^2_\by}^2+\|\partial_t\eta_n\|_{L^2_\by}^2)\dd s.
\end{align*}

Finally by taking an integration by parts in space we deal with the integral with $\varepsilon$ as below:
\begin{align*}
({\tt XIII})_n:&=\varepsilon\int_0^t\int_\omega\partial_t^2\nabla\eta_n\,\nabla(J_{\zeta_\varepsilon}^{-1}\circ\bfvarphi\partial_t J_{\zeta_\varepsilon}\circ\bfvarphi\partial_t\eta_n)\dy\dd s\\
&\quad +\varepsilon\int_0^t\int_\omega\partial_t\nabla\eta_n\,\nabla(\partial_t J_{\zeta_\varepsilon}^{-1}\circ\bfvarphi\partial_t(J_{\zeta_\varepsilon}\circ\bfvarphi\partial_t\eta_n))\dy\dd s\\
&=-\varepsilon\int_0^t\int_\omega\partial_t^2\eta_n\Delta(J_{\zeta_\varepsilon}^{-1}\circ\bfvarphi\partial_t J_{\zeta_\varepsilon}\circ\bfvarphi\partial_t\eta_n)\dy\dd s\\
&\quad -\varepsilon\int_0^t\int_\omega\partial_t\Delta\eta_n\,\partial_t J_{\zeta_\varepsilon}^{-1}\circ\bfvarphi\partial_t(J_{\zeta_\varepsilon}\circ\bfvarphi\partial_t\eta_n)\dy\dd s\\
&\lesssim \int_0^t\int_\omega\partial_t^2\eta_n(\partial_t\eta_n+\partial_t\nabla\eta_n+\partial_t\Delta\eta_n)\dy\dd s+\int_0^t\int_\omega\partial_t\Delta\eta_n\partial_t\eta_n\dy\dd s\\
&\leq \int_0^t\|\partial_t^2\eta_n\|_{L^2_\by}^2+\|\partial_t\eta_n\|_{L^2_\by}^2+\|\partial_t\Delta\eta_n\|_{L^2_\by}^2\dd s.
\end{align*}
Collecting all estimates and choosing $\kappa$ small enough yields the claim. 
\end{proof}
As already mentioned, the appropriate estimate of the initial values is a very sensitive point in the proof of the higher order in time estimate for the coupled system. It seems that it can only be rigorously justified using a Galerkin approximation, see~\cite{ScSu}. 
On the Galerkin level we find that estimate \eqref{gronwallpreparen} implies using Lemma~\ref{lem:gron} (assuming that $T_{\tt max}^\varepsilon(\mathcal E_n(0)+1)\ll1$),
that
\begin{align}\label{eq:16.03}
\mathcal E_n(t)+\int_0^t\|\partial_t\nabla\overline\bu_n\|^2_{L^2_\bx}\ds+\varepsilon\int_0^t\|\partial_t^2\eta_n\|_{W^{1,2}_\by}^2\ds\lesssim\mathcal E_n(0)+1+\int_0^t H(s)\ds,
\end{align}
for all $t\in (0,T_{\tt max}^\varepsilon)$ uniformly in $n$
and thereby we derive further that
\begin{align}
\label{eq:cont}
\limsup_{t\to 0} \mathcal{E}_{n}(t)\lesssim \mathcal{E}_{n}(0)+1.
\end{align}

Now we employ the argument of \cite[Section 4.2 and Prop. 4.2]{ScSu} to find that
\begin{equation*}
	\|\partial_t\overline\bu_n(0)\|_{L^2_\bx}+\|\partial_t^2\eta_n(0)\|_{L^2_\by} \leq c\left(\|\overline\bu_0\|_{W^{2,2}_\bx}+\|\eta_0\|_{W^{4,2}_\by}+\|\eta_*\|_{W^{2,2}_\by}+\|\overline\bff_0\|_{L^2_\bx}+\|g_0\|_{L^2_\by}\right), 
\end{equation*}
by using the system \eqref{weakreference} at the initial time. 
Hence the right-hand side of \eqref{eq:16.03} and \eqref{eq:cont} are bounded uniformly in $n$.

\subsection{The self-mapping property}\label{sec:3.2}
Compared with Proposition \ref{thm:accestn}, we derive in this subsection the $\varepsilon$-independent estimate, which eventually gives us the Proposition below. 
Similarly, we set
\begin{align*}
\mathcal E_\eta(t)&=\|\partial_t^2\eta(t)\|_{L^2_\by}^2+\|\partial_t\Dely\eta(t)\|_{L^2_\by}^2+\|\partial_t\overline\bu(t)\|_{L^2_\bx}^2+\|\nabla\overline\bu(t)\|_{L^2_\bx}^2,\\
H(t)&=1+\|\overline{\bff}(t)\|^2_{W^{1,2}_\bx}+\|\partial_t\overline{\bff}(t)\|^2_{L^2_\bx}+\| g(t)\|^2_{W^{1,2}_\by}+\|\partial_tg(t)\|^2_{L^2_\by}.
\end{align*}
At the heart of this section lies the following estimate for the time-differentiated system. Eventually we conclude with a Gr\"{o}nwall argument and the elliptic estimate from Theorem \ref{thm:stokessteady}.

\begin{proposition}
\label{thm:transformedSystemLinear}
There is $T_{\tt max}>0$ such that the following holds.
Suppose that the dataset
$( \bff,g, \eta_0, \eta_*, \overline{\bu}_0)$
satisfies \eqref{dataset'}.
Further assume that $\zeta\in X_{\tt shell}(0,T_{\tt max})$ in \eqref{reguzeta}, such that $\|{\zeta_\varepsilon}\|_{X_{\tt shell}(0,T_{\tt max})}\leq R$, for all $0\leq \epsilon\ll 1$.
Then the local strong solution $( \eta, \overline{\bu})$ of \eqref{contEqAloneBar'}--\eqref{momEqAloneBar'} satisfies 
\begin{align*}
\mathcal E_\eta(t)+\int_0^t\|\partial_t\nabla\overline\bu\|^2_{L^2_\bx}\ds+\int_0^t\|\partial_t\eta\|_{W^{2+s,2}_\by}^2\ds\leq\,c_0 \mathcal E_\eta(0)+c_1\int_0^t H(s)\ds+c_2\int_0^t \mathcal E_\eta^{5}(s)\ds,
\end{align*}
for all $t\in (0,T_{\tt max})$.
Here $c_0$ is independent of the data, $c_1$ and $c_2$ may depend on $T_{\tt max}$, $R$ and the initial data.
\end{proposition}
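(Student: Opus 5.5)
The estimate will be proved at the level of the regularised problem \eqref{eq:dissstructureTrans}, with constants independent of $\varepsilon$. The plan is to replace the time derivative by the backward difference quotient $\partial_t^h w(t)=h^{-1}(w(t)-w(t-h))$. Proposition~\ref{thm:accestn}, together with \eqref{eq:16.03} and the uniform control of the initial values, gives an $\varepsilon$-dependent bound for $\mathcal E_n$. It therefore suffices to show that the limit $(\eta_\varepsilon,\overline\bu_\varepsilon)$ satisfies the claimed inequality with constants depending only on $R$ (through $\|\zeta_\varepsilon\|_{X_{\tt shell}}$) and not on $\varepsilon$. Letting $h\to0$ and then $\varepsilon\to0$ by lower semicontinuity then yields the statement.

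Concretely, I apply $\partial_t^h$ to the weak formulation, or equivalently subtract the equation at $t-h$ from the one at $t$. I then test with the pair $(\partial_t^h\partial_t\eta,\ \mathbf B_{\zeta_\varepsilon}^{-\intercal}\partial_t^h(\mathbf B_{\zeta_\varepsilon}^{\intercal}\overline\bu))$. This pair is admissible: it satisfies the kinematic coupling and the divergence constraint for the geometry $\zeta_\varepsilon$, up to a correction built with Lemma~\ref{prop:musc} and Corollary~\ref{cor:cor}. The discrete Leibniz rule $\partial_t^h(ab)=a\,\partial_t^hb+\partial_t^ha\,b(\cdot-h)$ produces the principal terms $\frac12\frac{\dd}{\dt}(\|\partial_t^h\partial_t\eta\|^2+\|\partial_t^h\Dely\eta\|^2+\int J\,|\partial_t^h\overline\bu|^2)+\int\mathbf A\nabla\partial_t^h\overline\bu:\nabla\partial_t^h\overline\bu$. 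Together with $\varepsilon\|\partial_t^h\partial_t\nabla\eta\|^2\geq0$, which is simply dropped, these give the left-hand side. The remaining terms are commutators, in which difference quotients fall on $J_{\zeta_\varepsilon}$, $\mathbf A_{\zeta_\varepsilon}$, $\mathbf B_{\zeta_\varepsilon}$, $\partial_t\bfPsi_{\zeta_\varepsilon}$. Each is estimated using \eqref{210and212}--\eqref{218} and the regularity of $\zeta\in X_{\tt shell}$. In 3D the key facts are $\zeta\in L^\infty W^{4,2}\hookrightarrow W^{2,\infty}$, $\partial_t\zeta\in L^\infty W^{2,2}\hookrightarrow L^\infty$, and $\partial_t^2\zeta\in L^2W^{1/2,2}\cap L^\infty L^2$. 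These make $\partial_t\mathbf A,\partial_t\mathbf B\in L^\infty_tL^p_\bx$ and $\partial_t^2\mathbf A\in L^2_tL^2$-type, with norms $\lesssim R$. The convective terms are handled as $({\tt VI})_n$ with Sobolev embeddings and Young's inequality, which produces the high powers of $\mathcal E_\eta$; the power $5$ is a safe choice. Time-boundary terms are absorbed by $\kappa$ as in $({\tt I})_n$ and $({\tt XI})_n$.

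The main obstacle is the pressure. Unlike the Galerkin case, the test function is not exactly in the kernel of $\mathbf B_{\zeta_\varepsilon}:\nabla$ at both times. The pressure term $\int\partial_t^h(\mathbf B_{\zeta_\varepsilon}\overline\pi):\nabla(\cdot)$ thus leaves a commutator $\int\overline\pi\,\partial_t^h\mathbf B_{\zeta_\varepsilon}:\nabla\partial_t^h\overline\bu$. A matching trace term arises on $\omega$ from $\mathbf n^\intercal\mathbf H\mathbf n$. I would control $\overline\pi$ in $W^{1,2}$ via Theorem~\ref{thm:stokessteady} and Remark~\ref{rem:stokes}, applied to the momentum equation written as a Stokes problem with right-hand side $J\partial_t\overline\bu-\mathbf h$ and boundary datum $\partial_t\eta\,\mathbf n$. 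This gives $\|\overline\pi\|_{W^{1,2}}\lesssim\mathcal E_\eta^{1/2}+\|\partial_t\eta\|_{W^{3/2,2}}+H^{1/2}+\text{lower order}$. Here the structural pressure coupling gets moved onto the shell, and one uses the gain $\int_0^t\|\partial_t\eta\|^2_{W^{2+s,2}_\by}$.

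That last gain is obtained separately. I test the shell equation (differenced) against $\Dely^{s}\partial_t\eta$-type fractional multipliers, or more directly read off $\partial_t\eta$ as the trace of $\overline\bu$. The trace theorem gives $\|\partial_t\eta\|_{W^{2+s,2}_\by}\lesssim\|\overline\bu\|_{W^{5/2+s,2}_\bx}$. I then interpolate between $\overline\bu\in L^\infty W^{2,2}$ (from the Stokes estimate) and $\partial_t\nabla\overline\bu\in L^2L^2$, using the restriction $s\le1/4$. I expect this fractional shell regularity, and the closing of the pressure trace term with it, to be the delicate step. Finally, summing all bounds, choosing $\kappa$ small, and letting $h\to0$ yields the inequality with $c_0$ depending only on the structure, and with $c_1,c_2$ depending on $R$, $T_{\tt max}$ and the data.
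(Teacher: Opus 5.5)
Your fluid test function $\bfB_{\zeta_\varepsilon}^{-\intercal}\partial_t^h(\bfB_{\zeta_\varepsilon}^{\intercal}\overline\bu)$ coincides with the paper's $\Delta_\tau\overline\bu+\bfG$, and the pressure is indeed controlled by the steady Stokes estimate (Lemma~\ref{lem:pres}). However, the proposal never produces the term $\int_0^t\|\partial_t\eta\|^2_{W^{2+s,2}_\by}\ds$ on the left-hand side of the statement.

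Your ``direct'' route cannot give it. Both $\overline\bu\in L^\infty_tW^{2,2}_\bx$ and $\partial_t\overline\bu\in L^2_tW^{1,2}_\bx$ have spatial regularity at most $2$, so no interpolation between them reaches $L^2_tW^{5/2+s,2}_\bx$. The trace of a $W^{2,2}$ velocity only gives $\partial_t\eta\in W^{3/2,2}_\by$, which is weaker than what $\mathcal E_\eta$ already contains.

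The gain must come from $\Dely^2$, so a fractional multiplier is the right idea. But testing the differenced \emph{shell equation alone} fails: its right-hand side contains the boundary traces of $\bfB_{\zeta_\varepsilon}\Delta_\tau\overline\pi$ and $\bfA_{\zeta_\varepsilon}\nabla\Delta_\tau\overline\bu$, and no time regularity of the pressure is available.

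The missing ingredient is to test the full differenced coupled system with $\phi=\Delta^s_{-h}\Delta^s_h\Delta_\tau\eta_\varepsilon-\mathscr K_{\zeta_\varepsilon}(\Delta^s_{-h}\Delta^s_h\Delta_\tau\eta_\varepsilon)$ and its solenoidal extension $\overline{\bm{\varphi}}=\mathscr F^{\Div}_{\zeta_\varepsilon}(\phi)\circ\bfPsi_{\zeta_\varepsilon}$ from Lemma~\ref{prop:musc}. Since $\bfB_{\zeta_\varepsilon}:\nabla\overline{\bm{\varphi}}=0$, the pressure survives only in the commutator $\int\overline\pi^\tau\Div(\Delta_\tau\bfB_{\zeta_\varepsilon}^\intercal\overline{\bm{\varphi}})$, which needs only $\overline\pi\in L^2_tW^{1,2}_\bx$. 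All other terms are bounded by quantities from the main estimate. This is \eqref{eq:1508}, which is then added to the main inequality. Conversely, the Stokes bound does not need this gain, since $\|\partial_t\eta\|_{W^{3/2,2}_\by}\lesssim\|\partial_t\Dely\eta\|_{L^2_\by}\le\mathcal E_\eta^{1/2}$.

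A second step also fails as written. You assert that $\partial_t^2\bfA_{\zeta_\varepsilon}$ and $\partial_t^2\bfB_{\zeta_\varepsilon}$ are of $L^2_tL^2_\bx$-type with norms $\lesssim R$. They contain $\partial_t^2\naby\zeta$, while $X_{\tt shell}$ only gives $\partial_t^2\zeta\in L^\infty_tL^2_\by\cap L^2_tW^{1/2,2}_\by$.

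Such a term does appear. The integral $\int J_{\zeta_\varepsilon}^\tau\partial_t\Delta_\tau\overline\bu\cdot\bfG$ contains a second time derivative of $\overline\bu$ and must be integrated by parts in time. After that, $\partial_t\bfG$ contains $\mathcal A(\partial_t\Delta_\tau\nabla\bfPsi_{\zeta_\varepsilon},\nabla\bfPsi_{\zeta_\varepsilon})$. The paper (term $A_3$) integrates by parts in space to move this gradient onto $\Delta_\tau\overline\bu$, $\overline\bu^\tau$ and the coefficients. This produces a boundary integral on $\partial\Omega$, handled by $W^{1,2}(\Omega)\hookrightarrow L^4(\partial\Omega)$, and interior terms handled by $\partial_t^2\zeta_\varepsilon\in L^4_tL^3_\by$ from \eqref{1embed}.

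Finally, the shell test function is not $\partial_t^h\partial_t\eta$ plus an extension correction. Because $\nabla\bfPsi_{\zeta_\varepsilon}\bn=\bn$, the trace of your fluid test function is $(\partial_t\Delta_\tau\eta+\bn^\intercal\bfG\circ\bfvarphi)\bn$. This generates shell-specific terms: $\int\partial_t^2\Delta_\tau\eta\,\bn^\intercal\bfG\circ\bfvarphi$, treated by integration by parts in time, and $\int\Delta_\tau\Dely^2\eta\,\bn^\intercal\bfG\circ\bfvarphi$, treated by integration by parts in space. It also generates an $\varepsilon$-cross term, which must be absorbed by the $\varepsilon$-dissipation you propose to drop.
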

Before we prove the proposition we need the following two technical lemmas. The first is a uniform pressure estimate. The second is to improve the spatial regularity for solid.
\begin{lemma}[Uniform pressure estimate]
\label{lem:pres}
Under the assumptions of Proposition~\ref{thm:transformedSystemLinear}, we have 
\begin{align}\label{eq:1905} 
\int_I\Big(\|\overline\bfv_\varepsilon\|_{W^{2,2}_\bx(\Omega)}^2+\|\overline\pi_\varepsilon\|_{W_\bx^{1,2}(\Omega)}^2\Big)\dt&\lesssim \int_I\big(\mathcal E_\eta^{5}+1\big)\dt+\int_I\|\bff\circ\bfPsi_{\zeta_\varepsilon}\|_{L^2_\bx}^2\dt,
\end{align}
with constant only depending on $R$ but independent of $\varepsilon$.
\end{lemma}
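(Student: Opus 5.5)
The plan is to read the fluid equation of \eqref{eq:dissstructureTrans} at each fixed time $t$ as a steady Stokes problem, and then apply Theorem~\ref{thm:stokessteady} in the regularised moving domain. To avoid variable coefficients, we transform back to $\Omega_{\zeta_\varepsilon(t)}$ by setting $\bv_\varepsilon=\overline\bv_\varepsilon\circ\bfPsi_{\zeta_\varepsilon}^{-1}$ and $\pi_\varepsilon=\overline\pi_\varepsilon\circ\bfPsi_{\zeta_\varepsilon}^{-1}$. Then, for a.e. $t$, the pair $(\bv_\varepsilon,\pi_\varepsilon)$ solves
\begin{align*}
-\Delta\bv_\varepsilon+\nabla\pi_\varepsilon=\bff-\partial_t\bv_\varepsilon-(\bv_\varepsilon\cdot\nabla)\bv_\varepsilon\quad\text{in }\Omega_{\zeta_\varepsilon(t)},\qquad \Div\bv_\varepsilon=0,\qquad \bv_\varepsilon\circ\bfvarphi_{\zeta_\varepsilon}=\partial_t\eta\,\bfn.
\end{align*}
Here $\partial_t\bv_\varepsilon$ is the Eulerian time derivative, which is expressed through $\partial_t\overline\bv_\varepsilon$ and $\nabla\overline\bv_\varepsilon\,\partial_t\bfPsi^{-1}_{\zeta_\varepsilon}\circ\bfPsi_{\zeta_\varepsilon}$. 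The mean-zero condition on the boundary flux holds because $\partial_t\eta$ has zero mean, or because it follows from incompressibility.

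Since $\|\zeta_\varepsilon\|_{X_{\tt shell}}\le R$, we get $\zeta_\varepsilon\in L^\infty_t(W^{4,2}_\by)\hookrightarrow L^\infty_t(C^{2,\alpha}_\by)$ uniformly in $\varepsilon$. So Remark~\ref{rem:stokes} applies with $s=p=2$ and constants depending only on $R$. This gives
\[
\|\bv_\varepsilon\|_{W^{2,2}}+\|\nabla\pi_\varepsilon\|_{L^2}\lesssim \|\bff\|_{L^2}+\|\partial_t\bv_\varepsilon\|_{L^2}+\|(\bv_\varepsilon\cdot\nabla)\bv_\varepsilon\|_{L^2}+\|\partial_t\eta\|_{W^{3/2,2}_\by}.
\]
To control the full $W^{1,2}$ norm of the pressure, the mean value $c_\pi$ is fixed by the analogue of \eqref{eq:pressure}. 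It is bounded by $\|\partial_t^2\eta\|_{L^2_\by}$, $\|g\|_{L^2}$, and traces of $\nabla\bv_\varepsilon$ and $\pi_0$, which are absorbed by interpolation. We then transfer back to $\Omega$ using \eqref{210and212}; $W^{2,2}$ is preserved since $\bfPsi_{\zeta_\varepsilon}\in W^{5/2,2}\cap W^{2,\infty}$ uniformly.

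Each right-hand term is then bounded by $\mathcal E_\eta$. First, $\|\partial_t\overline\bv_\varepsilon\|_{L^2}^2\le\mathcal E_\eta$. Next, the transport part satisfies $\|\nabla\overline\bv_\varepsilon\|_{L^2}\|\partial_t\bfPsi_{\zeta_\varepsilon}\|_{L^\infty}\lesssim R\,\mathcal E_\eta^{1/2}$, using $\partial_t\zeta_\varepsilon\in L^\infty_tW^{2,2}_\by\hookrightarrow L^\infty$. The boundary term satisfies $\|\partial_t\eta\|_{W^{3/2,2}_\by}\le\|\partial_t\eta\|_{W^{2,2}_\by}\le\mathcal E_\eta^{1/2}$. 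The main obstacle is the convective term. We bound it via $\|\bv\|_{L^6}\|\nabla\bv\|_{L^3}\lesssim\|\nabla\bv\|_{L^2}^{3/2}\|\bv\|_{W^{2,2}}^{1/2}$ (Gagliardo--Nirenberg in 3D) and then apply Young's inequality to absorb $\kappa\|\bv\|_{W^{2,2}}$ on the left. This leaves $c\|\nabla\bv\|_{L^2}^3\lesssim\mathcal E_\eta^{3/2}$. After squaring, integrating in time and using $\mathcal E^{3}\le \mathcal E^5+1$, we obtain \eqref{eq:1905}.

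One care point is that all constants depend only on the Lipschitz/$C^{2}$ bounds of $\zeta_\varepsilon$ and on $\|\partial_t\zeta_\varepsilon\|_{L^\infty}$, which are controlled by $R$ and not by $\varepsilon$.
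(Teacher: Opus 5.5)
Your proposal is correct and follows essentially the paper's argument: the Stokes estimate of Theorem~\ref{thm:stokessteady} in $\Omega_{\zeta_\varepsilon(t)}$ via Remark~\ref{rem:stokes} (the paper writes the resulting right-hand side directly in transformed variables), the pressure mean recovered from the shell equation tested with $1$, and the convective term handled by interpolation with absorption of $\kappa\|\overline\bv_\varepsilon\|_{W^{2,2}_\bx}$. There are two small differences. Your $L^6$--$L^3$ splitting gives $\mathcal E_\eta^3$ instead of the paper's $L^4$--$L^4$ bound $\mathcal E_\eta^5$; both suffice. The zero-flux compatibility condition must come from incompressibility (divergence theorem in $\Omega_{\zeta_\varepsilon}$), not from a zero mean of $\partial_t\eta$, because $\bfn\cdot\bfn_{\zeta_\varepsilon}|\det\naby\bfvarphi_{\zeta_\varepsilon}|$ is not constant for a curved reference boundary.
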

\begin{proof}
In order to control the pressure we write similarly to \eqref{eq:pressure}
\begin{align*}
\overline\pi_\varepsilon=\overline\pi_{\varepsilon,0}+c_{\overline\pi_\varepsilon},
\end{align*}
where $(\overline\pi_{\varepsilon,0})_\Omega:=\int_\Omega\overline \pi_{\varepsilon,0}\dx=0$ and $c_{\overline\pi_\varepsilon}$ is a function of time only. The latter satisfies
\begin{align*}
c_{\overline\pi_\varepsilon}(t)\int_{\omega}\bfn\mathbf{B}_{{\zeta_\varepsilon}}\circ\bfvarphi\bfn\dy=\int_{\omega}\bfn\big(\bfA_{{\zeta_\varepsilon}}\nabla\overline\bfv_\varepsilon-\mathbf{B}_{{\zeta_\varepsilon}}\overline\pi_{\varepsilon,0}\big)\circ\bfvarphi\bfn\dy+\int_\omega\partial_t^2\eta_\varepsilon\dy-\int_\omega g\dy,
\end{align*}
due to the structure equation. Noticing that $\bfB_{{\zeta_\varepsilon}}$ is uniformly elliptic we infer from Poincar\'e's inequality
\begin{align*}
\int_{I}\|\overline\pi_\varepsilon\|^2_{W^{1,2}_\bx}\dt&\lesssim \int_{I}\|\nabla\overline\pi_\varepsilon\|^2_{L^{2}_\bx}\dt+\int_{I}|c_{\overline\pi_\varepsilon}|^2\dt\\
&\lesssim \int_{I}\|\nabla\overline\pi_\varepsilon\|^2_{L^{2}_\bx}\dt+\int_{I}\int_\omega|\partial_t^2\eta_\varepsilon|^2\dy\dt+\int_{I}\int_\omega|g|^2\dy\dt\\
&\quad +\int_{I}\|\overline\pi_{\varepsilon,0}\|_{L^{2}(\partial\Omega)}^2\dt+\int_{I}\|\nabla\overline\bfv_\varepsilon\|_{L^{2}(\partial\Omega)}^2\dt.
\end{align*}
By the trace theorem and $(\overline\pi_{\varepsilon,0})_\Omega=0$
we obtain for $I=[0,T]$ that
\begin{align*}
\int_{I}\|\overline\pi_{\varepsilon,0}\|_{L^{2}(\partial\Omega)}^2\dt
&\lesssim\int_{I}\|\overline\pi_{\varepsilon,0}\|_{W^{1,2}_\bx}^{2}\dt\lesssim \int_{I}\|\nabla\overline\pi_{\varepsilon,0}\|_{L^{2}_\bx}^{2}\dt=\int_{I}\|\nabla\overline\pi_\varepsilon\|_{L^{2}_\bx}^{2}\dt.
\end{align*}
We can use estimates for the steady Stokes system (see Theorem \ref{thm:stokessteady} and Remark \ref{rem:stokes}) with $s=2$, $p=2$ and $n=3$ to get
\begin{align*}
&\int_I\int_\Omega|\nabla^2\overline\bfv_\varepsilon|^{2}\dxt+\int_I\int_\Omega|\nabla\overline\pi_\varepsilon|^{2}\dxt\\
&\lesssim \int_I\int_\Omega\big(|\bfB_{\zeta_\varepsilon}\nabla\overline\bu_\varepsilon\,\overline\bu_\varepsilon|^{2}+|J_{\zeta_\varepsilon}\partial_t\overline\bfv_\varepsilon|^{2}+|J_{\zeta_\varepsilon}\nabla\overline \bu_\varepsilon\partial_t\bfPsi_{\zeta_\varepsilon}^{-1}\circ\bfPsi_{\zeta_\varepsilon}|^2+|J_{\zeta_\varepsilon}\bff \circ \bfPsi_{\zeta_\varepsilon}|^{2}\big)\dxt\\
&\quad +\int_I\|\partial_t\eta_\varepsilon\|^{2}_{W^{\frac{3}{2}, 2}(\omega)}\dt.
\end{align*}
Except for the first term, all terms on the right-hand side can be controlled by $\int_I\mathcal E_\eta\dt$. As for the convective term we have
 \begin{align*}
& \int_I\int_\Omega|\bfB_{\zeta_\varepsilon}\nabla\overline\bu_\varepsilon\,\overline\bu_\varepsilon|^{2}\dxt
 \lesssim
  \int_{I }\|\overline\bu_\varepsilon\|^2_{L^4_\bx}\|\nabla\overline\bu_\varepsilon\|_{L^{4}_\bx}^2\dt\\
&\leq\,c\int_{I }\|\overline\bu_\varepsilon\|_{L^4_\bx}^2\|\nabx\overline\bu_\varepsilon\|^{\frac{1}{2}}_{L^{2}_\bx}\|\nabla\overline\bu_\varepsilon\|_{W^{1,2}_\bx}^{\frac{3}{2}}\dt\\
&\leq\,c(\kappa)\int_{I }\|\overline\bu_\varepsilon\|_{L^4_\bx}^{8}\|\nabx\overline\bu_\varepsilon\|^{2}_{L^{2}_\bx}\dt+\kappa\int_{I }\|\nabla\overline\bu_\varepsilon\|_{W^{1,2}_\bx}^2\dt\\
&\leq \,c(\kappa)\int_{I }\|\overline\bu_\varepsilon\|^{10}_{W^{1,2}_\bx}\dt+\kappa\int_{I }\|\overline\bu_\varepsilon\|_{W^{2,2}_\bx}^2\, dt.
\end{align*}
Choosing $\kappa$ small enough we obtain the result.
\end{proof}

Regarding spatial regularity we show it for the discrete difference quotient in time that will eventually lead to an a-priori estimate of $\nabla \partial_t \eta$. We introduce here difference quotients:
\[
\Delta_\tau(f)(t,x)=\frac{f(t+\tau,x)-f(t,x)}{\tau}\text{ and }\Delta_{h,i}^s(f)(t,x)=\frac{f(t,x+he_i)-f(t,x)}{h^s},
\]
where in the following the index $i\in \{1,2\}$ is omitted to not overload the notation.

\begin{lemma}[Improved spatial regularity]
Under the assumption of Proposition~\ref{thm:transformedSystemLinear} we find for $I=[0,T]$ and all $s\in [0,\frac14)$ that
\begin{align}
&\nonumber
\int_{I}\|\Delta_\tau\eta_\varepsilon\|_{W^{s+2,2}_\by}^2\dt+\varepsilon\sup_I\|\Delta_\tau\eta_\varepsilon\|_{W^{s+1,2}_\by}^2\\
&\nonumber\lesssim\int_{I^\tau}\|\Delta_\tau\overline{\bfv}_\varepsilon\|_{W^{1,2}_\bx}^2\dt+\int_{I^\tau}\big(1+\|\partial_t\eta_\varepsilon\|_{W^{2,2}_\by}^{4}+\|\overline{\bv}_\varepsilon\|_{W^{1,2}_\bx}^{4}+\|\pi_\varepsilon\|_{W^{1,2}_\bx}^2\big)\dt
\\
&\quad+\sup_I\Big(\|\Delta_\tau\overline\bu_\varepsilon\|_{L^{2}_\bx}^2 +\|\Delta_\tau\Dely\eta_\varepsilon\|_{L^2_\by}^2 +\|\partial_t\Delta_\tau\eta_\varepsilon\|_{L^2_\by}^2\Big)+\varepsilon\|\Delta_\tau\eta_\varepsilon(0)\|_{W^{1+s,2}_\by}^2\label{eq:1508}
\end{align}
with constant depending on $R$ but uniformly in $\varepsilon$.
\end{lemma}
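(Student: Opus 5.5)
We apply the temporal difference quotient $\Delta_\tau$ to the regularised system \eqref{eq:dissstructureTrans}. We then test the shell equation with the fractional spatial difference quotient $\Delta^{-s}_{-h}\Delta^s_h\Delta_\tau\eta_\varepsilon$, which is morally $\Dely^{s}\Delta_\tau\eta_\varepsilon$. This gains $s$ derivatives beyond the energy level $W^{2,2}_\by$ of $\Delta_\tau\eta_\varepsilon$.

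The structure part does not close on its own, because the fluid stress acts on the shell. The standard remedy is to test the coupled system with a \emph{pair}. On the shell we use $\phi=\Delta^{-s}_{-h}\Delta^s_h\Delta_\tau\eta_\varepsilon$ (periodic, zero mean after subtracting $\mathscr K$). On the fluid we use the solenoidal extension $\bfphi=\bfB_{\zeta_\varepsilon}^{-\intercal}\Testzeta(\phi-\mathscr K(\phi))$ from Lemma~\ref{prop:musc}, built with respect to the mollified geometry. This pair is admissible in the weak formulation, so the pressure never appears.

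After summation by parts in $h$, the elastic term yields $\int_I\|\Delta^s_h\Delta_\tau\Dely\eta_\varepsilon\|^2_{L^2_\by}\dt$. Dividing by $|h|^{1+2s}$ and integrating over $h$ controls $\int_I\|\Delta_\tau\eta_\varepsilon\|^2_{W^{s+2,2}_\by}\dt$. The term $\varepsilon\partial_t\Delta_\tau\Dely\eta_\varepsilon$ gives $\varepsilon\frac{\dd}{\dt}\|\Delta_h^s\Delta_\tau\naby\eta_\varepsilon\|^2$, hence the $\varepsilon\sup_I\|\Delta_\tau\eta_\varepsilon\|_{W^{1+s,2}_\by}^2$ term, with its initial value appearing on the right.

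The remaining terms are handled as follows.

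\textbf{Inertial terms.} For $\partial_t^2\Delta_\tau\eta_\varepsilon\,\phi$ and $J\partial_t\Delta_\tau\overline\bu_\varepsilon\cdot\bfphi$ we integrate by parts in time. The boundary terms are bounded by $\sup_I(\|\partial_t\Delta_\tau\eta_\varepsilon\|_{L^2_\by}^2+\|\Delta_\tau\overline\bu_\varepsilon\|^2_{L^2_\bx})$ together with $\sup_I\|\phi\|_{L^2_\by}$ and $\sup_I\|\bfphi\|_{L^2_\bx}$. Since $2s<1/2$, these are controlled by $\sup_I\|\Delta_\tau\Dely\eta_\varepsilon\|_{L^2_\by}$.

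The interior terms involve $\partial_t\phi$ and $\partial_t\bfphi$. The first pairs as $\langle\partial_t\Delta_\tau\eta_\varepsilon,\Delta^{-s}_{-h}\Delta^s_h\partial_t\Delta_\tau\eta_\varepsilon\rangle$, which is a nonnegative or harmless quantity. The fluid interior term is controlled via the $\partial_t\Testzeta$ estimate by $\|\Delta_\tau\overline\bu_\varepsilon\|_{L^2_\bx}\|\partial_t\Delta_\tau\eta_\varepsilon\|_{W^{2s,2}_\by}$. Here the kinematic condition $\Delta_\tau\overline\bu_\varepsilon\circ\bfvarphi=\partial_t\Delta_\tau\eta_\varepsilon\bfn$ and the trace inequality give $\|\partial_t\Delta_\tau\eta_\varepsilon\|_{W^{1/2,2}_\by}\lesssim\|\Delta_\tau\overline\bu_\varepsilon\|_{W^{1,2}_\bx}$. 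This produces the term $\int_{I^\tau}\|\Delta_\tau\overline\bu_\varepsilon\|^2_{W^{1,2}_\bx}$.

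\textbf{Viscous term.} The viscous term $\bfA\nabla\Delta_\tau\overline\bu_\varepsilon:\nabla\bfphi$ is bounded by $\|\Delta_\tau\overline\bu_\varepsilon\|_{W^{1,2}_\bx}\|\phi\|_{W^{1,2}_\by}$, with $\|\phi\|_{W^{1,2}_\by}\lesssim\|\Delta_\tau\eta_\varepsilon\|_{W^{1+2s,2}_\by}$. This is absorbed by interpolation, since $1+2s<2+s$.

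\textbf{Commutator terms.} Discrete product rules produce terms where $\Delta_\tau$ falls on the coefficients, of the form $\Delta_\tau\bfA_{\zeta_\varepsilon}\nabla\overline\bu_\varepsilon$ and $\Delta_\tau\bfB_{\zeta_\varepsilon}\overline\pi_\varepsilon$. They are bounded using $\|\zeta\|_{X_{\tt shell}}\le R$, which gives $\partial_t\zeta_\varepsilon\in L^\infty W^{2,2}_\by$ and so $\Delta_\tau\bfA,\Delta_\tau\bfB\in L^\infty L^p$ for all $p<\infty$. Combined with $\|\overline\pi_\varepsilon\|_{W^{1,2}_\bx}$ and $\|\overline\bu_\varepsilon\|_{W^{1,2}_\bx}$, this yields the $\|\pi_\varepsilon\|^2_{W^{1,2}_\bx}$ and $\|\overline\bu_\varepsilon\|^4_{W^{1,2}_\bx}$ terms.

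\textbf{Convective and forcing terms.} The convective term gives $\|\overline\bu_\varepsilon\|_{L^4_\bx}\|\nabla\Delta_\tau\overline\bu_\varepsilon\|_{L^2_\bx}\|\bfphi\|_{L^4_\bx}$ plus similar terms. The terms containing $\partial_t\bfPsi$ contribute $\|\partial_t\eta_\varepsilon\|^4_{W^{2,2}_\by}$, which is why that quantity appears on the right. The forcing is harmless. Throughout, Young's inequality with a small parameter absorbs all appearances of $\|\Delta_\tau\eta_\varepsilon\|_{W^{2+s,2}_\by}$ into the left-hand side.

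\textbf{Main obstacle.} The main difficulty is the coupling estimate for the extension $\bfphi$. Lemma~\ref{prop:musc} applied to $\xi=\phi$ yields bounds containing $\xi\naby\zeta_\varepsilon$, $\xi\partial_t\zeta_\varepsilon$ and $\partial_t\xi\,\naby\zeta_\varepsilon$. In particular, $\partial_t\phi$ involves $\partial_t^2\Delta_\tau\eta_\varepsilon$ at regularity $W^{2s,2}_\by$, whose control is not immediate. I would first route these terms through the kinematic trace bound $\|\partial_t\Delta_\tau\eta_\varepsilon\|_{W^{1/2,2}_\by}\lesssim\|\Delta_\tau\overline\bu_\varepsilon\|_{W^{1,2}_\bx}$. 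The restriction $s<1/4$ is exactly what makes $2s<1/2$, so this suffices. The remaining coefficient factors are bounded by $\zeta_\varepsilon\in L^\infty W^{2,2}_\by\cap W^{1,\infty}W^{2,2}_\by$ through the embedding $W^{2,2}_\by\hookrightarrow W^{1,p}_\by$ for all $p<\infty$.
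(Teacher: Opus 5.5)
Your outline matches the paper's proof. You difference \eqref{eq:dissstructureTrans} in time and test the shell equation with $\xi:=\Delta^s_{-h}\Delta^s_h\Delta_\tau\eta_\varepsilon-\mathscr K_{\zeta_\varepsilon}(\Delta^s_{-h}\Delta^s_h\Delta_\tau\eta_\varepsilon)$, coupled to a solenoidal extension. The pressure enters only through the commutator with $\Delta_\tau\mathbf B_{\zeta_\varepsilon}$. Everything closes with the trace bound $\|\partial_t\Delta_\tau\eta_\varepsilon\|_{W^{1/2,2}_\by}\lesssim\|\Delta_\tau\overline\bu_\varepsilon\|_{W^{1,2}_\bx}$ and $2s\le\frac12$.

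\textbf{The fluid test function, as written, is not admissible.} Your choice $\mathbf B_{\zeta_\varepsilon}^{-\intercal}\mathscr F^{\mathrm{div}}_{\zeta_\varepsilon}(\xi)$ mixes coordinates:
\begin{itemize}
\item The extension $\mathscr F^{\mathrm{div}}_{\zeta_\varepsilon}(\xi)$ has trace $\xi\bfn$ on the deformed boundary, i.e.\ $\mathscr F^{\mathrm{div}}_{\zeta_\varepsilon}(\xi)\circ\bfvarphi_{\zeta_\varepsilon}=\xi\bfn$, not on $\partial\Omega$.
\item Even with the reference-domain extension you would get $\mathbf B_{\zeta_\varepsilon}^{-\intercal}\xi\bfn=J_{\zeta_\varepsilon}^{-1}\xi\bfn$ on $\partial\Omega$. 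This is because $\mathbf B^{-\intercal}=J^{-1}\nabla\bfPsi$ and $\nabla\bfPsi\,\bfn=\bfn$ by \eqref{eq:dnPsi}. It is also why the Galerkin basis pairs $\mathbf B^{-\intercal}_{\zeta_\varepsilon}\mathbf X_k$ with $J^{-1}_{\zeta_\varepsilon}\circ\bfvarphi\,X_k$.
\end{itemize}
Without the coupling $\overline{\bm\varphi}\circ\bfvarphi=\phi\,\bfn$, the normal stress on $\partial\Omega$ (including the trace of $\overline\pi_\varepsilon$) no longer cancels against the right-hand side of the shell equation. So the weak formulation cannot be used.

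The paper takes $\overline{\bm\varphi}_\varepsilon=\mathscr F^{\mathrm{div}}_{\zeta_\varepsilon}(\xi)\circ\bfPsi_{\zeta_\varepsilon}$. For this choice $\mathbf B_{\zeta_\varepsilon}:\nabla\overline{\bm\varphi}_\varepsilon=J_{\zeta_\varepsilon}(\Div\mathscr F^{\mathrm{div}}_{\zeta_\varepsilon}(\xi))\circ\bfPsi_{\zeta_\varepsilon}=0$ and $\overline{\bm\varphi}_\varepsilon\circ\bfvarphi=\mathscr F^{\mathrm{div}}_{\zeta_\varepsilon}(\xi)\circ\bfvarphi_{\zeta_\varepsilon}=\xi\bfn$. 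With this choice your term-by-term bounds go through via Lemma~\ref{prop:musc}, \eqref{210and212} and \eqref{218}.

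\textbf{Smaller corrections.}
\begin{itemize}
\item After integrating by parts in time, the interior inertial term appears as $+\int_I\|\Delta^s_h\partial_t\Delta_\tau\eta_\varepsilon\|^2_{L^2_\by}\dt$ on the right-hand side. Its sign does not help; it is controlled precisely by the trace bound you mention (the paper's term $(V)^\varepsilon$).
\item Integrating an estimate whose right-hand side is merely uniform in $h$ against a singular weight such as $|h|^{-1-2s}\,\dd h$ diverges at $h=0$. The paper only uses uniformity in $h$, which is a Nikolskii-type bound for $\Dely\Delta_\tau\eta_\varepsilon$. That is why it tests with $s\le\frac14$ but states the result for $s<\frac14$. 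Alternatively, put the $h$-integral into the test function, i.e.\ test with $(-\Dely)^s\Delta_\tau\eta_\varepsilon$.
\item $\partial_t\bfPsi_{\zeta_\varepsilon}$ involves only the given $\zeta_\varepsilon$. The $\|\partial_t\eta_\varepsilon\|^4_{W^{2,2}_\by}$ term instead comes from Young's inequality in the convective terms. There $\|\overline{\bm\varphi}_\varepsilon\|_{L^4_\bx}\lesssim\|\Delta_\tau\eta_\varepsilon\|_{W^{2,2}_\by}$ multiplies $\|\overline\bu_\varepsilon\|_{W^{1,2}_\bx}\|\Delta_\tau\overline\bu_\varepsilon\|_{W^{1,2}_\bx}$.
\end{itemize}
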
 
\begin{proof}
For that we take the finite difference of \eqref{eq:dissstructureTrans} in time and test by
\begin{align*}   
(\phi_\varepsilon,\overline{\bm{\varphi}}_\varepsilon)=\left(\Delta_{-h}^s\Delta_h^s \Delta_\tau\eta_\varepsilon-\mathscr K_{{\zeta_\varepsilon}}(\Delta_{-h}^s\Delta_h^s \Delta_\tau\eta_\varepsilon, \,\,\mathscr F_{\zeta_\varepsilon}^{\mathrm{div}}(\Delta_{-h}^s\Delta_h^s \Delta_\tau\eta_\varepsilon-\mathscr K_{{\zeta_\varepsilon}}(\Delta_{-h}^s\Delta_h^s \Delta_\tau\eta_\varepsilon))\circ\bfPsi_{\zeta_\varepsilon})\right).
\end{align*}
Here for $0<s\leq 1/4$,  $\Delta_h^sv(\by)=h^{-s}(v(\by+h\bm{e}_\alpha)-v(\by))$ is the fractional difference quotient for space variable in direction $\bm{e}_\alpha$ with $\alpha\in\{1,2\}$. $\mathscr F_{\zeta_\varepsilon}^{\mathrm{div}}$ is the solenoidal extension, please see for instance Proposition \ref{prop:musc}. It follows that
\begin{align}
\nonumber
&\int_I \int_{ \omega }|\Delta_h^s\Dely\Delta_\tau\eta_\varepsilon|^2\dy\dt+\varepsilon\sup_I\int_\omega|\Delta_h^s\naby\Delta_\tau\eta_\varepsilon|^2\dy\\
\nonumber&
=\varepsilon \int_\omega|\Delta_h^s\naby\Delta_\tau\eta_\varepsilon(0)|^2\dy-\int_I\int_\Omega J_{\zeta_\varepsilon}^\tau\Delta_\tau\overline{\bv}_\varepsilon\cdot\partial_t\overline{\bm{\varphi}}_\varepsilon\dx\dt\\
&\quad +\int_I\int_\Omega\Delta_\tau\bfA_{\zeta_\varepsilon}\nabla\overline{\bv}_\varepsilon:\nabla\overline{\bm{\varphi}}_\varepsilon \dx\dt+\int_I\int_\Omega\bfA_{\zeta_\varepsilon}^\tau\nabla\Delta_\tau\overline{\bv}_\varepsilon:\nabla\overline{\bm{\varphi}}_\varepsilon\dx\dt\nonumber\\
&\quad +\int_I\frac{\dd}{\dt}\left(\int_\Omega J_{\zeta_\varepsilon}^\tau\Delta_\tau\overline{\bv}\cdot\overline{\bm{\varphi}}_\varepsilon\dx+\int_\omega\partial_t\Delta_\tau\eta_\varepsilon\cdot\phi_\varepsilon\dy\right)\dt-\int_I\int_\omega(\partial_t\Delta_\tau\eta_\varepsilon \partial_t\phi_\varepsilon+\Delta_\tau g \,\phi_\varepsilon)\dy\dt\nonumber\\
&\quad -\int_I\int_\Omega\pi_\varepsilon^\tau\Div(\Delta_\tau\bfB_{\zeta_\varepsilon}^\intercal \overline{\bm{\varphi}}_\varepsilon)\dx\dt-\int_I\int_\Omega\Delta_\tau\bfh_{\zeta_\varepsilon}\cdot \overline{\bm{\varphi}}_\varepsilon\dx\dt 
\nonumber\\
&\nonumber\quad +\int_I\int_\Omega\left(-\Delta_\tau J_{\zeta_\varepsilon} \overline \bu_\varepsilon\cdot \partial_t\overline{\bm{\varphi}}_\varepsilon+\frac{\dd}{\dt}\left(\Delta_\tau J_{\zeta_\varepsilon}\overline \bu_\varepsilon\cdot \overline{\bm{\varphi}}_\varepsilon\right)-\Delta_\tau(\partial_t J_{\zeta_\varepsilon}\overline \bu_\varepsilon)\cdot \overline{\bm{\varphi}}_\varepsilon\right)\dx\dt\\
&=:(O)^\varepsilon+(I)^\varepsilon+...+(VII)^\varepsilon+(VIII)^\varepsilon.\label{mom:reg}
\end{align}
By Lemma \ref{prop:musc}, ${\zeta_\varepsilon}\in X_{\tt shell}(0,T)$, \eqref{210and212} and \eqref{218}
\begin{align*}
(I)^\varepsilon&\lesssim \int_I\|\Delta_\tau\overline\bu_\varepsilon\|_{L^{2}_\bx}(\| \partial_t\Delta_\tau\Delta_{-h}^s\Delta_h^s\eta_\varepsilon\|_{L^{2}_\by}+\|\Delta_\tau\Delta_{-h}^s\Delta_h^s\eta_\varepsilon\|_{L^2_\by}+\| \naby\Delta_\tau\Delta_{-h}^s\Delta_h^s\eta_\varepsilon\|_{L^{2}_\by})\dt\\
&\lesssim \int_I\|\Delta_\tau\overline\bu_\varepsilon\|_{L^{2}_\bx}(\| \partial_t\Delta_\tau\eta_\varepsilon\|_{W^{1/2,2}_\by}+\|\Delta_\tau\eta_\varepsilon\|_{W^{3/2,2}_\by})\dt\\
&\lesssim \int_I\|\Delta_\tau\overline\bu_\varepsilon\|_{W^{1,2}_\bx}^2\dt+\int_I\|\Delta_\tau\eta_\varepsilon\|_{W^{2,2}_\by}^2\dt,
\end{align*}
using also the trace theorem (together with $\Delta_\tau\overline\bu_\varepsilon\circ\bfvarphi=\Delta_\tau\partial_t\eta_\varepsilon\bn$ in $\omega$).
Note that we can express $\mathbf{A}_{\zeta}=a(x,\zeta,\nabla \zeta)$ hence we find that
\begin{align*}
\tau\abs{\Delta_\tau \mathbf{A}_{\zeta}}&=\abs{a(x,\zeta(t+\tau,x),\nabla\zeta(t+\tau,x))-a(x,\zeta(t,x),\nabla\zeta(t,x))}
\\
&\leq \abs{a(x,\zeta(t+\tau,x),\nabla\zeta(t,x))-a(x,\zeta(t,x),\nabla\zeta(t,x))}
\\
&\quad +
\abs{a(x,\zeta(t,x),\nabla\zeta(t+\tau,x))-a(x,\zeta(t,x),\nabla\zeta(t,x))},
\end{align*}
and so checking the definition of $\mathbf{A}_{\zeta_\epsilon}$ we find
\[
\abs{\Delta_\tau \mathbf{A}_{\zeta_\epsilon}}
\lesssim (1+\abs{\nabla \zeta_\epsilon})(\abs{ \Delta_\tau \nabla \zeta_\epsilon}+\abs{\Delta_\tau  \zeta_\epsilon}),
\]
where the constant depends on $\Omega$ and $\norm{\zeta_\epsilon}_\infty$. Similarly we can estimate the other terms.
Using again Lemma \ref{prop:musc}, \eqref{210and212}--\eqref{218} and ${\zeta_\varepsilon}\in X_{\tt shell}(0,T)$ we estimate further that
\begin{align*}
(II)^\varepsilon&\lesssim \int_I\|\Delta_\tau\naby{\zeta_\varepsilon}\|_{L^4_\by}\|\nabla\overline\bu_\varepsilon\|_{L^2_\bx}\|\nabla\overline\bfvarphi_\varepsilon\|_{L^4_\bx}\dx+\int_I\|\Delta_\tau{\zeta_\varepsilon}\|_{L^\infty_\by}\|\nabla\overline\bu_\varepsilon\|_{L^2_\bx}\|\nabla\overline\bfvarphi_\varepsilon\|_{L^2_\bx}\dt
\\
&\lesssim \int_I\|\Delta_\tau\naby{\zeta_\varepsilon}\|_{W^{1,2}_\by}\|\nabla\overline\bu_\varepsilon\|_{L^2_\bx}\|\Delta_\tau\Delta_{-h}^s\Delta_h^s\eta_\varepsilon\|_{W^{1,4}_\by}\dt+\int_I\|\nabla\overline\bu_\varepsilon\|_{L^2_\bx}\|\Delta_\tau\Delta_{-h}^s\Delta_h^s\eta_\varepsilon\|_{W^{1,2}_\by}\dt
\\
&\lesssim \int_I\|\nabla\overline\bu_\varepsilon\|_{L^2_\bx}\|\Delta_\tau\eta_\varepsilon\|_{W^{2,2}_\by}\dt\lesssim \int_I\Big(\|\nabla\overline\bu_\varepsilon\|_{L^2_\bx}^2+\|\Delta_\tau\Dely\eta_\varepsilon\|_{L^{2}_\by}^2\Big)\dt,\\
(III)_\varepsilon&\lesssim \int_I\big(1+\|\naby{\zeta_\varepsilon}\|_{L^\infty_\by}\big)\|\Delta_\tau\nabla\overline\bu_\varepsilon\|_{L^2_\bx}\|\Delta_\tau\Delta_{-h}^s\Delta_h^s\eta_\varepsilon\|_{W^{1,2}_\by}\dt\\
&\lesssim \int_I\|\Delta_\tau\nabla\overline\bu_\varepsilon\|_{L^2_\bx}\|\Delta_\tau\eta_\varepsilon\|_{W^{2,2}_\by}\dt\\
&\lesssim \int_I\Big(\|\Delta_\tau \nabx\overline\bu_\varepsilon\|_{L^{2}_\bx}^2+\|\Delta_\tau\Dely\eta_\varepsilon\|_{L^{2}_\by}^2\Big)\dt,
\end{align*}
where we used the fact that $\partial_t\nabla_\by\zeta_\varepsilon\in L^\infty_t(W^{1,2}_\by)$. 
We also have 
\begin{align*}
(IV)^\varepsilon&
\lesssim \sup_I\|\Delta_\tau\overline \bu_\varepsilon\|_{L^2_\bx}\|\overline{\bm{\varphi}}_{\varepsilon}\|_{L^2_\bx}+\sup_I\|\partial_t\Delta_\tau\eta_{\varepsilon}\|_{L^2_\by}\|\phi_\varepsilon\|_{L^2_\by}\\
&\lesssim \sup_I\Big(\|\Delta_\tau\overline{\bu}_\varepsilon\|_{L^2_\bx}+\|\partial_t\Delta_\tau\eta_\varepsilon\|_{L^2_\by}\Big)\|\Delta_\tau\eta_\varepsilon\|_{W^{2s,2}_\by}\\
&\lesssim\sup_I\Big(\|\Delta_\tau\overline{\bu}_\varepsilon\|_{L^2_\bx}^2+\|\partial_t\Delta_\tau\eta_\varepsilon\|_{L^2_\by}^2+\| \Delta_\tau\Dely\eta_\varepsilon\|_{L^{2}_\by}^2\Big),
\end{align*}
and
\begin{align*}
(V)^\varepsilon 
&\lesssim \int_I\|\partial_t\Delta_\tau \eta_\varepsilon\|_{L^2_\by}\left(\|\partial_t\Delta_\tau \eta_\varepsilon\|_{W^{2s,2}_\by}
+
\Vert
\partial_t\zeta_\varepsilon\Delta_\tau\Delta_{-h}^s\Delta_h^s\eta_\varepsilon\|_{L^1_\by}
\right)\dt+\int_I\|\Delta_\tau g\|_{L^2_\by}\| \Delta_\tau\eta_\varepsilon\|_{W^{2s,2}_\by}\dt\\ 
&\lesssim \int_I\Big(\|\partial_t\Delta_\tau\eta_\varepsilon\|_{W^{1/2,2}_\by}^2+\|\Delta_\tau g\|_{L^2_\by}^2+\|\Delta_\tau\eta_\varepsilon\|_{W^{2,2}_\by}^2\Big)\dt\\
&\lesssim \int_I\Big(\|\Delta_\tau\overline{\bu}_\varepsilon\|_{W^{1,2}_\bx}^2+\|\Delta_\tau g\|_{L^2_\by}^2+\|\Delta_\tau\Dely\eta_\varepsilon\|_{L^{2}_\by}^2\Big)\dt,
\end{align*}
again by the Trace theorem. 
For the pressure term we have
\begin{align*}
(VI)^\varepsilon&\lesssim \int_I\|\pi_\varepsilon\|_{L^6_\bx}\|\Delta_\tau\naby^2{\zeta_\varepsilon}\|_{L^2_\by}\|\overline{\bfvarphi}_\varepsilon\|_{L^6_\bx}\dt\\
&\quad + \int_I\|\pi\|_{L^4_\bx}\|\Delta_\tau\naby{\zeta_\varepsilon}\|_{L^4_\by}(1+\|\naby{\zeta_\varepsilon}\|_{L^\infty_\by})\|\nabla\overline\bfvarphi_\varepsilon\|_{L^2_\bx}\dt\\
&\lesssim \int_I\big(\|\pi_\varepsilon\|_{W^{1,2}_\bx}^2+\|\overline\bfvarphi_\varepsilon\|_{W^{1,2}_\bx}^2\big)\dt\lesssim \int_I\big(\|\pi_\varepsilon\|_{W^{1,2}_\bx}^2+\|\Delta_\tau\eta_\varepsilon\|_{W^{1+2s}_\by}^2\big)\dt\\
&\lesssim \int_I\big(\|\pi_\varepsilon\|_{W^{1,2}_\bx}^2+\|\Delta_\tau\Dely\eta_\varepsilon\|_{L^2_\by}^2\big)\dt.
\end{align*}
We can now decompose $(VII)^\varepsilon$ into the sum of
\begin{align*}
(VII)^\varepsilon_1&:= \int_I\int_\Omega
\Delta_\tau\Big(J_{\zeta_\varepsilon} \nabx\overline{\bu}_\varepsilon\cdot \partial_t \bfPsi_{\zeta_\varepsilon}^{-1}\circ \bfPsi_{\zeta_\varepsilon}\Big)\cdot\overline{\bm{\varphi}}_\varepsilon\dx\dt,
\\(VII)^\varepsilon_2&:= \int_I\int_\Omega\Delta_\tau\big(\mathbf{B}_{\zeta_\varepsilon}\nabx\overline{\bfv}_\varepsilon~\overline{\bu}_\varepsilon\big)\cdot\overline{\bm{\varphi}}_\varepsilon\dx\dt,\\
(VII)^\varepsilon_3&:=- \int_I\int_\Omega\Delta_\tau\big(
J_{\zeta_\varepsilon}  \bff\circ \bfPsi_{\zeta_\varepsilon}\big)\cdot\overline{\bm{\varphi}}_\varepsilon\dx\dt,
\end{align*}
while further decomposing $(VII)^\varepsilon_2$ into the sum of $(VII)^\varepsilon_{2,1}$, $(VII)^{\varepsilon}_{2,2}$ and $(VII)^{\varepsilon}_{2,3}$ (defining them in an obvious manner differentiating in time by product rule).
We estimate by Lemma \ref{prop:musc}, \eqref{210and212}, \eqref{218} and ${\zeta_\varepsilon}\in X_{\tt shell}(0,T)$,
\begin{align*}
(VII)^\varepsilon_1&\lesssim\int_I
\|\nabla\overline\bu_\varepsilon^\tau\|_{L^2_\bx}\|\partial_t\nabla\zeta_\epsilon\|_{L^4_\by}\|\overline{\bm{\varphi}}_\epsilon\|_{L^4_\bx}\dt+\int_I\|\nabla\overline\bu_\varepsilon^\tau\|_{L^2_\bx}\|\partial_t\Delta_\tau\zeta_\epsilon\|_{L^3_\by}\|\overline\bfvarphi_\varepsilon\|_{L^6_\bx}\dt\\
&\quad +\int_I(\|\Delta_\tau\nabla\overline \bu_\epsilon\|_{L^2_\bx}+\|\nabla\overline\bu_\epsilon\|_{L^2_\bx})\|\overline{\bm{\varphi}}_{\epsilon}\|_{L^2_\bx}\dt\\
&\lesssim\int_I\|\nabla\overline\bu_\varepsilon^\tau\|_{L^2_\bx}(1+\|\partial_t^2\zeta_\varepsilon\|_{L^3_\by})\|\overline{\bm{\varphi}}_\epsilon\|_{W^{1,2}_\bx}\dt+\int_I\|\Delta_\tau\nabla\overline\bu_\epsilon\|_{L^2_\bx}\|\overline{\bm{\varphi}}_\epsilon\|_{L^2_\bx}\dt\\
& \lesssim \int_I\|\nabla\overline\bu_\varepsilon^\tau\|_{L^{2}_\bx}(1+\|\partial_t^2\zeta_\varepsilon\|_{L^3_\by})(\|\Delta_\tau\Delta_{-h}^s\Delta_h^s\eta_\varepsilon\|_{W^{1,2}_\by}+\|\Delta_\tau\Delta_{-h}^s\Delta_h^s\eta_\varepsilon\, \nabla{\zeta_\varepsilon}\|_{L^2_\by})\dt\\
&\quad +\int_{I^\tau}\|\Delta_\tau\nabla\overline\bu_\varepsilon\|_{L^2_\bx}\|\Delta_{-h}^s\Delta_h^s\partial_t\eta_\varepsilon\|_{L^2_\bx}\dt\\
&\lesssim\int_{I}\|\nabla \overline\bu_\varepsilon^\tau\|_{L^2_\bx}^4\dt+\int_I\|\partial_t^2\zeta_\varepsilon\|_{L^3_\by}^4\dt+\int_I\|\Delta_\tau\eta_\epsilon\|_{W^{2,2}_\by}^2\dt+\int_I\|\Delta_\tau\overline\bu_\epsilon\|_{W^{1,2}_\bx}^2\dt,
\end{align*}
where we use the embedding 
\begin{equation}\label{1embed}
	\zeta_\varepsilon\in W^{2,\infty}(I; L^2(\omega))\cap L^\infty(I;W^{4,2}(\omega))\hookrightarrow W^{\frac{7}{3},2}(I; L^3(\omega))\hookrightarrow W^{2,4}(I; L^3(\omega)),
\end{equation}
Furthermore,
\begin{align*}
(VII)^\varepsilon_{2,1}&\lesssim \int_I\|\Delta_\tau\mathbf B_{\zeta_\varepsilon}\|_{L^{6}_\bx}\|\nabx\overline{\bfv}_\varepsilon\|_{L^2_\bx}\|\overline{\bu}_\varepsilon\|_{L^6_\bx}\|\overline{\bm{\varphi}}_\varepsilon\|_{L^6_\bx}\dt\\
&\lesssim \int_I\|\Delta_\tau\naby{\zeta_\varepsilon}\|_{W^{1,2}_\by}\|\nabla\overline\bu_\varepsilon\|_{L^2_\bx}^2(\|\Delta_\tau\Delta_{-h}^s\Delta_h^s\eta_\varepsilon\|_{W^{1,2}_\by}+\|\Delta_\tau\Delta_{-h}^s\Delta_h^s\eta_\varepsilon\, \naby{\zeta_\varepsilon}\|_{L^2_\by})\dt\\
&\lesssim \int_I\|\nabla\overline\bu_\varepsilon\|_{L^2_\bx}^2\|\Delta_\tau\eta_\varepsilon\|_{W^{2,2}_\by}\dt\\
&\leq\,\int_I\big(\|\Delta_\tau\Dely\eta_\varepsilon\|_{L^2_\by}^{2}+\|\nabx\overline{\bv}_\varepsilon\|_{L^{2}_\bx}^{4}\big)\dt,
\end{align*}
\begin{align*}
(VII)^\varepsilon_{2,2}&\lesssim \int_I\|\mathbf B_{\zeta_\varepsilon}^\tau\|_{L^{\infty}_\bx}\|\Delta_\tau\nabx\overline{\bfv}_\varepsilon\|_{L^2_\bx}\|\overline{\bu}_\varepsilon^\tau\|_{L^4_\bx}\|\overline{\bm{\varphi}}_\varepsilon\|_{L^4_\bx}\dt \\
&\lesssim \int_I(1+\|\naby{\zeta_\varepsilon}^\tau\|_{L^{\infty}_\by})\|\Delta_\tau\overline{\bu}_\varepsilon\|_{W^{1,2}_\bx}\|\overline{\bu}_\varepsilon^\tau\|_{W^{1,2}_\bx}\|\Delta_\tau\eta_\varepsilon\|_{W^{2,2}_\by}\dt\\
&\lesssim\,\int_I\|\Delta_\tau\overline{\bfv}_\varepsilon\|_{W^{1,2}_\bx}^2\dt+\int_{I }\big(\|\Delta_\tau\eta_\varepsilon\|_{W^{2,2}_\by}^{4}+\|\overline{\bv}_\varepsilon\|_{W^{1,2}_\bx}^{4}\big)\dt,
\end{align*}
\begin{align*}
(VII)^\varepsilon_{2,3}&\lesssim \int_I\|\mathbf B_{\zeta_\varepsilon}^\tau\|_{L^{\infty}_\bx}\|\nabx\overline{\bfv}_\varepsilon\|_{L^2_\bx}\|\Delta_\tau\overline{\bu}_\varepsilon\|_{L^4_\bx}\|\overline{\bfvarphi}_\varepsilon\|_{L^4_\bx}\dt\\
&\lesssim \int_I(1+\|\naby{\zeta_\varepsilon}^\tau\|_{L^{\infty}_\by})\|\overline{\bu}_\varepsilon\|_{W^{1,2}_\bx}\|\Delta_\tau\overline{\bu}_\varepsilon\|_{W^{1,2}_\bx}\|\Delta_\tau\eta_\varepsilon\|_{W^{2,2}_\by}\dt\\
&\lesssim\,\int_I\|\Delta_\tau\overline{\bfv}_\varepsilon\|_{W^{1,2}_\bx}^2\dt+\int_{I }\big(\|\Delta_\tau\eta_\varepsilon\|_{W^{2,2}_\by}^{4}+\|\overline{\bv}_\varepsilon\|_{W^{1,2}_\bx}^{4}\big)\dt.
\end{align*}
Finally we see that
\begin{align*}
	(VIII)^\varepsilon&\lesssim \int_I\|\overline\bu_\varepsilon\|_{L^{2}_\bx}\big(\| \partial_t\Delta_\tau\Delta_{-h}^s\Delta_h^s\eta_\varepsilon\|_{L^{2}_\by}+\|\Delta_\tau\Delta_{-h}^s\Delta_h^s\eta\, \partial_t{\zeta_\varepsilon}\|_{L^2_\by})\dt\\&\quad +\sup_I\|\overline\bu_\varepsilon\|_{L^{2}_\bx}\|\Delta_{-h}^s\Delta_h^s\Delta_\tau\eta\|_{L^2_\bx}\\
	&\quad +\int_I\Big(\|\partial_t\Delta_\tau\zeta_\varepsilon\|_{L^3_\by}\|\overline\bu_\varepsilon\|_{L^6_\bx}+\|\partial_t\zeta_\varepsilon^\tau\|_{L^\infty_\by}\|\Delta_\tau\overline\bu_\varepsilon\|_{L^2_\bx}\Big)\|\Delta_\tau\eta_\varepsilon\|_{W^{2s,2}_\by}\dt\\
	&\lesssim 1+ \int_I\|\Delta_\tau\overline\bu_\varepsilon\|_{W^{1,2}_\bx}^2\dt+\int_I\|\Delta_\tau\eta_\varepsilon\|_{W^{2,2}_\by}^2\dt+\sup_I\|\Delta_\tau\Dely\eta_\varepsilon\|_{L^2_\by}^2,
\end{align*}
As all estimates are uniformly with respect to $h$ we have finished the proof.
\end{proof}
Now we are in a position to prove Proposition \ref{thm:transformedSystemLinear}. 

\begin{proof}[Proof of Proposition~\ref{thm:transformedSystemLinear}] 
We start working with system \eqref{eq:dissstructureTrans}. Observe that $( \eta_\varepsilon, \overline {\bu}_\varepsilon)$, as solutions of \eqref{eq:dissstructureTrans}, has sufficient regularity such that one can use $(\partial_t \eta_\varepsilon, \overline {\bu}_\varepsilon)$ as a test-function. Hence in particular the following energy inequality is satisfied:
\begin{align}
\nonumber
&\sup_I
\int_\omega\big(\vert \partial_t\eta_\varepsilon\vert^2
+
\vert  \Dely \eta_\varepsilon\vert^2 \big)\dy
 +
 \sup_I
 \int_\Omega\vert  \overline{\bu}_\varepsilon\vert^2\dx
+\varepsilon\int_I\int_\omega|\partial_t\naby\eta_\varepsilon|^2\dy\dt+
 \int_I\int_\Omega\vert  \nabx\overline{\bu}_\varepsilon\vert^2\dx\dt
\\
&\lesssim
\int_\omega\big(\vert \eta_*\vert^2
+
\vert \Dely \eta_0\vert^2 \big)\dy
 +
 \int_I\int_\omega|g|^2\dy\dt
  +
   \int_\Omega\vert \overline{\bu}_0\vert^2\dx
+
 \int_I\int_\Omega\vert \overline{\bff}\vert^2 
 \dx\dt,
\label{3.81}
\end{align}
where we used \eqref{initialconv} and the hidden constant is independent of $T$ and $\varepsilon$. In particular we find that $\norm{\eta_\epsilon}_\infty\leq C_0$, where $C_0$ is independent of $\varepsilon$ but may depend on the initial data via \eqref{3.81}.

Now we want to differentiate the equation in time and test with
$\partial_t\overline\bu_\varepsilon+\bfG$ in the momentum equation (the corresponding test-function for the structure equation is then $\partial_t^2\eta_\varepsilon+\bn^\intercal\bfG|_{\partial\Omega}\circ\bfvarphi$), where $\bfG=\bfB_{\zeta_\varepsilon}^{-\intercal}\partial_t\bfB_{\zeta_\varepsilon}^\intercal\overline\bu_\varepsilon$.
However, since this procedure is not well-defined at the current stage, we instead replace time derivatives by temporal difference quotients with parameter $0<\tau\ll1$. Due to the discrete product rule, time shifted functions of the form $f^\tau:=f(\cdot+\tau)$ appear frequently.
Our pair of test-functions is
\begin{align*}
(\partial_t\Delta_\tau\eta_\varepsilon+\bn^\intercal\bfG|_{\partial\Omega}\circ\bfvarphi\,,\,\Delta_\tau\overline\bu_\varepsilon+\bfG),\qquad \bfG=\bfB_{\zeta_\varepsilon}^{-\intercal}\Delta_\tau\bfB_{\zeta_\varepsilon}^\intercal\overline\bu^\tau_\varepsilon.
\end{align*}

Thereby, 
we derive from \eqref{eq:dissstructureTrans} that
\begin{align}
&\sup_I
\int_\omega\big(\vert \partial_t\Delta_\tau\eta_\varepsilon\vert^2
+
\vert  \Dely \Delta_\tau\eta_\varepsilon\vert^2 \big)\dy
 +
 \sup_I
 \int_\Omega\vert  \Delta_\tau\overline{\bu}_\varepsilon\vert^2\dx\nonumber
 \\
&\quad 
+ \varepsilon\int_I\int_\omega\vert  \partial_t\Delta_\tau\naby\eta_\varepsilon\vert^2\dy\dt
+
 \int_I\int_\Omega\vert  \Delta_\tau\nabx\overline{\bu}_\varepsilon\vert^2\dx\dt\nonumber
\\
&\lesssim
\underbrace{\int_\omega\big(\vert \partial_t\Delta_\tau\eta_\varepsilon(0)\vert^2
+
\vert \Dely \Delta_\tau\eta_{\varepsilon}(0)\vert^2\big)\dy}_{1st}
 +\underbrace{
\int_\Omega\vert \Delta_\tau\overline{\bu}_\varepsilon(0)\vert^2\dx}_{2nd}\nonumber
\\
&\quad +
 \underbrace{\int_I\int_\omega\Delta_\tau g \,\partial_t\Delta_\tau\eta_\varepsilon\dy\dt}_{3rd}
-\underbrace{\varepsilon\int_I\int_\omega\partial_t\Delta_\tau\naby\eta_\varepsilon
\cdot\naby(\bn^\intercal\bfG\circ\bfvarphi)\dy\dt}_{4th}\nonumber
\\
&
\quad -\underbrace{\int_I\int_\omega\partial_t^2\Delta_\tau\eta_\varepsilon\,\bn^\intercal\bfG\circ\bfvarphi\dy\dt}_{5th}
-\underbrace{\int_I\int_\omega\Delta_\tau\Dely^2\eta_\varepsilon\, \bn^\intercal\bfG\circ\bfvarphi\dy\dt}_{6th}\nonumber
 \\
&\quad +
\underbrace{\int_I\int_\omega\Delta_\tau g\, \bn^\intercal\bfG\circ\bfvarphi\dy\dt}_{7th}
+
 \underbrace{\int_I\int_\Omega\Big(\partial_t J^\tau_{\zeta_\varepsilon}\frac{\Delta_\tau\overline{\bfv}_\varepsilon}{2}-\Delta_\tau J_{\zeta_\varepsilon}\partial_t\overline\bfv_\varepsilon \Big)\cdot\Delta_\tau\overline{\bv}_\varepsilon
 \dx\dt}_{8th}\nonumber
 \\
&
\quad  +
\underbrace{ \int_I\int_\Omega\Delta_\tau\mathbf{h}_{\zeta_\varepsilon}\cdot\Delta_\tau\overline{\bv}_\varepsilon
 \dx\dt}_{9th}
-
 \underbrace{\int_I\int_\Omega
 \Delta_\tau\bfA_{\zeta_\varepsilon}\nabla\overline{\bv}_\varepsilon:\Delta_\tau\nabla\overline\bu_\varepsilon
 \dx\dt }_{10th}\nonumber
 \\
 &\quad +\underbrace{\int_I\int_\Omega\pi_\varepsilon\Div(\Delta_\tau \bfB_{\zeta_\varepsilon}^\intercal(\Delta_\tau\overline{\bv}_\varepsilon+\bfG))\dx\dt}_{11th}
+\underbrace{\int_I\int_\Omega(\Delta_\tau\bfh_{\zeta_\varepsilon}-\Delta_\tau J_{\zeta_\varepsilon}\partial_t\overline{\bfv}_\varepsilon)\cdot \bfG \dx\dt}_{12th}\nonumber
 \\
&\quad -\underbrace{\int_I\int_\Omega J_{_{\zeta_\varepsilon}}^\tau\partial_t\Delta_\tau\overline{\bv}_\varepsilon\cdot \bfG\dx\dt}_{13th}
-\underbrace{\int_I\int_\Omega\Delta_\tau\bfA_{\zeta_\varepsilon}\nabla\overline{\bv}_\varepsilon:\nabla\bfG\dx\dt}_{14th}\nonumber
 \\
&\quad -\underbrace{\int_I\int_\Omega\bfA_{\zeta_\varepsilon}\nabla\Delta_\tau\overline{\bv}_\varepsilon:\nabla\bfG \dx\dt}_{15th}.
\label{est}
\end{align} 
We proceed to estimate the terms on the right in  \eqref{est}.
Notice that, different from the plate case in \cite{ScSu}, here $\bfG$ does not have zero boundary values leading to the extra terms of the form $\bfG\circ\bfvarphi$ in \eqref{est} which we are now going to estimate. It holds
\begin{align*}
\bfG\circ \bfvarphi&=(\bfB_{\zeta_\varepsilon}^{-\intercal}\Delta_\tau\bfB_{\zeta_\varepsilon}^\intercal)\circ\bfvarphi\,\partial_t\eta^\tau_\varepsilon\bfn
= (\Delta_\tau J_{\zeta_\varepsilon} J_{\zeta_\varepsilon}^{-1}+1)\, \partial_t\eta_\varepsilon^\tau\, \bn
 \approx \Delta_\tau J_{\zeta_\varepsilon} (1+{\zeta_\varepsilon}) \partial_t\eta^\tau_\varepsilon\bfn
\end{align*}
on $\omega$ using the fact that $\nabla\bfPsi_{\zeta_\varepsilon}\circ\bfvarphi\bfn=\bfn$
and \eqref{eq:detPsi}. Note that the 1st 2nd and 3rd integral on the right hand side of \eqref{est} can be treated directly by the data. Hence we obtain starting from the estimate of the 4th integral:
\begin{align*}\nonumber
\text{4th}&\lesssim\varepsilon^2\int_I\|\partial_t\Delta_\tau\naby\eta_\varepsilon\|_{L^2_\by}^2\dt+c(\varepsilon)\int_I\|\naby\bfG\|^2_{L^{2}_\by}\dt\nonumber\\
&\lesssim\varepsilon^2\int_{I^\tau}\|\partial_t\Delta_\tau\naby\eta_\varepsilon\|_{L^{2}_\by}^2\dt+c(\varepsilon)\int_I\|\Delta_\tau\naby{\zeta_\varepsilon}\|^2_{L^{2}_\bx}\|{\zeta_\varepsilon}\|^2_{L^{\infty}_\by}\|\partial_t\eta^\tau_\varepsilon\|^2_{L^{\infty}_\by}\dt\nonumber\\
&\quad +c(\varepsilon)\int_I\|\Delta_\tau{\zeta_\varepsilon}\|^2_{L^{2}_\by}\|\naby{\zeta_\varepsilon}\|^2_{L^{\infty}_\bx}\| \partial_t\eta^\tau_\varepsilon\|_{L^\infty_\by}\dt+c(\varepsilon)\int_I\|\Delta_\tau{\zeta_\varepsilon}\|^2_{L^{\infty}_\by}\|{\zeta_\varepsilon}\|^2_{L^{\infty}_\by}\|\partial_t\naby\eta^\tau_\varepsilon\|^2_{L^{2}_\by}\dt\nonumber\\
&\lesssim \varepsilon^2\int_{I^\tau}\|\partial_t\Delta_\tau\naby\eta_\varepsilon\|_{L^{2}_\by}^2\dt+c(\varepsilon)\int_{I^\tau}\|\partial_t\Dely\eta_\varepsilon\|^2_{L^{2}_\by}\dt,
\end{align*}
uniformly in $\bfn$ and $\varepsilon$ and where $I^\tau=(0,T+\tau)$.
We proceed with the 5th term
\begin{align*}
-\text{5th}&=\int_I\int_\omega \partial_t^2\Delta_\tau\eta_\varepsilon\bn^\intercal\bfG\circ\bfvarphi\dy\dt\\
&\approx -\int_I\int_\omega \partial_t^2\eta_\varepsilon^\tau\,\Delta_\tau\partial_t\eta_\varepsilon\Delta_\tau{\zeta_\varepsilon} (1+{\zeta_\varepsilon})\dy\dt-\int_I\int_\omega \partial_t\Delta_\tau\eta_\varepsilon\partial_t^2{\zeta_\varepsilon}\partial_t\eta^\tau_\varepsilon (1+{\zeta_\varepsilon})\dy\dt\\
&\quad -\int_I\int_\omega\partial_t\Delta_\tau\eta_\varepsilon \partial_t\eta^\tau_\varepsilon |\partial_t{\zeta_\varepsilon}|^2\dy\dt+\int_\omega \partial_t\Delta_\tau\eta_\varepsilon\partial_t{\zeta_\varepsilon}\partial_t\eta^\tau_\varepsilon(1+{\zeta_\varepsilon})\dy\bigg|_{0}^{T}
\\
&\lesssim \int_{I^\tau}\big(\|\partial_t^2\eta_\varepsilon\|_{L^2_\by}^2
+\|\partial_t^2\eta_\varepsilon\|_{L^2_\by}\|\partial_t\eta_\varepsilon\|_{L^\infty_\by}\big)\dt
+\sup_{I}\|\partial_t\Delta_\tau\eta_\varepsilon\|_{L^2_\by}\|\partial_t\eta_\varepsilon^\tau\|_{L^2_\by}\\
&\lesssim \int_{I^\tau}\big(\|\partial_t^2\eta_\varepsilon\|_{L^2_\by}^2
+\|\partial_t\Dely\eta_\varepsilon\|_{L^2_\by}^2\big)\dt+\delta\sup_{I}\|\partial_t\Delta_\tau\eta_\varepsilon\|_{L^2_\by}^2+c(\delta),
\end{align*}
using $\zeta_\epsilon\in X_{\tt shell}(0,T)$ and \eqref{3.81}. Similarly, it holds for the 6th term that
\begin{align*}
&\int_I\int_\omega \Delta_\tau\Dely^2\eta_\varepsilon\bn^\intercal\bfG\circ\bfvarphi\dy\dt
\\&\approx\int_I\int_\omega \Delta_\tau\Dely\eta_\varepsilon\partial_t\Dely\eta_\varepsilon^\tau\,\partial_t{\zeta_\varepsilon}(1+{\zeta_\varepsilon})\dy\dt
+\int_I\int_\omega \Delta_\tau\Dely\eta_\varepsilon\partial_t\Dely{\zeta_\varepsilon}
\partial_t\eta^\tau_\varepsilon(1+{\zeta_\varepsilon})\dy\dt\\
&\quad +2\int_I\int_\omega \Delta_\tau\Dely\eta_\varepsilon\partial_t\naby
{\zeta_\varepsilon}\cdot\partial_t\naby\eta^\tau_\varepsilon(1+{\zeta_\varepsilon})\dy\dt+\int_I\int_\omega\Delta_\tau\Dely\eta_\varepsilon\partial_t\eta_\varepsilon\partial_t{\zeta_\varepsilon}\Dely{\zeta_\varepsilon}\dy\dt\\
&\quad +2\int_I\int_\omega\Delta_\tau\Dely\eta_\varepsilon
\partial_t\naby\eta^\tau\partial_t{\zeta_\varepsilon}\cdot\naby{\zeta_\varepsilon}\dy\dt+2\int_I\int_\omega\Delta_\tau\Dely\eta_\varepsilon
\partial_t\eta^\tau_\varepsilon\partial_t\naby{\zeta_\varepsilon}\cdot\naby{\zeta_\varepsilon}\dy\dt\\
&\lesssim \int_I(\|\Delta_\tau\Dely\eta_\varepsilon\|_{L^2_\by}^2+\|\partial_t\eta^\tau_\varepsilon\|_{L^2_\by}^2)\dt+\int_I \|\Delta_\tau\Dely\eta_\varepsilon\|_{L^2_\by}\|\partial_t\eta^\tau_\varepsilon\|_{L^\infty_\by}\|\partial_t\Dely{\zeta_\varepsilon}\|_{L^2_\by}\dt\\
&\quad +\int_I \|\Delta_\tau\Dely\eta_\varepsilon\|_{L^2_\by}\|\partial_t\naby\eta^\tau_\varepsilon\|_{L^4_\by}\|\partial_t\naby{\zeta_\varepsilon}\|_{L^4_\by}\dt\\
&\lesssim \int_{I^\tau} \|\partial_t\Dely\eta_\varepsilon\|_{L^2_\by}^2\dt,
\end{align*}
and,  for the 7th term
\begin{align*}
\text{7th}=\int_I\int_\omega\Delta_\tau g\, \bn^\intercal\bfG\circ\bfvarphi\dy\dt
&\approx \int_I\int_\omega\Delta_\tau g \partial_t\eta^\tau_\varepsilon\partial_t{\zeta_\varepsilon}(1+{\zeta_\varepsilon})\dy\dt\lesssim\int_{I^\tau}\|\partial_tg\|_{L^2_\by}^2\dt+\int_{I^\tau}\|\partial_t\eta_\varepsilon\|_{L^2_\by}^2\dt.
\end{align*}
Now we look into the remaining terms in  \eqref{est}.
The 8th integral on the right-hand side 
can be estimated based on \eqref{eq:detPsi} and the assumption on ${\zeta_\varepsilon}$,
\begin{align*}
	\text{8th}=\int_I\int_\Omega\Big(\partial_t J^\tau_{\zeta_\varepsilon}\frac{\Delta_\tau\overline{\bfv}_\varepsilon}{2}-\Delta_\tau J_{\zeta_\varepsilon}\partial_t\overline\bfv_\varepsilon \Big)\cdot\Delta_\tau\overline{\bv}_\varepsilon
\dx\dt\lesssim \,\int_{I^\tau}\|\partial_t\overline{\bfv}_\varepsilon\|_{L^2_\bx}^2\dt.
\end{align*}
The 9th  integral on the right hand side of \eqref{est} decomposes into
\begin{align*}
({\tt II})^{\bfh}&:=- \int_I\int_\Omega\Delta_\tau\big(
J_{\zeta_\varepsilon} \nabx\overline{\bu}_\varepsilon\cdot \partial_t \bfPsi_{\zeta_\varepsilon}^{-1}\circ \bfPsi_{\zeta_\varepsilon}\big)\cdot\Delta_\tau\overline{\bv}_\varepsilon\dxt,
\\({\tt III })^{\bfh}&:=- \int_I\int_\Omega\Delta_\tau\big(\mathbf{B}_{\zeta_\varepsilon}\nabx\overline{\bfv}_\varepsilon~\overline{\bu}_\varepsilon\big)\cdot\Delta_\tau\overline{\bv}_\varepsilon\dxt,\\
({\tt IV})^{\bfh}&:= \int_I\int_\Omega\Delta_\tau\big(
J_{\zeta_\varepsilon}  \bff\circ \bfPsi_{\zeta_\varepsilon}\big)\cdot\Delta_\tau\overline{\bv}_\varepsilon\dxt.
\end{align*}
Clearly, the last term is of lower order and it holds
\begin{align*}
({\tt IV})^{\bfh}
&\lesssim \int_{I^\tau}\big(\|\partial_t\bff\circ \bfPsi_{\zeta_\varepsilon}\|^2_{L^2_\bx}+\|\bff\circ \bfPsi_{\zeta_\varepsilon}\|_{W^{1,2}_\bx}^2\big)\dt+\int_{I^\tau}\|\partial_t\overline{\bv}_\varepsilon\|_{L^2_\bx}^2\dt.
\end{align*}
We further have
\begin{align*}
({\tt III })^{\bfh}&=- \int_I\int_\Omega\Delta_\tau\mathbf{B}_{\zeta_\varepsilon}\nabx\overline{\bfv}_\varepsilon^\tau~\overline{\bu}^\tau_\varepsilon\cdot\Delta_\tau\overline{\bv}_\varepsilon\dxt- \int_I\int_\Omega\mathbf{B}_{\zeta_\varepsilon}\Delta_\tau\nabx\overline{\bfv}_\varepsilon~\overline{\bu}^\tau_\varepsilon\cdot\Delta_\tau\overline{\bv}_\varepsilon\dxt\\
&\quad - \int_I\int_\Omega\mathbf{B}_{\zeta_\varepsilon}\nabx\overline{\bfv}_\varepsilon~\Delta_\tau\overline{\bu}_\varepsilon\cdot\Delta_\tau\overline{\bv}_\varepsilon\dxt\\
&=:({\tt III })^{\bfh}_1+({\tt III })^{\bfh}_2+({\tt III })^{\bfh}_3.
\end{align*}
First of all we have
\begin{align*}
({\tt III })^{\bfh}_1&\lesssim\int_I\|\nabla\overline\bu_\varepsilon^\tau\|_{L^2_\bx}\|\overline\bu_\varepsilon^\tau\|_{L^6_\bx}\|\Delta_\tau\naby{\zeta_\varepsilon}\|_{L^6_\by}\|\Delta_\tau\overline\bu_\varepsilon\|_{L^6_\bx}\dt\\
&\lesssim\int_I\|\overline\bu^\tau_\varepsilon\|_{W^{1,2}_\bx}^2\|\Delta_\tau\naby{\zeta_\varepsilon}\|_{W^{1,2}_\by}\|\Delta_\tau\overline\bu_\varepsilon\|_{W^{1,2}_\bx}\dt\\
&\lesssim \delta\int_I\| \Delta_\tau\overline\bu_\varepsilon\|_{W^{1,2}_\bx}^2\dt+c(\delta)\int_I\|\overline\bu_\varepsilon^\tau\|^4_{W^{1,2}_\bx}\dt,
\end{align*}
where we used that $\partial_t{\zeta_\varepsilon} \in L^\infty(I; W^{2,2}(\omega))$.

We have by \eqref{210and212}, \eqref{218} and ${\zeta_\varepsilon}\in X_{\tt shell}(0,T)$ that for arbitrary  $\delta>0$,
\begin{align*}
({\tt III })^{\bfh}_2&\lesssim \int_I\|\mathbf B_{\zeta_\varepsilon}\|_{L^{\infty}_\bx}\|\Delta_\tau\nabx\overline{\bfv}_\varepsilon\|_{L^2_\bx}\|\overline{\bu}^\tau_\varepsilon\|_{L^6_\bx}\|\Delta_\tau\overline{\bv}_\varepsilon\|_{L^3_\bx}\dt\\
&\lesssim \int_I(1+\|\naby{\zeta_\varepsilon}\|_{L^{\infty}_\by})\|\Delta_\tau\overline{\bu}_\varepsilon\|_{W^{1,2}_\bx}\|\overline{\bu}^\tau_\varepsilon\|_{W^{1,2}_\bx}\|\Delta_\tau\overline{\bv}_\varepsilon\|_{L^2_\bx}^{\frac{1}{2}}\|\Delta_\tau\overline\bu_\varepsilon\|_{W^{1,2}_\bx}^{\frac{1}{2}}\dt\\
&\lesssim \int_I\|\Delta_\tau\overline{\bfv}_\varepsilon\|_{W^{1,2}_\bx}^{\frac{3}{2}}\|\Delta_\tau\overline{\bv}_\varepsilon\|_{L^2_\bx}^{\frac{1}{2}}\|\overline{\bv}^\tau_\varepsilon\|_{W^{1,2}_\bx}\dt\\
&\leq\,\delta \int_{I}\|\Delta_\tau\overline{\bfv}_\varepsilon\|_{W^{1,2}_\bx}^2\dt+c(\delta)\int_{I}\|\Delta_\tau\overline{\bv}_\varepsilon\|_{L^2_\bx}^{4}\dt+c(\delta)\int_{I^\tau}\|\overline{\bv}_\varepsilon\|_{W^{1,2}_\bx}^{8}\dt,
\end{align*}
as well as
\begin{align*}
({\tt III })^{\bfh}_3&\lesssim \int_I\|\mathbf B_{\zeta_\varepsilon}\|_{L^{\infty}_\bx}\|\nabx\overline{\bfv}_\varepsilon\|_{L^2_\bx}\|\Delta_\tau\overline{\bu}_\varepsilon\|_{L^4_\bx}^2\dt\\
&\lesssim \int_I(1+\|\naby{\zeta_\varepsilon}\|_{L^{\infty}_\by})\|\overline{\bu}_\varepsilon\|_{W^{1,2}_\bx}\|\Delta_\tau\overline{\bu}_\varepsilon\|_{W^{1,2}_\bx}^{\frac{3}{2}}\|\Delta_\tau\overline{\bv}_\varepsilon\|_{L^2_\bx}^{\frac{1}{2}}\dt\\
&\leq\,\delta \int_I\|\Delta_\tau\overline{\bfv}_\varepsilon\|_{W^{1,2}_\bx}^2\dt+c(\delta)\int_{I^\tau}\big(\|\partial_t\overline{\bv}_\varepsilon\|_{L^2_\bx}^{4}+\|\overline{\bv}_\varepsilon\|_{W^{1,2}_\bx}^{8}\big)\dt.
\end{align*}
We proceed similarly for $({\tt II})^{\bfh}$ obtaining
\begin{align*}
({\tt II})^{\bfh}=&- \int_I\int_\Omega\Delta_\tau
J_{\zeta_\varepsilon} \nabx\overline{\bu}_\varepsilon\cdot \partial_t \bfPsi_{\zeta_\varepsilon}^{-1}\circ \bfPsi_{\zeta_\varepsilon}\cdot\Delta_\tau\overline{\bv}_\varepsilon\dxt\\
&- \int_I\int_\Omega
J_{\zeta_\varepsilon}^\tau \Delta_\tau\nabx\overline{\bu}_\varepsilon\cdot (\partial_t \bfPsi_{\zeta_\varepsilon}^{-1}\circ \bfPsi_{\zeta_\varepsilon})^\tau\cdot\Delta_\tau\overline{\bv}_\varepsilon\dxt\\
&- \int_I\int_\Omega
J_{\zeta_\varepsilon}^\tau \nabx\overline{\bu}^\tau_\varepsilon\cdot \Delta_\tau(\partial_t \bfPsi_{\zeta_\varepsilon}^{-1}\circ \bfPsi_{\zeta_\varepsilon})\cdot\Delta_\tau\overline{\bv}_\varepsilon\dxt\\
=:&({\tt II})^{\bfh}_1+({\tt II})^{\bfh}_2+({\tt II})^{\bfh}_3.
\end{align*}
Now $({\tt II})^{\bfh}_1$ and $({\tt II})^{\bfh}_2$ can be estimated directly using \eqref{eq:detPsi} and \eqref{218}. We have
\begin{align*}
({\tt II})^{\bfh}_1+({\tt II})^{\bfh}_2&\lesssim 
\int_I\|\partial_t{\zeta_\varepsilon}\|_{L^\infty_\by}^2\|\nabla\overline\bu_\varepsilon\|_{L^2_\bx}\|\Delta_\tau\overline\bu_\varepsilon\|_{L^2_\bx}\dt+\int_I\|\Delta_\tau\nabla\overline\bu_\varepsilon\|_{L^2_\bx}\|\Delta_\tau\overline\bu\|_{L^2_\bx}\dt\\
&\leq\,\delta \int_I\|\Delta_\tau\overline{\bfv}_\varepsilon\|_{W^{1,2}_\bx}^2\dt+c(\delta)\int_{I^\tau}\big(\|\partial_t\overline{\bv}_\varepsilon\|_{L^2_\bx}^{2}+\|\overline{\bv}_\varepsilon\|_{W^{1,2}_\bx}^{2}\big)\dt.
\end{align*}
Finally, using again \eqref{1embed} we see that
\begin{align*}
({\tt II})^{\bfh}_3
&\lesssim \int_I\|\nabx\overline{\bu}^\tau_\varepsilon\|_{L^2_\bx}\|\partial_t\Delta_\tau{\zeta_\varepsilon}\|_{L^3_\by}\|\Delta_\tau\overline{\bv}_\varepsilon\|_{L^6_\bx}\dt+\int_I\|\nabx\overline{\bu}^\tau\|_{L^2_\bx}\|(\partial_t\naby{\zeta_\varepsilon})^\tau\|_{L^4_\bx}\|\partial_t{\zeta_\varepsilon}\|_{L^\infty_\by}\|\Delta_\tau\overline{\bv}_\varepsilon\|_{L^4_\bx}\dt\\
&\lesssim \int_I\|\overline{\bu}^\tau_\varepsilon\|_{W^{1,2}_\bx}\|\partial_t\Delta_\tau{\zeta_\varepsilon}\|_{L^3_\by}\|\Delta_\tau\overline{\bv}_\varepsilon\|_{W^{1,2}_\bx}\dt+\int_I\|\nabla\overline\bu^\tau_\varepsilon\|_{L^2_\bx}\|\Delta_\tau\overline\bu_\varepsilon\|_{W^{1,2}_\bx}\|\partial_t\nabla_\by\zeta_\varepsilon\|_{W^{1,2}_\by}\dt\\
&\leq\,\delta \int_I\|\Delta_\tau\overline{\bfv}_\varepsilon\|_{W^{1,2}_\bx}^2\dt+c(\delta)\int_{I^\tau}(1+\|\overline{\bv}_\varepsilon\|_{W^{1,2}_\bx}^{4})\dt+c(\delta)\int_I\|\partial_t^2{\zeta_\varepsilon}\|_{L^3_\by}^4\dt.
\end{align*}
Lastly, for $0<s\leq \frac{1}{4}$ using the embedding
\begin{equation}\label{ineq2}
	\zeta_\varepsilon\in W^{2,\infty}(I; L^2(\omega))\cap L^\infty(I;W^{4,2}(\omega))\hookrightarrow W^{\frac{3-s}{2},2}(I; W^{s+2,2}(\omega))\hookrightarrow W^{1,4}(I; W^{s+2,2}(\omega)),
\end{equation}
we have
\begin{align*}
\int_I\int_\Omega\Delta_\tau\bfA_{\zeta_\varepsilon}\nabla\overline\bu_\varepsilon:\Delta_\tau\nabla\overline\bu_\varepsilon\dxt&\leq \int_I\|\Delta_\tau\nabla_\by\zeta_\varepsilon\|_{L^\infty_\by}\|\nabla\overline \bu_\varepsilon\|_{L^2_\bx}\|\Delta_\tau\nabla\overline \bu_\varepsilon\|_{L^2_\bx}\dt\\
&\lesssim\delta\int_I\|\Delta_\tau\nabla\overline\bu_\varepsilon\|_{L^2_\bx}^2\dt+c(\delta)\int_{I^\tau}\|\partial_t{\zeta_\varepsilon}\|_{W^{2+s,2}_\by}^4\dt+\int_I\|\nabla\overline\bu_\varepsilon\|_{L^2_\bx}^4\dt
.
\end{align*}
Next we consider now the 11th term that is the pressure term:
\begin{align*}
\text{11th}=\int_I\int_\Omega&\overline\pi_\varepsilon^\tau\Div(\Delta_\tau \bfB_{\zeta_\varepsilon}^\intercal(\Delta_\tau\overline{\bv}_\varepsilon+\bfG))\dx\dt.
\end{align*}
We split the 11th term into two parts.  First
\begin{align*}
&\int_I\int_\Omega\overline \pi_\varepsilon^\tau\Div(\Delta_\tau \bfB_{\zeta_\varepsilon}^\intercal\Delta_\tau\overline{\bv}_\varepsilon)\dx\dt\\
&\lesssim \int_I\|\overline\pi_\varepsilon\|_{L^4_\bx}\|\Delta_\tau\naby^2{\zeta_\varepsilon}\|_{L^2_\bx}\|\Delta_\tau\overline\bv_\varepsilon \|_{L^4_\bx}\dt+\int_I\|\overline\pi_\varepsilon \|_{L^4_\bx}\|\Delta_\tau\naby{\zeta_\varepsilon}\|_{L^4_\bx}\|\Delta_\tau\nabla\overline\bv_\varepsilon\|_{L^2_\bx}\dt\\
&\lesssim \int_I\|\overline\pi_\varepsilon\|_{W^{1,2}_\bx}\|\Delta_\tau\zeta_\varepsilon\|_{W^{2,2}_\by}\|\Delta_\tau\overline\bv_\varepsilon\|_{W^{1,2}_\bx}\dt\\
&\lesssim \delta\int_I\|\Delta_\tau\overline\bv_\varepsilon\|_{W^{1,2}_\bx}^2\dt+c(\delta)\int_I\|\overline\pi_\varepsilon\|_{W^{1,2}_\bx}^2\dt.
\end{align*}
The first term can eventually be absorbed while we estimate the second one using Lemma~\ref{lem:pres}. Next we estimate 
\begin{align*}
&\int_I\int_\Omega\overline \pi_\varepsilon^\tau\Div(\Delta_\tau \bfB_{\zeta_\varepsilon}^\intercal\bfG)\dx\dt\\
&\lesssim \int_I\|\overline \pi_\varepsilon\|_{L^6_\bx}\|\Delta_\tau\naby^2{\zeta_\varepsilon}\|_{L^2_\by}\|\Delta_\tau\naby{\zeta_\varepsilon}\|_{L^6_\by}(1+\|\naby{\zeta_\varepsilon}\|_{L^\infty_\by})\|\overline\bu_\varepsilon^\tau\|_{L^6_\bx}\dt\\
&\quad + \int_I\|\overline\pi_\varepsilon\|_{L^6_\bx}\|\Delta_\tau\naby{\zeta_\varepsilon}\|_{L^6_\by}\|\partial_t\nabla_\by\zeta_\varepsilon\|_{L^2_\by}\|\nabla^2_\by\zeta_\varepsilon\|_{L^\infty_\by}\|\overline\bu_\varepsilon^\tau\|_{L^6_\bx}\dt\\
&\quad + \int_I\|\overline \pi_\varepsilon\|_{L^6_x}\|\Delta_\tau\naby{\zeta_\varepsilon}\|^2_{L^6_\by}(1+\|\naby{\zeta_\varepsilon}\|_{L^\infty_\by})\|\nabla\overline\bu_\varepsilon^\tau\|_{L^2_\bx}\dt\\
&\lesssim \int_I\big(\|\overline \pi_\varepsilon\|_{W^{1,2}_\bx}^2+\|\overline\bu_\varepsilon^\tau\|_{W^{1,2}_\bx}^2\big)\dt.
\end{align*}
 Combining the two terms and using Lemma~\ref{lem:pres} we finally get
\begin{align*}
\text{11th} \lesssim  \delta\int_I\|\Delta_\tau\overline\bv_\varepsilon\|_{W^{1,2}_\bx}^2\dt+\int_{I^\tau}\big(\mathcal E_\eta^{5}+1\big)\dt+\int_I\|\bff\circ\bfPsi_{\zeta_\varepsilon}\|_{L^2_\bx}^2\dt.
\end{align*}

{\textbf{Estimates for the divergence-corrector $G$, terms 12  until 15.}}

\noindent
It remains to estimate the terms related to $\bfG=\bfB_{\zeta_\varepsilon}^{-\intercal}\Delta_\tau\bfB_{\zeta_\varepsilon}^\intercal\overline\bu_\varepsilon^\tau$. For the 12th and 13th integral in \eqref{est}, we decompose it into (using the definition of $\bfh_\zeta$)
\begin{align*}
({\tt I})^{\bfh,\bfG}&:= -\int_I\int_\Omega\Delta_\tau\big(J_{\zeta_\varepsilon}\partial_t \overline{\bu}_\varepsilon\big)\cdot \bfG\dxt\\
({\tt II})^{\bfh,\bfG}&:=- \int_I\int_\Omega\Delta_\tau\big(
J_{\zeta_\varepsilon} \nabx\overline{\bu}_\varepsilon\cdot \partial_t \bfPsi_{\zeta_\varepsilon}^{-1}\circ \bfPsi_{\zeta_\varepsilon}\big)\cdot\bfG\dxt
\\({\tt III })^{\bfh,\bfG}&:=- \int_I\int_\Omega\Delta_\tau\big(\mathbf{B}_{\zeta_\varepsilon}\nabx\overline{\bfv}_\varepsilon~\overline{\bu}_\varepsilon\big)\cdot\bfG\dxt\\
({\tt IV})^{\bfh,\bfG}&:= \int_I\int_\Omega\Delta_\tau\big(
J_{\zeta_\varepsilon}  \bff\circ \bfPsi_{\zeta_\varepsilon}\big)\cdot\bfG\dxt.
\end{align*}
Note that the last term is of lower order and we obtain that
\begin{align*}
({\tt IV})^{\bfh,\bfG}
&\lesssim \int_{I^\tau}\big(\|\bff\circ \bfPsi_{\zeta_\varepsilon}\|_{L^2_\bx}^2+\|\partial_t\bff\circ \bfPsi_{\zeta_\varepsilon}\|_{L^2_\bx}^2+\|\nabla\bff\circ \bfPsi_{\zeta_\varepsilon}\|^2_{L^2_\bx}\big)\dt+\int_{I^\tau}\|\overline\bu_\varepsilon\|_{W^{1,2}_\bx}^2\dt.
\end{align*}
With the embedding \eqref{ineq2},
we now perform the following estimate:
\begin{align*}
({\tt III })^{\bfh,\bfG}&=- \int_I\int_\Omega\Delta_\tau\mathbf{B}_{\zeta_\varepsilon}\nabx\overline{\bfv}_\varepsilon~\overline{\bu}_\varepsilon\cdot\bfG\dxt- \int_I\int_\Omega\mathbf{B}_{\zeta_\varepsilon}^\tau\Delta_\tau\nabx\overline{\bfv}_\varepsilon~\overline{\bu}_\varepsilon\cdot\bfG\dxt\\
&\quad - \int_I\int_\Omega\mathbf{B}^\tau_{\zeta_\varepsilon}\nabx\overline{\bfv}_\varepsilon^\tau~\Delta_\tau\overline{\bu}_\varepsilon\cdot\bfG\dxt\\
&\lesssim \int_I\|\nabla\overline\bu_\varepsilon\|_{L^2_\bx}\|\Delta_\tau\naby{\zeta_\varepsilon}\|_{L^\infty_\by}\|\overline\bu_\varepsilon\|_{L^6_\bx}^2\|\Delta_\tau\naby{\zeta_\varepsilon}\|_{L^6_\by}\dt+\int_I\|\Delta_\tau\nabla\overline\bu_\varepsilon\|_{L^2_\bx}\|\overline\bu_\varepsilon\|_{L^6_\bx}^2\|\Delta_\tau\naby{\zeta_\varepsilon}\|_{L^6_\by}\dt\\
&\quad +\int_I\|\nabla\overline\bu_\varepsilon^\tau\|_{L^2_\bx}\|\Delta_\tau\overline\bu_\varepsilon\|_{L^6_\bx}\|\Delta_\tau\naby{\zeta_\varepsilon}\|_{L^6_\by}\|\overline\bu_\varepsilon^\tau\|_{L^6_\bx}\dt\\
&\lesssim \int_{I}\|\partial_t\naby{\zeta_\varepsilon}\|_{W^{1+s,2}_\by}^4\dt+c(\delta)\int_{I^\tau}\|\overline\bu_\varepsilon\|_{W^{1,2}_\bx}^4\dt+\delta\int_I\|\Delta_\tau\overline\bu_\varepsilon\|_{W^{1,2}_\bx}^2\dt,
\end{align*}
and decompose
\begin{align*}
({\tt II})^{\bfh,\bfG}=&- \int_I\int_\Omega\Delta_\tau
J_{\zeta_\varepsilon} \nabx\overline{\bu}_\varepsilon\cdot \partial_t \bfPsi_{\zeta_\varepsilon}^{-1}\circ \bfPsi_{\zeta_\varepsilon}\cdot\bfG\dxt\\
&- \int_I\int_\Omega
J_{\zeta_\varepsilon}^\tau \Delta_\tau\nabx\overline{\bu}_\varepsilon\cdot (\partial_t \bfPsi_{\zeta_\varepsilon}^{-1}\circ \bfPsi_{\zeta_\varepsilon})^\tau\cdot\bfG\dxt\\
&- \int_I\int_\Omega
J_{\zeta_\varepsilon}^\tau \nabx\overline{\bu}_\varepsilon^\tau\cdot \Delta_\tau(\partial_t \bfPsi_{\zeta_\varepsilon}^{-1}\circ \bfPsi_{\zeta_\varepsilon})\cdot\bfG\dxt\\
=:&({\tt II})^{\bfh,\bfG}_1+({\tt II})^{\bfh,\bfG}_2+({\tt II})^{\bfh,\bfG}_3.
\end{align*}
Then we have by \eqref{210and212}, \eqref{218} and \eqref{eq:detPsi}
\begin{align*}
({\tt II})^{\bfh,\bfG}_1+({\tt II})^{\bfh,\bfG}_2
&\lesssim  \int_I(\|\nabla\overline\bu_\varepsilon\|_{L^2_\bx}+\|\Delta_\tau\nabla\overline\bu_\varepsilon\|_{L^2_\bx})\|\partial_t\zeta_\varepsilon\|_{L^\infty_\by}^2\|\Delta_\tau\nabla\zeta_\varepsilon\|_{L^4_\by}\|\overline\bu_\varepsilon\|_{L^4_\bx}\dt\\
&\lesssim  \delta\int_I\|\Delta_\tau\overline\bu_\varepsilon\|_{W^{1,2}_\bx}^2\dt+c(\delta)\int_I\|\overline\bu\|_{W^{1,2}_\bx}^2\dt.
\end{align*}
Finally, as for $({\tt II})^{\bfh}_3$ above,
\begin{align*}
({\tt II})^{\bfh,\bfG}_3
&\lesssim\,\int_I\|\nabla\overline\bu_\varepsilon\|_{L^2_\bx}\|\Delta_\tau\partial_t\zeta_\varepsilon\|_{L^4_\by}\|\Delta_\tau\nabla_\by\zeta_\varepsilon\|_{L^{20}_\by}\|\overline \bu_\varepsilon\|_{L^5_\bx}\dt+\int_I\|\nabla\overline\bu_\varepsilon\|_{L^2_\bx}\|\partial_t\nabla_\by\zeta_\varepsilon\|_{L^6_\by}^2\|\overline\bu_\varepsilon\|_{L^6_\bx}\dt\\
&\lesssim \int_I\|\overline \bu_\varepsilon\|_{W^{1,2}_\bx}^{\frac{19}{10}}\|\Delta_\tau\partial_t\zeta_\varepsilon\|_{W^{\frac{1}{2},2}_\by}\|\Delta_\tau\nabla_\by\zeta_\varepsilon\|_{W^{1,2}_{\by}}\|\overline\bu_\varepsilon\|_{L^2_\bx}^{\frac{1}{10}}\dt+\int_I\|\overline\bu_\varepsilon\|_{W^{1,2}_\bx}^2\|\partial_t\nabla_\by\zeta_\varepsilon\|_{W^{1,2}_\by}^2\dt\\
&\lesssim \int_{I}(1+\|\overline \bu_\varepsilon\|_{W^{1,2}_\bx}^4)\dt+\int_I|\|\partial_t^2\zeta_\varepsilon\|_{W^{\frac{1}{2},2}_\by}^2\dt,
\end{align*}
where we used the interpolation inequality:
\begin{equation}\label{ineq1}
\|\overline\bu_\varepsilon\|_{L^5_\bx}\lesssim \|\overline\bu_\varepsilon\|_{L^2_\bx}^{\frac{1}{10}}\|\overline\bu\varepsilon\|_{W^{1,2}_\bx}^{\frac{9}{10}}.
\end{equation} 
Now we estimate the 12th term that was defined as $({\tt I})^{\bfh,\bfG}$.  Taking an integration by parts we have 
\begin{align*}
\text{12th} &= ({\tt I})^{\bfh,\bfG}=-\int_I\int_\Omega\Delta_\tau J_{\zeta_\varepsilon}\partial_t\overline\bu_\varepsilon\cdot \bfG\dxt-\int_I\int_\Omega J^\tau_{\zeta_\varepsilon}\partial_t\Delta_\tau\overline\bu_\varepsilon\cdot \bfG\dxt\\
&=-\int_I\int_\Omega\Delta_\tau J_{\zeta_\varepsilon}\partial_t\overline\bu_\varepsilon\cdot \bfG\dxt+\int_I\int_\Omega \partial_t J^\tau_{\zeta_\varepsilon}\Delta_\tau\overline\bu_\varepsilon\cdot \bfG\dxt+\int_I\int_\Omega  J^\tau_{\zeta_\varepsilon}\Delta_\tau\overline\bu_\varepsilon\cdot \partial_t\bfG\dxt\\
&\quad -\int_\Omega J^\tau_{\zeta_\varepsilon}\Delta_\tau\overline\bu_\varepsilon\cdot \bfG\dx\bigg|_0^T = ({\tt I})^{\bfh,\bfG}_1+({\tt I})^{\bfh,\bfG}_2+({\tt I})^{\bfh,\bfG}_3+({\tt I})^{\bfh,\bfG}_4.
\end{align*}
The first two terms on the right hand side above are bounded by
\begin{align*}
({\tt I})^{\bfh,\bfG}_1+({\tt I})^{\bfh,\bfG}_2
&\lesssim  \int_I\|\partial_t\zeta_\varepsilon\|_{L^\infty_\by}(\partial_t\overline\bu_\varepsilon\|_{L^2_\bx}+\|\Delta_\tau\overline\bu_\varepsilon\|_{L^2_\bx})\|\Delta_\tau\nabla_\by\zeta_\varepsilon\|_{L^4_\by}\|\overline\bu_\varepsilon\|_{L^4_\bx}\dt\\
&\lesssim \int_I\|\partial_t\overline\bu_\varepsilon\|_{L^2_\bx}\|\Delta_\tau\nabla_\by\zeta_\varepsilon\|_{W^{1,2}_\by}\|\overline\bu_\varepsilon\|_{W^{1,2}_\bx}\dt\\
&\lesssim \int_I\|\partial_t\overline\bu_\varepsilon\|_{L^2_\bx}^2\dt+\int_I\|\overline\bu_\varepsilon\|_{W^{1,2}_\bx}^2\dt.
\end{align*}
The last integral in $({\tt I})^{\bfh,\bfG}$ can be estimated as
\begin{align*}
({\tt I})^{\bfh,\bfG}_4&\lesssim \sup_{I^\tau}\|\Delta_\tau\overline\bu_\varepsilon\|_{L^2_\bx}\|\overline\bu_\varepsilon\|_{L^4_\bx}\|\naby{\zeta_\varepsilon}\|_{L^{\infty}_\by}\|\Delta_\tau\naby{\zeta_\varepsilon}\|_{L^{4}_\by}\\
&\lesssim \sup_{I^\tau}\|\Delta_\tau\overline\bu_\varepsilon\|_{L^2_\bx}\|\overline\bu_\varepsilon\|_{L^2_\bx}^{1/4}\|\overline\bu_\varepsilon\|_{W^{1,2}_\bx}^{3/4}\\
&\lesssim \delta\sup_{I^\tau}\Big(\|\Delta_\tau\overline\bu_\varepsilon\|_{L^2_\bx}^2+\|\overline\bu_\varepsilon\|_{W^{1,2}_\bx}^{2} \Big)+c(\delta)\sup_{I^\tau}\|\overline\bu_\varepsilon\|^{2}_{L^2_\bx}.
\end{align*}

{\bf Control of the term  $({\tt I})^{\bfh,\bfG}_3$.}

\noindent
This is the most complicated term.  For this we compute $\partial_t\bfG$. 
 Since $\bfB_{\zeta_\varepsilon}=\mathrm{cof}(\nabla\bfPsi_{\zeta_\varepsilon})$, there is a bilinear mapping $\mathcal A:\R^{3\times 3}\times\R^{3\times 3}\rightarrow\R^{3\times 3}$
such that
\begin{align}
\partial_t\bfG&=\bfB_{\zeta_\varepsilon}^{-\intercal}\Delta_\tau\bfB_{\zeta_\varepsilon}^\intercal\partial_t\overline\bu_\varepsilon^\tau+\partial_t\bfB_{\zeta_\varepsilon}^{-\intercal}\Delta_\tau\bfB_{\zeta_\varepsilon}^\intercal\overline\bu_\varepsilon^\tau+\bfB_{\zeta_\varepsilon}^{-\intercal}\partial_t\Delta_\tau\big(\mathcal A(\nabla \bfPsi_{\zeta_\varepsilon}^\intercal,\nabla\bfPsi_{\zeta_\varepsilon}^\intercal)\big)\overline\bu_\varepsilon^\tau\label{dtg}\\
&= \bfB_{\zeta_\varepsilon}^{-\intercal}\Delta_\tau\bfB_{\zeta_\varepsilon}\partial_t\overline\bu_\varepsilon^\tau+\partial_t\bfB_{\zeta_\varepsilon}^{-\intercal}\Delta_\tau\bfB_{\zeta_\varepsilon}\overline\bu_\varepsilon^\tau+2\bfB_{\zeta_\varepsilon}^{-\intercal}\big(\mathcal A( \Delta_\tau\nabla \bfPsi_{\zeta_\varepsilon},\partial_t\nabla\bfPsi_{\zeta_\varepsilon})\big)\overline\bu_\varepsilon^\tau\nonumber\\
&\quad +\bfB_{\zeta_\varepsilon}^{-\intercal}\big(\mathcal A( \partial_t\Delta_\tau\nabla \bfPsi_{\zeta_\varepsilon},\nabla\bfPsi_{\zeta_\varepsilon})\big)\overline\bu_\varepsilon^\tau=: A_1+A_2+A_3.\nonumber
\end{align}
The most critical term in \eqref{dtg} is the last one. Writing $\mathcal A=\sum_{i,j=1}^3\mathcal A_{ij}$, with $\mathcal A_{ij}:\R^3\times\R^3\rightarrow\R^{3\times 3}$ acting on the column vectors of the matrices we obtain that
\begin{align*}
A_3&=-\int_I\int_\Omega J_{\zeta_\varepsilon}^\tau\Delta_{\tau}\overline\bu_\varepsilon\cdot\bfB_{\zeta_\varepsilon}^{-\intercal}\big(\mathcal A( \partial_t\Delta_\tau\nabla \bfPsi_{\zeta_\varepsilon},\nabla\bfPsi_{\zeta_\varepsilon})\big)\overline\bu_\varepsilon^\tau\dxt\\
&=-\sum_{i,j=1}^3\int_I\int_\Omega J_{\zeta_\varepsilon}^\tau \Delta_{\tau}\bu_\varepsilon\cdot\bfB_{\zeta_\varepsilon}^{-\intercal}\big(\mathcal A_{ij}( \partial_t\Delta_\tau\partial_i \bfPsi_{\zeta_\varepsilon},\partial_j\bfPsi_{\zeta_\varepsilon})\big)\overline\bu_\varepsilon^\tau\dxt\\
&=-\sum_{i,j=1}^3\int_I\int_{\partial\Omega}J_{{\zeta_\varepsilon}}^\tau\Delta_{\tau}\overline\bu_\varepsilon\cdot\bfB_{\zeta_\varepsilon}^{-\intercal}\big(\mathcal A_{ij}( \partial_t\Delta_\tau \bfPsi_{\zeta_\varepsilon},\partial_j\bfPsi_{\zeta_\varepsilon})\big)\overline\bu_\varepsilon\,n^i\,\dd \mathcal{H}^2\dt\\
&\quad +\sum_{i,j=1}^3\int_I\int_\Omega J_{\zeta_\varepsilon}^\tau\Delta_{\tau}\overline\bu_\varepsilon\cdot\bfB_{\zeta_\varepsilon}^{-\intercal}\big(\mathcal A_{ij}( \partial_t\Delta_\tau \bfPsi_{\zeta_\varepsilon},\partial_{ij}\bfPsi_{\zeta_\varepsilon})\big)\overline\bu_\varepsilon^\tau\dxt\\
&\quad +\sum_{i,j=1}^3\int_I\int_\Omega\partial_iJ_{\zeta_\varepsilon}^\tau\Delta_{\tau}\overline\bu_\varepsilon\cdot\bfB_{\zeta_\varepsilon}^{-\intercal}\big(\mathcal A_{ij}( \partial_t\Delta_\tau \bfPsi_{\zeta_\varepsilon},\partial_j\bfPsi_{\zeta_\varepsilon})\big)\overline\bu_\varepsilon^\tau\dxt\\
&\quad +\sum_{i,j=1}^3\int_I\int_\Omega J_{\zeta_\varepsilon}^\tau\Delta_{\tau}\partial_i\overline\bu_\varepsilon\cdot\bfB_{\zeta_\varepsilon}^{-\intercal}\big(\mathcal A_{ij}( \partial_t\Delta_\tau \bfPsi_{\zeta_\varepsilon},\partial_j\bfPsi_{\zeta_\varepsilon})\big)\overline\bu_\varepsilon^\tau\dxt\\
&\quad +\sum_{i,j=1}^3\int_I\int_\Omega J_{\zeta_\varepsilon}^\tau\Delta_{\tau}\overline\bu_\varepsilon\cdot\partial_i\bfB_{\zeta_\varepsilon}^{-\intercal}\big(\mathcal A_{ij}( \partial_t\Delta_\tau \bfPsi_{\zeta_\varepsilon},\partial_j\bfPsi_{\zeta_\varepsilon})\big)\overline\bu_\varepsilon^\tau\dxt\\
&\quad +\sum_{i,j=1}^3\int_I\int_\Omega J_{\zeta_\varepsilon}^\tau\Delta_{\tau}\overline\bu_\varepsilon\cdot\bfB_{\zeta_\varepsilon}^{-\intercal}\big(\mathcal A_{ij}( \partial_t\Delta_\tau \bfPsi_{\zeta_\varepsilon},\partial_j\bfPsi_{\zeta_\varepsilon})\big)\partial_i\overline\bu_\varepsilon^\tau\dxt\\
&=:J_0+J_1+\dots J_5.
\end{align*}
Using the estimates for $\bfPsi_{\zeta_\varepsilon}$ (cf. \eqref{210and212}, \eqref{218} and \eqref{eq:detPsi}) as well as $\zeta_\varepsilon \in X_{\tt shell}(0,T)$, we have by the trace embedding $W^{1,2}(\Omega)\hookrightarrow W^{1/2,2}(\partial\Omega)\hookrightarrow L^4(\partial\Omega)$
\begin{align*}
J_0&\lesssim \int_I\big(1+\|\naby{\zeta_\varepsilon}\|_{L^{\infty}_\by}^3\big)\|\partial_t^2{\zeta_\varepsilon}\|_{L^2_\by}\|\Delta_{\tau}\overline\bu_\varepsilon\|_{L^4(\partial\Omega)}\|\overline\bu_\varepsilon\|_{L^4(\partial\Omega)}\dt\\
&\lesssim \delta\int_I\|\Delta_{\tau}\overline\bu_\varepsilon\|_{W^{1,2}_\bx}^2\dt+c(\delta) \int_I\|\overline\bu_\varepsilon\|_{W^{1,2}_\bx}^2\dt.
\end{align*}
Similarly, it holds
\begin{align*}
J_1
&\lesssim \int_I\lVert\partial_t^2{\zeta_\varepsilon}\rVert_{L^2_\by}\lVert\Delta_{\tau}\overline\bu_\varepsilon\rVert_{L^4_\bx}\lVert\overline\bu_\varepsilon\rVert_{L^4_\bx}\lVert\naby^2{\zeta_\varepsilon}\rVert_{L^\infty_\by}\dt\\
&\lesssim \delta\int_I\lVert\Delta_{\tau}\overline\bu_\varepsilon\rVert_{W^{1,2}_\bx}^2\dt+c(\delta)\int_I\lVert\overline\bu_\varepsilon\rVert_{W^{1,2}_\bx}^2\dt,
\end{align*}
where we used the fact that $\naby^2{\zeta_\varepsilon}\in L^\infty(I\times \omega)$.
Moreover,
\begin{align*}
J_2&\lesssim \int_I\big(1+\|\naby{\zeta_\varepsilon}\|_{L^{\infty}_\by}^3\big)\|\partial_t^2{\zeta_\varepsilon}\|_{L^2_\by}\|\Delta_{\tau}\overline\bu_\varepsilon\|_{L^4_\bx}\|\overline\bu_\varepsilon\|_{L^4_\bx}\dt\\
&\lesssim \delta\int_I\|\Delta_{\tau}\overline\bu_\varepsilon\|_{W^{1,2}_\bx}^2\dt+c(\delta) \int_I\|\overline\bu_\varepsilon\|_{W^{1,2}_\bx}^2\dt,
\end{align*}
\begin{align*}
J_3&\lesssim \int_I\big(1+\|\naby{\zeta_\varepsilon}\|_{L^{\infty}_\by}^3\big)\|\partial_t^2{\zeta_\varepsilon}\|_{L^3_\by}\|\Delta_{\tau}\nabla\overline\bu_\varepsilon\|_{L^2_\bx}\|\overline\bu_\varepsilon\|_{L^6_\bx}\dt\\
&\lesssim \int_I\|\Delta_{\tau}\overline\bu_\varepsilon\|_{W^{1,2}_x}\|\overline\bu_\varepsilon\|_{W^{1,2}_\bx}\|\partial_t^2{\zeta_\varepsilon}\|_{L^3_\by}\dt\\
&\lesssim \delta\int_I\|\Delta_{\tau}\overline\bu_\varepsilon\|_{W^{1,2}_\bx}^2\dt+c(\delta) \int_I\big(\|\partial_t^2{\zeta_\varepsilon}\|_{L^3_\by}^4+\|\overline\bu_\varepsilon\|_{W^{1,2}_\bx}^4\big)\dt\\
&\lesssim \delta\int_I\|\Delta_{\tau}\overline\bu_\varepsilon\|_{W^{1,2}_\bx}^2\dt+c(\delta) \bigg(\int_I\|\overline\bu_\varepsilon\|_{W^{1,2}_\bx}^4\dt+1\bigg),
\end{align*}
where we used the embedding \eqref{1embed}.
Finally, we have
\begin{align*}
J_4&\lesssim\int_I(1+\|\naby{\zeta_\varepsilon}\|_{L^\infty_\by}^2)\|\partial_t^2{\zeta_\varepsilon}\|_{L^2_\by}\|\naby^2{\zeta_\varepsilon}\|_{L^{\infty}_\by}\|\Delta_{\tau}\overline\bu_\varepsilon\|_{L^{4}_\bx}\|\overline\bu_\varepsilon\|_{L^4_\bx}\dt\\
&\lesssim\int_I\|\Delta_{\tau}\overline\bu_\varepsilon\|_{W^{1,2}_\bx}\|\naby^2{\zeta_\varepsilon}\|_{L^{\infty}_\by}\|\overline\bu_\varepsilon\|_{W^{1,2}_\bx}\dt\\
&\lesssim\delta\int_I\|\Delta_{\tau}\overline\bu_\varepsilon\|_{W^{1,2}_\bx}^2\dx+c(\delta)\int_I\|\overline\bu_\varepsilon\|_{W^{1,2}_\bx}^{2}\dt,
\end{align*}
as well as
\begin{align*}
J_5&\lesssim\int_I(1+\|\naby{\zeta_\varepsilon}\|_{L^\infty_\by}^3)\|\partial_t^2{\zeta_\varepsilon}\|_{L^3_\by}\|\Delta_{\tau}\overline\bu_\varepsilon \|_{L^{6}_\bx}\|\nabla\overline\bu_\varepsilon\|_{L^2_\bx}\dt\\
&\lesssim\int_I\|\Delta_{\tau}\overline\bu_\varepsilon\|_{W^{1,2}_\bx}\|\partial_t^2{\zeta_\varepsilon}\|_{L^{3}_\by}\|\overline\bu_\varepsilon\|_{W^{1,2}_\bx}\dt\\
&\lesssim\delta\int_I\|\Delta_{\tau}\overline\bu_\varepsilon\|_{W^{1,2}_\bx}^2\dt+c(\delta)\int_I\Big(\|\partial_t^2{\zeta_\varepsilon}\|_{L^3_\by}^{4}+\|\overline\bu_\varepsilon\|_{W^{1,2}_\bx}^{4}\Big)\dt\\
&\lesssim\delta\int_I\|\Delta_{\tau}\overline\bu_\varepsilon\|_{W^{1,2}_\bx}^2\dx+c(\delta)\bigg(\int_I\|\overline\bu_\varepsilon\|_{W^{1,2}_\bx}^{4}\dt+1\bigg),
\end{align*}
using again ${\zeta_\varepsilon}\in  X_{\tt shell}(0,T)\subset W^{2,4}(I; L^3(\omega))$.

We continue the estimates of the terms involving $\partial_t\bfG$ from \eqref{dtg}. For the first term in \eqref{dtg}, we have
\begin{align*}
A_1&=\int_I\int_\Omega J_{\zeta_\varepsilon}^\tau\Delta_{\tau} \overline{\bu}_\varepsilon\cdot \bfB_{\zeta_\varepsilon}^{-\intercal}\Delta_\tau\bfB_{\zeta_\varepsilon}\partial_t\overline\bu_\varepsilon^\tau\dxt\\
&\lesssim \int_I\lVert\Delta_{\tau}\overline\bu_\varepsilon\rVert_{L^4_\bx}\lVert\naby{\zeta_\varepsilon}\rVert_{L^\infty_\by}\lVert\partial_t\naby{\zeta_\varepsilon}\rVert_{L^4_\by}\lVert\partial_t\overline\bu_\varepsilon^\tau\rVert_{L^2_\bx}\dt\\
&\lesssim \int_I\lVert\Delta_{\tau}\overline\bu_\varepsilon\rVert_{W^{1,2}_\bx}\lVert\partial_t\naby{\zeta_\varepsilon}\rVert_{W^{1,2}_\by}\lVert\partial_t\overline\bu_\varepsilon^\tau\rVert_{L^2_\bx}\dt\\
&\lesssim \delta\int_I\lVert\Delta_{\tau}\overline\bu_\varepsilon\rVert_{W^{1,2}_\bx}^2\dt+c(\delta)\int_{I^\tau}\lVert\partial_t\overline\bu_\varepsilon\rVert_{L^2_\bx}^2\dt,
\end{align*}
where we used $\partial_t\naby{\zeta_\varepsilon}\in L^\infty(I; W^{1,2}(\omega))$. 
 For the second term in \eqref{dtg}, we have 
\begin{align*}
A_2&=\int_I\int_\Omega J_{\zeta_\varepsilon}^\tau\Delta_{\tau} \overline{\bu}_\varepsilon\cdot \Delta_\tau\bfB_{\zeta_\varepsilon}^{-\intercal}\partial_t\bfB_{\zeta_\varepsilon}\overline\bu_\varepsilon^\tau\dxt\\
&\lesssim \int_I\lVert\Delta_{\tau}\overline\bu_\varepsilon\rVert_{L^2_\bx}\lVert\naby{\zeta_\varepsilon}\rVert_{L^\infty_\by}\lVert\partial_t\naby{\zeta_\varepsilon}\rVert_{L^6_\by}^2\lVert\overline\bu_\varepsilon^\tau\rVert_{L^6_\bx}\dt\\
&\lesssim \int_I\lVert\Delta_{\tau}\overline\bu_\varepsilon\rVert_{L^2_\bx}\lVert\overline\bu_\varepsilon^\tau\rVert_{W^{1,2}_\bx}\lVert\partial_t\naby{\zeta_\varepsilon}\rVert_{W^{1,2}_\by}^2\dt\\
&\lesssim \int_I\lVert\Delta_{\tau}\overline\bu_\varepsilon\rVert_{L^2_\bx}^2\dt+\int_{I^\tau}\lVert\overline\bu_\varepsilon\rVert_{W^{1,2}_\bx}^2\dt.
\end{align*}
The integral related to the third term on the right hand side of \eqref{dtg} can be estimated in a similar way, thereby we finished estimating $({\tt I})^{\bfh,\bfG}$.

{\bf The last two integrals related to $\nabla \bfG$ in \eqref{est}, namely the 14th and 15th term .}

\noindent
 For these terms we will be using repeatedly \eqref{210and212} and \eqref{218}. As $\bfG=\bfB_{\zeta_\varepsilon}^{-\intercal}\Delta_\tau\bfB_{\zeta_\varepsilon}^\intercal\overline\bu_\varepsilon^\tau$, we have 
\begin{align*}
\text{14th}&=\int_I\int_\Omega\Delta_\tau\bfA_{\zeta_\varepsilon}\nabla\overline\bu_\varepsilon:\nabla\bfG\dx\dt\\
&=\int_I\int_\Omega\Delta_\tau\bfA_{\zeta_\varepsilon}\nabla\overline\bu_\varepsilon:(\nabla\bfB_{\zeta_\varepsilon}^{-\intercal}\Delta_\tau\bfB_{\zeta_\varepsilon}^\intercal\overline\bu_\varepsilon^\tau+\bfB_{\zeta_\varepsilon}^{-\intercal}\Delta_\tau\nabla\bfB_{\zeta_\varepsilon}^\intercal\overline\bu_\varepsilon^\tau+\bfB_{\zeta_\varepsilon}^{-\intercal}\Delta_\tau\bfB_{\zeta_\varepsilon}^\intercal\nabla\overline\bu_\varepsilon^\tau)\dx\dt.
\end{align*}
Using again the fact that $\naby^2{\zeta_\varepsilon}\in L^\infty(I\times \omega)$, we derive that
\begin{align*}
&\int_I\int_\Omega\Delta_\tau\bfA_{\zeta_\varepsilon}\nabla\overline\bu_\varepsilon:\nabla\bfB_{\zeta_\varepsilon}^{-\intercal}\Delta_\tau\bfB_{\zeta_\varepsilon}^\intercal\overline\bu_\varepsilon^\tau\dx\dt\\
&\lesssim \int_I\lVert\Delta_\tau\naby{\zeta_\varepsilon}\rVert_{L^6_\by}^2\lVert\nabla\overline\bu_\varepsilon\rVert_{L^2_\bx}\lVert\naby^2{\zeta_\varepsilon}\rVert_{L^\infty_\by}\lVert\overline\bu_\varepsilon^\tau\rVert_{L^6_\bx}\dt\\
&\lesssim \int_I\lVert\Delta_\tau\naby{\zeta_\varepsilon}\rVert_{W^{1,2}_\by}^2\big(\lVert\overline\bu_\varepsilon\rVert_{W^{1,2}_\bx}^2+\lVert\overline\bu_\varepsilon^\tau\rVert_{W^{1,2}_\bx}^2\big)\dt\lesssim \int_{I^\tau}\lVert\overline\bu_\varepsilon\rVert_{W^{1,2}_\bx}^2\dt.
\end{align*}
By using the embedding,
\begin{equation*}
\zeta_\varepsilon\in W^{2,\infty}(I; L^2(\omega))\cap L^\infty(I;W^{4,2}(\omega))\hookrightarrow W^{\frac{5}{4},2}(I;W^{2,4}(\omega))\hookrightarrow W^{1,2}(I; W^{2,4}(\omega)),
\end{equation*}
 and \eqref{ineq1} we continue  by estimating
\begin{align*}
&\int_I\int_\Omega\Delta_\tau\bfA_{\zeta_\varepsilon}\nabla\overline\bu_\varepsilon:\bfB_{\zeta_\varepsilon}^{-\intercal}\Delta_\tau\nabla\bfB_{\zeta_\varepsilon}^\intercal\overline\bu_\varepsilon^\tau\dx\dt\\
&\lesssim \int_I\|\Delta_\tau\nabla_\by\zeta_\varepsilon\|_{L^{20}_\by}\|\nabla\overline\bu_\varepsilon\|_{L^2_\bx}\|\nabla_\by\zeta_\varepsilon\|_{L^\infty_\by}\|\Delta_\tau\nabla_\by^2\zeta_\varepsilon\|_{L^4_\by}\|\overline\bu_\varepsilon\|_{L^5_\bx}\dt\\
&\lesssim \int_I\|\Delta_\tau\nabla_\by\zeta_\varepsilon\|_{W^{1,2}_\by}\|\overline\bu_\varepsilon\|_{W^{1,2}_\bx}^{\frac{19}{10}}\|\Delta_\tau\nabla_\by^2\zeta_\varepsilon\|_{L^4_\by}\|\overline\bu_\varepsilon\|_{L^2_\bx}^{\frac{1}{10}}\dt\\
&\lesssim \int_I\|\partial_t\nabla_\by^2\zeta_\varepsilon\|_{L^4_\by}^2\dt+\int_I(1+\|\overline\bu_\varepsilon\|_{W^{1,2}_\bx}^4)\dt.
\end{align*}
We finally have 
\begin{align*}
&\int_I\int_\Omega\Delta_\tau\bfA_{\zeta_\varepsilon}\nabla\overline\bu_\varepsilon:\bfB_{\zeta_\varepsilon}^{-\intercal}\Delta_\tau\bfB_{\zeta_\varepsilon}^\intercal\nabla\overline\bu_\varepsilon^\tau\dx\dt\\
&\lesssim \int_I\lVert\Delta_\tau\naby{\zeta_\varepsilon}\rVert_{L^\infty_\by}^2\big(\lVert\overline\bu_\varepsilon\rVert_{W^{1,2}_\bx}^2+\lVert\overline\bu_\varepsilon^\tau\rVert_{W^{1,2}_\bx}^2\big)\lVert\naby{\zeta_\varepsilon}\rVert_{L^\infty_\by}\dt\\
&\lesssim \int_I\lVert\partial_t\naby{\zeta_\varepsilon}\rVert_{W^{s+1, 2}_\by}^4\dt+\int_{I^\tau}\lVert\overline\bu_\varepsilon\rVert_{W^{1,2}_\bx}^4\dt,
\end{align*}
where for $0<s\leq \frac{1}{4}$ we use the embedding \eqref{ineq2}. This concludes the estimate of the 14th term.

For the 15th and last integral in \eqref{est}, we have
\begin{align*}
\text{15th}&=\int_I\int_\Omega\bfA_{\zeta_\varepsilon}^\tau\nabla\Delta_\tau\overline\bu_\varepsilon:\nabla\bfG\dx\dt\\
&=\int_I\int_\Omega\bfA_{\zeta_\varepsilon}^\tau\Delta_\tau\nabla\overline\bu_\varepsilon:(\nabla\bfB_{\zeta_\varepsilon}^{-\intercal}\Delta_\tau\bfB_{\zeta_\varepsilon}^\intercal\overline\bu_\varepsilon^\tau+\bfB_{\zeta_\varepsilon}^{-\intercal}\Delta_\tau\nabla\bfB_{\zeta_\varepsilon}^\intercal\overline\bu_\varepsilon^\tau+\bfB_{\zeta_\varepsilon}^{-\intercal}\Delta_\tau\bfB_{\zeta_\varepsilon}^\intercal\nabla\overline\bu_\varepsilon^\tau)\dx\dt.
\end{align*}
Again we estimate the above terms one by one. First, we have
\begin{align*}
&\int_I\int_\Omega\bfA_{\zeta_\varepsilon}^\tau\Delta_\tau\nabla\overline\bu_\varepsilon:\nabla\bfB_{\zeta_\varepsilon}^{-\intercal}\Delta_\tau\bfB_{\zeta_\varepsilon}^\intercal\overline\bu_\varepsilon^\tau\dt\\
&\lesssim \int_I\lVert\naby{\zeta_\varepsilon}\rVert_{L^\infty_\by}\lVert\Delta_\tau\nabla\overline\bu_\varepsilon\rVert_{L^2_\bx}\lVert\naby^2{\zeta_\varepsilon}\rVert_{L^\infty_\by}\lVert\Delta_\tau\naby{\zeta_\varepsilon}\rVert_{L^3_\by}\lVert\overline\bu_\varepsilon^\tau\rVert_{L^6_\bx}\dt\\
&\lesssim \int_I\lVert\Delta_\tau\nabla\overline\bu_\varepsilon\rVert_{L^2_\bx}\lVert\Delta_\tau\naby{\zeta_\varepsilon}\rVert_{W^{1,2}_\by}\lVert\overline\bu_\varepsilon^\tau\rVert_{W^{1,2}_\bx}\dt\\
&\lesssim \delta\int_I\lVert\Delta_\tau\nabla\overline\bu_\varepsilon\rVert_{L^2_\bx}^2\dt+c(\delta)\int_{I^\tau}\lVert\overline\bu_\varepsilon\rVert_{W^{1,2}_\bx}^2\dt.
\end{align*}
Next based on the embedding
$$ \zeta_\varepsilon\in W^{2,\infty}(I; L^2(\omega))\cap L^\infty(I;W^{4,2}(\omega))\hookrightarrow W^{\frac{4}{3},2}(I; W^{2,3}(\omega))\hookrightarrow W^{1,4}(I; W^{2,3}(\omega)),$$
 we estimate
\begin{align*}
&\int_I\int_\Omega\bfA_{\zeta_\varepsilon}^\tau\Delta_\tau\nabla\overline\bu_\varepsilon:\bfB_{\zeta_\varepsilon}^{-\intercal}\Delta_\tau\nabla\bfB_{\zeta_\varepsilon}^\intercal\overline\bu_\varepsilon^\tau\dx\dt\\
&\lesssim \int_I\lVert\Delta_\tau\nabla\overline\bu_\varepsilon\rVert_{L^2_\bx}\lVert\Delta_\tau\naby^2{\zeta_\varepsilon}\rVert_{L^3_\by}\lVert\overline\bu_\varepsilon^\tau\rVert_{L^6_\bx}\dt\\
&\lesssim \delta\int_I\lVert\Delta_\tau\nabla\overline\bu_\varepsilon\rVert_{L^2_\bx}^2\dt+c(\delta)\int_{I^\tau}\lVert\partial_t\naby^2{\zeta_\varepsilon}\rVert_{L^3_\by}^4\dt+c(\delta)\int_{I^\tau}\lVert\overline\bu_\varepsilon\rVert_{W^{1,2}_\bx}^4\dt.
\end{align*}
Finally, using \eqref{ineq2} again we also have
\begin{align*} 
&\int_I\int_\Omega\bfA_{\zeta_\varepsilon}^\tau\Delta_\tau\nabla\overline\bu_\varepsilon:\bfB_{\zeta_\varepsilon}^{-\intercal}\Delta_\tau\bfB_{\zeta_\varepsilon}^\intercal\nabla\overline\bu_\varepsilon^\tau\dx\dt\\
&\lesssim \int_I\lVert\Delta_\tau\nabla\overline\bu_\varepsilon\rVert_{L^2_\bx}\lVert\nabla\overline\bu_\varepsilon^\tau\rVert_{L^2_\bx}\lVert\naby{\zeta_\varepsilon}\rVert_{L^\infty_\by}^2\lVert\Delta_\tau\naby{\zeta_\varepsilon}\rVert_{L^\infty_\by}\dt\\
&\lesssim \int_I\lVert\partial_t\nabla\overline\bu_\varepsilon\rVert_{L^2_\bx}\lVert\Delta_\tau\naby{\zeta_\varepsilon}\rVert_{W^{s+1,2}_\by}\lVert\nabla\overline\bu_\varepsilon^\tau\rVert_{L^2_\bx}\dt\\
&\lesssim \delta\int_I\lVert\Delta_\tau\nabla\overline\bu_\varepsilon\rVert_{L^2_\bx}^2\dt+c(\delta)\int_I\|\overline\bu_\varepsilon^\tau\|_{W^{1,2}_\bx}^4\dt+c(\delta)\int_I\lVert\partial_t\naby{\zeta_\varepsilon}\rVert_{W^{s+1,2}_\by}^4\dt.
\end{align*}
We have finished estimating the terms depending on $\nabla\bfG$. Now all terms in \eqref{est} are estimated. 

Putting \eqref{est} and \eqref{eq:1508} together and using \eqref{eq:1905} we have shown that for $0<\varepsilon\ll 1$, 
\begin{align*}
&\int_{I}\|\Delta_\tau\eta_\varepsilon\|_{W^{s+2,2}_\by}^2\dt+\varepsilon\sup_I\|\Delta_\tau\eta_\varepsilon\|_{W^{s+1,2}_\by}^2+\sup_I
\int_\omega\big(\vert \partial_t\Delta_\tau\eta_\varepsilon\vert^2
+
\vert  \Dely \Delta_\tau\eta_\varepsilon\vert^2\big)\dy
 \\
 &\quad +
 \sup_I
 \int_\Omega\vert  \Delta_\tau\overline{\bu}_\varepsilon\vert^2\dx
+ \varepsilon\int_I\int_\omega\vert  \partial_t\Delta_\tau\naby\eta_\varepsilon\vert^2\dy\dt
+
 \int_I\int_\Omega\vert  \Delta_\tau\nabx\overline{\bu}_\varepsilon\vert^2\dx\dt
\\
&\lesssim 
 \delta\int_{I^\tau}\|\Delta_{\tau}\overline\bu_\varepsilon\|_{W^{1,2}_\bx}^2\dx+\delta\Big(\sup_{I^\tau}\|\overline\bu_\varepsilon\|_{W^{1,2}_\bx}^2+\sup_I\|\Delta_\tau\overline\bu\|_{L^2_\bx}^2+\sup_I\|\Delta_\tau\partial_t\eta_\varepsilon\|_{L^2_\by}^2\Big)\\
&\quad +\int_I(\mathcal E^5_{\eta_\varepsilon}(s)+1)\ds+\int_I H(s)\ds+\varepsilon^2\int_I\|\partial_t\Delta_\tau\naby\eta_\varepsilon\|_{L^2_\by}^2\dt\\
&\quad+\|\partial_t\Delta_\tau\eta_\varepsilon(0)\|_{L^2_\by}^2+ \|\Delta_\by\Delta_\tau\eta(0)\|_{L^2_\by}^2
+\|\Delta_\tau \overline\bu_\varepsilon(0)\|_{L^2_\bx}^2+\varepsilon\|\Delta_\tau\eta_\varepsilon(0)\|_{W^{1+s,2}_\by}^2.
\end{align*} 
Finally, recalling \eqref{eq:cont} we 
find that
\begin{align*}
&\limsup_{\tau\to 0}\|\partial_t\Delta_\tau\eta_\varepsilon(0)\|_{L^2_\by}^2+ \|\Delta_\by\Delta_\tau\eta(0)\|_{L^2_\by}^2
+\|\Delta_\tau \overline\bu_\varepsilon(0)\|_{L^2_\bx}^2)\leq 
\limsup_{\tau\to 0}\dashint_0^\tau \mathcal{E}_{\eta_\varepsilon}\, \ds 
\\
&\quad \leq  c\left(1+\|\overline\bu_0\|_{W^{2,2}_\bx}+\|\eta_0\|_{W^{4,2}_\by}+\|\eta_*\|_{W^{2,2}_\by}+\|\overline\bff_0\|_{L^2_\bx}+\|g_0\|_{L^2_\by}\right).  
\end{align*}
Hence we can pass to the limit $\tau\rightarrow0$ obtaining
 for $0<\varepsilon\ll 1$
\begin{equation*}
\begin{aligned}
&\int_{I}\|\partial_t\eta_\varepsilon\|_{W^{s+2,2}_\by}^2\dt
+\sup_I
\int_\omega\big(\vert \partial_t^2\eta_\varepsilon\vert^2
+
\vert  \Dely \partial_t\eta_\varepsilon\vert^2\big)\dy\\
& \quad +
 \sup_I
 \int_\Omega\vert  \partial_t\overline{\bu}_\varepsilon\vert^2\dx
+
 \sup_I
 \int_\Omega\vert  \nabla\overline{\bu}_\varepsilon\vert^2\dx
+ 
 \int_I\int_\Omega\vert  \partial_t\nabx\overline{\bu}_\varepsilon\vert^2\dx\dt
\\
&\lesssim \delta\sup_{I}\Big(\|\overline\bu_\varepsilon\|_{W^{1,2}_\bx}^2+\|\partial_t\overline\bu_\varepsilon\|_{L^2_\bx}^2+\|\partial_t^2\eta_\varepsilon\|_{L^2_\by}^2\Big)+\int_I(\mathcal E^5_{\eta_\varepsilon}(s)+1)\ds+\int_I H(s)\ds\\
&\quad +1+\|\overline\bu_0\|_{W^{2,2}_\bx}+\|\eta_0\|_{W^{4,2}_\by}+\|\eta_*\|_{W^{2,2}_\by}+\|\overline\bff_0\|_{L^2_\bx}+\|g_0\|_{L^2_\by}+\|\eta_*\|_{W^{1+s,2}_\by}^2,
\end{aligned}
\end{equation*}
where we used \eqref{initialconv} and the embedding
$$W^{1,2}(I;L^2(\Omega))\hookrightarrow L^\infty(I;L^2(\Omega)).$$ 

We can now absorb the $\delta$-terms
obtaining the estimate
\begin{align}\label{eq:12.11}
\mathcal E_{\eta_\varepsilon}(t)+\int_0^t\|\partial_t\nabla\overline\bu_\varepsilon\|^2_{L^2_\bx}\ds+\int_0^t\|\partial_t{\eta_\varepsilon}\|_{W^{2+s,2}_\by}^2\ds\leq\,c_0+c_1\int_0^t H(s)\ds+c_2\int_0^t \mathcal E_{\eta_\varepsilon}^{5}(s)\ds.
\end{align}
Note that this estimate is independent from $\varepsilon$ which allows us to extend the existence time in Proposition \ref{thm:accestn} to an $\varepsilon$-independent time $T_{\tt max}$. Now we pass to the limit $\varepsilon\to 0$ in \eqref{eq:12.11} which finishes the proof.
\end{proof}

\begin{corollary}\label{cor:self}
Under the assumptions of Proposition \ref{thm:transformedSystemLinear}, we have
$(\eta,\overline \bu,\overline \pi)\in X_{\tt shell}\times X_{\tt fluid}$
together with $\|\eta\|_{X_{\tt shell}(0,T_{\tt max})}\leq R$.
\end{corollary}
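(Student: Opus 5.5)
The plan is to combine the differential inequality of Proposition~\ref{thm:transformedSystemLinear} with the local Gr\"onwall Lemma~\ref{lem:gron}, and then to recover the missing spatial regularity from elliptic theory. Concretely, we apply Lemma~\ref{lem:gron} with $f=\mathcal E_\eta+\int_0^t\|\partial_t\nabla\overline\bu\|_{L^2_\bx}^2\ds+\int_0^t\|\partial_t\eta\|^2_{W^{2+s,2}_\by}\ds$ and $p=5$. The constant $c_0\mathcal E_\eta(0)$ is bounded by the data through the initial acceleration bound (the argument of \cite[Prop.~4.2]{ScSu} quoted after \eqref{eq:cont}). This gives, for $T_{\tt max}$ small (depending on $R$ and the data),
\[
\mathcal E_\eta(t)+\int_0^t\|\partial_t\nabla\overline\bu\|_{L^2_\bx}^2\ds+\int_0^t\|\partial_t\eta\|_{W^{2+s,2}_\by}^2\ds\le 2c_0\mathcal E_\eta(0)+2c_1\int_0^tH\ds .
\]
Here the first term on the right depends only on the data, independently of $R$, while the $R$-dependence of $c_1$ is made harmless by shrinking $T_{\tt max}$, since $\int_0^t H\ds\to0$. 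Hence $\eta\in W^{2,\infty}_tL^2_\by\cap W^{1,\infty}_tW^{2,2}_\by\cap W^{1,2}_tW^{s+2,2}_\by$ and $\overline\bu\in W^{1,\infty}_tL^2_\bx\cap L^\infty_tW^{1,2}_\bx$, with bounds $C(\text{data})+o(1)$.

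Next comes the spatial regularity, for which we use the steady Stokes estimate at fixed time. Theorem~\ref{thm:stokessteady} with Remark~\ref{rem:stokes} (applicable since $\zeta\in L^\infty_t(W^{2,2}\cap C^1)$) is applied pointwise in time, not integrated as in Lemma~\ref{lem:pres}. The boundary datum $\partial_t\eta\in L^\infty_tW^{2,2}_\by$ and the right-hand side $J\partial_t\overline\bu$ are now in $L^\infty_t$. The convective term is handled as in Lemma~\ref{lem:pres} with absorption. This yields $\overline\bu\in L^\infty_tW^{2,2}_\bx$ and $\overline\pi\in L^\infty_tW^{1,2}_\bx$, the constant $c_{\overline\pi}$ being controlled via $\partial_t^2\eta\in L^\infty_tL^2$. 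So $(\overline\bu,\overline\pi)\in X_{\tt fluid}$. For the shell we read off $\Dely^2\eta=g-\partial_t^2\eta+\bn^\intercal\mathbf H_\zeta\circ\bfvarphi\bn$. The trace of $\nabla\overline\bu,\overline\pi$ lies in $L^\infty_tW^{1/2,2}_\by$, so $\eta\in L^\infty_tW^{4,2}$, and bootstrapping by one half derivative gives $L^2_tW^{9/2,2}$. For the latter we use $\nabla\overline\bu\in L^2_tW^{3/2,2}$ (Stokes with $s=5/2$, boundary datum $\partial_t\eta\in L^2_tW^{2+s,2}$, at least for the needed fractional gain) and $g\in L^2_tW^{1,2}$. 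Finally $\partial_t^2\eta\in L^2_tW^{1/2,2}$ follows from the trace of $\partial_t\overline\bu\in L^2_tW^{1,2}_\bx$ via $\partial_t\overline\bu\circ\bfvarphi=\partial_t^2\eta\bn$.

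It remains to get $\|\eta\|_{X_{\tt shell}}\le R$. We choose $R$ as a fixed multiple of the data-only bound $C(\text{data})$. All $L^\infty_t$ norms above are then $\le C(\text{data})+o(1)$ as $T_{\tt max}\to0$. The $L^2_t$ norms are $\le T_{\tt max}^{1/2}$ times $L^\infty_t$ quantities, or are integrated quantities that vanish as $T_{\tt max}\to0$. Hence they fit within $R$ after shrinking $T_{\tt max}$.

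The main obstacle is the careful order of choosing $R$ and then $T_{\tt max}$. Every constant carrying $R$-dependence must multiply a quantity that vanishes as $T_{\tt max}\to 0$, and the $L^2_tW^{9/2,2}$ bound needs the fractional Stokes regularity on the moving $B^\theta_{\varrho,p}$ domain, where $\theta$ must exceed $5/2-1/2$. We expect this to be borderline given $\zeta\in L^2_tW^{9/2}$, and it is the step we would check first.
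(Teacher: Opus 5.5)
Your argument follows the paper's proof: local Gr\"onwall via Lemma~\ref{lem:gron}, then the Stokes estimate of Lemma~\ref{lem:pres} taken with a supremum in time to get $\overline\bu\in L^\infty_tW^{2,2}_\bx$ and $\overline\pi\in L^\infty_tW^{1,2}_\bx$, and then the shell equation for $L^\infty_tW^{4,2}_\by$ and $L^2_tW^{9/2,2}_\by$, with $\partial_t^2\eta\in L^2_tW^{1/2,2}_\by$ obtained from the trace of $\partial_t\overline\bu$. The one correction is that the $s=5/2$ Stokes step you flag as the main obstacle is unnecessary: as you note yourself, the traces of $\nabla\overline\bu$ and $\overline\pi$ already lie in $L^\infty_tW^{1/2,2}$, which is exactly the half derivative needed for $\Dely^2\eta\in L^2_tW^{1/2,2}_\by$, so the paper simply uses $\int_I(\|\nabla^2\overline\bu\|_{L^2_\bx}^2+\|\nabla\overline\pi\|_{L^2_\bx}^2)\dt$ there, and neither fractional Stokes regularity nor extra boundary regularity of $\zeta$ is involved.
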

\begin{proof}
We can now apply the local Gr\"{o}nwall's lemma from \cite[lemma 24]{DiRu2005} (assuming that $T_0(\mathcal E(0)+1)<1$) means of $\mathcal E(0)$ and $\int_0^{T_0}H(s)\ds$. In particular, the left-hand side
can be made (much) smaller than $R^2$, if we choose $T_{\tt max}$ and $R$ accordingly, i.e.
\begin{align}\label{eq:09.08}
\mathcal E_\eta(t)+\int_0^t\|\partial_t\nabla\overline\bu\|^2_{L^2_\bx}\ds+\int_0^t\|\partial_t\eta\|_{W^{2+s,2}_\by}^2\ds\leq\,\delta (T_{\tt max},R) R^2.
\end{align}
Arguing exactly as in the proof of Lemma \ref{lem:pres} (but taking the supremum in time instead of the integral) we obtain
\begin{align*}
\sup_I\Big(\|\overline\bfv\|_{W^{2,2}_\bx(\Omega)}^2+\|\overline\pi\|_{W_\bx^{1,2}(\Omega)}^2\Big)&\lesssim \sup_I\Big(1+\mathcal E_\eta^{5}+\|\bff\circ\bfPsi_{\zeta}\|_{L^2_\bx}^2\Big),
\end{align*}
which is uniformly bounded by \eqref{eq:09.08}.

Based on equation \eqref{shellEqAloneBar'}, we get now further
\begin{align*}
\sup_I\int_\omega|\Dely^2 \eta|^2\dy
&\lesssim \sup_I\int_\omega\left(|\partial_t^2 \eta|^2+|g|^2\right)\dy+\sup_I\|\naby\zeta\|_{L^8(\omega)}^4\|\nabla\overline\bu\|_{L^4(\partial\Omega)}^2+\sup_I\|\naby\zeta\|_{L^4(\omega)}^2\|\overline\pi\|_{L^4(\partial\Omega)}^2\\
&\lesssim \sup_I\int_\omega\left(|\partial_t^2 \eta|^2+|g|^2\right)\dy +\sup_I\|\naby\zeta\|_{W^{1,2}(\omega)}^4\|\nabla\overline\bu\|_{W^{\frac{1}{2},2}(\partial\Omega)}^2\\
&\quad +\sup\|\naby\zeta\|_{W^{1, 2}(\omega)}^2\|\overline\pi\|_{W^{\frac{1}{2},2}(\partial\Omega)}^2\\
&\lesssim \sup_I\int_\omega\left(|\partial_t^2 \eta|^2+|g|^2\right)\dy+\sup_I\|\nabla\overline\bu\|_{W^{1,2}(\Omega)}^2+\sup_I\|\overline\pi\|_{W^{1,2}(\Omega)}^2,
\end{align*}
%
%
%
%
%
as well as
\begin{align*}
\int_I\|\Dely^2 \eta\|^2_{W^{1/2,2}(\omega)}&\lesssim \int_I\|\partial_t^2 \eta\|^2_{W^{1/2,2}(\omega)}+\int_I\int_\Omega|\nabla^2\overline\bfv|^2\dxt+\int_I\int_\Omega|\nabla\overline\pi|^2\dxt+\int_I\|g\|^2_{W^{1/2,2}(\omega)}\dt,
\end{align*}
where 
\begin{align*}
\|\partial_t^2\eta\|_{W^{1/2,2}_\by}\lesssim \|\partial_t\overline\bu\|_{W^{1,2}_\bx},
\end{align*}
where the last term is controlled by Proposition \ref{thm:transformedSystemLinear} yielding a uniform control of $\mathcal E(t)$. 
	We conclude that $(\eta,\bu,\pi)\in X_{\tt shell}\times X_{\tt fluid}$ with $\|\eta\|_{X_{\tt shell}(0,T_{\tt max})}\leq R$.
\end{proof}
%

\subsection{The contraction argument}
\label{sec:3.3}
We argue similarly to \cite[Section 5]{BMSS} (see also \cite{schwarzacher2022weak}), where the weak-strong uniqueness property of the fluid-structure problem with dissipation in the shell is studied. Here, we do not have visco-elastic dissipation at hand. However, we estimate the difference of two strong solutions rather than a weak and a strong one.
We aim to compare two strong solutions
$(\bu_1,\eta_1)$ and $(\bu_2,\eta_2)$ with geometries corresponding to $\zeta_1$ and $\zeta_2$, respectively. 
\begin{proposition}\label{prop:contraction}
Fix $R,T_{\tt max}>0$ as in Proposition \ref{thm:transformedSystemLinear}.
Suppose that the dataset
$(\bff,g, \eta_0, \eta_*, \overline{\bu}_0 )$
satisfies \eqref{dataset'}
and that $\zeta_1,\zeta_2\in X_{\tt shell}(0,T_{\tt max})$ with $\|\zeta_1\|_{X_{\tt shell}(0,T_{\tt max})}\leq R$ and $\|\zeta_2\|_{X_{\tt shell}(0,T_{\tt max})}\leq R$. Suppose that there are two solutions
$( \eta_1, \overline{\bu}_1)$ and $(\eta_2, \overline\bu_2)$ of \eqref{contEqAloneBar'}--\eqref{momEqAloneBar'} in $X_{\tt shell}\times X_{\tt fluid}$  corresponding to $\zeta_1$ and $\zeta_2$, respectively. Then there is $T_{\tt max}'\in(0,T_{\tt max}]$ such that
\begin{align*}
&\sup_{0<t<T'_{\tt max}}\Big(\|\partial_t(\eta_1-\eta_2)(t)\|_{L^2_\by}^2+\|\Dely(\eta_1-\eta_2)(t)\|_{L^2_\by}^2\Big)\\
&\leq\,\tfrac{1}{2}\sup_{0<t<T'_{\tt max}}\Big(\|\partial_t(\zeta_1-\zeta_2)(t)\|_{L^2_\by}^2+\|\Dely(\zeta_1-\zeta_2)(t)\|_{L^2_\by}^2\Big).
			\end{align*}
\end{proposition}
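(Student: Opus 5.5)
The plan is to follow the weak--strong stability strategy of \cite[Section 5]{BMSS}, but now comparing two strong solutions in the \emph{moving} domains. Set $\bu_i=\overline\bu_i\circ\bfPsi_{\zeta_i}^{-1}$ on $\Omega_{\zeta_i}$. Since both solve the decoupled problem in the geometry $\zeta_i$, the pair $(\eta_i,\bu_i)$ solves the Navier--Stokes/shell system in $\Omega_{\zeta_i}$. The coupling condition is $\bu_i\circ\bfvarphi_{\zeta_i}=\partial_t\eta_i\bfn$. The pressure appears only through the boundary term in the structure equation, so in the moving domain we may use the pressure-free weak formulation with solenoidal test functions.

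\textbf{Step 1: transport and correct the test function.} To compare the solutions, transport $\bu_2$ to $\Omega_{\zeta_1}$ by
\[
\tilde\bu_2:=\bu_2\circ\bfPsi_{\zeta_2}\circ\bfPsi_{\zeta_1}^{-1}.
\]
Its trace on $\partial\Omega_{\zeta_1}$ is $\partial_t\eta_2\bfn$, but it is no longer solenoidal. Its divergence is a combination of $\nabla\bu_2$ with $\nabla(\bfPsi_{\zeta_2}-\bfPsi_{\zeta_1})$, hence of size $|\nabla\bu_2|\,|\naby(\zeta_1-\zeta_2)|$ by \eqref{211and213}. It also has zero mean, since its flux through the boundary vanishes. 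Correct it with the universal Bogovskij operator of Corollary~\ref{thm:ndBog}, setting
\[
\bfw:=\tilde\bu_2-\Bog(\Div\tilde\bu_2)\qquad\text{in }\Omega_{\zeta_1}.
\]
Then $\bfw$ is solenoidal with trace $\partial_t\eta_2\bfn$. The Lipschitz bound on $\zeta_i$ needed for the operator comes from $\|\zeta_i\|_{X_{\tt shell}}\le R$. Remark~\ref{time-bog} lets us commute $\partial_t$ with $\Bog$, which gives
\[
\|\partial_t\Bog(\Div\tilde\bu_2)\|\lesssim \|\partial_t(\ldots)\|.
\]
This bound involves $\partial_t\naby(\zeta_1-\zeta_2)$ tested against $\nabla\bu_2$; it is either integrated by parts in time or controlled using $\nabla\bu_2\in L^\infty_tW^{1,2}$.

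\textbf{Step 2: relative energy inequality.} Test the equation of solution 1 by $(\partial_t(\eta_1-\eta_2),\bu_1-\bfw)$, which is admissible in $\Omega_{\zeta_1}$. Subtract the equation of solution 2, written in $\Omega_{\zeta_1}$ after transformation and tested by the same pair. This gives
\[
\tfrac12\frac{\mathrm d}{\mathrm dt}\Big(\|\partial_t(\eta_1-\eta_2)\|_{L^2_\by}^2+\|\Dely(\eta_1-\eta_2)\|_{L^2_\by}^2+\|\bu_1-\bfw\|_{L^2}^2\Big)+\|\nabla(\bu_1-\bfw)\|_{L^2}^2\le \mathcal R,
\]
where the initial values coincide, so the difference vanishes at $t=0$. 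The remainder $\mathcal R$ has two kinds of terms.

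\emph{(a) Solution-difference terms.} The convective difference and similar terms are bounded by
\[
\|\nabla\bu_i\|_{L^\infty_tW^{1,2}}\,\|\bu_1-\bfw\|_{L^2}\,\|\nabla(\bu_1-\bfw)\|_{L^2},
\]
and are absorbed or handled by Gr\"onwall.

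\emph{(b) Geometric-error terms.} These are linear in $\zeta_1-\zeta_2$ or its derivatives $\naby$, $\naby^2$, $\partial_t$, multiplied by strong norms of $(\bu_2,\pi_2,\eta_2)$ that are bounded by $R$ via Corollary~\ref{cor:self}. The pressure $\pi_2$ enters because $\tilde\bu_2$ is not divergence-free in $\Omega_{\zeta_1}$. Its contribution is bounded by
\[
\|\pi_2\|_{W^{1,2}}\,\|\Div\tilde\bu_2\|_{W^{-1,2}}\quad\text{or}\quad\|\pi_2\|_{L^2}\,\|\naby(\zeta_1-\zeta_2)\|_{L^2}\,\|\nabla\bu_2\|_{L^\infty\text{-type}}.
\]
Using $\bu_2\in L^\infty_tW^{2,2}$ and Sobolev embeddings on $\omega$ and $\Omega$, each such term is bounded by
\[
C(R)\,\|\zeta_1-\zeta_2\|_{W^{1,\infty}_tL^2\cap L^\infty_tW^{2,2}}\big(\|\nabla(\bu_1-\bfw)\|_{L^2}+\text{energy of the difference}\big).
\]
Young's inequality then leaves $C(R)\,\|\zeta_1-\zeta_2\|^2$ pointwise in time.

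\textbf{Step 3: Gr\"onwall and conclusion.} Gr\"onwall's lemma gives
\[
\sup_{t<T'}\big(\text{energy of the difference}\big)\le C(R)\,T'\,e^{C(R)T'}\sup_{t<T'}\Big(\|\partial_t(\zeta_1-\zeta_2)\|_{L^2_\by}^2+\|\Dely(\zeta_1-\zeta_2)\|_{L^2_\by}^2\Big),
\]
and choosing $T'_{\tt max}$ small yields the factor $\tfrac12$.

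\textbf{Main obstacle.} I expect the hardest part to be the geometric error terms containing time derivatives of the geometry. These come from $\partial_t\bfw$, in particular $\partial_t\Bog(\Div\tilde\bu_2)$, and from $\partial_t\bfPsi_{\zeta_1}-\partial_t\bfPsi_{\zeta_2}$ in the transported momentum equation. They involve $\partial_t\naby(\zeta_1-\zeta_2)$, which is not controlled by the energy norm of $\zeta_1-\zeta_2$. My first attempt will be to keep $\partial_t$ on the test side and integrate by parts in time, moving the derivative onto $\bu_2$ and using $\partial_t\bu_2\in L^\infty_tL^2$. Where a spatial gradient falls on $\zeta_1-\zeta_2$, I will integrate by parts in space so that at most $\Dely(\zeta_1-\zeta_2)$ and $\partial_t(\zeta_1-\zeta_2)$ in $L^2$ appear. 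The price is extra spatial derivatives on $\bu_2$, $\pi_2$ and $\naby\zeta_i$, which are available from $X_{\tt shell}\times X_{\tt fluid}$.
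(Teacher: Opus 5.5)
Your architecture is the paper's. You transport $\bu_2$ to $\Omega_{\zeta_1}$, restore the divergence constraint with the universal Bogovskij operator, test the difference system with $\big(\partial_t(\eta_1-\eta_2),\,\bu_1-\tilde\bu_2+\Bog(\Div\tilde\bu_2)\big)$, and close by taking the time short. Using $\|\bu_1-\mathbf w\|_{L^2}^2$ as the fluid energy, instead of $\|\bu_1-\tilde\bu_2\|_{L^2}^2$, is a harmless variant. It even avoids the paper's endpoint term $\int_{\Omega_{\zeta_1(t)}}(\bu_1-\tilde\bu_2)\cdot\Bog(\Div\tilde\bu_2)\dx$, which carries no factor $T$.

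The gap is the step you yourself flag as the main obstacle; it is left open. The $\partial_t\bfPsi$ terms of the transported momentum equation contain $\partial_t(\zeta_1-\zeta_2)$ but not its spatial gradient (the paper's $R_5$), so the only source of $\partial_t\naby(\zeta_1-\zeta_2)$ is $\partial_t\Bog(\Div\tilde\bu_2)$.

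Your first remedy, integrating by parts in time, does not close. It moves $\partial_t$ onto $\bu_1-\mathbf w$, which the difference energy does not control. Estimating that factor by $C(R)$ leaves a term only linear in $\zeta_1-\zeta_2$, which cannot be absorbed into $\tfrac12\sup\big(\|\partial_t(\zeta_1-\zeta_2)\|_{L^2_\by}^2+\|\Dely(\zeta_1-\zeta_2)\|_{L^2_\by}^2\big)$.

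Your second remedy, integrating by parts in space, is the right instinct. But $\Bog$ is nonlocal, so you must first put its argument in divergence form and then use $\|\Bog(\Div\mathbf F)\|_{L^p}\lesssim\|\mathbf F\|_{L^p}$ from Corollary~\ref{thm:ndBog}. The paper does this with the Piola/cofactor structure of $\bfB=\mathrm{cof}(\nabla\bfPsi)$. One has $\Div\tilde\bu_2=(\mathbb I-\bfB_{\zeta_2-\zeta_1}):\nabla\tilde\bu_2$, hence
$\partial_t\Div\tilde\bu_2=(\mathbb I-\bfB_{\zeta_2-\zeta_1}):\nabla\partial_t\tilde\bu_2-\partial_t\bfB_{\zeta_2-\zeta_1}:\nabla\tilde\bu_2$.
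Both pieces are divergences. For the second, $\partial_t\mathrm{cof}(\nabla\bfPsi_{\zeta_2-\zeta_1}):\nabla\tilde\bu_2$ is the divergence of an expression containing $\partial_t\bfPsi_{\zeta_2-\zeta_1}$ but not $\partial_t\nabla\bfPsi_{\zeta_2-\zeta_1}$. Bounding both in $W^{-1,6/5}$ and pairing with the $L^6$ norm of the difference gives $C(R)\big(\|\zeta_1-\zeta_2\|_{W^{2,2}_\by}+\|\partial_t(\zeta_1-\zeta_2)\|_{L^2_\by}\big)\|\bu_1-\tilde\bu_2\|_{W^{1,2}}$, which is exactly what your Step~2(b) presupposes.

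A smaller error: $\Div\tilde\bu_2$ does not have zero mean in general. Its flux through $\partial\Omega_{\zeta_1}$ is $\int_\omega\partial_t\eta_2\,\bn\cdot\bn_{\zeta_1}|\det(\naby\bfvarphi_{\zeta_1})|\dy$. For a curved reference boundary this weight depends on $\zeta_1$ through the curvature of $\partial\Omega$, whereas incompressibility of $\bu_2$ only makes the same integral with weight built from $\zeta_2$ vanish. Hence $\Div\mathbf w=b\int_{\Omega_{\zeta_1}}\Div\tilde\bu_2\neq0$, and $\bu_1-\mathbf w$ is not an admissible solenoidal test function. The paper's written proof passes over this point as well.

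It can be repaired as follows. Before applying $\Bog$, replace $\tilde\bu_2$ by $\tilde\bu_2-\lambda(t)\mathbf E$, where $\mathbf E\circ\bfvarphi_{\zeta_1}=\bn$ and $\lambda(t)$ cancels the flux. Add $\lambda(t)$ to the structure test function. Then $|\lambda|\lesssim\|\partial_t\eta_2\|_{L^2_\by}\|\zeta_1-\zeta_2\|_{L^2_\by}$. After one integration by parts in time, and using $\zeta_1(0)=\zeta_2(0)$, its contributions are quadratic in the differences and small for short times.
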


\begin{proof}
We transform $\bu_2$ and $\pi_2$ to the domain of the pair $(\bu_1,\eta_1)$ (that is $\Omega_{\zeta_1}$) by setting
\begin{align*}
\underline{\bu}_2:=\bu_2\circ\bfPsi_{\zeta_2-\zeta_1},\quad \underline\pi_2:=\pi_2\circ\bfPsi_{\zeta_2-\zeta_1},
\quad \underline{\mathbf f}_2:=\mathbf f_2\circ\bfPsi_{\zeta_2-\zeta_1},
\end{align*}
where the Hanzawa transform $\bm{\Psi}_{\zeta_2-\zeta_1} : \Omega_{\zeta_1} \rightarrow\Omega_{\zeta_2}$ is defined similarly as in \eqref{map}. Next we introduce a suitable Bogovskij operator for our setting:
\[
	\Bog_{\zeta_1}(f):=\Bog(f\chi_{\Omega_{\zeta_1}}),
\] 
where $\Bog$ is defined in Theorem~\ref{thm:ndBog}, depending on $\norm{\zeta_1}_{L^\infty_\by}=:L$ and $\norm{\naby\zeta_1}_{L^\infty_\by}=:C_L$ only. 
 	
To obtain the difference estimate, we test the equation for $(\bu_1-\underline\bu_2,\eta_1-\eta_2)$ by the pair $(\bu_1-\underline\bu_2+\mathrm{Bog}_{\zeta_1}(\Div\underline\bu_2),\partial_t(\eta_1-\eta_2))$. We find that
\begin{align}
&\tfrac{1}{2}\int_{\Omega_{\zeta_1(t)}}|\bu_1(t)-\underline{\bu}_2(t)|^2\dx+\int_0^t
\int_{\Omega_{\zeta_1( s )}}|\nabla(\bu_1-\underline{\bu}_2)|^2\dx\ds\nonumber\\
&\quad+\tfrac{1}{2}\int_\omega|\partial_t(\eta_1-\eta_2)(t)|^2\dy+\tfrac{1}{2}\int_\omega|\Dely(\eta_1-\eta_2)(t)|^2\dy\nonumber\\
&\leq 
\tfrac{1}{2}\int_{\Omega_{\zeta_1(0)}}|\bu_1(0)-\underline{\bu}_2(0)|^2\dx+\tfrac{1}{2}\int_\omega|\partial_t(\eta_1-\eta_2)(0)|^2\dy+\tfrac{1}{2}\int_\omega|\Dely(\eta_1-\eta_2)(0)|^2\dy\nonumber\\
&\quad-\int_0^t\int_{\Omega_{\zeta_1( s )}}\big((\mathbb I_{3\times 3}-J_{\zeta_2-\zeta_1})\partial_t \underline{\bu}_2\big)\cdot\big(\bu_1- \underline\bu_2+ \mathrm{Bog}_{\zeta_1}(\Div\underline\bu_2)\big)\dx\ds\nonumber\\
&\quad+\int_0^t\int_{\Omega_{\zeta_1( s )}}\big(J_{\zeta_2-\zeta_1}\nabx \underline{\bu}_2 \partial_t \bfPsi_{\zeta_2-\zeta_1}^{-1}\circ \bfPsi_{\zeta_2-\zeta_1}\big)\cdot\big(\bu_1- \underline\bu_2+ \mathrm{Bog}_{\zeta_1}(\Div\underline\bu_2)\big)\dx\ds\nonumber\\
&\quad+\int_0^t\int_{\Omega_{\zeta_1( s )}}\nabx \underline{\bu}_2  \underline{\bu}_2\cdot\big(\bu_1- \underline\bu_2+ \mathrm{Bog}_{\zeta_1}(\Div\underline\bu_2)\big)\dx\ds\nonumber
\\
&\quad+\tfrac{1}{2} \int_0^t\int_{\partial\Omega_{\zeta_1( s )}}\bfn\circ\bfvarphi^{-1}_ {\zeta_1}\cdot\partial_t\eta_1 \bn_{\zeta_1 }\circ\bfvarphi^{-1}_ {\zeta_1}|\underline\bu_2 |^2 \dd \mathcal H^2\ds\nonumber\\
&\quad- \int_0^t\int_{\partial\Omega_{\zeta_1( s )}}\bfn\circ\bfvarphi^{-1}_ {\zeta_1}\cdot\partial_t\eta_2 \bn_{\zeta_1 }\circ\bfvarphi^{-1}_ {\zeta_1}| \bu_1 |^2 \dd \mathcal H^2\ds\nonumber
	\\
&\quad
+\int_0^t\int_{\Omega_{\zeta_1( s )}}\big( 
\mathbf B_{\zeta_2-\zeta_1}- \mathbb I
_{3\times 3}\big)\nabx \underline{\bu}_2  \underline{\bu}_2\cdot\big(\bu_1- \underline\bu_2+ \mathrm{Bog}_{\zeta_1}(\Div\underline\bu_2)\big)\dx\ds\nonumber\\
&\quad+
\int_0^t\int_{\Omega_{\zeta_1( s )}}(\bu_1-\underline\bu_2) \cdot \partial_t\mathrm{Bog}_{\zeta_1}(\Div\underline\bu_2)\dx\ds-
\int_{\Omega_{\zeta_1(t)}}(\bu_1-\underline\bu_2) \cdot \mathrm{Bog}_{\zeta_1}(\Div\underline\bu_2)\dx\nonumber\\
&\quad+
\int_{\Omega_{\zeta_1(0)}}(\bu_1-\underline\bu_2)(0) \cdot \mathrm{Bog}_{\zeta_1(0)}(\Div\underline\bu_2(0))\dx
-\int_0^t\int_{\Omega_{\zeta_1( s )}}\nabla(\bu_1- \underline\bu_2):\nabla\mathrm{Bog}_{\zeta_1}(\Div\underline\bu_2)\dx\ds\nonumber
	\\
&\quad+\int_0^t\int_{\Omega_{\zeta_1( s )}}\big(\mathbf A_{\zeta_2-\zeta_1}-\mathbb I_{3\times 3}\big)\nabla\underline\bu_2:\nabla(\bu_1-\underline \bu_2+\mathrm{Bog}_{\zeta_1}(\Div\underline\bu_2))\dx\ds\nonumber\\
&\quad+\int_0^t\int_{\Omega_{\zeta_1( s )}}\big(\mathbb I_{3\times 3}-\mathbf B_{\zeta_2-\zeta_1}\big)\underline \pi_2:\nabla(\bu_1-\underline \bu_2+\mathrm{Bog}_{\zeta_1}(\Div\underline\bu_2))\dx\ds\nonumber\\
&\quad+\int_0^t \int_\omega ( g_1-g_2)\partial_t(\eta_1-\eta_2)\dy\ds
+\int_0^t \int_{\Omega_{\zeta_1( s )}}  (\bu_1-\underline \bu_2)\cdot (\mathbf{f}_1-\underline{\mathbf f}_2)\dx\ds\nonumber\\
&\quad+\int_0^t \int_{\Omega_{\zeta_1( s )}} (\mathbb I_{3\times 3}-J_{ \zeta_2-\zeta_1})\underline{\mathbf f}_2\cdot (\bu_1-\underline \bu_2)\dx\ds+\int_0^t\int_{\Omega_{\zeta_1}}(\mathbb I_{3\times 3}-J_{\zeta_2-\zeta_1})\underline{\mathbf f}_2\mathrm{Bog}_{\zeta_1}(\Div \underline \bu_2)\dx\ds\nonumber\\
&\quad+\int_0^t\int_{\Omega_{\zeta_1( s )}}\bu_1\otimes \bu_1:\nabla\mathrm{Bog}_{\zeta_1}(\Div\underline \bu_2)\dx\ds+\int_0^t\int_{\Omega_{\zeta_1( s )}}\bu_1\cdot\nabla\bu_1\cdot\underline \bu_2\dx\ds\nonumber\\
&\quad+\int_0^t\int_{\Omega_{\zeta_1( s )}}(\mathbf f_1-\underline{\mathbf f}_2)\mathrm{Bog}_{\zeta_1}(\Div\underline \bu_2)\dx\ds\nonumber\\
&=:\sum_{i=1}^{22} R_{i}.\nonumber
\end{align}
Except for $R_{5}$, $R_{10}$ and $R_{11}$, all terms can be estimated as in \cite[Section 5]{BMSS} and are bounded by
\begin{align*}
& \delta\int_0^t\left(\lVert\nabla(\bu_1-\underline \bu_2)\rVert_{L^{2}(\Omega_{\zeta_1(s)})}^2\right)\ds+C(\delta)\int_0^t\|\partial_t\eta_1-\partial_t\eta_2\|_{L^2(\omega)}^2\ds\\
&+C(\delta)\int_0^t\left(\lVert g_1-g_2\rVert_{L^2(\omega)}^2+\lVert \mathbf f_1-\underline {\mathbf f}_2\rVert_{L^2(\Omega_{\zeta_1(s)})}^2\right)\ds
\\
&+C(\delta)\lVert \bu_1(0)-\underline \bu_2(0)\rVert_{L^2(\Omega_{\zeta_1(0)})}^2
\\
&+C(\delta) \int_0^t\Big(\lVert\underline {\mathbf f}_2\rVert_{L^2(\Omega_{\zeta_1(s)})}^2+\lVert\partial_t\underline \bu_2\rVert_{L^2(\Omega_{\zeta_1(s)})}^2+\lVert\underline \bu_2\rVert_{W^{2,2}(\Omega_{\zeta_1(s)})}^2+\lVert\underline \pi_2\rVert_{W^{1,2}(\Omega_{\zeta_1(s)})}^2\Big)\lVert\zeta_1-\zeta_2\rVert_{W^{2,2}(\omega)}^2\ds
\\
&+C(\delta) \int_0^t\Big(1+\lVert\underline {\mathbf f}_2\rVert_{L^2(\Omega_{\zeta_1(s)})}^2+\lVert\partial_t\underline \bu_2\rVert_{L^2(\Omega_{\zeta_1(s)})}^2+\lVert\underline \bu_2\rVert_{W^{2,2}(\Omega_{\zeta_1(s)})}^2\Big)\lVert\partial_t(\zeta_1-\zeta_2)\rVert_{L^{2}(\omega)}^2\ds
\\
&+C(\delta)\int_0^t\left(1+\lVert\underline \bu_2\rVert_{W^{2,2}(\Omega_{\zeta_1(s)})}^2+\lVert\partial_t\eta_2\rVert_{W^{2,2}(\omega)}^2\right)\lVert\bu_1-\underline\bu_2\rVert_{L^2(\Omega_{\zeta_1(s)})}^2\ds,
\end{align*}
which is bounded by (taking $g_1=g_2$ and $\mathbf f_1=\underline{\mathbf f}_2$ and $\underline\bu_2(0)=\bu_1(0)\circ\bfPsi_{\zeta_2(0)-\zeta_1(0)}$)
\begin{align*}
& \delta\int_0^t\left(\lVert\nabla(\bu_1-\underline \bu_2)\rVert_{L^{2}(\Omega_{\zeta_1(s)})}^2\right)\ds+C(\delta)\int_0^t\|\partial_t\eta_1-\partial_t\eta_2\|_{L^2(\omega)}^2\ds
\\
&\quad +C(\delta,R) \int_0^t\Big(\lVert\zeta_1-\zeta_2\rVert_{W^{2,2}(\omega)}^2+\lVert\partial_t(\zeta_1-\zeta_2)\rVert_{L^{2}(\omega)}^2+\lVert\bu_1-\underline\bu_2\rVert_{L^2(\Omega_{\zeta_1(s)})}^2\Big)\ds
\\
&\leq \delta\int_0^t\lVert\nabla(\bu_1-\underline \bu_2)\rVert_{L^{2}(\Omega_{\zeta_1(s)})}^2\ds
+C(\delta) T_{\tt max}\sup_t\|\partial_t\eta_1-\partial_t\eta_2\|_{L^2(\omega)}^2\\
&\quad +C(\delta,R)T_{\tt max} \sup_t\Big(\lVert\zeta_1-\zeta_2\rVert_{W^{2,2}(\omega)}^2+\lVert\partial_t(\zeta_1-\zeta_2)\rVert_{L^{2}(\omega)}^2+\lVert\bu_1-\underline\bu_2\rVert_{L^2(\Omega_{\zeta_1(s)})}^2\Big).
\end{align*}
On the other hand we have by continuity of $\mathrm{Bog}_{\zeta_1}$ that
\begin{align*}
R_5 & \lesssim \int_0^t\lVert\nabla\underline \bu_2\rVert_{L^4(\Omega_{\zeta_1(s)})}\lVert\partial_t\bfPsi_{\zeta_2-\zeta_1}^{-1}\circ\bfPsi_{\zeta_2-\zeta_1}\rVert_{L^2(\Omega_{\zeta_1(s)})}\lVert\bu_1-\underline \bu_2\rVert_{L^4(\Omega_{\zeta_1(s)})}\ds 
\\
& \lesssim \int_0^t\lVert \underline \bu_2\rVert_{W^{2,2}(\Omega_{\zeta_1(s)})}\lVert\partial_t(\zeta_1-\zeta_2)\rVert_{L^2(\omega)}\lVert\bu_1-\underline \bu_2\rVert_{L^4(\Omega_{\zeta_1(s)})}\ds
\\
& \leq\delta\int_0^t\lVert\bu_1-\underline \bu_2\rVert_{W^{1,2}(\Omega_{\zeta_1(s)})}^2\ds+C(\delta)\int_0^t\lVert\underline \bu_2\rVert_{W^{2,2}(\Omega_{\zeta_1(s)})}^2\lVert\partial_t(\zeta_1-\zeta_2)\rVert_{L^2(\omega)}^2\ds.
\end{align*}
We rewrite $\Div\underline\bu_2$ as
$$\Div\underline\bu_2=\mathbb{I}_{3\times 3}:\nabla\underline\bu_2=(\mathbb{I}_{3\times 3}-\bfB_{\zeta_2-\zeta_1}):\nabla\underline\bu_2,$$
and thereby have 
\begin{align*}
R_{10}&=\int_0^t\int_{\Omega_{\zeta_1( s )}}(\bu_1-\underline \bu_2)\cdot\mathrm{Bog}_{\zeta_1}\big((\mathbb I_{3\times 3}-\mathbf B_{\zeta_2-\zeta_1}):\partial_t\nabla\underline\bfv_2\big)\dx\ds\\
&\quad -\int_0^t\int_{\Omega_{\zeta_1( s )}}(\bu_1-\underline \bu_2)\cdot\mathrm{Bog}_{\zeta_1}\big(\partial_t\mathbf B_{\zeta_2-\zeta_1}:\nabla\underline{\bfv}_2\big)\dx\ds\\
&=:R_{10}^1+R_{10}^2.
\end{align*}
By \eqref{210and212} and continuity of $\mathrm{Bog}$, we see that
\begin{align*}
R_{10}^1&\leq 
\int_0^t\lVert\bu_1-\underline \bu_2\rVert_{L^6(\Omega_{\zeta_1( s )})}\lVert(\mathbb I_{3\times 3}-\mathbf B_{\zeta_2-\zeta_1})\partial_t\nabla \underline \bu_2\rVert_{W^{-1,\frac{6}{5}}(\Omega_{\zeta_1( s )})}\ds\\
&\lesssim \int_0^t\lVert\bu_1-\underline \bu_2\rVert_{W^{1,2}(\Omega_{\zeta_1( s )})}\lVert\mathbb I_{3\times 3}-\mathbf B_{\zeta_2-\zeta_1}\rVert_{W^{1,2}(\Omega_{\zeta_1( s ))}}\lVert\partial_t\underline \bu_2\rVert_{L^2(\Omega_{\zeta_1( s )})}\ds\\
&\leq \delta \int_0^t\lVert \bu_1-\underline\bu_2\rVert_{W^{1,2}(\Omega_{\zeta_1( s )})}^2\ds+C(\delta)\int_0^t\lVert\partial_t\underline \bu_2\rVert_{L^2(\Omega_{\zeta_1( s )})}^2\lVert\zeta_1-\zeta_2\rVert_{W^{2,2}(\omega)}^2\ds,
\end{align*}
as well as (using the Piola identity, the definition of $\mathbf B_{\zeta_2-\zeta_1}$ and \eqref{eq:detPsi})
\begin{align*}
R_{10}^2&\leq 
\int_0^t\lVert\bu_1-\underline \bu_2\rVert_{L^6(\Omega_{\zeta_1( s )})}\lVert \partial_t\mathbf B_{\zeta_2-\zeta_1}:\nabla\underline{\bfv}_2\rVert_{W^{-1,\frac{6}{5}}(\Omega_{\zeta_1( s )})}\ds\\
&\leq 
\int_0^t\lVert\bu_1-\underline \bu_2\rVert_{W^{1,2}(\Omega_{\zeta_1( s )})}\|\partial_t(\zeta_2-\zeta_1)\|_{L^2(\omega)}\|\nabla\underline \bu_2\rVert_{W^{1,2}(\Omega_{\zeta_1( s )})}\ds\\
&\leq \delta \int_0^t\lVert \bu_1-\underline\bu_2\rVert_{W^{1,2}(\Omega_{\zeta_1( s )})}^2\ds+C(\delta)\int_0^t\lVert\underline \bu_2\rVert_{W^{2,2}(\Omega_{\zeta_1( s )})}^2\lVert\partial_t(\zeta_1-\zeta_2)\rVert_{L^2(\omega)}^2\ds.
\end{align*}
Note that we also took into account that $\zeta_1$ and $\zeta_2$ belong to the class $L^\infty(I;C^{0,1}(\omega))$.

Finally, we estimate 
\begin{align*}
R_{11}
&\leq \lVert\bu_1-\underline\bu_2\rVert_{L^2(\Omega_{\zeta_1(s)})}\lVert\mathrm{Bog}_{\zeta_1}\left((\mathbb I_{3\times 3}-\mathbf B_{\zeta_2-\zeta_1}):\nabla\underline\bu_2\right)\rVert_{L^2(\Omega_{\zeta_1(s)})}
\\
&\lesssim \lVert\bu_1-\underline\bu_2\rVert_{L^2(\Omega_{\zeta_1(s)})}\lVert\mathrm{Bog}_{\zeta_1}\left((\mathbb I_{3\times 3}-\mathbf B_{\zeta_2-\zeta_1}):\nabla\underline\bu_2\right)\rVert_{W^{1,6/5}(\Omega_{\zeta_1(s)})}
\\
&\leq \delta\lVert\bu_1-\underline\bu_2\rVert_{ L^2(\Omega_{\zeta_1(s)})}^2+C(\delta)\lVert(\mathbb I_{3\times 3}-\mathbf B_{\zeta_2-\zeta_1}):\nabla\underline\bu_2\rVert_{ L^{\frac{6}{5}}(\Omega_{\zeta_1(s)})}^2
\\
&\leq \delta\lVert\bu_1-\underline\bu_2\rVert_{ L^2(\Omega_{\zeta_1(s)})}^2+C(\delta)\lVert\nabla\underline\bu_2\rVert_{ L^2(\Omega_{\zeta_1(s)})}^2\lVert\nabla_{\by}(\zeta_1-\zeta_2)\rVert_{L^3(\omega)}^2
\\
&\leq \delta\lVert\bu_1-\underline\bu_2\rVert_{ L^2(\Omega_{\zeta_1(s)})}^2+C(\delta)\lVert\zeta_1-\zeta_2\rVert_{W^{2,2}(\omega)}^{\frac{4}{3}}\lVert\zeta_1-\zeta_2\rVert_{L^{2}(\omega)}^{\frac{2}{3}}
\\
&\leq \delta\lVert\bu_1-\underline\bu_2\rVert_{ L^2(\Omega_{\zeta_1(s)})}^2+\nu\lVert\zeta_1-\zeta_2\rVert_{W^{2,2}(\omega)}^2+C(\delta,\nu)\lVert\zeta_1-\zeta_2\rVert_{L^{2}(\omega)}^2
\\
&\leq \delta\lVert\bu_1-\underline\bu_2\rVert_{ L^2(\Omega_{\zeta_1(s)})}^2+\nu\lVert\zeta_1-\zeta_2\rVert_{W^{2,2}(\omega)}^2+C(\delta,\nu)\int_0^t\lVert\partial_t(\zeta_1-\zeta_2)\rVert_{L^{2}(\omega)}^2\ds.
\end{align*}

Combining everything yields (choosing $T_{\tt max},\delta,\nu$ small enough)
\begin{align}
&\tfrac{1}{2}\sup_{0\leq t\leq T_{\tt max}}\int_{\Omega_{\zeta_1(t)}}|\bu_1(t)-\underline{\bu}_2(t)|^2\dx+\int_0^{T_{\tt max}}
\int_{\Omega_{\zeta_1(s)}}|\nabla(\bu_1-\underline{\bu}_2)|^2\dx\ds\nonumber\\
&\quad+\sup_{0\leq t\leq T_{\tt max}}\bigg(\tfrac{1}{2}\int_\omega|\partial_t(\eta_1-\eta_2)(t)|^2\dy+\tfrac{1}{2}\int_\omega|\Dely(\eta_1-\eta_2)(t)|^2\dy\bigg)\nonumber\\
&\leq \tfrac{1}{2}\sup_{0\leq t\leq T_{\tt max}}\bigg(\tfrac{1}{2}\int_\omega|\partial_t(\zeta_1-\zeta_2)(t)|^2\dy+\tfrac{1}{2}\int_\omega|\Dely(\zeta_1-\zeta_2)(t)|^2\dy\bigg),\nonumber
\end{align}
which gives the claim.
\end{proof}

\subsection{Conclusion}\label{sec:3.4}
\begin{proof}[Proof of Theorem \ref{thm:NS3}]
As a consequence of Proposition \ref{prop:contraction} the weak solution obtained in Proposition \ref{thm:transformedSystemLinear} is unique. Hence the mapping $\zeta\mapsto \eta$ is well-defined. By Corollary \ref{cor:self},
it maps the convex set $$B^{ X_{\tt shell}(0,T_{\tt max})}_R(0)\cap \{\zeta\in X_{\tt shell}(0,T_{\tt max}):\,\zeta(0)=\eta_0\}$$
into itself. Again, by Proposition \ref{prop:contraction} the mapping $\zeta\mapsto \eta$ is a contraction with respect to the topology from $$W^{1,\infty}(0,T_{\tt max};L^2(\omega))\cap L^{\infty}(0,T_{\tt max};W^{2,2}(\omega)).$$
By Banach's fixed-point theorem, we obtain the existence of the solution for the system \eqref{contEqAloneBar}--\eqref{momEqAloneBar}.
\end{proof}

\section{2D Navier--Stokes equations}
In this section we deal with the two-dimensional case. The claim of Theorem \ref{thm:NS2} follows from the following proposition.
\begin{proposition}
\label{thm:transformedSystemLinea2D}
Suppose that the dataset
$(\overline{\bff},g, \eta_0, \eta_*, \overline{\bu}_0)$
satisfies \eqref{dataset'}.
Then there exists a strong solution $( \eta, \overline{\bu},  \overline{\pi} )$ of \eqref{contEqAloneBar}--\eqref{momEqAloneBar} in 2D such that
\begin{equation}
\begin{aligned}
\label{energyEstLinear}
&\sup_I\int_\omega
\big(\vert \partial_t^2\eta\vert^2 
+
\vert \partial_t\Dely \eta\vert^2
\big)
\dy+\int_I
\big(\|\eta\|_{W^{3,2}(\omega)}^2
+\|\partial_t\Dely\eta\|_{W^{1/4,2}(\omega)}^2
\big)
\dt
\\&\quad +
\sup_I\int_\Omega\big(\vert\partial_t \overline{\bu}\vert^2+\vert\nabla\overline{\bu}\vert^2\big)\dx
+
\int_I\int_\Omega\vert \partial_t\nabla\overline{\bu} \vert^2 \dx\dt
 \\&\leq
c\Big(\|\eta_0\|_{W^{2,2}_\by},\|\eta_*\|_{W^{2,2}_\by},\|\partial_t^2\eta(0)\|_{L^2_\by},\|\partial_t\overline\bu(0)\|_{L^2_\bx},\|\overline\bu_0\|_{L^2_\bx},\| \overline{\bff} \|_{W^{1,2}(I;L^2_\bx)},\|g\|_{W^{1,2}(I;L^2_\by)}\Big).
\end{aligned}
\end{equation}
\end{proposition}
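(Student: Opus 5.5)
The plan is to derive \eqref{energyEstLinear} as an a priori estimate for a strong solution of the coupled (not decoupled) system \eqref{contEqAloneBar}--\eqref{momEqAloneBar} on any interval on which it exists and $\|\eta\|_{L^\infty_{t,\by}}<L$. Existence on such an interval then follows from the local theory (the 2D analogue of Theorem \ref{thm:NS3}) by a continuation argument. In one space dimension, $\eta\in L^\infty_t W^{2,2}_\by\hookrightarrow L^\infty_t C^{1,1/2}_\by$ by the energy inequality alone. Hence the geometry is uniformly Lipschitz with bounds depending only on the energy, and the coefficients $\mathbf A_\eta,\mathbf B_\eta,J_\eta$ are controlled without any a priori smallness. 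Where differentiation in time is not yet justified, I will, as in Proposition \ref{thm:transformedSystemLinear}, work with difference quotients $\Delta_\tau$ and pass to the limit $\tau\to0$ at the end, with the initial values controlled as in \eqref{eq:cont}.

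Step 1 (fractional spatial regularity of $\eta$). I will first prove $\eta\in L^2_tW^{2+s,2}_\by$ for some $s>0$, and then the bound on $\|\eta\|_{L^2_tW^{3,2}_\by}$. The test pair is
\[
\Delta^s_{-h}\Delta^s_h\eta-\mathscr K_\eta(\cdot)\quad\text{together with its solenoidal extension }\Testzeta(\cdot)\circ\bfPsi_\eta\text{ from Lemma \ref{prop:musc}},
\]
mirroring the proof of \eqref{eq:1508} but without the time difference. The error terms involve only $\nabla\overline\bu\in L^2_{t,\bx}$, $\partial_t\eta\in L^\infty_tL^2_\by$ and the coefficients, which are bounded via the 1D embeddings. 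The pressure term requires $\overline\pi\in L^2_tL^2_\bx$. I will obtain this from the pressure-free formulation by a Bogovskij/Ne\v cas argument, or alternatively absorb it using the equation. Bootstrapping with $s$ up to $1/2$ and then to $1$, and feeding in the elliptic Stokes estimate of Theorem \ref{thm:stokessteady} on $\Omega_{\eta(t)}$, should yield the $W^{3,2}$ bound once Step 2 provides $\partial_t\overline\bu\in L^2_tL^2_\bx$.

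Step 2 (acceleration estimate). I will differentiate the system in time and test with $(\partial_t^2\eta+\bn^\intercal\bfG\circ\bfvarphi,\ \partial_t\overline\bu+\bfG)$, where $\bfG=\bfB_\eta^{-\intercal}\partial_t\bfB_\eta^\intercal\overline\bu$ restores the divergence constraint. This is the same test structure as in \eqref{est}, with $\zeta_\varepsilon$ replaced by $\eta$ itself. Each of the terms 4th through 15th is bounded exactly as before. In 2D, however, the critical coefficient norms are available: $\|\partial_t\nabla_\by\eta\|_{L^\infty_\by}\lesssim\|\partial_t\Dely\eta\|_{L^2_\by}$ and $\|\nabla^2_\by\eta\|_{L^\infty_\by}\lesssim\|\eta\|_{W^{2+s',2}_\by}$, and Ladyzhenskaya's inequality $\|v\|_{L^4}^2\lesssim\|v\|_{L^2}\|\nabla v\|_{L^2}$ replaces the 3D Sobolev bounds. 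As a result, the nonlinear terms produce at most quadratic factors of $\mathcal E_\eta$ multiplied by an integrable weight. The weight is built from $1+\|\nabla\overline\bu\|_{L^2}^2+\|\eta\|_{W^{3,2}}^2+\|\partial_t\eta\|_{W^{2+s,2}}^2$, which are known to be in $L^1_t$ from the energy estimate and Step 1. The resulting inequality has the linear form
\[
\mathcal E_\eta(t)+\int_0^t\|\partial_t\nabla\overline\bu\|^2\ds\le C+\int_0^t a(s)\,\mathcal E_\eta(s)\ds,\qquad a\in L^1(I),
\]
so the standard Gr\"onwall lemma closes it globally rather than locally. The term $\|\partial_t\Dely\eta\|_{W^{1/4,2}}$ will come from repeating Step 1 with $\partial_t$ applied, as in \eqref{eq:1508}.

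Main obstacle. I expect the difficulty to be the linear-in-$\mathcal E$ structure of the estimate. Terms such as $J_3$ and $J_5$, which contain $\partial_t^2\eta$ in the coefficients, and the pressure term 11th must not produce powers like $\mathcal E^5$ as in 3D. For the pressure, I will use Theorem \ref{thm:stokessteady} at each time, with $\|\overline\pi\|_{W^{1,2}}\lesssim\|\partial_t\overline\bu\|_{L^2}+\|\overline\bu\|_{W^{1,2}}^2+\dots$, and Ladyzhenskaya's inequality to keep the convective contribution of the form (integrable)$\times\mathcal E$. Since the coupled Step 1 and Step 2 bounds feed into each other, I will combine them into a single Gr\"onwall functional. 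Finally, $\sup\|\partial_t\nabla\overline\bu\|$-type norms and $\eta\in L^\infty_tW^{4,2}_\by$ follow from the shell equation, as in Corollary \ref{cor:self}.
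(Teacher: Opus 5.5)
Your overall architecture matches the paper: an a priori bound plus continuation from the local theory, the time-differentiated test pair with corrector $\mathbf G=\mathbf B_\eta^{-\intercal}\partial_t\mathbf B_\eta^\intercal\overline\bu$, a spatial-regularity lemma for $\eta$, and Gr\"onwall. However, the step you yourself flag as the main obstacle is not resolved by the tools you propose, and as written the argument only closes locally in time.

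\textbf{The pressure.} Bounding term 11 through Theorem~\ref{thm:stokessteady} controls $\|\overline\pi\|_{W^{1,2}_\bx}$ by $\|\partial_t\overline\bu\|_{L^2_\bx}$ and the boundary datum $\|\partial_t\eta\|_{W^{3/2,2}_\by}$. Both are of size $\mathcal E^{1/2}$. For an undamped shell neither is known to lie in $L^2_t$ before the acceleration estimate is closed; this is exactly the $L^2_t$-acceleration bound that is unavailable without visco-elastic damping. Hence $\int_I\int_\Omega\overline\pi\,\Div(\partial_t\mathbf B_\eta^\intercal\partial_t\overline\bu)\dx\dt$ is at best $\delta\int_I\|\partial_t\overline\bu\|_{W^{1,2}_\bx}^2\dt+c\int_I\|\overline\pi\|_{W^{1,2}_\bx}^2\|\partial_t\Dely\eta\|_{L^2_\by}^2\dt$, i.e.\ of order $\int_I\mathcal E^2\dt$. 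Ladyzhenskaya's inequality only tames the convective part. This is precisely how the 3D proof ends up with $\int\mathcal E^5$ via Lemma~\ref{lem:pres}.

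\textbf{The weight.} Your weight $a$ contains $\|\eta\|_{W^{3,2}_\by}^2$ and $\|\partial_t\eta\|_{W^{2+s,2}_\by}^2$. The bound $\|\nabla_\by^2\eta\|_{L^\infty_\by}\lesssim\|\eta\|_{W^{2+s',2}_\by}$ needs $s'>1/2$. None of these is an energy-level quantity; by your own account the $W^{3,2}$ bound needs Step 2. So $a\in L^1(I)$ is not known in advance. Merging Steps 1 and 2 into one functional then produces products of left-hand-side quantities, which is again superlinear.

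\textbf{What the paper does instead.} It never estimates $\overline\pi$. In \eqref{for2dlater} and \eqref{later}, the pressure integral $\int_I\int_\Omega\overline\pi\,\Div(\mathbf B_\eta^\intercal\boldsymbol\Theta^\eta)\dx\dt$ is rewritten through the weak formulation tested with $\boldsymbol\Theta^\eta$, including the shell terms $(\partial_t^2\eta+\Dely^2\eta-g)\,\mathbf n^\intercal\boldsymbol\Theta^\eta\circ\boldsymbol\varphi$. Only velocity and displacement terms remain.

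These, and all other critical terms, are estimated in the collar coordinates $\Lambda^{\boldsymbol\varphi}$ as in \eqref{eq:2405}. Integrals over $S_L$ are written as $\int_{-L}^0\int_\omega$, and the 1D inequality $\|v\|_{L^\infty_\by}\lesssim\|v\|_{L^2_\by}^{1/2}\|v\|_{W^{1,2}_\by}^{1/2}$ is applied slice-wise to the fluid factors. Geometric factors such as $\nabla_\by^2\eta$ or $\partial_t^2\eta$ are then only needed in $L^2_\by$.

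Where $\|\eta\|_{W^{3,2}_\by}$ or $\|\partial_t\Dely\eta\|_{W^{1/4,2}_\by}$ do appear, they enter with exponent below $2$ next to energy-level factors, e.g.\ $\|\nabla_\by^2\eta\|_{L^4_\by}\lesssim\|\nabla^2_\by\eta\|_{L^2_\by}^{3/4}\|\eta\|_{W^{3,2}_\by}^{1/4}$. They are absorbed with $\delta$ using Lemma~\ref{lem:4.2}, whose right-hand side is itself linear in $\mathcal E$ with energy-level weights. The outcome is a Gr\"onwall inequality with weight $1+\|\overline\bu\|_{W^{1,2}_\bx}^2\in L^1(I)$, which is what yields the estimate on the whole existence interval.
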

First observe that the pair of test functions $(\partial_t \eta, \overline{\bu})$ for
the fluid-structure system and transform the energy estimate to the reference domain. Since $\naby\eta$ is bounded in space-time
we can use the ellipticity of $\mathbf{A}_{\eta_0} $ (notation as in Section \ref{sec:loc}) to transform to the reference domain obtaining
\begin{equation*}
\begin{aligned}
&\sup_I
\int_\omega\big(\vert \partial_t\eta\vert^2
+
\vert  \Dely \eta\vert^2 \big)\dy
 +
 \sup_I
 \int_\Omega\vert  \overline{\bu}\vert^2\dx
+
 \int_I\int_\Omega\vert  \nabx\overline{\bu}\vert^2\dx\dt
\\
&\lesssim
\int_\omega\big(\vert \eta_*\vert^2
+
\vert \Dely \eta_0\vert^2 \big)\dy
 +
 \int_I\int_\omega|g|^2\dy\dt
  +
   \int_\Omega\vert \overline{\bu}_0\vert^2\dx
+
 \int_I\int_\Omega\vert  \overline{\mathbf{f}}\vert^2 
 \dx\dt.
\end{aligned}
\end{equation*}
Moreover, based on Theorem \ref{thm:NS3}, which also hold for $n=2$ by constant elongation. The aim is to obtain uniform estimates that hold uniformly in time. The first step is the following lemma
\begin{lemma}\label{lem:4.2}
We have $\eta\in L^2(W^{3,2})$, in particular, and the estimate
\begin{align}\label{eq:MuSc'2D}
\begin{aligned}
\int_I\|\eta\|^2_{W^{3,2}_\by}\dt
&\lesssim 1+\int_I\big(\|\partial_t^2\eta\|_{L^2_\by}^2+\|\partial_t \overline\bu\|^2_{L^2_\bx}+\|\nabla \overline\bu\|_{L^2_\bx}^3+\|\overline\bff\|_{L^2_\bx}^2+\|g\|_{L^2_\bx}^2\big)\dt\\
&\quad +\int_I\left(1+\|\nabla\overline\bu\|_{L^2_\bx}^2\right)\|\partial_t\Dely\eta\|^2_{L^{2}_\by}\dt.
\end{aligned}
\end{align}
\end{lemma}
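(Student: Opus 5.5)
We obtain the $W^{3,2}$-bound for $\eta$ by testing the system \eqref{contEqAloneBar}--\eqref{momEqAloneBar} (2D, with the true geometry $\eta$) with a spatial difference quotient of $\eta$. This is the same device as in the improved spatial regularity lemma of Section \ref{sec:loc}, but without temporal differences and with integer order. Concretely, we take as structure test function $\phi=\Delta_{-h}\Delta_h\eta-\mathscr K_\eta(\Delta_{-h}\Delta_h\eta)$, where $\Delta_h$ is the spatial difference quotient in $\by$ of order one. As fluid test function we take its solenoidal extension $\bfphi=\Testzeta(\phi)\circ\bfPsi_\eta$ from Lemma \ref{prop:musc}, suitably transformed so that $\mathbf B_\eta:\nabla\bfphi$-type constraints are respected. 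The elastic term then produces $\int_I\|\Delta_h\Dely\eta\|_{L^2_\by}^2\dt$ on the left. Uniformity in $h$ gives $\eta\in L^2(I;W^{3,2}(\omega))$ after letting $h\to0$. The crucial point in 2D is that $\omega$ is one-dimensional, so $\eta\in L^\infty(I;W^{2,2})\hookrightarrow L^\infty(I;C^{1,1/2})$ by the energy estimate. Hence the Lemma \ref{prop:musc} bounds give
$\|\bfphi\|_{W^{1,2}_\bx}\lesssim\|\eta\|_{W^{2,2}_\by}\lesssim 1$ and $\|\partial_t\bfphi\|_{L^2_\bx}\lesssim\|\partial_t\eta\|_{W^{2,2}_\by}(1+\|\partial_t\eta\|_{L^\infty_\by})$, with constants depending only on the energy.

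The right-hand side terms are then treated one by one. The inertial terms are not integrated by parts in time; we keep them as $\int_I\int_\omega\partial_t^2\eta\,\phi$ and $\int_I\int_\Omega J_\eta\partial_t\overline\bu\cdot\bfphi$. Using $\|\phi\|_{L^2}\lesssim\|\eta\|_{W^{2,2}}\lesssim1$, these are bounded by $\int_I(\|\partial_t^2\eta\|_{L^2_\by}^2+\|\partial_t\overline\bu\|_{L^2_\bx}^2)\dt$ plus constants. The viscous term $\int\mathbf A_\eta\nabla\overline\bu:\nabla\bfphi$ requires care because $\nabla\bfphi$ contains one more derivative of $\eta$ than is controlled. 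We would estimate it by $\|\nabla\overline\bu\|_{L^2_\bx}\|\eta\|_{W^{3,2}_\by}$, taking the $W^{1/2}$-trace loss into account, and absorb the latter via Young. Alternatively, we shift one difference quotient onto $\overline\bu$, which reduces to $\|\Delta_h\nabla\overline\bu\|$, itself controlled via the Stokes estimate (Theorem \ref{thm:stokessteady}) by $\|\partial_t\overline\bu\|_{L^2}$, the convective term and $\|\partial_t\eta\|_{W^{3/2,2}_\by}$. The pressure pairing is handled using $\mathbf B_\eta:\nabla\bfphi$, which vanishes up to commutators involving $\nabla^2\eta$. Those commutators are controlled by $\|\overline\pi\|_{L^2}\|\eta\|_{W^{3,2}}^{1-\theta}$, with $\|\overline\pi\|$ again coming from the Stokes estimate.

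The convective and geometric lower order terms, such as $\mathbf B_\eta\nabla\overline\bu\,\overline\bu$ and $\nabla\overline\bu\,\partial_t\bfPsi_\eta^{-1}$, are estimated with Ladyzhenskaya's inequality in 2D. This yields $\|\overline\bu\|_{L^4}^2\|\nabla\overline\bu\|_{L^2}\lesssim\|\nabla\overline\bu\|_{L^2}^2\|\overline\bu\|_{L^2}$ and, after Stokes, the term $\|\nabla\overline\bu\|_{L^2_\bx}^3$ in \eqref{eq:MuSc'2D}. The term $\partial_t\bfPsi_\eta$ requires $\|\partial_t\eta\|_{L^\infty}\lesssim\|\partial_t\Dely\eta\|_{L^2}$, which produces the product $(1+\|\nabla\overline\bu\|_{L^2_\bx}^2)\|\partial_t\Dely\eta\|^2_{L^2_\by}$. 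The forces give $\|\overline\bff\|^2$ and $\|g\|^2$.

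The main obstacle will be the viscous stress and pressure pairings. They involve $\nabla\bfphi$, whose second-order-in-$\eta$ part is exactly at the critical level $W^{3,2}$. The closure therefore hinges on absorbing a fraction of $\int_I\|\eta\|_{W^{3,2}_\by}^2$ through Young's inequality. The first thing to check is whether these contributions are genuinely linear, and not superlinear, in $\|\eta\|_{W^{3,2}}$.
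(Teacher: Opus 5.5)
Your plan is sound, but it is not the route the paper takes.

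\textbf{The paper's route.} The paper differentiates the system in time and tests it with spatial difference quotients of $\partial_t\eta$, plus their solenoidal extension. Its left-hand side is therefore $\int_I\int_\omega|\Delta_h\Dely\partial_t\eta|^2\dy\dt$, so it is really aimed at spatial regularity of $\partial_t\eta$ (cf.\ the term $\int_I\|\partial_t\Dely\eta\|_{W^{1/4,2}_\by}^2\dt$ in Proposition~\ref{thm:transformedSystemLinea2D}). Because $\partial_t\bfB_\eta$ then hits the test function, the pressure does not drop out. The paper has to rewrite $\int_I\int_\Omega\pi\Div(\partial_t\bfB_\eta^\intercal\overline{\bm{\varphi}}^\eta)\dx\dt$ through the weak formulation with $\boldsymbol\Upsilon^\eta=\bfB_\eta^{-\intercal}\partial_t\bfB_\eta^\intercal\overline{\bm{\varphi}}^\eta$. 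This produces many commutator and boundary terms and leaves $\delta\int_I\|\eta\|^2_{W^{3,2}_\by}\dt$ on the right.

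\textbf{Your route.} Testing the undifferentiated system with $\Delta_{-h}\Delta_h\eta$, as you do, gives the stated $W^{3,2}$ bound for $\eta$ itself much more directly. For a strong solution one has $\eta\in L^\infty(I;W^{4,2}(\omega))$ anyway, so you could even test with $-\Dely\eta$ and skip the difference quotients. What your route does not give is the extra regularity of $\partial_t\eta$, but that is not part of this statement.

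\textbf{Intermediate claims to correct.}

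First, the bound $\|\bfphi\|_{W^{1,2}_\bx}\lesssim\|\eta\|_{W^{2,2}_\by}$ is false, and it contradicts your later (correct) treatment of the viscous term. Lemma~\ref{prop:musc}, combined with $\naby\eta\in L^\infty$, gives $\|\bfphi\|_{W^{1,2}_\bx}\lesssim\|\Delta_{-h}\Delta_h\eta\|_{W^{1,2}_\by}\lesssim\|\Delta_h\Dely\eta\|_{L^2_\by}$; only $\|\phi\|_{L^2_\by}$ is $O(1)$. This bound is linear in the left-hand-side quantity, and it is exactly what settles your ``first thing to check'': the viscous term is $\lesssim\|\nabla\overline\bu\|_{L^2_\bx}\|\Delta_h\Dely\eta\|_{L^2_\by}$ and is absorbed by Young.

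Second, the pressure term vanishes identically, not ``up to commutators''. Since $\Testzeta(\phi)$ is divergence-free on $\Omega_\eta$, one has $\bfB_\eta:\nabla\bfphi=J_\eta(\Div\Testzeta(\phi))\circ\bfPsi_\eta=0$. So no Stokes estimate is needed anywhere.

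Third, the convective term needs no Stokes step either. It is bounded by $\|\nabla\overline\bu\|_{L^2_\bx}\|\overline\bu\|_{L^4_\bx}\|\bfphi\|_{L^4_\bx}\lesssim(1+\|\nabla\overline\bu\|_{L^2_\bx})^{3/2}\|\Delta_h\Dely\eta\|_{L^2_\by}$, using Ladyzhenskaya and the energy bound. Young then gives $1+\|\nabla\overline\bu\|^3_{L^2_\bx}$.

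Finally, for the fluid inertia term, bound $\|\bfphi\|_{L^2_\bx}$ by Poincar\'e (the extension vanishes away from $\partial\Omega$) and use Young again. Your bound on $\partial_t\bfphi$ is never needed, since you do not integrate by parts in time.
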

\begin{proof}
For that purpose consider the test function pair for the fluid-structure system \eqref{contEqAloneBar}--\eqref{momEqAloneBar} as follows:
\begin{align*}   
(\overline{\bm{\varphi}}^\eta,\phi^\eta)=\left(\Delta_{-h}\Delta_h \partial_t\eta-\mathscr K_{{\eta}}(\Delta_{-h}\Delta_h \partial_t\eta),\, \mathscr F_{\eta}^{\mathrm{div}}(\Delta_{-h}\Delta_h \partial_t\eta-\mathscr K_{{\eta}}(\Delta_{-h}\Delta_h \partial_t\eta))\circ\bfPsi_{\eta}\right).
\end{align*}
where $\mathscr F_{\eta}^{\mathrm{div}}$ is the corresponding solenoidal extension introduced in Lemma \ref{prop:musc} and for $h>0$, $$\Delta_hv(\by):=h^{-1}(v(\by+h\bm{e}_\alpha)-v(\by))$$
is the difference quotient for space variable in direction $\bm{e}_\alpha$ with $\alpha\in\{1,2\}$.
Similarly to \eqref{mom:reg}, we have
\begin{align}
&\int_I \int_{\omega} |\Delta_h\Dely\partial_t\eta|^2\dy\dt\nonumber\\
&=\int_I\int_\Omega J_{\eta}\partial_t\overline{\bv}\cdot\partial_t\overline{\bm{\varphi}}^\eta\dx\dt -\int_I\int_\Omega\partial_t\bfA_{\eta}\nabla\overline{\bv}:\nabla\overline{\bm{\varphi}} ^\eta\dx\dt-\int_I\int_\Omega\bfA_{\eta}\nabla\partial_t\overline{\bv}:\nabla\overline{\bm{\varphi}}^\eta\dx\dt\nonumber\\
&\quad -\int_I\frac{\dd}{\dt}\left(\int_\Omega J_{\eta}\partial_t\overline{\bv}\cdot\overline{\bm{\varphi}}^\eta\dx+\int_\omega\partial_t^2\eta\cdot\phi^\eta\dy\right)\dt+\int_I\int_\omega(\partial_t^2\eta\, \partial_t\phi^\eta+\partial_t g \,\phi^\eta)\dy\dt\nonumber\\
&\quad +\int_I\int_\Omega\pi\Div(\partial_t\bfB_{\eta}^\intercal \overline{\bm{\varphi}}^\eta)\dx\dt+\int_I\int_\Omega\partial_t\bfh_{\eta}\cdot \overline{\bm{\varphi}}^\eta\dx\dt 
\nonumber\\
&\quad+\int_I\int_\Omega\left(-\partial_t J_{\eta} \overline \bu\cdot \partial_t\overline{\bm{\varphi}}+\frac{\dd}{\dt}\left(\partial_tJ_{\eta}\overline \bu\cdot \overline{\bm{\varphi}}^\eta\right)-\partial_t(\partial_t J_{\eta}\overline \bu)\cdot \overline{\bm{\varphi}}^\eta\right)\dx\dt \nonumber\\
&=:(I)^\eta+...+(VII)^\eta+(VIII)^\eta.\label{mom:reg2D}
\end{align}
We first estimate $(I)^\eta$  as follows:
\begin{align}
(I)^\eta&\lesssim \int_I\|\partial_t\overline\bu\|_{L^2_\bx}\|\partial_t\overline\bfvarphi^\eta\|_{L^2_\bx}\dt\nonumber\\
&\lesssim \int_I\|\partial_t\overline\bu\|_{L^2_\bx}\left(\|\partial_t\Delta_{-h}\Delta_h\partial_t\eta\|_{L^2_\by}+\|\Delta_{-h}\Delta_h\partial_t\eta\,\partial_t\eta\|_{L^2_\by}\right)\dt\nonumber\\
&\lesssim \int_I\|\partial_t\overline\bu\|_{L^2_\bx}\left(\|\partial_t^2\eta\|_{W^{2s,2}_\by}+\|\Delta_{-h}\Delta_h\partial_t\eta\|_{L^2_\by}\|\partial_t\eta\|_{L^\infty_\by}\right)\dt\nonumber\\
&\lesssim \int_I\|\partial_t\overline\bu\|_{L^2_\bx}(\|\partial_t\overline\bu\|_{W^{1,2}_\bx}+\|\overline\bu\|_{W^{1,2}_\bx}\|\partial_t\naby\eta\|_{L^2_\by})\dt\nonumber\\
&\lesssim \delta\int_I\|\partial_t\overline\bu\|_{W^{1,2}_\bx}^2\dt+c(\delta)\int_I\|\partial_t\overline\bu\|_{L^2_\bx}^2\dt+\int_I\|\overline\bu\|_{W^{1,2}_\bx}^2\|\partial_t\naby\eta\|_{L^2_\by}^2\dt.\label{later2}
\end{align}
Note that $\nabla\eta\in L^\infty_{t,\by}$, then we have $|\partial_t\bfA_\eta|\lesssim  |\partial_t\nabla\eta|+|\partial_t\eta|$.
Using Lemma \ref{prop:musc} and \eqref{218} we estimate further 
\begin{align}
(II)^\eta&\lesssim \int_I\|\partial_t\naby\eta\|_{L^4_\by}\|\nabla\overline\bu\|_{L^2_\bx}\|\nabla\overline\bfvarphi^{\eta}\|_{L^4_\bx}\dx+\int_I\|\nabla\overline\bu\|_{L^2_\bx}\|\partial_t\eta\|_{L^4_\by}\|\nabla\overline\bfvarphi^\eta\|_{L^4_\bx}\dt\nonumber\\
&\lesssim \int_I\|\partial_t\naby\eta\|_{W^{1,2}_\by}\|\nabla\overline\bu\|_{L^2_\bx}\|\partial_t\eta\|_{W^{1+2s,4}_\by}\dt+\int_I\|\overline\bu\|_{W^{1,2}_\bx}\|\partial_t\eta\|_{W^{2,2}_\by}^{\frac{1}{8}}\|\partial_t\eta\|_{W^{2,2}_\by}^{\frac{5}{8}+s}\dt\nonumber\\
&\lesssim \int_I\Big(1+\|\overline\bu\|_{W^{1,2}_\bx}^2\Big)(1+\|\partial_t\eta\|_{W^{2,2}_\by}^2)\dt,\label{later1}
\end{align}
where we used $\partial_t\eta\in L^\infty_t(L^2_\by)$ and 
$$\|\partial_t\eta\|_{L^4_\by}\lesssim \|\partial_t\eta\|_{L^2_\by}^{\frac{7}{8}}\|\partial_t\eta\|_{W^{2,2}_\by}^{\frac{1}{8}}, $$
$$\|\partial_t\eta\|_{W^{1+2s,4}_\by}\lesssim \|\partial_t\eta\|_{L^2_\by}^{\frac{3}{8}-s}\|\partial_t\eta\|_{W^{2,2}_\by}^{\frac{5}{8}+s}.$$
As before (but now using that $\naby\eta\in L^\infty_{t,\by}$ by the 1D Sobolev embedding),
\begin{align*}
(III)^\eta
&\lesssim \int_I\Big(\|\partial_t\overline\bu\|_{W^{1,2}_\bx}^2+\|\partial_t\Dely\eta\|_{L^{2}_\by}^2\Big)\dt,\\
(IV)^\eta
&\lesssim\sup_I\Big(\|\partial_t\bu\|_{L^2_\bx}^2+\|\partial_t^2\eta\|_{L^2_\by}^2+\|\partial_t\Dely\eta\|_{L^{2}_\by}^2\Big),\\
(V)^\eta
&\lesssim \int_I\Big(\|\partial_t\bu\|_{W^{1,2}_\bx}^2+\|\partial_tg\|_{L^2_\by}^2+\|\partial_t\Dely\eta\|_{L^{2}_\by}^2\Big)\dt.
\end{align*}
We now treat $(VII)^\eta$ and $(VIII)^\eta$ before dealing with the more subtle $(VI)^\eta$.
Similar to how $(VII)^\eta$ was treated in  \eqref{mom:reg}, we can again decompose $(VII)^\eta$ into the sum of $(VII)^{\eta}_1,\dots, (VII)^{\eta}_3$, where
\begin{align*}
(VIII)^\eta&:=- \int_I\int_\Omega\partial_t J_\eta \overline\bu\cdot \partial_t\overline{\bm{\varphi}}^\eta\dx\dt+\int_\Omega\partial_tJ_{\eta} \overline{\bu}\cdot \overline{\bm{\varphi}}^\eta\dx \bigg|_0^T+\int_I\int_\Omega\partial_t(\partial_t J_{\eta}\overline{\bu})\cdot \overline{\bm{\varphi}}^\eta\dx\dt,\\
(VII)^\eta_1&:= \int_I\int_\Omega
J_\eta\nabx\overline{\bu}\cdot \partial_t \bfPsi_{\eta}^{-1}\circ \bfPsi_{\eta}\cdot \partial_t\overline{\bm{\varphi}}^\eta\dx\dt-\int_I\int_\Omega
\partial_t\Big(J_{\eta} \nabx\overline{\bu}\cdot \partial_t \bfPsi_{\eta}^{-1}\circ \bfPsi_{\eta}\cdot\overline{\bm{\varphi}}^\eta\Big)\dx\dt,
\\(VII)^\eta_2&:=- \int_I\int_\Omega\partial_t\big(\mathbf{B}_{\eta}\nabx\overline{\bfv}~\overline{\bu}\big)\cdot\overline{\bm{\varphi}}^\eta\dx\dt,\\
(VII)^\eta_3&:= \int_I\int_\Omega\partial_t\big(
J_\eta\bff\circ \bfPsi_{\eta}\big)\cdot\overline{\bm{\varphi}}^\eta\dx\dt.
\end{align*}
Arguing as before (and using again $\naby\eta\in L^{\infty}_{t,\by}$) we have
\begin{align*}
(VII)^{\eta}_3&\lesssim \int_I\Big(\|\partial_t\overline{\bff}\|_{L^2_\bx}^2+\|\overline{\bff}\|_{L^2_\bx}^2\Big)\dt +\int_I(1+\|\overline\bu\|_{W^{1,2}_\bx}^2)\|\partial_t\Dely\eta\|_{L^2_\by}^{2}\dt.
\end{align*}
Arguing as in \eqref{later2} we obtain
\begin{align*}
(VIII)^\eta&\lesssim \int_I\|\partial_t\eta\|_{L^\infty_\by}\|\overline \bu\|_{L^2_\bx}\|\partial_t\overline\bfvarphi^\eta\|_{L^2_\bx}\dt+\sup_I\|\partial_t\eta\|_{L^4_\by}\|\overline\bu\|_{L^2_\bx}\|\overline\bfvarphi^\eta\|_{L^4_\bx}\\
&\quad +\int_I\|\partial_t^2\eta\|_{L^2_\by}\|\overline \bu\|_{L^4_\bx}\|\overline\bfvarphi^\eta\|_{L^4_\bx}\dt+\int_I\|\partial_t\eta\|_{L^\infty_\by}\|\partial_t\overline\bu\|_{L^2_\bx}\|\overline\bfvarphi^\eta\|_{L^2_\bx}\dt\\
&\lesssim \int_I\|\partial_t\eta\|_{W^{2,2}_\by}\|\partial_t\overline\bu\|_{W^{1,2}_\bx}+\sup_I\|\partial_t\eta\|_{W^{2,2}_\by}\|\partial_t\eta\|_{W^{2s,4}_\by}\\
&\quad+\int_I\|\partial_t^2\eta\|_{L^2_\by}\|\overline\bu\|_{W^{1,2}_\bx}\|\partial_t\eta\|_{W^{2s,4}_\by}\dt +\int_I\|\partial_t\eta\|_{W^{2,2}_\by}\|\partial_t\overline\bu\|_{L^2_\bx}\|\partial_t\eta\|_{W^{2s,2}_\by}\dt\\
&\lesssim \int_I\|\partial_t\overline\bu\|_{W^{1,2}_\bx}^2\dt+\int_I\left(\|\partial_t\eta\|_{W^{2,2}_\by}^2+\|\partial_t^2\eta\|_{L^2_\by}^2\right)\dt+\sup_I\|\partial_t\eta\|_{W^{2,2}_\by}^2\\
&\quad +\int_I\|\overline\bu\|_{W^{1,2}_\bx}^2(\|\partial_t\eta\|_{W^{2,2}_\by}^2+\|\partial_t\overline\bu\|_{L^2_\bx}^2)\dt.
\end{align*}
Now we estimate the remaining terms in \eqref{mom:reg2D}:
\begin{align*}
(VII)^\eta_1&\lesssim \int_I\|\nabla\overline\bu\|_{L^2_\bx}\|\partial_t\eta\|_{L^\infty_\by}\|\partial_t\overline\bfvarphi^\eta\|_{L^2_\bx}\dt+\sup_I\|\nabla\overline\bu\|_{L^2_\bx}\|\partial_t\eta\|_{L^\infty_\by}\|\overline\bfvarphi^\eta\|_{L^2_\bx}\dt
\\
&\lesssim \int_I\|\nabla\overline\bu\|_{L^2_\bx}\|\partial_t\eta\|_{L^2_\by}^{\frac{1}{2}}\|\partial_t\naby\eta\|_{L^2_\by}^{\frac{1}{2}}(\|\partial_t^2\Delta_{-h}^s\Delta_h^s\eta\|_{L^2_\by}+\|\partial_t\Delta_{-h}^s\Delta_h^s\eta\,\partial_t\eta\|_{L^2_\by})\dt
\\
&\quad +\sup_I\|\nabla\overline\bu\|_{L^2_\bx}\|\partial_t\eta\|_{L^2_\by}^{\frac{3}{4}}\|\partial_t\Dely\eta\|_{L^2_\by}^{\frac{1}{4}}\|\partial_t\eta\|_{L^2_\by}^{1-s}\|\partial_t\Dely\eta\|_{L^2_\by}^s\dt
\\
&\lesssim \int_I\|\nabla\overline\bu\|_{L^2_\bx}\left(\|\partial_t\naby\eta\|_{L^2_\by}^{\frac{1}{2}}\|\partial_t\overline\bu\|_{W^{1,2}_\bx}+\|\overline\bu\|_{W^{1,2}_\bx}\|\partial_t\naby\eta\|_{L^2_\by}^{\frac{3}{2}}\right)\dt\\&\quad +\sup_I\|\nabla\overline\bu\|_{L^2_\bx}\|\partial_t\eta\|_{L^2_\by}^{\frac{7}{4}-s}\|\partial_t\Dely\eta\|_{L^2_\by}^{\frac{1}{4}+s}\dt
\\
&\lesssim \delta\int_I\|\partial_t\overline\bu\|_{W^{1,2}_\bx}^2\dt+c(\delta)\int_I(\|\partial_t\Dely\eta\|_{L^2_\by}^2+1)\|\nabla\overline\bu\|_{L^2_\bx}^2\dt\\&\quad +\delta\sup_I\|\nabla\overline\bu\|_{L^2_\bx}^2+\varepsilon\sup_I\|\partial_t\Dely\eta\|_{L^2_\by}^2+c(\delta, \varepsilon).
\end{align*}
Note that $|\partial_t\bfB_\eta|\lesssim |\partial_t\nabla\eta|+|\partial_t\eta|$,
thereby we have
\begin{align*}
(VII)_2^\eta&\lesssim \int_I\|\partial_t\naby\eta\|_{L^\infty_\by}\|\nabla\overline\bu\|_{L^2_\bx}\|\overline\bu\|_{L^4_\bx}\|\overline\bfvarphi^\eta\|_{L^4_\bx}\dt+\int_I\|\partial_t\eta\|_{L^\infty_\by}\|\nabla\overline\bu\|_{L^2_\bx}\|\overline\bu\|_{L^3_\bx}\|\overline\bfvarphi^\eta\|_{L^6_\bx}\dt\\
&\quad +\int_I\|\partial_t\nabla\overline\bu\|_{L^2_\bx}\|\overline\bu\|_{L^4_\bx}\|\overline\bfvarphi^\eta\|_{L^4_\bx} +\|\nabla\overline\bu\|_{L^2_\bx}\|\partial_t\overline\bu\|_{L^4_\bx}\|\overline\bfvarphi^\eta\|_{L^4_\bx}\dt
\\
&\lesssim \int_I\|\partial_t\eta\|_{L^2_\by}^{\frac{1}{4}}\|\partial_t\eta\|_{W^{2,2}_\by}^{\frac{3}{4}}\|\nabla\overline\bu\|_{L^2_\bx}^{\frac{3}{2}}\|\overline\bu\|_{L^2_\bx}^{\frac{1}{2}}\|\partial_t\eta\|_{L^2_\by}^{\frac{7}{8}-s}\|\partial_t\eta\|_{W^{2,2}_\by}^{\frac{1}{8}+s}\dt\\
&\quad+\int_I\|\partial_t\eta\|_{L^2_\by}^{\frac{3}{4}}\|\partial_t\eta\|_{W^{2,2}_\by}^{\frac{1}{4}}\|\overline\bu\|_{L^2_\bx}^{\frac{2}{3}}\|\overline\bu\|_{W^{1,2}_\bx}^{\frac{4}{3}}\|\partial_t\eta\|_{L^2_\by}^{\frac{5}{6}-s}\|\partial_t\eta\|_{W^{2,2}_\by}^{\frac{1}{6}+s}\dt\\
&\quad +\int_I\|\partial_t\overline\bu\|_{W^{1,2}_\bx}\|\nabla\overline\bu\|_{L^2_\bx}\|\partial_t\eta\|_{W^{2,2}_\by}\dt
\\
&\lesssim \delta\int_I\|\partial_t\overline\bu\|_{W^{1,2}_\bx}^2\dt+c(\delta)\int_I(\|\overline\bu\|_{W^{1,2}_\bx}^2+1)(\|\overline\bu\|_{W^{1,2}_\bx}^2+\|\partial_t\eta\|_{W^{2,2}_\by}^2)\dt,
\end{align*}
where the $1D$ interpolation inequalities:
$$ \|\partial_t\naby\eta\|_{L^\infty_\by}\lesssim \|\partial_t\naby\eta\|_{W^{-1,2}_\by}^{\frac{1}{4}}\|\partial_t\naby\eta\|_{W^{1,2}_\by}^{\frac{3}{4}},\qquad   \|\partial_t\eta\|_{L^\infty_\by}\lesssim \|\partial_t\eta\|_{L^2_\by}^{\frac{3}{4}}\|\partial_t\eta\|_{W^{2,2}_\by}^{\frac{1}{4}},$$
$$ \|\partial_t\eta\|_{W^{2s,4}_\by}\lesssim \|\partial_t\eta\|_{L^2_\by}^{\frac{7}{8}-s}\|\partial_t\eta\|_{W^{2,2}_\by}^{\frac{1}{8}+s},\qquad  \|\partial_t\eta\|_{W^{2s,6}_\by}\lesssim \|\partial_t\eta\|_{L^2_\by}^{\frac{5}{6}-s}\|\partial_t\eta\|_{W^{2,2}_\by}^{\frac{1}{6}+s},$$
and $2D$ interpolation inequalities:
$$\|\overline\bu\|_{L^4_\bx}\lesssim \|\overline\bu\|_{L^2_\bx}^{\frac{1}{2}}\|\overline\bu\|_{W^{1,2}_\bx}^{\frac{1}{2}}, \qquad \|\overline \bu\|_{L^3_\bx}\lesssim \|\overline\bu\|_{L^2_\bx}^{\frac{2}{3}}\|\overline\bu\|_{W^{1,2}_\bx}^{\frac{1}{3}}. $$

We finally return to treat $(VI)^\eta$. For this,
 we use the weak formulation of the coupled system to represent the pressure integrals involved in  \eqref{mom:reg2D} as 
%
\begin{align}
&\int_I\int_\Omega\pi\Div(\partial_t\bfB_\eta^\intercal \overline\bfvarphi^\eta)\dx\dt
=\int_I\int_\Omega\pi\Div(\bfB_\eta^\intercal\boldsymbol\Upsilon^\eta)\dx\dt\nonumber
\\
&=\int_I\int_\Omega J_{\eta}\partial_t\overline{\bu}\cdot\boldsymbol\Upsilon^\eta\dx\dt+\int_I\int_\Omega\bfA_\eta\nabla\overline{\bu}:\nabla\boldsymbol\Upsilon^\eta\dx\dt-\int_I\int_\Omega\bfh_\eta(\overline{\bu})\cdot\boldsymbol\Upsilon^\eta\dx\dt\nonumber
\\
&\quad +\int_I\int_\omega(\partial_t^2\eta+\Dely^2\eta)\bfn^\intercal \boldsymbol\Upsilon^\eta\circ\bfvarphi\dy\dt
-\int_I\int_\omega g\, \bfn^\intercal \boldsymbol{\Upsilon}^\eta\circ\bfvarphi \dy\dt,\label{for2dlater}
\end{align}
where 
$\boldsymbol\Upsilon^\eta=\bfB_\eta^{-\intercal}\partial_t\bfB_\eta^\intercal\overline{\bfvarphi}^\eta  
$
and \begin{align*}
\boldsymbol{\Upsilon}^\eta\circ\bfvarphi&=\bfB_\eta^{-\intercal}\circ\bfvarphi \partial_t\bfB_\eta^\intercal\circ\bfvarphi \Dely^{1/4}\partial_t\eta \bfn
= J_\eta ^{-1}\partial_t J_\eta \Dely^{1/4}\partial_t\eta \bfn
\\
&\approx (1+\eta)\partial_t\eta \Dely^{1/4}\partial_t\eta\bfn,
\end{align*}
by \eqref{eq:detPsi} and \eqref{eq:dnPsi}.
We derive from Lemma \ref{prop:musc} that
\begin{align*}
&\int_I\int_\Omega J_{\eta}\partial_t\overline{\bu}\cdot\boldsymbol\Upsilon^\eta\dx\dt
\lesssim \int_I(\partial_t\overline\bu\,\partial_t\eta+\partial_t\overline\bu\,\partial_t\nabla\eta)\overline\bfvarphi^\eta\dt
\\
&\lesssim \int_I(\|\partial_t\eta\|_{L^\infty_\by}+\|\partial_t\naby\eta\|_{L^\infty_\by})\|\partial_t\overline\bu\|_{L^2_\bx}\|\partial_t\Delta_{-h}^s\Delta_h^s\eta\|_{L^2_\bx}\dt \\
&\lesssim \int_I\|\partial_t\eta\|_{W^{2,2}_\by}\|\partial_t\overline\bu\|_{L^2_\bx}\|\overline\bu\|_{W^{1,2}_\bx}\dt\\
&\lesssim \int_I\Big(\|\partial_t\overline\bu\|_{L^2_\bx}^2+\|\overline\bu\|_{W^{1,2}_\bx}^2\|\partial_t\Dely\eta\|_{L^2_\by}^2\Big)\dt,
\end{align*}
Using again \eqref{matrices} and \eqref{210and212}--\eqref{218}, we see that the critical term in $|\partial_t\nabla\bfB_\eta|$ is $\partial_t\nabla^2\eta$, then
\begin{align*}
&\int_I\int_\Omega\bfA_\eta\nabla\overline{\bu}:\nabla\boldsymbol\Upsilon^\eta\dx\dt\\
&\sim \int_I\int_\Omega\nabla\overline\bu\left(\partial_t\nabla^2\eta+\nabla^2\eta\partial_t\nabla\eta\right)\overline\bfvarphi^\eta\dt +\int_I\int_\Omega\nabla\overline\bu\partial_t\nabla\eta\nabla\overline\bfvarphi^\eta\dx\dt\\
&:=I_{\bfA}+II_{\bfA}+III_{\bfA}.
\end{align*}
We see that
\begin{align*}
I_{\bfA}&\lesssim \int_I\|\nabla\overline\bu\|_{L^2_\bx}\|\partial_t\nabla^2\eta\|_{L^4_\by}\|\overline\bfvarphi^\eta\|_{L^4_\bx}\dt\\
&\lesssim \int_I\|\overline\bu\|_{W^{1,2}_\bx}\|\partial_t\nabla^2\eta\|_{W^{\frac{1}{4},2}_\by}\|\partial_t\eta\|_{W^{2,2}_\by}\dt\\
&\leq \delta\int_I\|\partial_t\nabla^2\eta\|_{W^{\frac{1}{4},2}_\by}^2\dt+c(\delta)\int_I\|\overline\bu\|_{W^{1,2}_\bx}^2\|\partial_t\eta\|_{W^{2,2}_\by}^2\dt.
\end{align*}
Moreover, we have
\begin{align*}
II_\bfA&\lesssim \int_I\|\nabla\overline\bu\|_{L^2_\bx}\|\nabla^2\eta\|_{L^4_\by}\|\partial_t\nabla\eta\|_{L^\infty_\by}\|\overline\bfvarphi^\eta\|_{L^4_\bx}\dt\\
&\lesssim \int_I\|\overline\bu\|_{W^{1,2}_\bx}\|\nabla^2\eta\|_{L^2_\by}^{\frac{3}{4}}\|\eta\|_{W^{3,2}_\by}^{\frac{1}{4}}\|\partial_t\eta\|_{W^{2,2}_\by}\|\partial_t\eta\|_{L^2_\by}^{\frac{7}{8}-s}\|\partial_t\eta\|_{W^{2,2}_\by}^{\frac{1}{8}+s}\dt\\
&\lesssim \delta\int_I\|\eta\|_{W^{3,2}_\by}^2\dt+c(\delta)\int_I(1+\|\overline\bu\|_{W^{1,2}_\bx}^2)(1+\|\partial_t\eta\|_{W^{2,2}_\by}^2)\dt.
\end{align*}
Finally we notice that $III_\bfA$ has been estimated in \eqref{later1}.

Recalling the definition of $\bfh_{\eta}$ in \eqref{matrices}, we have
\begin{align*}
&\int_I\int_\Omega\bfh_\eta(\overline{\bu})\cdot\boldsymbol\Upsilon^\eta\dx\dt\\
&\sim \int_I\int_\Omega \nabla\overline\bu\,\overline\bu\partial_t\nabla\eta\overline\bfvarphi^\eta\dx\dt +\int_I\int_\Omega\nabla\overline\bu\partial_t\eta\,\partial_t\nabla\eta\,\overline\bfvarphi^\eta\dt +\int_I\int_\Omega\overline\bff\partial_t\nabla\eta\overline\bfvarphi^\eta\dt.
\end{align*}
The first integral has been estimated in $(VII)_2^\eta$. We treat the remaining integrals below:
\begin{align*}
\int_I\int_\Omega\nabla\overline\bu\partial_t\eta\,\partial_t\nabla\eta\,\overline\bfvarphi^\eta\dx\dt&\lesssim \int_I\|\nabla\overline\bu\|_{L^2_\bx}\|\partial_t\eta\|_{L^\infty_\by}\|\partial_t\naby\eta\|_{L^\infty_\by}\|\partial_t\eta\|_{W^{2s,2}_\by}\dt\\
&\lesssim \int_I\|\nabla\overline\bu\|_{L^2_\bx}\|\partial_t\eta\|_{L^2_\by}^{\frac{3}{4}}\|\partial_t\Dely\eta\|_{L^2_\by}^{\frac{1}{4}}\|\partial_t\eta\|_{L^2_\by}^{\frac{1}{4}}\|\partial_t\Dely\eta\|_{L^2_\by}^{\frac{3}{4}}\|\partial_t\eta\|_{W^{2s,2}_\by}\dt\\
&\lesssim \int_I(1+\|\overline\bu\|_{W^{1,2}_\bx}^4)\dt+\int_I\|\partial_t\eta\|_{W^{2,2}_\by}^2\dt.
\end{align*}
Fnally using $\overline\bff\in L^\infty_t(L^2_\bx)$, we have
\begin{align*}
\int_I\int_\Omega\overline\bff\partial_t\nabla\eta\overline\bfvarphi^\eta\dt&\lesssim \int_I\|\overline\bff\|_{L^2_\bx}\|\partial_t\naby\eta\|_{L^\infty_\by}\|\partial_t\Delta_{-h}^s\Delta_h^s\eta\|_{L^2_\by}\dt\\
&\lesssim \int_I\left(\|\partial_t\eta\|_{W^{2,2}_\by}^2+\|\overline\bu\|_{W^{1,2}_\bx}^2\right)\dt.
\end{align*}

We continue the estimate of the solid part in \eqref{for2dlater}. We have
\begin{align*}
\int_I\int_\omega\partial_t^2\eta\bn^\intercal\boldsymbol{\Upsilon}^\eta\circ\bfvarphi \dy\dt&\leq \int_I\|\partial_t^2\eta\|_{L^2_\by}\|\partial_t\eta\|_{L^\infty_\by}\|\partial_t\Delta_\by^{1/4}\eta\|_{L^2_\by}\dt\nonumber\\
&\leq \int_I\|\partial_t^2\eta\|_{L^2_\by}^2\dt+\int_I\|\partial_t\eta\|_{W^{2,2}_\by}^2\|\overline\bu\|_{W^{1,2}_\bx}^2\dt.
\end{align*}
For the integral depends on $\Dely^2\eta$ we take an integration by parts and obtain
\begin{align*}
&\int_I\int_\omega\Dely^2\eta \bn^\intercal\boldsymbol{\Upsilon}^\eta\circ\bfvarphi\dy\dt\\
&=-\int_I\int_\omega\naby\Dely\eta\naby\eta
\partial_t\eta(\Delta_{-h}^s\Delta_h^s\partial_t\eta-\mathscr K_\eta(\Delta_{-h}^s\Delta_h^s\partial_t\eta))\dy\dt\\
&\quad-\int_I\int_\omega\naby\Dely\eta (1+\eta)\partial_t\naby\eta(\Delta_{-h}^s\Delta_h^s\partial_t\eta-\mathscr K_\eta(\Delta_{-h}^s\Delta_h^s\partial_t\eta))\dy\dt\\
&\quad -\int_I\int_\omega\naby\Dely\eta (1+\eta)\partial_t\eta \Delta_{-h}^s\Delta_h^s\partial_t\naby\eta\dy\dt
\\
&\lesssim \int_I\|\eta\|_{W^{3,2}_\by}\|\naby\eta\|_{L^{\infty}_\by}\|\partial_t\eta\|_{L^{2}_\by}\|\partial_t\eta\|_{W^{1/2,\infty}_\by}\dt+ \int_I\|\eta\|_{W^{3,2}_\by}\|\partial_t\naby\eta\|_{L^{\infty}_\by}\|\partial_t\eta\|_{W^{1/2,2}_\by}\dt
\\
&\quad + \int_I\|\eta\|_{W^{3,2}_\by}\|\partial_t\eta\|_{L^{4}_\by}\|\partial_t\eta\|_{W^{1+2s,4}_\by}\dt
\\
&\lesssim \int_I\|\eta\|_{W^{3,2}_\by}\|\partial_t\eta\|_{W^{2,2}_\by}\dt+ \int_I\|\eta\|_{W^{3,2}_\by}\|\partial_t\eta\|_{W^{2,2}_\by}\|\overline\bu\|_{W^{1,2}_\bx}\dt+\int_I\|\eta\|_{W^{3,2}_\by}\|\partial_t\Dely\eta\|_{L^2_\by}^{\frac{3}{4}+s}\dt\\
&\leq\,\delta \int_I\|\eta\|_{W^{3,2}_\by}^2\dt+c(\delta)\int_I\left(1+\|\partial_t\eta\|^2_{W^{2,2}_\by}\right)\Big(\|\overline\bu\|_{W^{1,2}_\bx}^2+1\Big)\dt.
\end{align*}
Finally, we also have
\begin{align*}
\int_I\int_\omega g\bn^\intercal\boldsymbol{\Upsilon}^\eta\circ\bfvarphi \dy\dt\lesssim \int_I\|g\|_{L^2_\by}\|\partial_t\eta\|_{L^2_\by}\|\partial_t\eta\|_{W^{2s,\infty}_\by}\dt\lesssim \int_I\|g\|_{L^2_\by}^2\dt+\int_I\|\partial_t\eta\|_{W^{2,2}_\by}^2\dt.
\end{align*}
Thus we finish the estimate of \eqref{mom:reg2D}.
\end{proof}
Now we are in a position to prove Proposition \ref{thm:transformedSystemLinea2D}.

\begin{proof}[Proof of Theorem~\ref{thm:transformedSystemLinea2D}]
Based on Theorem \ref{thm:NS3}, which also hold for $n=2$ by constant elongation. Hence there exists a local strong solution for short times. In particular $\partial_t^2\eta,\partial_t \overline\bu $ have well defined initial conditions and
\begin{align}
\label{eq:initial}
\int_\omega\abs{\Delta_\tau\partial_t\eta(0)}^2\dy +\int_{\Omega} \abs{\Delta_\tau\overline\bv(0)}^2\dx <\infty
\end{align}
uniformly $h$. 
The goal is to obtain estimates which do not blow up in time and to extend the local solution to a global one.
The regularity of the strong solution does not suffice to differentiate \eqref{contEqAloneBar}--\eqref{momEqAloneBar} in time and use the test function pair:
$$(\partial_t^2\eta+\bn^\intercal \bfG\circ\bfvarphi, \quad   \partial_t\overline\bu+\bfG),$$
 where $\bfG=\bfB_\eta^{-\intercal}\partial_t\bfB_\eta^\intercal\overline\bu$. Hence the following computations are formal. They do hold nevertheless as it can be easily checked that the very same computations can be performed for the respective version using finite in time-differences, in particular because of \eqref{eq:initial}. While in the 3D case we took the effort to perform all estimates on the finite differences in the 2D case we chose to use derivatives as this allows to follow the argument better.
Hence (formally) using $(\partial_t^2\eta+\bn^\intercal \bfG\circ\bfvarphi, \,\, \partial_t\overline\bu+\bfG),$ as test function on the (formally) time-differentiated system implies
\begin{align}
&\sup_I
\int_\omega\big(\vert \partial_t^2\eta\vert^2
+
\vert   \partial_t\Dely\eta\vert^2 \big)\dy
 +
 \sup_I
 \int_\Omega\vert  \partial_t\overline{\bu}\vert^2\dx
+ \sup_I
 \int_\Omega\vert  \nabla\overline{\bu}\vert^2\dx
+
 \int_I\int_\Omega\vert  \partial_t\nabx\overline{\bu}\vert^2\dx\dt\nonumber
\\
&\lesssim
\int_\omega\big(\vert \partial_t^2\eta(0)\vert^2
+
\vert \Dely \eta_*\vert^2 \big)\dy +
\int_\Omega\vert \partial_t\overline{\bu}(0)\vert^2\dx
+ \int_\Omega\vert  \nabx\overline{\bu}_0\vert^2\dx\nonumber\\
 &
\quad +
\int_I\int_\omega\partial_tg \,\partial_t^2\eta\dy\dt+
 \int_I\int_\Omega
 \partial_t\bfA_\eta\nabla\overline{\bv}\cdot\partial_t\nabla\overline\bu
 \dx\dt+
 \int_I\int_\Omega\big(\partial_t\mathbf{h}_\eta-\partial_t J_\eta\partial_t\overline\bu\big)\cdot\partial_t\overline{\bv}
 \dx\dt\nonumber\\
 &\quad +\int_I\int_\Omega\pi\Div(\partial_t \bfB_\eta^\intercal(\partial_t\overline{\bv}+\bfG))\dx\dt
 +\int_I\int_\Omega\big(\partial_t\bfh_\eta-\partial_t J_\eta\partial_t\overline\bu\big)\cdot \bfG \dx\dt\nonumber\\
 &\quad -\int_I\int_\Omega J_\eta\partial_t^2\overline{\bv}\cdot \bfG\dx\dt
 -\int_I\int_\Omega\partial_t\bfA_\eta\nabla\overline{\bv}:\nabla\bfG\dx\dt -\int_I\int_\Omega\bfA_\eta\nabla\partial_t\overline{\bv}:\nabla\bfG \dx\dt\nonumber\\
 &\quad +\int_I\int_\omega\partial_t g\, \bn^\intercal\bfG\circ\bfvarphi\dy\dt
-\int_I\int_\omega\partial_t^3\eta\,\bn^\intercal\bfG\circ\bfvarphi\dy\dt
-\int_I\int_\omega\partial_t\Delta^2_\by\eta\, \bn^\intercal\bfG\circ\bfvarphi\dy\dt,
\label{dt3.8a2d}
\end{align}
analogously to \eqref{est}.
Here $\partial_t\overline \bu(0)$ and $\partial_t^2\eta (0)$ can be computed from the data using the equation at $t=0$. Now we make use of another test in order to improve the spatial regularity of $\eta$. 

{\bf Estimate of the solid part in \eqref{dt3.8a2d}.} Note that 
$\partial_t\bfB_\eta^\intercal\circ\bfvarphi \bn=0$ does not hold in the case of shells in general. Thus the integrals in the last line of \eqref{dt3.8a2d} appear additionally to those arsing in the case of plates 
from \cite{ScSu}. 
For this recalling that $\bfG\circ\bfvarphi\sim |\partial_t\eta|^2(1+\eta)\bfn$ we have
\begin{align*}
&\int_I\int_\omega\partial_tg \bn^\intercal\bfG\circ\bfvarphi\dy\dt\lesssim \int_I\|\partial_tg\|_{L^2_\by}\|\partial_t\eta\|_{L^4_\by}^2\dt\\
&\lesssim \int_I\|\partial_tg\|_{L^2_\by}\|\partial_t\eta\|_{L^2_\by}\|\partial_t\eta\|_{W^{1/2,2}_\by}\dt\lesssim \int_I\|\partial_tg\|_{L^2_\by}^2\dt+\int_I\|\overline\bu\|_{W^{1,2}_\bx}^2\dt.
\end{align*}
Taking an integration by parts in time, we see that
\begin{align}
&\int_I\int_\omega \partial_t^3\eta\partial_t\eta\partial_t\eta(1+\eta)\dy\dt\nonumber\\
&=-2\int_I\int_\omega |\partial_t^2\eta|^2\partial_t\eta(1+\eta)\dy\dt-\int_I\int_\omega\partial_t^2\eta|\partial_t\eta|^3\dy\dt
+\int_\omega \partial_t^2\eta|\partial_t\eta|^2(1+\eta)\dy\bigg|_{s=0}^{s=T}\nonumber
\\
&\lesssim \int_I\|\partial_t^2\eta\|^2_{L^4_\by}\|\partial_t\eta\|_{L^2_\by}\dt+\int_I\|\partial_t^2\eta\|_{L^2_\by}\|\partial_t\eta\|_{L^6_\by}^3\dt
+\sup_I\|\partial_t^2\eta\|_{L^2_\by}\|\partial_t\eta\|_{L^2_\by}\|\partial_t\eta\|_{L^\infty_\by}\nonumber\\
&\lesssim \int_I\|\partial_t^2\eta\|_{L^2_\by}\|\partial_t^2\eta\|_{W^{1/2,2}_\by}
\dt+\int_I\|\partial_t^2\eta\|_{L^2_\by}\|\partial_t\eta\|_{L^2_\by}^2\|\partial_t\eta\|_{W^{1,2}_\by}\dt
+\sup_I\|\partial_t^2\eta\|_{L^2_\by}\|\partial_t\eta\|_{L^2_\by}^{\frac{1}{2}}
\|\partial_t\nabla\eta\|_{L^2_\by}^{\frac{1}{2}}\nonumber\\
&\lesssim c(\delta)\int_I\|\partial_t^2\eta\|_{L^2_\by}^2\dt+\delta\int_I\|\partial_t\overline\bu\|_{W^{1,2}_\by}^2\dt+\int_I\|\partial_t\eta\|_{W^{2,2}_\by}^2\dt\nonumber\\
&\quad +\delta\sup_I\big(\|\partial_t^2\eta\|_{L^2_\by}^2
+\|\partial_t\Dely\eta\|_{L^2_\by}^2\big)+c(\delta),\label{etaestimate}
\end{align}
where we used the trace theorem and the embedding $W^{1/4,2}_\by\hookrightarrow L^4_\by$ and the following $1D$ interpolation inequality
$$\|\partial_t^2\eta\|_{W^{\frac{1}{4},2}_\by}\lesssim \|\partial_t^2\eta\|_{L^2_\by}^{\frac{1}{2}}\|\partial_t^2\eta\|_{W^{\frac{1}{2},2}_\by}^{\frac{1}{2}}. $$ 
Similarly, for the last integral in \eqref{dt3.8a2d} after taking integration by parts in space we just write the high-order terms here:
\begin{align*}
&\int_I\int_\omega \partial_t\Dely^2\eta|\partial_t\eta|^2(1+\eta)\dy\dt\\
&\lesssim \int_I\int_\omega |\partial_t\Dely\eta|^2\partial_t\eta\dy\dt
+\int_I\int_\omega \partial_t\Dely\eta|\partial_t\naby\eta|^2\dy\dt+\int_I\int_\omega\partial_t\Dely\eta|\partial_t\eta|^2\naby^2\eta\dx\dt
\\
&\lesssim \int_I \big(\|\partial_t\Dely\eta\|_{L^4_\by}
\|\partial_t\Dely\eta\|_{L^2_\by}\|\partial_t\eta\|_{L^4_\by}
+\|\partial_t\Dely\eta\|_{L^2_\by}\|\partial_t\naby\eta\|_{L^4_\by}^2\big)\dt+\int_I\|\partial_t\Dely\eta\|_{L^2_\by}\|\partial_t\eta\|_{L^4_\by}^2\|\Dely\eta\|_{L^\infty_\by}\dt\\
&\lesssim c(\delta)\int_I\|\partial_t\Dely\eta\|^2_{L^2_\by}
\|\partial_t\eta\|_{W^{1/4,2}_\by}^2\dt+\delta\int_I\|\partial_t\Dely\eta\|^2_{W^{1/4,2}_\by}
+\int_I\|\partial_t\Dely\eta\|_{L^2_\by}\|\partial_t\eta\|_{W^{1/2,2}_\by}\|\partial_t\eta\|_{W^{2,2}_\by}\dt\\
&\quad +\int_I\|\partial_t\eta\|_{W^{2,2}_\by}\|\partial_t\eta\|_{L^2_\by}\|\partial_t\eta\|_{W^{1/2,2}_\by}\|\eta\|_{W^{3,2}_\by}\dt\\
&\lesssim c(\delta)\int_I\|\partial_t\eta\|^2_{W^{2,2}_\by}
\big(\|\overline\bu\|_{W^{1,2}_\bx}^2+1\big)\dt+\delta\int_I\|\partial_t\Dely\eta\|^2_{W^{1/4,2}_\by}+\delta\int_I\|\eta\|_{W^{3,2}_\by}^2\dt.
\end{align*}
Note that the last term can be handled by means of \eqref{eq:MuSc'2D}.
Now we estimate the remaining integrals on the right hand side of \eqref{dt3.8a2d} in order. 

We note that
$$\int_I\int_\omega\partial_t g\partial_t^2\eta\dy\dt\lesssim \int_I\|\partial_tg\|_{L^2_\by}^2\dt+\int_I\|\partial_t^2\eta\|_{L^2_\by}^2\dt. $$
Then we estimate further that
\begin{align*}
	&\int_I\int_\Omega\partial_t\bfA_\eta\nabla\overline\bu\cdot\partial_t\nabla\overline\bu\dxt\\
	&\lesssim\int_I\|\partial_t\nabla\overline\bu\|_{L^2_\bx}\|\partial_t\naby\eta\|_{L^\infty_\by}\|\nabla\overline\bu\|_{L^2_\bx}\dt\\
	&\lesssim\delta\int_I\|\partial_t\nabla\overline\bu\|_{L^2_\bx}^2\dt+c(\delta)\int_I\|\partial_t\naby\eta\|_{L^\infty_\by}^2\|\nabla\overline\bu\|_{L^2_\bx}^2\dt\\
	&\lesssim\delta\int_I\|\partial_t\nabla\overline\bu\|_{L^2_\bx}^2\dt+c(\delta)\int_I\|\partial_t\Dely\eta\|^2_{L^{2}_\by}\|\nabla\overline\bu\|_{L^2_\bx}^2\dt.
\end{align*}
Recalling $\bfh_\eta$ in \eqref{matrices}, we decompose the integral $\int_I\int_\Omega (\partial_t\bfh_\eta-\partial_tJ_\eta\partial_t\overline\bu)\cdot\partial_t\overline\bu \dx\dt$ into
\begin{align*}
({\tt I})^{\bfh}&:=- \int_I\int_\Omega\partial_t J_\eta|\partial_t \overline{\bu}|^2\dxt,\\
({\tt II})^{\bfh}&:=- \int_I\int_\Omega\partial_t\big(
J_\eta \nabx\overline{\bu}\cdot \partial_t \bfPsi_\eta^{-1}\circ \bfPsi_\eta\big)\cdot\partial_t\overline{\bv}\dxt,
\\({\tt III })^{\bfh}&:=- \int_I\int_\Omega\partial_t\big(\mathbf{B}_\eta\nabx\overline{\bfv}~\overline{\bu}\big)\cdot\partial_t\overline{\bv}\dxt,\\
({\tt IV})^{\bfh}&:= \int_I\int_\Omega\partial_t\big(
J_\eta  \bff\circ \bfPsi_\eta\big)\cdot\partial_t\overline{\bv}\dxt,
\end{align*}
where the last term clearly is of lower order.
As for $(\mathrm{I})^{\bfh}$ it holds that
\begin{align*}
	(\mathrm{I})^{\bfh}&
	\lesssim\int_I\|\partial_t\eta\|_{L^2_\by}\|\partial_t\overline\bu\|^2_{L^4_\bx}\dt\lesssim \int_I\|\partial_t\overline\bu\|_{L^2_\bx}\|\partial_t\overline\bu\|_{W^{1,2}_\bx}\dt\\
	&\lesssim \delta\int_I\|\partial_t\overline\bu\|_{W^{1,2}_\bx}^2\dt+c(\delta)\int_I\|\partial_t\overline\bu\|_{L^2_\bx}^2\dt.
\end{align*}
For $({\tt III })^{\bfh}$, we have
\begin{align*}
({\tt III })^{\bfh}&=- \int_I\int_\Omega\partial_t\mathbf{B}_\eta\nabx\overline{\bfv}~\overline{\bu}\cdot\partial_t\overline{\bv}\dxt- \int_I\int_\Omega\mathbf{B}_\eta\partial_t\nabx\overline{\bfv}~\overline{\bu}\cdot\partial_t\overline{\bv}\dxt- \int_I\int_\Omega\mathbf{B}_\eta\nabx\overline{\bfv}~\partial_t\overline{\bu}\cdot\partial_t\overline{\bv}\dxt\\
&=:({\tt III })^{\bfh}_1+({\tt III })^{\bfh}_2+({\tt III })^{\bfh}_3,
\end{align*}
where, by \eqref{210and212} and \eqref{218},
\begin{align*}
({\tt III })^{\bfh}_1&\lesssim \int_I\|\partial_t\mathbf B_\eta\|_{L^{\infty}_\bx}\|\nabx\overline{\bfv}\|_{L^2_\bx}\|\overline{\bu}\|_{L^4_\bx}\|\partial_t\overline{\bv}\|_{L^4_\bx}\dt\\
&\lesssim \int_I\|\partial_t\Dely\eta\|_{L^2_\by}\|\overline{\bfv}\|_{W^{1,2}_\bx}^{3/2}\|\overline{\bfv}\|_{L^{2}_\bx}^{1/2}\|\partial_t\overline{\bv}\|_{L^2_\bx}^{1/2}\|\partial_t\overline{\bv}\|_{W^{1,2}_\bx}^{1/2}\dt\\
&\leq\,\delta \int_I\|\partial_t\overline{\bfv}\|_{W^{1,2}_\bx}^2\dt+c(\delta)\int_I\|\overline{\bv}\|_{W^{1,2}_\bx}^{2}\big(1+\|\partial_t\Dely\eta\|_{L^2_\by}^{2}+\|\partial_t\overline{\bv}\|_{L^2_\bx}^{2}\big)\dt,
\end{align*}
\begin{align*}
({\tt III })^{\bfh}_2&\lesssim \int_I\|\mathbf B_\eta\|_{L^{\infty}_\bx}\|\partial_t\nabx\overline{\bfv}\|_{L^2_\bx}\|\overline{\bu}\|_{L^4_\bx}\|\partial_t\overline{\bv}\|_{L^4_\bx}\dt\\
&\lesssim \int_I\big(1+\|\naby\eta\|_{L^{\infty}_\by} \big)\|\partial_t\overline{\bu}\|_{W^{1,2}_\bx}^{3/2}\|\overline{\bu}\|_{W^{1,2}_\bx}^{1/2}\|\overline{\bu}\|_{L^{2}_\bx}^{1/2}\|\partial_t\overline{\bv}\|_{L^2_\bx}^{1/2}\dt\\
&\leq\,\delta \int_I\|\partial_t\overline{\bfv}\|_{W^{1,2}_\bx}^2\dt+c(\delta)\int_I\|\overline{\bv}\|_{W^{1,2}_\bx}^{2}\|\partial_t\overline{\bv}\|_{L^2_\bx}^{2}\dt,
\end{align*}
as well as
\begin{align*}
({\tt III })^{\bfh}_3&\lesssim \int_I\|\mathbf B_\eta\|_{L^{\infty}_\bx}\|\nabx\overline{\bfv}\|_{L^2_\bx}\|\partial_t\overline{\bu}\|_{L^4_\bx}^2\dt\\
&\lesssim \int_I\big(1+\|\naby\eta\|_{L^{\infty}_\by}\big)\|\overline{\bu}\|_{W^{1,2}_\bx}\|\partial_t\overline{\bu}\|_{W^{1,2}_\bx}\|\partial_t\overline{\bv}\|_{L^2_\bx}\dt\\
&\leq\,\delta \int_I\|\partial_t\overline{\bfv}\|_{W^{1,2}_\bx}^2\dt+c(\delta)\int_I\|\overline{\bv}\|_{W^{1,2}_\bx}^{2}\|\partial_t\overline{\bv}\|_{L^2_\bx}^{2}\dt.
\end{align*}

We proceed similarly for $({\tt II})^{\bfh}$ obtaining
\begin{align*}
({\tt II})^{\bfh}=&- \int_I\int_\Omega\partial_t
J_\eta \nabx\overline{\bu}\cdot \partial_t \bfPsi_\eta^{-1}\circ \bfPsi_\eta\cdot\partial_t\overline{\bv}\dxt\\
&- \int_I\int_\Omega
J_\eta \partial_t\nabx\overline{\bu}\cdot \partial_t \bfPsi_\eta^{-1}\circ \bfPsi_\eta\cdot\partial_t\overline{\bv}\dxt\\
&- \int_I\int_\Omega
J_\eta \nabx\overline{\bu}\cdot \partial_t(\partial_t \bfPsi_\eta^{-1}\circ \bfPsi_\eta)\cdot\partial_t\overline{\bv}\dxt\\
=:&({\tt II})^{\bfh}_1+({\tt II})^{\bfh}_2+({\tt II})^{\bfh}_3.
\end{align*}
We first see that
\begin{align*}
	({\tt II})^{\bfh}_1&\lesssim \int_I\|\partial_t\eta\|_{L_\by^\infty}^2\|\nabla\overline{\bfv}\|_{L^2_\bx}\|\partial_t\overline{\bfv}\|_{L^2_\bx}\dt\\
	&\lesssim \int_I\|\partial_t\eta\|_{W^{2,2}_\by}^2\|\nabla\overline{\bfv}\|_{L^2_\bx}\|\partial_t\overline{\bfv}\|_{L^2_\bx}\dt\\
	&\lesssim \int_I\|\partial_t\eta\|_{W^{2,2}_\by}^{2}\big(\|\overline{\bv}\|_{W^{1,2}_\bx}^2+\|\partial_t\overline{\bv}\|_{L^2_\bx}^{2}\big).
\end{align*}
Now we use a trick frequently used in \cite{ScSu}
to write the integral over $\Omega$ as an iterated integral and use 1D embeddings. This is more complicated for shells than for plates. Fortunately, the function $\partial_t\bfPsi_\eta$ is supported in $S_L$, where coordinates $(s,y)\in(-L,L)\times\omega$ exists together with a smooth diffeomorphism $\Lambda :(-L,L)\times \partial\Omega\rightarrow S_L$, cf. Section \ref{ssec:geom}. Moreover, we can parametrise $\partial\Omega$ via the function $\bfvarphi:\omega\rightarrow\partial\Omega$.
 The function
\begin{equation}\label{mapneed}
\Lambda^\bfvarphi:(-L,L)\times \omega\rightarrow S_L,\quad (s,y)\mapsto \Lambda(s,\bfvarphi(y)) 
\end{equation}
is also a smooth diffeomorphism.
We estimate $(\tt II)_2^{\bfh}$ as follows using also \eqref{218} and \eqref{energyEstLinear} that
\begin{align}\label{eq:2405}
\begin{aligned}
(\tt II)_2^{\bfh}&\lesssim \int_I\int_{-L}^0\|\partial_t\nabla\overline\bu\circ\Lambda^\bfvarphi\|_{L^2_\by}\|\partial_t\eta\|_{L^2_\by}\|\partial_t\overline\bu\circ\Lambda^\bfvarphi\|_{L^\infty_\by}\dd z\dt
\\
&\lesssim \int_I\int_{-L}^0\|\partial_t\nabla\overline\bu\circ\Lambda^\bfvarphi\|_{L^2_\by}\|\partial_t\eta\|_{L^2_\by}\|\partial_t\overline\bu\circ\Lambda^\bfvarphi\|_{L^2_\by}^{\frac{1}{2}}\|\partial_t\overline\bu\circ\Lambda^\bfvarphi\|_{W^{1,2}_\by}^{\frac{1}{2}}\dd z\dt\\
&\lesssim \int_I\|\partial_t\overline\bu\|_{W^{1,2}_\bx}^{\frac{3}{2}}\|\partial_t\overline\bu\|_{L^2_\bx}^{\frac{1}{2}}\dt
\\
&\lesssim\delta\int_I\|\partial_t\overline\bu\|_{W^{1,2}_\bx}^2\dt+c(\delta)\int_I\|\partial_t\overline\bu\|_{L^2_\bx}^2\dt.
\end{aligned}
\end{align}
transforming back to $S_L$ in the penultimate step. 

Finally, using the interpolation of $L^\infty_\by$ between $L^2_\by$ and $W^{1,2}_\by$ we obtain
arguing as in \eqref{eq:2405} that
\begin{align*}
({\tt II})^{\bfh}_3&=\int_I\int_\Omega
J_\eta \nabx\overline{\bu}\cdot \partial_t^2 \bfPsi_\eta^{-1}\circ \bfPsi_\eta\cdot\partial_t\overline{\bv}\dxt+\int_I\int_\Omega
J_\eta \nabx\overline{\bu}\cdot \partial_t\nabla \bfPsi_\eta^{-1}\circ \bfPsi_\eta\partial_t\bfPsi_\eta\cdot\partial_t\overline{\bv}\dxt\\
&\lesssim \int_I\int_{-L}^0\|\nabx\overline{\bu}\circ \Lambda^\bfvarphi\|_{L^2_\by}\|\partial_t^2\eta\|_{L^2_\by}\|\partial_t\overline{\bv}\circ \Lambda^\bfvarphi\|_{L^\infty_\by}\,\dd z\dt
\\
&\quad +\int_I\|\nabx\overline{\bu}\|_{L^2_\bx}\|\partial_t\naby\eta\|_{L^\infty_\by}\|\partial_t\eta\|_{L^\infty_\by}\|\partial_t\overline{\bv}\|_{L^2_\bx}\dt
\\
&\lesssim \int_I\int_{-L}^0\|\nabx\overline{\bu}\circ \Lambda^\bfvarphi\|_{L^2_\by}\|\partial_t^2\eta\|_{L^2_\by}\|\partial_t\overline{\bv}\circ \Lambda^\bfvarphi\|_{L^2_\by}^{1/2}\|\partial_t\overline{\bv}\|_{W^{1,2}_\bx}^{1/2}\,\dd z\dt
\\&\quad +\int_I\|\nabx\overline{\bu}\|_{L^2_\bx}\|\partial_t\naby\eta\|_{L^2_\bx}\|\partial_t\Dely\eta\|_{L^2_\by}^{1/2}\|\partial_t\eta\|_{L^2_\by}^{1/2}\|\partial_t\overline{\bv}\|_{L^2_\bx}\dt\\
&\lesssim \int_I\|\nabx\overline{\bu}\|_{L^2_\bx}\|\partial_t^2\eta\|_{L^2_\by}\|\partial_t\overline{\bv}\|_{L^2_\bx}^{1/2}\|\partial_t\overline{\bv}\|_{W^{1,2}_\bx}^{1/2}\dt+\int_I\|\nabx\overline{\bu}\|_{L^2_\bx}\|\partial_t\Dely\eta\|_{L^2_\by}\|\partial_t\overline{\bv}\|_{L^2_\bx}\dt\\
&\lesssim \delta\int_I\|\partial_t\overline{\bu}\|_{W^{1,2}_\bx}^2\dt+c(\delta)\int_I\big(\|\overline{\bv}\|_{W^{1,2}_\bx}^2+1\big)\big(\|\partial_t\Dely\eta\|_{L^2_\by}^{2}+\|\partial_t^2\eta\|_{L^2_\by}^{2}+\|\partial_t\overline{\bv}\|_{L^2_\bx}^{2}\big)\dt.
\end{align*}

It remains to estimate the terms in \eqref{dt3.8a2d} related to $\bfG=\bfB_\eta^{-\intercal}\partial_t\bfB_\eta\overline \bu$. We now consider $\int_I\int_\Omega(\partial_t\bfh_\eta-\partial_t J_\eta\partial_t\overline\bu)\cdot \bfG\dx\dt$, which is again decomposed as
\begin{align*}
({\tt I})^{\bfh,\bfG}&:=- \int_I\int_\Omega\partial_tJ_\eta\partial_t \overline{\bu}\cdot \bfG\dxt,\\
({\tt II})^{\bfh,\bfG}&:=- \int_I\int_\Omega\partial_t\big(
J_\eta \nabx\overline{\bu}\cdot \partial_t \bfPsi_\eta^{-1}\circ \bfPsi_\eta\big)\cdot\bfG\dxt,
\\({\tt III })^{\bfh,\bfG}&:=- \int_I\int_\Omega\partial_t\big(\mathbf{B}_\eta\nabx\overline{\bfv}~\overline{\bu}\big)\cdot\bfG\dxt,\\
({\tt IV})^{\bfh,\bfG}&:= \int_I\int_\Omega\partial_t\big(
J_\eta  \bff\circ \bfPsi_\eta\big)\cdot\bfG\dxt.
\end{align*}
By \eqref{210and212}, \eqref{218} and \eqref{energyEstLinear}, we have
\begin{align*}
	({\tt I})^{\bfh,\bfG}&\lesssim \int_I\|\partial_t\eta\|_{L^\infty_\by}\|\partial_t \overline{\bu}\|_{L^2_\bx}\big(1+\|\naby\eta\|_{L^\infty_\by}\big)\|\partial_t\naby\eta\|_{L^\infty_\by}\|\overline\bu\|_{L^2_\bx}\dt
	\\
	&\lesssim \int_I\|\partial_t\eta\|_{L^2_\by}^{3/4}\|\partial_t\Dely\eta\|_{L^2_\by}^{1/4}\|\partial_t \overline{\bu}\|_{L^2_\bx}\|\partial_t\eta\|_{L^2_\by}^{1/4}\|\partial_t\Dely\eta\|_{L^2_\by}^{3/4}\dt\\
	&\lesssim  \int_I\big(\|\partial_t
	\Dely\eta\|^2_{L^2_\by}+ \|\partial_t\overline{\bu}\|_{L^2_\bx}^2\big)\dt.
\end{align*}
Then with the obvious sub-decompositions into
$({\tt III })^{\bfh,\bfG}_1,({\tt III })^{\bfh,\bfG}_2$ and $({\tt III })^{\bfh,\bfG}_3$
as well as
$({\tt II})^{\bfh,\bfG}_1,({\tt II})^{\bfh,\bfG}_2$ and $({\tt II})^{\bfh,\bfG}_3$.
It holds by \eqref{218} and \eqref{energyEstLinear} that
\begin{align*}
({\tt III })^{\bfh,\bfG}_1&\lesssim \int_I\|\partial_t\naby\eta\|_{L^{\infty}_\by}^2\big(1+\|\naby\eta\|_{L^{\infty}_\by}\big)\|\nabx\overline{\bfv}\|_{L^2_\bx}\|\overline{\bu}\|_{L^4_\bx}^2\dt\\
&\lesssim \int_I\|\partial_t\eta\|_{L^{2}_\by}^{\frac{1}{2}}\|\partial_t\Dely\eta\|_{L^{2}_\by}^{\frac{3}{2}}\|\overline{\bfv}\|_{W^{1,2}_\bx}^2\|\overline{\bfv}\|_{L^{2}_\bx}\dt\\
&\lesssim  \int_I\|\partial_t\Dely\eta\|^2_{L^{2}_\by}\|\overline{\bfv}\|_{W^{1,2}_\bx}^2\dt,
\end{align*}
\begin{align*}
({\tt III })^{\bfh,\bfG}_2&\lesssim \int_I\|\partial_t\naby\eta\|_{L^{\infty}_\by}\big(1+\|\naby\eta\|_{L^{\infty}_\by}^2\big)\|\partial_t\nabx\overline{\bfv}\|_{L^2_\bx}\|\overline{\bu}\|_{L^4_\bx}^2\dt\\
&\lesssim \int_I\|\partial_t\Dely\eta\|^{\frac{3}{4}}_{L^{2}_\by}\|\partial_t\eta\|_{L^{2}_\by}^{\frac{1}{4}}\|\partial_t\nabx\overline{\bfv}\|_{L^{2}_\bx}\|\overline\bu\|_{W^{1,2}_\bx}\|\overline\bu\|_{L^{2}_\bx}\dt\\
&\lesssim \delta\int_I\|\partial_t\nabx\overline{\bfv}\|_{L^{2}_\bx}^2\dt+c(\delta)  \int_I(1+\|\partial_t\Dely\eta\|^2_{L^{2}_\by})\|\overline{\bfv}\|_{W^{1,2}_\bx}^2\dt,
\end{align*}
\begin{align*}
({\tt III })^{\bfh,\bfG}_3&\lesssim \int_I\|\partial_t\naby\eta\|_{L^{\infty}_\by}\big(1+\|\naby\eta\|_{L^{\infty}_\by}^2\big)\|\nabx\overline{\bfv}\|_{L^2_\bx}\|\overline{\bu}\|_{L^4_\bx}\|\partial_t\overline{\bu}\|_{L^4_\bx}\dt\\
&\lesssim \int_I\|\partial_t\Dely\eta\|_{L^{2}_\by}^{\frac{3}{4}}\|\partial_t\eta\|_{L^{2}_\by}^{\frac{1}{4}}\|\bu\|_{W^{1,2}_\bx}^{\frac{3}{2}}\|\bu\|_{L^{2}_\bx}^{\frac{1}{2}}\|\partial_t\bu\|_{W^{1,2}_\bx}^{\frac{1}{2}}\|\partial_t\bu\|_{L^{2}_\bx}^{\frac{1}{2}}\dt\\
&\lesssim \delta\int_I\|\partial_t\nabx\overline{\bfv}\|_{L^{2}_\bx}^2\dt+c(\delta)\int_I\|\nabla\overline\bu\|_{L^2_\bx}^2\|\partial_t\Dely\eta\|_{L^2_\by}\|\partial_t\overline\bu\|_{L^2_\bx}^{\frac{2}{3}}\dt\\
&\lesssim \delta\int_I\|\partial_t\nabx\overline{\bfv}\|_{L^{2}_\bx}^2\dt+c(\delta)\int_I\|\nabla\overline\bu\|_{L^2_\bx}^2\left(\|\partial_t\Dely\eta\|_{L^2_\by}^2+\|\partial_t\overline\bu\|_{L^2_\bx}^2+1\right)\dt,
\end{align*}
as well as
\begin{align*}
({\tt II})^{\bfh,\bfG}_1&\lesssim  \int_I\|\partial_t
\eta\|^2_{L^\infty_\by} \|\nabx\overline{\bu}\|_{L^2_\bx}\big(1+\|\naby\eta\|_{L^\infty_\by}\big)\|\partial_t\naby\eta\|_{L^\infty_\by}\|\overline{\bu}\|_{L^2_\bx}\dt\\
&\lesssim  \int_I\|\partial_t
\eta\|_{L^2_\by}^{7/4} \|\nabx\overline{\bu}\|_{L^2_\bx}\|\partial_t\Dely\eta\|^{5/4}_{L^2_\by}\dt\\
&\lesssim\int_I\|\partial_t\Dely\eta\|_{L^2_\by}^2\|\overline\bu\|_{W^{1,2}_\bx}^2\dt+\int_I(1+\|\partial_t\Dely\eta\|_{L^2_\by}^2)\dt,
\end{align*}
\begin{align*}
({\tt II})^{\bfh,\bfG}_2&\lesssim  \int_I \|\partial_t\nabx\overline{\bu}\|_{L^2_\bx}\|\partial_t\eta\|_{L^\infty_\by}\big(1+\|\naby\eta\|_{L^\infty_\by}\big)\|\partial_t\naby\eta\|_{L^\infty_\by}\|\overline{\bu}\|_{L^2_\bx}\dt\\
&\lesssim  \int_I\|\partial_t\nabla\overline\bu\|_{L^2_\bx}\|\partial_t\eta\|_{L^2_\by}^{\frac{3}{4}}\|\partial_t\Dely\eta\|_{L^2_\by}^{\frac{1}{4}}\|\partial_t\eta\|_{L^2_\by}^{\frac{1}{4}}\|\partial_t\Dely\eta\|_{L^2_\by}^{\frac{3}{4}}\dt\\
&\lesssim \delta\int_I\|\partial_t\nabla\overline\bu\|_{L^2_\bx}^2\dt+c(\delta)\int_I\|\partial_t\Dely\eta\|_{L^2_\by}^2\dt,
\end{align*}
and, finally, using the trace embedding $W^{3/4,2}(\Omega)\hookrightarrow W^{1/4,2}(\partial\Omega)\hookrightarrow L^4(\partial\Omega)$ and arguing as in  \eqref{eq:2405}, we have
\begin{align*}
({\tt II})^{\bfh}_3&=\int_I\int_\Omega
J_\eta \nabx\overline{\bu}\cdot \partial_t^2 \bfPsi_\eta^{-1}\circ \bfPsi_\eta\cdot\bfG\dxt+\int_I\int_\Omega
J_\eta \nabx\overline{\bu}\cdot \partial_t\nabla \bfPsi_\eta^{-1}\circ \bfPsi_\eta\partial_t\bfPsi_\eta\cdot\bfG\dxt\\
&\lesssim \int_I\int_{-L}^0\|\nabx\overline{\bu}\circ\Lambda^{\bfvarphi}\|_{L^2_\by}\|\partial_t^2\eta\|_{L^2_\by}\|\overline{\bv}\circ\Lambda^{\bfvarphi}\|_{L^\infty_\by}\big(1+\|\naby\eta\|_{L_\by^{\infty}}\big)\|\partial_t\naby\eta\|_{L_\by^{\infty}}\,\dd z\dt
\\
&\quad +\int_I\|\nabx\overline{\bu}\|_{L^2_\bx}\|\partial_t\naby\eta\|_{L^\infty_\bx}\|\partial_t\eta\|_{L^\infty_\bx}\|\overline{\bv}\|_{L^2_\bx}\big(1+\|\naby\eta\|_{L_\by^{\infty}}\big)\|\partial_t\naby\eta\|_{L_\by^{\infty}}\dt
\\
&\lesssim \int_I\int_{-L}^0\|\nabx\overline{\bu}\circ\Lambda^{\bfvarphi}\|_{L^2_\by}\|\overline{\bv}\circ\Lambda^{\bfvarphi}\|_{W^{1,2}_\by}\|\partial_t^2\eta\|_{L^2_\by}\|\partial_t\Dely\eta\|_{L_\by^{2}}\,\dd z\dt
\\
&\quad +\int_I\|\nabx\overline{\bu}\|_{L^2_\bx}\|\partial_t\eta\|_{L^2_\bx}^{5/4}\|\partial_t\Dely\eta\|_{L^2_\bx}^{7/4}\dt
\\
&\lesssim \int_I\|\overline{\bv}\|_{W^{1,2}_\bx}^{2}\|\partial_t^2\eta\|_{L^2_\by}\|\partial_t\Dely\eta\|_{L_\by^{2}}\dt+\int_I\|\nabx\overline{\bu}\|_{L^2_\bx}\|\partial_t\Dely\eta\|_{L^2_\bx}^{7/4}\dt
\\
&\lesssim \int_I\big(\|\overline{\bv}\|_{W^{1,2}_\bx}^{2}+1\big)\big(\|\partial_t^2\eta\|^2_{L^2_\by}+\|\partial_t\Dely\eta\|_{L_\by^{2}}^{2}\big)\dt+\int_I\|\overline{\bv}\|_{W^{1,2}_\bx}^{2}\dt.
\end{align*}

Now we estimate
$-\int_I\int_\Omega J_\eta\partial_t^2 \overline{\bu}\cdot \bfG\dx\dt
$.
For this, we integrate by parts in time obtaining that
\begin{align*}
-\int_I\int_\Omega J_\eta\partial_t^2 \overline{\bu}\cdot \bfG\dx\dt&=-\int_I\int_\Omega\partial_t\big(J_\eta\partial_t \overline{\bu}\cdot \bfG\big)\dxt+\int_I\int_\Omega\partial_tJ_\eta\partial_t \overline{\bu}\cdot \bfG\dxt\\&\quad +\int_I\int_\Omega J_\eta\partial_t \overline{\bu}\cdot \partial_t\bfG\dxt.
\end{align*}
Note that the second term coincides with $({\tt I})^{\bfh,\bfG}$ estimate above, while we have 
(using boundedness of $\eta$ due to \eqref{energyEstLinear})
\begin{align*}
	&-\int_I\int_\Omega\partial_t\big(J_\eta\partial_t \overline{\bu}\cdot \bfG\big)\dxt\\&\lesssim \|\eta\|_{L^\infty((0,T_0)\times\omega)}\sup_I\Big(\|\partial_t\overline\bu\|_{L^2_\bx}\|\overline\bu\|_{L^2_\bx}\big(1+\|\naby\eta\|_{L^{\infty}_\by}\big)\|\partial_t\naby\eta\|_{L^{\infty}_\by}\Big)\\
&\lesssim \sup_I\Big(\|\partial_t\overline\bu\|_{L^2_\bx}\|\partial_t\eta\|^{1/4}_{L^2_\by}\|\partial_t\Dely\eta\|^{3/4}_{L^{2}_\by}\Big)\\
&\lesssim \delta\sup_I\big(\|\partial_t\overline\bu\|_{L^2_\bx}^2+\|\partial_t\Dely\eta\|^{2}_{L^{2}_\by}\big)+c(\delta).
\end{align*}
{\bf Control of the integral $ \int_I\int_\Omega J_\eta\partial_t \overline{\bu}\cdot \partial_t\bfG\dxt$.}
Since $\bfB_\eta=\mathrm{cof}(\nabla\bfPsi_\eta)$ there is a linear mapping $\mathcal A:\R^{2\times 2}\rightarrow\R^{2\times 2}$
such that
\begin{align*}
\partial_t\bfG&=\bfB_\eta^{-\intercal}\partial_t\bfB_\eta\partial_t\overline\bu+\partial_t\bfB_\eta^{-\intercal}\partial_t\bfB_\eta\overline\bu+\bfB_\eta^{-\intercal}\mathcal A(\nabla\partial_t^2 \bfPsi_\eta\big)\overline\bu.
\end{align*}
Writing $\mathcal A=\sum_{i=1}^2\mathcal A_{i}$ with $\mathcal A_{i}:\R^2\rightarrow\R^{2\times 2}$ acting on the column vectors of the matrices we obtain
\begin{align}\nonumber
&\int_I\int_\Omega J_\eta\partial_t\overline\bu\cdot\bfB_\eta^{-\intercal}\mathcal A( \partial_t^2\nabla \bfPsi_\eta\big)\overline\bu\dxt=\sum_{i=1}^2\int_I\int_\Omega J_\eta\partial_t\overline\bu\cdot\bfB_\eta^{-\intercal}\mathcal A_i( \partial_t^2\partial_i \bfPsi_\eta\big)\overline\bu\dxt\nonumber\\
&=\sum_{i=1}^2\int_I\int_\Omega\partial_i J_\eta\partial_t\overline\bu\cdot\bfB_\eta^{-\intercal}\mathcal A_i( \partial_t^2 \bfPsi_\eta\big)\overline\bu\dxt+\sum_{i=1}^2\int_I\int_\Omega J_\eta\partial_t\partial_i\overline\bu\cdot\bfB_\eta^{-\intercal}\mathcal A_i( \partial_t^2 \bfPsi_\eta\big)\overline\bu\dxt\nonumber\\
&\quad +\int_I\int_\Omega J_\eta\partial_t\overline\bu\cdot\partial_i\bfB_\eta^{-\intercal}\mathcal A_i( \partial_t^2 \bfPsi_\eta\big)\overline\bu\dxt+\sum_{i=1}^2\int_I\int_\Omega J_\eta\partial_t\overline\bu\cdot\bfB_\eta^{-\intercal}\mathcal A_i( \partial_t^2 \bfPsi_\eta\big)\partial_i\overline\bu\dxt\nonumber\\
&\quad +\sum_{i=1}^2\int_I\int_{\partial\Omega}J_\eta\partial_t\overline\bu\cdot\bfB_\eta^{-\intercal}\mathcal A_i( \partial_t^2\bfPsi_\eta\big)\overline\bu\, n^i\,\dd\mathscr H^1\dt\nonumber\\
&=J_1+\dots J_5.\label{eq:2308}
\end{align}
We have by \eqref{210and212}, \eqref{218}, \eqref{eq:detPsi} and \eqref{energyEstLinear} that 
\begin{align*}
J_1&\lesssim\int_I\|\partial_t\overline\bu\|_{L^{4}_\bx}\big(1+\|\naby\eta\|^2_{L^\infty_\by}\big)\|\partial_t^2\eta\|_{L^2_\by}\|\overline\bu\|_{L^4_\bx}\dt\\
&\lesssim\int_I\|\partial_t\overline\bu\|_{L^2_\bx}^{1/2}\|\partial_t\overline\bu\|_{W^{1,2}_\bx}^{1/2}\|\partial_t^2\eta\|_{L^2_\by}\|\overline\bu\|_{W^{1,2}_\bx}^{1/2}\|\overline\bu\|_{L^{2}_\bx}^{1/2}\dt\\
&\lesssim\delta\int_I\|\partial_t\overline\bu\|_{W^{1,2}_\bx}^{2}\dt+c(\delta)\int_I\|\partial_t\overline\bu\|_{L^2_\bx}^{2/3}\|\partial_t^2\eta\|_{L^2_\by}^{4/3}\|\overline\bu\|_{W^{1,2}_\bx}^{2/3}\dt\\
&\lesssim\delta\int_I\|\partial_t\overline\bu\|_{W^{1,2}_\bx}^{2}\dt+ c(\delta)\int_I\big(\|\partial_t\overline\bu\|_{L^2_\bx}^{2}\|\overline\bu\|_{W^{1,2}_\bx}^{2}+\|\partial_t^2\eta\|_{L^{2}_\by}^{2}\big)\dt,
\end{align*}
and, arguing as in \eqref{eq:2405},
\begin{align*}
J_2&\lesssim\int_I\int_{-L}^0\|\partial_t\overline\bu\circ\Lambda^{\bfvarphi}\|_{W^{1,2}_\by}\big(1+\|\naby\eta\|_{L^\infty_\by}\big)\|\partial_t^2\eta\|_{L^2_\by}\|\overline\bu\circ\Lambda^{\bfvarphi}\|_{L^\infty_\by}\,\dd z\dt\\
&\lesssim\int_I\int_{-L}^0\|\partial_t\overline\bu\circ\Lambda^{\bfvarphi}\|_{W^{1,2}_\by}\|\overline\bu\circ\Lambda^{\bfvarphi}\|_{L^2_\by}^{1/2}\|\overline\bu\circ\Lambda^{\bfvarphi}\|_{W^{1,2}_\by}^{1/2}\,\dd z\|\partial_t^2\eta\|_{L^2_\by}\dt\\
&\lesssim\int_I\|\partial_t\overline\bu\|_{W^{1,2}_\bx}\|\overline\bu\|_{W^{1,2}_\bx}^{1/2}\|\partial_t^2\eta\|_{L^2_\by}\dt\\
&\lesssim\delta\int_I\|\partial_t\overline\bu\|_{W^{1,2}_\bx}^{2}\dt+ c(\delta)\int_I\|\partial_t^2\eta\|_{L^{2}_\by}^{2}\big(\|\overline\bu\|_{W^{1,2}_\bx}^{2}+1\big)\dt,
\end{align*}
as well as
\begin{align*}
J_3&\lesssim\int_I\int_{-L}^0\|\partial_t\overline\bu\circ\Lambda^{\bfvarphi}\|_{L^{\infty}_\by}\big(1+\|\naby^2\eta\|_{L^2_\by}\big)\|\partial_t^2\eta\|_{L^2_\by}\|\overline\bu\circ\Lambda^{\bfvarphi}\|_{L^\infty_\by}\,\dd z\dt\\
&\lesssim\int_I\int_{-L}^0\|\partial_t\overline\bu\circ\Lambda^{\bfvarphi}\|_{L^{2}_\by}^{1/2}\|\partial_t\overline\bu\circ\Lambda^{\bfvarphi}\|_{W^{1,2}_\by}^{1/2}\|\overline\bu\circ\Lambda^{\bfvarphi}\|_{L^2_\by}^{1/2}\|\overline\bu\circ\Lambda^{\bfvarphi}\|_{W^{1,2}_\by}^{1/2}\,\dd z\|\partial_t^2\eta\|_{L^2_\by}\dt\\
&\lesssim\int_I\|\partial_t\overline\bu\|_{L^{2}_\bx}^{1/2}\|\partial_t\overline\bu\|_{W^{1,2}_\bx}^{1/2}\|\overline\bu\|_{L^{2}_\bx}^{1/2}\|\overline\bu\|_{W^{1,2}_\bx}^{1/2}\|\partial_t^2\eta\|_{L^2_\by}\dt\\
&\lesssim\delta\int_I\|\partial_t\overline\bu\|_{W^{1,2}_\bx}^{2}\dt+ c(\delta)\int_I\big(\|\partial_t^2\eta\|_{L^{2}_\by}^{2}+\|\partial_t\overline{\bu}\|_{L^{2}_\bx}^{2}\big)\big(\|\overline\bu\|_{W^{1,2}_\bx}^{2}+1\big)\dt,
\end{align*}
and
\begin{align*}
J_4&\lesssim\int_I\int_{-L}^0\|\partial_t\overline\bu\circ\Lambda^{\bfvarphi}\|_{L^{\infty}_\by}\big(1+\|\naby\eta\|_{L^\infty_\by}\big)\|\partial_t^2\eta\|_{L^2_\by}\|\nabla\overline\bu\circ\Lambda^{\bfvarphi}\|_{L^2_\by}\,\dd z\dt\\
&\lesssim\int_I\int_{-L}^0\|\partial_t\overline\bu\circ\Lambda^{\bfvarphi}\|_{L^{2}_\by}^{1/2}\|\partial_t\overline\bu\circ\Lambda^{\bfvarphi}\|_{W^{1,2}_\by}^{1/2}\|\overline\bu\circ\Lambda^{\bfvarphi}\|_{W^{1,2}_\by}\,\dd z\|\partial_t^2\eta\|_{L^2_\by}\dt
\\
&\lesssim\int_I\|\partial_t\overline\bu\|_{L^{2}_\bx}^{1/2}\|\partial_t\overline\bu\|_{W^{1,2}_\bx}^{1/2}\|\overline\bu\|_{W^{1,2}_\bx}\|\partial_t^2\eta\|_{L^2_\by}\dt\\
&\lesssim\delta\int_I\|\partial_t\overline\bu\|_{W^{1,2}_\bx}^{2}\dt+ c(\delta)\int_I\big(\|\partial_t^2\eta\|_{L^{2}_\by}^{2}+\|\partial_t\overline{\bu}\|_{L^{2}_\bx}^{2}\big)\big(\|\overline\bu\|_{W^{1,2}_\bx}^{2}+1\big)\dt.
\end{align*}
Finally, it holds
\begin{align*}
J_5&\lesssim\int_I\|\partial_t\overline\bu\|_{L^{4}(\partial\Omega)}\big(1+\|\naby\eta\|_{L^\infty_\by}\big)\|\partial_t^2\eta\|_{L^2_\by}\|\overline\bu\|_{L^4(\partial\Omega)}\dt\\
&\lesssim\int_I\|\partial_t\overline\bu\|_{W^{3/4,2}_\bx}\|\partial_t^2\eta\|_{L^2_\by}\|\overline\bu\|_{W^{3/4,2}_\bx}\dt\\
&\lesssim\int_I\|\partial_t\overline\bu\|_{W^{1,2}_\bx}^{3/4}\|\partial_t\overline\bu\|_{L^{2}_\bx}^{1/4}\|\partial_t^2\eta\|_{L^2_\by}\|\overline\bu\|_{W^{1,2}_\bx}^{3/4}\|\overline\bu\|_{L^{2}_\bx}^{1/4}\dt\\
&\lesssim\delta\int_I\|\partial_t\overline\bu\|_{W^{1,2}_\bx}^{2}\dt+ c(\delta)\int_I\big(\|\partial_t^2\eta\|_{L^{2}_\by}^{2}+\|\partial_t\overline{\bu}\|_{L^{2}_\bx}^{2}\big)\big(\|\overline\bu\|_{W^{1,2}_\bx}^{2}+1\big)\dt.
\end{align*}
As for $\int_I\int_\Omega J_\eta\partial_t\overline{\bu}\bfB_\eta^{-\intercal}\partial_t\bfB_\eta\partial_t\overline\bu\dxt$ we argue similarly \eqref{eq:2308} to terms
\begin{align*}
J_1'&:=\sum_{i=1}^2\int_I\int_\Omega\partial_i J_\eta\partial_t\overline\bu\cdot\bfB_\eta^{-\intercal}\mathcal A_i( \partial_t \bfPsi_\eta\big)\partial_t\overline\bu\dxt,\\
J_2'&:=\sum_{i=1}^2\int_I\int_\Omega J_\eta\partial_t\partial_i\overline\bu\cdot\bfB_\eta^{-\intercal}\mathcal A_i( \partial_t\bfPsi_\eta\big)\partial_t\overline\bu\dxt,\\
J_3'&=\int_I\int_\Omega J_\eta\partial_t\overline\bu\cdot\partial_i\bfB_\eta^{-\intercal}\mathcal A_i( \partial_t \bfPsi_\eta\big)\partial_t\overline\bu\dxt,\\
J_4'&:=\sum_{i=1}^2\int_I\int_\Omega J_\eta\partial_t\overline\bu\cdot\bfB_\eta^{-\intercal}\mathcal A_i( \partial_t \bfPsi_\eta\big)\partial_i\partial_t\overline\bu\dxt,\\
J_5'&:=\sum_{i=1}^2\int_I\int_{\partial\Omega}J_\eta\partial_t\overline\bu\cdot\bfB_\eta^{-\intercal}\mathcal A_i( \partial_t\bfPsi_\eta\big)\partial_t\overline\bu\, n^i\,\dd\mathscr H^1\dt.
\end{align*}
They can be estimated analogously to $J_1,\dots,J_5$ above
obtaining by using the function $\Lambda^\bfvarphi$:
\begin{align*}
J_1'&\lesssim\int_I\|\partial_t\eta\|_{L^2_\by}\|\partial_t\overline\bu\|_{L^2_\bx}\|\partial_t\overline\bu\|_{W^{1,2}_\bx}\dt\lesssim \delta\int_I\|\partial_t\overline\bu\|_{W^{1,2}_\bx}^{2}\dt+c(\delta)\int_I\|\partial_t\overline\bu\|_{L^2_\bx}^{2}\dt,
\end{align*}
\begin{align*}
J_2'+J_4'&\lesssim\int_I\|\partial_t\eta\|_{L^2_\by}\|\partial_t\overline\bu\|_{L^2_\bx}^{1/2}\|\partial_t\overline\bu\|_{W^{1,2}_\bx}^{3/2}\dt\lesssim \delta\int_I\|\partial_t\overline\bu\|_{W^{1,2}_\bx}^{2}\dt+c(\delta)\int_I\|\partial_t\overline\bu\|_{L^2_\bx}^{2}\dt,
\end{align*}
\begin{align*}
J_3'&\lesssim\int_I\|\partial_t\eta\|_{L^2_\by}\big(1+\|\eta\|_{W^{2,2}_\by}\big)\|\partial_t\overline\bu\|_{L^2_\bx}\|\partial_t\overline\bu\|_{W^{1,2}_\bx}\dt\\
&\lesssim \delta\int_I\|\partial_t\overline\bu\|_{W^{1,2}_\bx}^{2}\dt+c(\delta)\int_I\|\partial_t\overline\bu\|_{L^2_\bx}^{2}\dt,
\end{align*}
\begin{align*}
J_5'&\lesssim\int_I\|\partial_t\eta\|_{L^2_\by}\|\partial_t\overline\bu\|_{L^4(\partial\Omega)}^{2}\dt\lesssim\int_I\|\partial_t\overline\bu\|_{W^{1/4,2}(\partial\Omega)}^{2}\dt\\
&\lesssim\int_I\|\partial_t\overline\bu\|_{W^{3/4,2}_\bx}^{2}\dt\lesssim \delta\int_I\|\partial_t\overline\bu\|_{W^{1,2}_\bx}^{2}\dt+c(\delta)\int_I\|\partial_t\overline\bu\|_{L^2_\bx}^{2}\dt.
\end{align*}
Finally, we have
\begin{align}
\int_I\int_\Omega J_\eta\partial_t\overline{\bu}\partial_t\bfB_\eta^{-\intercal}\partial_t\bfB_\eta\overline\bu\dxt
&\lesssim\int_I\int_{-L}^0\|\partial_t\overline\bu\circ\Lambda^{\bfvarphi}\|_{L^\infty_\by}\|\partial_t\naby\eta\|_{L^2_\by}^2\|\overline\bu\circ\Lambda^{\bfvarphi}\|_{L^\infty_\by}\,\dd z\dt\nonumber\\
&\lesssim\int_I\|\partial_t\overline\bu\|_{W^{1,2}_\bx}\|\partial_t\eta\|_{L^2_\by}\|\partial_t\Dely\eta\|_{L^2_\by}\|\overline\bu\|_{W^{1,2}_\bx }\dt\nonumber\\
&\lesssim \delta \int_I\|\partial_t\overline\bu\|_{W^{1,2}_\bx}^{2}\dt+c(\delta)\int_I\|\overline\bu\|_{W^{1,2}_\bx }\|\partial_t\Dely\eta\|_{L^2_\by}^2\dt,\label{similar}
\end{align}
using again the argument from \eqref{eq:2405}.

{\bf Estimate of two integrals depending on $\nabla\bf G$ in \eqref{dt3.8a2d}.} By the definition of $\bfG=\bfB_\eta^{-\intercal}\partial_t\bfB_\eta^\intercal\overline\bu$, we see that
\begin{align*}
\int_I\int_\Omega \partial_t\bfA_\eta\nabla\overline\bu:\nabla\bfG\dxt=\int_I\int_\Omega\partial_t\bfA_\eta\nabla\overline\bu:(\nabla\bfB_\eta^{-\intercal}\partial_t\bfB_\eta\overline\bu+\bfB_\eta^{-\intercal}\partial_t\nabla\bfB_\eta\overline\bu+\bfB_\eta^{-\intercal}\partial_t\bfB_\eta\nabla\overline\bu)\dxt.
\end{align*}
We have the estimate
\begin{align*}
\int_I\int_\Omega\partial_t\bfA_\eta\nabla\overline\bu:\nabla\bfB_\eta^{-\intercal}\partial_t\bfB_\eta\overline\bu\dxt&\lesssim\int_I\|\partial_t\nabla_\by\eta\|_{L^\infty_\by}^2\|\nabla\overline\bu\|_{L^2_\bx}\|\nabla^2_\by\eta\|_{L^4_\by}\|\overline\bu\|_{L^4_\bx}\dt\\
&\lesssim \int_I\|\partial_t\eta\|_{L^2_\by}^{\frac{1}{2}}\|\partial_t\Delta_\by\eta\|_{L^2_\by}^{\frac{3}{2}}\|\nabla\overline\bu\|_{L^2_\bx}^{\frac{3}{2}}\|\nabla^2_\by\eta\|_{L^2_\by}^{\frac{3}{4}}\|\eta\|_{W^{3,2}_\by}^{\frac{1}{4}}\|\overline\bu\|_{L^2_\bx}^{\frac{1}{2}}\dt\\
&\leq \delta\int_I\|\eta\|_{W^{3,2}_\by}^2\dt+c(\delta)\int_I(1+\|\partial_t\Delta_\by\eta\|_{L^2_\by}^2)(1+\|\nabla\overline\bu\|_{L^2_\bx}^2)\dt.
\end{align*}
Using the map $\Lambda^\bfvarphi$ in \eqref{mapneed}, we have
\begin{align*}
&\int_I\int_\Omega\partial_t\bfA_\eta\nabla\overline\bu:\bfB_\eta^{-\intercal}\partial_t\nabla\bfB_\eta\overline\bu\dxt\\
&\lesssim \int_I\int_{-L}^0\|\partial_t\nabla_\by\eta\|_{L^\infty_\by}\|\nabla\overline\bu\circ\Lambda^\bfvarphi\|_{L^2_\by}\|\partial_t\naby^2\eta\|_{L^2_\by}\|\overline\bu\circ\Lambda^\bfvarphi\|_{L^\infty_\by}\dd z\dt\\
&\lesssim \int_I\int_{-L}^0\|\partial_t\nabla_\by\eta\|_{L^2_\by}^{\frac{1}{2}}\|\partial_t\nabla^2_\by\eta\|_{L^2_\by}^{\frac{3}{2}}\|\overline\bu\circ\Lambda^\bfvarphi\|_{L^2_\by}^{\frac{1}{2}}\|\nabla\overline\bu\circ\Lambda^\bfvarphi\|_{L^2_\by}^{\frac{3}{2}}\dd z\dt\\
&\lesssim \int_I\|\partial_t\nabla_\by\eta\|_{L^2_\by}^{\frac{1}{2}}\|\partial_t\nabla^2_\by\eta\|_{L^2_\by}^{\frac{3}{2}}\|\overline\bu\|_{L^2_\bx}^{\frac{1}{2}}\|\nabla\overline\bu\|_{L^2_\bx}^{\frac{3}{2}}\dt\\
&\leq \int_I\|\partial_t\eta\|_{W^{2,2}_\by}^2\dt+\int_I\|\partial_t\eta\|_{W^{2,2}_\by}^2\|\nabla\overline\bu\|_{L^2_\bx}^2\dt.
\end{align*}
Also we estimate:
\begin{align*}
	\int_I\int_\Omega\partial_t\bfA_\eta\nabla\overline\bu:\bfB_\eta^{-\intercal}\partial_t\bfB_\eta\nabla\overline\bu\dxt&\lesssim \int_I\|\partial_t\naby\eta\|_{L^\infty_\by}^2\|\nabla\overline\bu\|_{L^2_\bx}^2\dxt\\
	&\lesssim \int_I\|\partial_t\eta\|_{L^2_\by}^{\frac{1}{2}}\|\partial_t\Dely\eta\|_{L^2_\by}^{\frac{3}{2}}\|\nabla\overline\bu\|_{L^2_\bx}^2\dt\\
	&\lesssim \int_I(1+\|\partial_t\Dely\eta\|_{L^2_\by}^2)\|\nabla\overline\bu\|_{L^2_\bx}^2\dt.
\end{align*}

Now for the second integral on $\nabla\bfG$, we have
\begin{align*}
\int_I\int_\Omega\bfA_\eta\nabla\partial_t\overline\bu:\nabla\bfG\dxt=\int_I\int_\Omega \bfA_\eta\nabla\partial_t\overline\bu:(\nabla\bfB_\eta^{-\intercal}\partial_t\bfB_\eta\overline\bu+\bfB_\eta^{-\intercal}\partial_t\nabla\bfB_\eta\overline\bu+\bfB_\eta^{-\intercal}\partial_t\bfB_\eta\nabla\overline\bu)\dxt.
\end{align*}
We estimate the above integrals as below:
\begin{align*}
&\int_I\int_\Omega\bfA_\eta\nabla\partial_t\overline\bu:\nabla\bfB_\eta^{-\intercal}\partial_t\bfB_\eta\overline\bu\dxt
\\&
\lesssim \int_I\|\nabla\partial_t\overline\bu\|_{L^2_\bx}\|\naby^2\eta\|_{L^6_\by}\|\partial_t\naby\eta\|_{L^6_\by}\|\overline\bu\|_{L^6_\bx}\dt
\\
&\lesssim \int_I\|\nabla\partial_t\overline\bu\|_{L^2_\bx}\|\naby^2\eta\|_{L^2_\by}^{\frac{2}{3}}\|\naby^2\eta\|_{W^{1,2}_\by}^{\frac{1}{3}}\|\partial_t\eta\|_{L^2_\by}^{\frac{1}{3}}\|\partial_t\Dely\eta\|_{L^2_\by}^{\frac{2}{3}}\|\overline\bu\|_{L^2_\bx}^{\frac{1}{3}}\|\nabla\overline\bu\|_{L^2_\bx}^{\frac{2}{3}}\dt
\\
&\leq\delta\int_I\|\nabla\partial_t\overline\bu\|_{L^2_\bx}^2\dt+c(\delta)\int_I\|\naby^2\eta\|_{W^{1,2}_\by}^{\frac{2}{3}}\|\partial_t\Dely\eta\|_{L^2_\by}^{\frac{4}{3}}\|\nabla\overline\bu\|_{L^2_\bx}^{\frac{4}{3}}\dt
\\
&\leq \delta\int_I\|\nabla\partial_t\overline\bu\|_{L^2_\bx}^2\dt+\mu\int_I\|\eta\|_{W^{3,2}_\by}^2\dt+c(\delta, \mu)\int_I\|\partial_t\Dely\eta\|_{L^2_\by}^2\|\nabla\overline\bu\|_{L^2_\bx}^2\dt,
\end{align*}
\begin{align*}
&\int_I\int_\Omega\bfA_\eta\nabla\partial_t\overline\bu:\bfB_\eta^{-\intercal}\partial_t\nabla\bfB_\eta\overline\bu\dxt\\
&\lesssim \int_I\int_{-L}^0\|\nabla\partial_t\overline\bu\circ\Lambda^\bfvarphi\|_{L^2_\by}\|\partial_t\nabla^2_\by\eta\|_{L^2_\by}\|\overline\bu\circ\Lambda^\bfvarphi\|_{L^\infty_\by}\dd z\dt\\
&\lesssim \int_I\|\nabla\partial_t\overline\bu\|_{L^2_\bx}\|\partial_t\nabla^2_\by\eta\|_{L^2_\by}\|\nabla\overline\bu\|_{L^2_\bx}\dt\\
&\leq \delta\int_I\|\nabla\partial_t\overline\bu\|_{L^2_\bx}^2\dt+c(\delta)\int_I\|\partial_t\eta\|_{W^{2,2}_\by}^2\|\nabla\overline\bu\|_{L^2_\bx}^2\dt,
\end{align*}
\begin{align*}
\int_I\int_\Omega\bfA_\eta\nabla\partial_t\overline\bu:\bfB_\eta^{-\intercal}\partial_t\bfB_\eta\nabla\overline\bu\dxt&\lesssim \int_I\|\nabla\partial_t\overline\bu\|_{L^2_\bx}\|\partial_t\naby\eta\|_{L^\infty_\by}\|\nabla\overline\bu\|_{L^2_\bx}\dt
\\
&\leq\delta\int_I\|\nabla\partial_t\overline\bu\|_{L^2_\bx}^2\dt+c(\delta)\int_I\|\partial_t\Dely\eta\|_{L^2_\by}^2\|\nabla\overline\bu\|_{L^2_\bx}^2\dt.
\end{align*}

\textbf{Estimates for the pressure term.}
Finally, we are concerned with the pressure term in \eqref{dt3.8a2d}. We use the weak formulation of the system \eqref{contEqAloneBar}--\eqref{momEqAloneBar} to represent the pressure integrals involved in \eqref{dt3.8a2d} as follows:
\begin{align}
&\int_I\int_\Omega\pi\Div(\partial_t \bfB_\eta^\intercal(\partial_t\overline{\bv}+\bfG))\dx\dt=\int_I\int_\Omega\pi\Div(\bfB_\eta^\intercal\boldsymbol{\Theta}^\eta)\dx\dt\nonumber
\\
&=\int_I\int_\Omega J_{\eta}\partial_t\overline{\bu}\cdot\boldsymbol{\Theta}^\eta\dx\dt+\int_I\int_\Omega\bfA_\eta\nabla\overline{\bu}:\nabla\boldsymbol{\Theta}^\eta\dx\dt-\int_I\int_\Omega \big(\bfh_\eta(\overline{\bu})-J_\eta\partial_t\overline\bu\big)\cdot\boldsymbol{\Theta}^\eta\dx\dt\nonumber
\\
&\quad +\int_I\int_\omega(\partial_t^2\eta+\Dely^2\eta) \bn^\intercal\boldsymbol{\Theta}^\eta\circ\bfvarphi\dy\dt-\int_I\int_\omega g\, \bn^\intercal\boldsymbol{\Theta}^\eta\circ\bfvarphi\dy\dt,\label{later}
\end{align}
where 
\begin{align*}
&\boldsymbol{\Theta}^\eta:=\bfB_\eta^{-\intercal}\partial_t\bfB_\eta^\intercal\partial_t\overline{\bu}+\bfB_\eta^{-\intercal}\partial_t\bfB_\eta^\intercal\bfG=:\boldsymbol{\Theta}^\eta_1+\boldsymbol{\Theta}^\eta_2.
\end{align*}
We note that $\int_I\int_\Omega J_{\eta}\partial_t\overline{\bu}\cdot\boldsymbol{\Theta}^\eta_2\dx\dt$ can be done similarly as \eqref{similar}, while (after an integration by parts) we can procced as for the terms $J_1',\dots, J_5'$ considered above to bound
$\int_I\int_\Omega J_{\eta}\partial_t\overline{\bu}\cdot\boldsymbol{\Theta}^\eta_1\dx\dt$.
In order to proceed we argue yet another time as in \eqref{eq:2405} obtaining
\begin{align*}
\int_I\int_\Omega\bfA_\eta\nabla\overline{\bu}:\nabla\boldsymbol{\Theta}_1^\eta\dx\dt&\lesssim \int_I\int_{-L}^0\|\nabla\overline\bu\circ\Lambda^{\bfvarphi}\|_{L^2_\by}\big(1+\|\naby^2\eta\|_{L^2_\by}\big)\|\partial_t\naby\eta\|_{L^\infty_\by}\|\partial_t\overline\bu\circ\Lambda^{\bfvarphi}\|_{L^\infty_\by}\dd z\dt
\\
&\quad + \int_I\int_{-L}^0\|\nabla\overline\bu\circ\Lambda^{\bfvarphi}\|_{L^2_\by}\|\partial_t\naby^2\eta\|_{L^{2}_\by}\|\partial_t\overline\bu\circ\Lambda^{\bfvarphi}\|_{L^{\infty}_\by}\dd z\dt
\\
&\quad +\int_I\int_{-L}^0\|\nabla\overline\bu\circ\Lambda^{\bfvarphi}\|_{L^2_\by}\|\partial_t\naby\eta\|_{L^\infty_\by}\|\partial_t\nabla\overline\bu\circ\Lambda^{\bfvarphi}\|_{L^2_\by}\,\dd z\dt
\\
&\lesssim \int_I\|\nabla\overline\bu\|_{L^2_\bx}\|\partial_t\Dely\eta\|_{L^2_\by}\|\partial_t\overline\bu\|_{W^{1,2}_\by}\dt
\\
&\lesssim \delta\int_I\|\partial_t\overline\bu\|^2_{W^{1,2}_\by}\dt+c(\delta)\int_I\|\nabla\overline\bu\|^2_{L^2_\bx}\|\partial_t\Dely\eta\|_{L^2_\by}^2,
\end{align*}
on account of \eqref{210and212}, \eqref{218} and \eqref{energyEstLinear}.
Similarly, we have
\begin{align*}
\int_I\int_\Omega\bfA_\eta\nabla\overline{\bu}:\nabla\boldsymbol{\Theta}_2^\eta\dx\dt&\lesssim \int_I\|\nabla\overline\bu\|_{L^2_\bx}^2\big(1+\|\naby\eta\|_{L^\infty_\by}\big)\|\partial_t\naby\eta\|_{L^\infty_\by}^2\dt
\\
&\quad +\int_I\int_{-L}^0\|\nabla\overline\bu\circ\Lambda^{\bfvarphi}\|_{L^{2}_\by}\|\overline\bu\circ\Lambda^{\bfvarphi}\|_{L^{\infty}_\by}\big(1+\|\naby^2\eta\|_{L^2_\by}\big)\|\partial_t\naby\eta\|_{L^\infty_\by}^2\,\dd z\dt
\\
&\quad +\int_I\int_{-L}^0\|\nabla\overline\bu\circ\Lambda^{\bfvarphi}\|_{L^{2}_\by}\|\overline\bu\circ\Lambda^{\bfvarphi}\|_{L^{\infty}_\by}\|\partial_t\naby\eta\|_{L^{\infty}_\by}\|\partial_t\naby^2\eta\|_{L^{2}_\by}\,\dd z\dt
\\
&\lesssim \int_I\|\nabla\overline\bu\|_{L^2_\bx}^2\big(\|\partial_t\Dely\eta\|_{L^2_\by}^2+1\big)\dt.
\end{align*}
Finally, 
\begin{align*}
&\int_I\int_\Omega\big(\bfh_\eta(\overline{\bu})-J_\eta\partial_t\overline\bu\big)\cdot\boldsymbol{\Theta}^\eta_1\dx\dt\\&=
-\int_I\int_\Omega J_\eta\partial_t \overline{\bu}\cdot\bfB_\eta^{-\intercal}\partial_t\bfB_\eta^\intercal
\partial_t\overline{\bu}\dxt-\int_I\int_\Omega
J_\eta \nabx\overline{\bu}\cdot \partial_t \bfPsi_\eta^{-1}\circ \bfPsi_\eta \cdot\bfB_\eta^{-\intercal}\partial_t\bfB_\eta^\intercal
\partial_t\overline{\bu}\dxt\\
&\quad -\int_I\int_\Omega\mathbf{B}_\eta\nabx\overline{\bfv}~\overline{\bu}\cdot\bfB_\eta^{-\intercal}\partial_t\bfB_\eta^\intercal
\partial_t\overline{\bu}\dxt+
\int_I\int_\Omega J_\eta  \bff \circ \bfPsi_\eta\cdot\bfB_\eta^{-\intercal}\partial_t\bfB_\eta^\intercal
\partial_t\overline{\bu}\dxt\\
&=:({\tt J})^1_1+({\tt J})^2_1+({\tt J})^3_1+({\tt J})^4_1,
\end{align*}
where the first term is the most critical one. However, it can be estimates as $J_1',\dots, J_5'$ considered above.
 Note that ${\tt(J)}_1^4$ is trivial such that we only estimate the remainig two integrals. 
It holds (using once more\eqref{210and212}, \eqref{218} and \eqref{energyEstLinear} and employ the argument from \eqref{eq:2405})
\begin{align*}
{\tt (J)}_1^2&\lesssim\int_I\int_{-L}^0
\|\nabx\overline{\bu}\circ\Lambda^{\bfvarphi}\|_{L^2_\by}\|\partial_t\eta\|_{L_\by^2} \|\partial_t\naby\eta\|_{L^\infty_\by}\|
\partial_t\overline{\bu}\circ\Lambda^{\bfvarphi}\|_{L^\infty_\by}\dd z\dt\\
&\lesssim\int_I
\|\nabx\overline{\bu}\|_{L^2_\bx} \|\partial_t\Dely\eta\|_{L^2_\by}\|
\partial_t\overline{\bu}\|_{W^{1,2}_\bx}\dt\\
&\leq\delta \int_I\|\partial_t\overline\bu\|_{W^{1,2}_\bx}^{2}\dt+c(\delta)\int_I\|\overline\bu\|_{W^{1,2}_\bx}^{2}\|\partial_t\Dely\eta\|_{L^2_\by}^2\dt,
\end{align*}
\begin{align*}
{\tt (J)}_1^3&\lesssim\int_I\int_{-L}^0
\|\nabx\overline{\bu}\circ\Lambda^{\bfvarphi}\|_{L^2_\by} \|\partial_t\naby\eta\|_{L^\infty_\by}\|
\overline{\bu}\circ\Lambda^{\bfvarphi}\|_{L^2_\by}\|
\partial_t\overline{\bu}\|_{L^\infty_\by}\,\dd z\dt\\
&\lesssim\int_I
\|\nabx\overline{\bu}\|_{L^2_\bx} \|\partial_t\Dely\eta\|_{L^2_\by}\|
\overline{\bu}\|_{L^2_\bx}\|
\partial_t\overline{\bu}\|_{W^{1,2}_\bx}\dt\\
&\leq\delta \int_I\|\partial_t\overline\bu\|_{W^{1,2}_\bx}^{2}\dt+c(\delta)\int_I\|\overline\bu\|_{W^{1,2}_\bx}^{2}\|\partial_t\Dely\eta\|_{L^2_\by}^2\dt.
\end{align*}
Similarly, \begin{align*}
&\int_I\int_\Omega\big(\bfh_\eta(\overline{\bu})-J_\eta\partial_t\overline\bu\big)\cdot\boldsymbol{\Theta}^\eta_2\dx\dt\\
&=
-\int_I\int_\Omega J_\eta\partial_t \overline{\bu}\cdot(\bfB_\eta^{-\intercal}\partial_t\bfB_\eta^\intercal)^2
\overline{\bu}\dxt-\int_I\int_\Omega
J_\eta \nabx\overline{\bu}\cdot \partial_t \bfPsi_\eta^{-1}\circ \bfPsi_\eta \cdot(\bfB_\eta^{-\intercal}\partial_t\bfB_\eta^\intercal)^2
\overline{\bu}\dxt\\
&\quad -\int_I\int_\Omega\mathbf{B}_\eta\nabx\overline{\bfv}~\overline{\bu}\cdot(\bfB_\eta^{-\intercal}\partial_t\bfB_\eta^\intercal)^2
\overline{\bu}\dxt+
\int_I\int_\Omega J_\eta  \bff \circ \bfPsi_\eta\cdot(\bfB_\eta^{-\intercal}\partial_t\bfB_\eta^\intercal)^2\overline{\bu}\dxt\\
&=:({\tt J})^1_2+({\tt J})^2_2+({\tt J})^3_2+({\tt J})^4_2,
\end{align*}
where
\begin{align*}
({\tt J})^1_2&\lesssim\int_I\int_{-L}^0\|\partial_t\overline\bu\circ\Lambda^{\bfvarphi}\|_{L^\infty_\by}\|\partial_t\naby\eta\|_{L^2_\by}^2\|\overline\bu\circ\Lambda^{\bfvarphi}\|_{L^\infty_\by}\,\dd z\dt\\
&\lesssim\int_I\|\partial_t\overline\bu\|_{W^{1,2}_\bx}\|\partial_t\eta\|_{L^2_\by}\|\partial_t\Dely\eta\|_{L^2_\by}\|\overline\bu\|_{W^{1,2}_\bx }\dt\\
&\lesssim \delta \int_I\|\partial_t\overline\bu\|_{W^{1,2}_\bx}^{2}\dt+c(\delta)\int_I\|\overline\bu\|_{W^{1,2}_\bx }\|\partial_t\Dely\eta\|_{L^2_\by}^2\dt,
\end{align*}
	\begin{align*}
({\tt J})^2_2&\lesssim\int_I\int_{-L}^0\|\nabx\overline{\bfv}\circ\Lambda^{\bfvarphi}\|_{L^2_\by}\|\partial_t\eta\|_{L^2_\by}\|\partial_t\naby\eta\|_{L^\infty_\by}^2\|
\overline{\bu}\circ\Lambda^{\bfvarphi}\|_{L^\infty_\by}\dd z\dt\\
&\lesssim\int_I\|\overline\bu\|_{W^{1,2}_x}^2\|\partial_t\Dely\eta\|_{L^2_\by}^{2}\dt,
\end{align*}
\begin{align*}
({\tt J})^3_2&\lesssim\int_I\int_{-L}^0\|\nabx\overline{\bfv}\circ\Lambda^{\bfvarphi}\|_{L^2_\by}\|\partial_t\naby\eta\|_{L^4_\by}^2\|
\overline{\bu}\circ\Lambda^{\bfvarphi}\|_{L^\infty_\by}^2\dd z\dt\\
&\lesssim\int_I\|\overline\bu\|_{W^{1,2}_\bx}^2\|\partial_t\Dely\eta\|_{L^2_\by}^2\|\overline\bu\|_{L^{2}_\bx }\dt\\
&\lesssim \int_I\|\overline\bu\|_{W^{1,2}_\bx }^2\|\partial_t\Dely\eta\|_{L^2_\by}^2\dt,
\end{align*}
\begin{align*}
({\tt J})^4_2
&\lesssim\int_I\|\overline\bff\|_{L^{2}_\bx}\|\partial_t\nabla_\by\eta\|_{L^\infty_\by}^2\|\overline\bu\|_{L^{2}_\bx }\dt\lesssim \int_I\big(\|\overline\bu\|_{W^{1,2}_\bx }^2+\|\overline\bff\|_{L^{2}_\bx}^2\big)\|\partial_t\Dely\eta\|_{L^2_\by}^2\dt.
\end{align*}

Finally we deal with the integrals in \eqref{later} coming from the boundary of $\bfG$. 
on account of $\nabla\bfPsi_\zeta\bfn=\bfn$ and \eqref{eq:detPsi}
we have
\begin{align*}
 \boldsymbol{\Theta}^\eta\circ\bfvarphi&=\bfB_\eta^{-\intercal}\circ\bfvarphi\,\partial_t\bfB_\eta^\intercal\circ\bfvarphi \,\partial_t^2\eta\bn+\bfB_\eta^{-\intercal}\bfvarphi\,\partial_t\bfB_\eta^\intercal\circ\bfvarphi \bfG\circ\bfvarphi\bn\nonumber\\
 &=J_\eta^{-1}\partial_t J_\eta \partial_t^2\eta \bn+ J_\eta^{-1} J_\eta^{-1} \partial_t J_\eta \partial_t J_\eta \partial_t\eta\bn\\
 &\approx \partial_t\eta\partial_t^2\eta(1+\eta)\bn+(1+\eta)^2|\partial_t\eta|^2\partial_t\eta\bn.
\end{align*}
Then we realize that $\int_I\int_\omega|\partial_t^2\eta|^2\partial_t\eta(1+\eta)\dy\dt$ can be estimated similarly as in \eqref{etaestimate}. We find that
\begin{align*}
\int_I\int_\omega\partial_t^2\eta(1+\eta)^2|\partial_t\eta|^3\dy\dt&\lesssim \int_I\|\partial_t^2\eta\|_{L^2_\by}\|\partial_t\eta\|_{L^6_\by}^3\dt
\\
&\lesssim \int_I\|\partial_t^2\eta\|_{L^2_\by}\|\partial_t\eta\|_{L^2_\by}^2\|\partial_t\naby\eta\|_{L^2_\by}\dt
\\
&\lesssim  \int_I\left(\|\partial_t^2\eta\|_{L^2_\by}^2+\|\partial_t\Dely\eta\|_{L^2_\by}^2\right)\dt,
\end{align*}
where we used the $1D$ interpolation inequality:
$$\|\partial_t\eta\|_{L^6_\by}\lesssim \|\partial_t\eta\|_{L^2_\by}^{\frac{2}{3}}\|\partial_t\eta\|_{W^{1,2}_\by}^{\frac{1}{3}}. $$
We obtain further by using \eqref{energyEstLinear} that
\begin{align}
\int_I\int_\omega\Dely^2\eta (1+\eta)^2|\partial_t\eta|^3\dy\dt& =-\int_I\int_{\omega}\naby\Dely\eta
\cdot\naby\big((1+\eta)^2|\partial_t\eta|^3\big)\dx\dt\nonumber\\
&\lesssim \int_I\|\eta\|_{W^{3,2}_\by}\big(\|\partial_t\eta\|_{L^2_\by}\|\partial_t\eta\|_{L^\infty_\by}^2
+\|\partial_t\naby\eta\|_{L^\infty_\by}\|\partial_t\eta\|_{L^2_y}\|\partial_t\eta\|_{L^\infty_\by}\big)\dt\nonumber
\\
&\lesssim \int_I\|\eta\|_{W^{3,2}_\by}\big(\|\partial_t\naby\eta\|_{L^2_\by}
+\|\partial_t\eta\|_{L^2_\by}^{1/4}\|\partial_t\Dely\eta\|_{L^2_\by}^{3/4}\|\partial_t\eta\|_{L^2_\by}^{3/4}\|\partial_t\Dely\eta\|_{L^2_\by}^{1/4}\big)\dt\nonumber
\\
&\lesssim\,\delta\int_I\|\eta\|_{W^{3,2}_\by}^2\dt +c(\delta)\int_I\|\partial_t\Dely\eta\|_{L^2_\by}^2
\dt.\label{needagain}
\end{align}
As for the remaining integral containing $ \Dely^2\eta$ we procced by a formal computation which can be made rigorous by projecting $\eta$ on a finite sum of Fourier modes. We obtain
\begin{align*}
&\int_I\int_\omega\Dely^2\eta\,\partial_t^2\eta\partial_t\eta(1+\eta)\dy\dt
=\tfrac{1}{2}\int_I\int_\omega\Dely^2\eta\,\partial_t|\partial_t\eta|^2(1+\eta)\dy\dt\\
&=-\tfrac{1}{2}\int_I\int_\omega\Dely^2\partial_t\eta\,|\partial_t\eta|^2(1+\eta)\dy\dt-\tfrac{1}{2}\int_I\int_\omega\Dely^2\eta\,|\partial_t\eta|^2\partial_t(1+\eta)\dy\dt\\&\quad +\frac{1}{2}\int_\omega\Dely^2\eta\,|\partial_t\eta|^2(1+\eta)\dy\bigg|_0^T\\
&=:(\tt{I})_1+(\tt{I})_2+(\tt{I})_3,
\end{align*}
where (using \eqref{energyEstLinear})
\begin{align*}
({\tt I})_1&=-\frac{1}{2} \int_I\int_\omega\Dely\partial_t\eta\,\Dely\big(|\partial_t\eta|^2(1+\eta)\big)\dy\dt
\\
&\lesssim \int_I\|\partial_t\Dely\eta\|_{L^2_\by}\|\partial_t\naby\eta\|_{L^4_\by}^2 \dt+\int_I\|\partial_t\Dely\eta\|_{L^2_\by}\|\partial_t\Dely\eta\|_{L^3_\by}\|\partial_t\eta\|_{L^6_\by}\dt\\&\quad +\int_I\|\Dely\eta\|_{L^2_\by}\|\partial_t\eta\|_{L^\infty_\by}^2\|\partial_t\Dely\eta\|_{L^2_\by}\dt\\
&\lesssim \int_I\|\partial_t\Dely\eta\|_{L^2_\by}\|\partial_t\eta\|_{W^{1/2,2}_\by}\|\partial_t\eta\|_{W^{7/4,4}_\by}\dt+\int_I\|\partial_t\Dely\eta\|_{L^2_\by}^{\frac{4}{3}}\|\partial_t\Dely\eta\|^{\frac{2}{3}}_{W^{1/4,2}_\by}\|\partial_t\eta\|_{W^{1/2,2}_\by}\dt\\&\quad +\int_I\|\partial_t\Dely\eta\|_{L^2_\by}^2\dt
\\
&\lesssim \delta \int_I\|\partial_t\Dely\eta\|_{W^{1/4,2}_\by}^2\dt+c(\delta)\int_I\|\partial_t\Dely\eta\|_{L^2_\by}^2(\|\overline{\bu}\|_{W^{1,2}_\bx}^2+1)\dt,
\end{align*}
Notice that we have
\begin{align*}
({\tt I})_2&=\int_I\int_\omega\naby\Dely\eta\cdot\naby\big(|\partial_t\eta|^2\partial_t(1+\eta)\big)\dy\dt=3\int_I\int_\omega\naby\Dely\eta|\partial_t\eta|^2\partial_t\naby\eta\dy\dt,
\end{align*}
which has been estimated in the second part of \eqref{needagain}.
For $({\tt I})_3$, we have
\begin{align*}
({\tt I})_3&=\int_\omega\Dely\eta\,\Dely \big(|\partial_t\eta|^2(1+\eta)\big)\dy\bigg|_0^T
\\
&\lesssim 
\sup_I\|\Dely\eta\|_{L^2_\by}\|\partial_t\Dely\eta\|_{L^2_\by}\|\partial_t\eta\|_{L^\infty_\by}+ \sup_I\|\Dely\eta\|_{L^2_\by}^2\|\partial_t\eta\|^2_{L^\infty_\by}+ \sup_I\|\Dely\eta\|_{L^2_\by}\|\partial_t\naby\eta\|_{L^4_\by}^2\\
&\lesssim \sup_I\|\partial_t\Dely\eta\|_{L^2_\by}\|\partial_t\eta\|_{L^2_\by}^{\frac{3}{4}}\|\partial_t\Dely\eta\|_{L^2_\by}^{\frac{1}{4}}+\sup_I\|\partial_t\eta\|_{L^2_\by}^{\frac{3}{2}}\|\partial_t\Dely\eta\|_{L^2_\by}^{\frac{1}{2}}+\sup_I\|\partial_t\eta\|_{L^2_\by}^{\frac{3}{4}}\|\partial_t\Dely\eta\|_{L^2_\by}^{\frac{5}{4}}\\
&\lesssim\delta \sup_I\|\partial_t\Dely\eta\|_{L^2_\by}^2+c(\delta).
\end{align*}
Moreover, we have
\begin{align*}
&\int_I\int_\omega g \left(\partial_t\eta\partial_t^2\eta(1+\eta)+(1+\eta)^2|\partial_t\eta|^3\right)\dy\dt
\\&\leq\int_I\|g\|_{L^2_\by}\big(\|\partial_t^2\eta\|_{L^2_\by}\|\partial_t\eta\|_{L^\infty_\by}+\|\partial_t\eta\|_{L^2_\by}\|\partial_t\eta\|_{L^\infty_\by}^2\big)\dt\\
&\lesssim \int_I\|g\|_{L^2_\by}\left(\|\partial_t^2\eta\|_{L^2_\by}\|\partial_t\Dely\eta\|_{L^2_\by}+\|\partial_t\eta\|_{L^2_\by}^2\|\partial_t\naby\eta\|_{L^2_\by}\right)\dt\\
&\lesssim \int_I\left(\|g\|_{L^2_\by}^2+1\right)\left(1+\|\partial_t\Dely\eta\|_{L^2_\by}^{2}+\|\partial_t^2\eta\|_{L^2_\by}^2\right)\dt,
\end{align*}
using that $g\in W^{1,2}_tL^2_\by$.

Now setting 
\begin{align*}
\mathcal E(t)&=\|\partial_t^2\eta(t)\|_{L^2_\by}^2+\|\partial_t\Dely\eta(t)\|_{L^2_\by}^2+\|\partial_t\bu(t)\|_{L^2_\bx}^2+\|\nabla\bu(t)\|_{L^2_\bx}^2
,\\
H(t)&=\|\bff(t)\|^2_{L^2_\bx}+\|\partial_t\bff(t)\|^2_{L^2_\bx}+\|g(t)\|^2_{L^2_\by}+\|\partial_tg(t)\|^2_{L^2_\by},
\end{align*}
collecting the previous estimates, choosing $\delta$ small enough and using \eqref{eq:MuSc'2D},
we have shown that
\begin{align*}
&\mathcal E(t)+\int_0^t\big(\|\eta\|_{W^{3,2}_\by}^2+\|\partial_t\Dely\eta\|_{W^{1/4,2}_\by}^2+\|\partial_t\nabla\overline\bu\|^2_{L^2_\bx}\big)\dt\\&\lesssim \mathcal E(0)+1+\int_0^tH(s)\ds+\int_0^t(1+\|\overline{\bu}\|_{W^{1,2}_\bx}^2)\mathcal E(s)\ds.
\end{align*}
We now apply Gr\"{o}nwall's lemma to complete the proof.
\end{proof}

\section*{Acknowledgments}
D. Breit and P. R. Mensah are supported by Grant BR 4302/5-1 (543675748) and ME 6391/1-1 (543675748), respectively, by the German Research Foundation (DFG).

S. Schwarzacher and P. Su are supported by the ERC-CZ Grant CONTACT LL2105 funded by the Ministry of Education, Youth and Sport of the Czech Republic. S. Schwarzacher also acknowledges the support of the VR
Grant 2022-03862 of the Swedish Research Council. P. Su is also supported by PEPS ``Jeunes Chercheuses et Jeunes Chercheurs
de l'Insmi" from CNRS and the Sophie Germain program of the Fondation Math\'ematique Jacques Hadamard (FMJH).

\section*{Compliance with Ethical Standards}
\smallskip
\par\noindent
{\bf Conflict of Interest}. The authors declare that they have no conflict of interest.

\smallskip
\par\noindent
{\bf Data Availability}. Data sharing is not applicable to this article as no datasets were generated or analysed during the current study.

\end{document}